\theoremstyle{plain}
\newtheorem{theorem}{Theorem}[section]
\newtheorem{lemma}[theorem]{Lemma}
\newtheorem{proposition}[theorem]{Proposition}
\newtheorem{corollary}[theorem]{Corollary}
\newtheorem{question}[theorem]{Question}
\theoremstyle{definition}
\newtheorem{example}[theorem]{Example}
\newtheorem{definition}[theorem]{Definition}
\newtheorem{deflem}[theorem]{Definition-Lemma}
\newtheorem{defn}[theorem]{Definition}
\theoremstyle{remark}
\newtheorem{remark}[theorem]{Remark}
\newcommand{\cF}{\mathcal{F}}
\newcommand{\cE}{\mathcal{E}}
\newcommand{\cO}{\mathcal{O}}
\newcommand{\cN}{\mathcal{N}}
\newcommand{\cM}{\mathcal{M}}
\newcommand{\cI}{\mathcal{I}}
\newcommand{\cJ}{\mathcal{J}}
\newcommand{\cP}{\mathcal{P}}
\newcommand{\cS}{\mathcal{S}}
\newcommand{\cH}{\mathcal{H}}
\newcommand{\bC}{\mathbb{C}}
\newcommand{\bQ}{\mathbb{Q}}
\newcommand{\bR}{\mathbb{R}}
\newcommand{\bZ}{\mathbb{Z}}
\newcommand{\fa}{\mathfrak{a}}
\newcommand{\fb}{\mathfrak{b}}
\newcommand{\fc}{\mathfrak{c}}
\newcommand{\fm}{\mathfrak{m}}
\newcommand{\fn}{\mathfrak{n}}
\newcommand{\ev}{\mathrm{ev}}
\newcommand{\ic}{\mathrm{ic}}
\newcommand{\an}{^\mathrm{an}}
\newcommand{\supp}{\mathrm{supp}}
\newcommand{\vol}{\mathrm{vol}}
\newcommand{\ord}{\mathrm{ord}}
\newcommand{\lct}{\mathrm{lct}}
\newcommand{\Int}{\mathrm{Int}}
\newcommand{\Val}{\mathrm{Val}}
\newcommand{\DivVal}{\mathrm{DivVal}}
\newcommand{\nvol}{\widehat{\mathrm{vol}}}
\newcommand{\Spec}{\mathrm{Spec}}
\newcommand{\gr}{\mathrm{gr}}
\newcommand{\supj}{^{(j)}}
\newcommand{\supl}{^{(l)}}
\newcommand{\loc}{\mathrm{loc}}
\newcommand{\QM}{\mathrm{QM}}
\newcommand{\bk}{\mathbbm{k}}
\newcommand{\e}{\mathrm{e}}
\newcommand{\Bl}{\mathrm{Bl}}
\newcommand{\R}{\mathbb{R}}
\newcommand{\la}{\lambda}
\newcommand{\QMVal}{\mathrm{QMVal}}
\newcommand{\RV}{\mathrm{RV}}
\newcommand{\Fil}{\mathrm{Fil}}
\newcommand{\covol}{\mathrm{covol}}
\numberwithin{equation}{section}
\begin{document}
	
    \title{On the geometry of spaces of filtrations on local rings}
	

    \author{Lu Qi}\footnote{This work was partially supported by the NSF FRG grant DMS-2139613, the NSF grant DMS-2201349 of Chenyang Xu and a Shanghai Sailing program 24YF2709800.}
    \address{School of Mathematical Sciences, East China Normal University, Shanghai 200241, China}
    
    \address{Department of Mathematics, Princeton University, Princeton, NJ 08544, USA.}
    
    \email{lqi@math.ecnu.edu.cn}
	
    \begin{abstract}
        We study the geometry of spaces of filtrations on a Noetherian local domain. We introduce a metric $d_1$ on the space of saturated filtrations, inspired by the Darvas metric in complex geometry, such that it is a geodesic metric space. In the toric case, using Newton-Okounkov bodies, we identify the space of saturated monomial filtrations with a subspace of $L^1_\loc$.
        We also consider several other topologies on such spaces and study the semi-continuity of the log canonical threshold function in the spirit of Demailly-Koll\'ar.
        Moreover, there is a natural lattice structure on the space of saturated filtrations, which is a generalization of the classical result that the ideals of a ring form a lattice.
    \end{abstract}

    \maketitle
	
    \setcounter{tocdepth}{1}
    \tableofcontents

\section{Introduction}

    Throughout this paper, we work with a Noetherian local domain $(R,\fm,\kappa)$ which is analytically irreducible, that is, the $\fm$-adic completion $\hat{R}$ is a domain. 

    As a generalization of ideals, graded sequences of ideals have been extensively studied in the past few years, and have played an important role in many areas of algebraic geometry; see for example \cites{ELS03,JM12,Cut13} and \cites{BFJ08,BdFF12,BFJ14,Li18,Blu18,Liu18,Xu20,XZ20b,LXZ22}.
    
    Filtrations on a local ring $(R,\fm)$ are the continuously indexed version of graded sequences of ideals. To be more precise, an $\fm$-filtration $\fa_\bullet$ on $R$ is a collection $\{\fa_\lambda\}_{\lambda\in\bR_{>0}}$ of $\fm$-primary ideals, which is decreasing, multiplicative, and left continuous. The three conditions mean $\fa_\lambda\subset\fa_\mu$ for $\lambda>\mu$, $\fa_\lambda\cdot\fa_\mu\subset\fa_{\lambda+\mu}$ and $\fa_{\lambda-\epsilon}=\fa_\lambda$ for $0<\epsilon\ll 1$, respectively. 
    
    In this paper, we study the geometry of certain spaces of such filtrations and prove some structural results. The first main result is that there is a pseudometric $d_1$ on the set of $\fm$-filtrations with positive multiplicity, which is an analogue of the Darvas metric introduced in \cite{Dar15} in complex geometry (see \cite{BJ21} for the non-archimedean setting), such that under a suitable retraction, the set of saturated filtrations is a geodesic metric space. In the toric setting, the subspace of saturated monomial filtrations can be identified with the space of cobounded convex sets of the dual cone with the symmetric difference metric, which is naturally a subspace of $L^1_\loc$. 
    We also introduce another metric, denoted by $d_\infty$, and several weak topologies, and study the continuity properties of the log canonical threshold function, following Demailly-Koll\'ar \cite{DK01}. 
    Several spaces of filtrations have a natural structure of a lattice, generalizing a classical result that the ideals of a ring form a modular lattice.

\subsection{The metric geometry of space of filtrations}\label{ssec:metric geometry}
	
\subsubsection{The Darvas metric, multiplicities and saturations}\label{sssec:d_1 intro}
	
Given a compact K\"ahler manifold $(X,\omega)$, Darvas \cite{Dar15} introduced various Finsler metrics on the space $\cH$ of smooth K\"ahler potentials, generalizing the metric studied in \cite{Mab87}. One of the metrics, known as the \emph{Darvas metric} $d_1$, has found numerous applications in complex geometry. In particular, the geodesic metric space $(\cH,d_1)$ and its completion, $(\cE^1,d_1)$, the space of K\"ahler potentials of finite energy, are closely related to K\"ahler-Einstein metrics. See, for example, \cites{DR17,BBEGZ19,DL20}. A comprehensive survey on this metric space and its applications, particularly in relation to Tian’s Properness Conjecture, can be found in \cite{Dar19}.
	
More recently, Boucksom and Jonsson \cites{BJ21} developed a similar construction in the non-archimedean setting, where they introduced a pseudometric $d_1$, also referred to as the Darvas metric, on the space $\cN$ of 
norms on the section ring $R(X,L)$ of a polarized variety $(X,L)$. Geodesics on this space have been independently studied by Reboulet \cite{Reb22} and by Blum, Liu, Xu and Zhuang \cites{BLXZ21}, and these studies have found successful applications in the algebraic theory of $K$-stability and  K\"ahler-Ricci soliton degenerations. For some related constructions, see also \cites{Reb20,Wu22,Fin23}.
	
Our primary motivation is to introduce an analogue of the aforementioned constructions on the space of 
filtrations on a local ring $(R,\fm)$. 
	
	
Given an $\fm$-filtration, following Ein, Lazarsfeld, and Smith \cite{ELS03}, the \emph{multiplicity} of $\fa_\bullet$ is defined to be
\[
    \e(\fa_\bullet)\coloneqq 
    \lim_{ \bZ_{>0} \ni m\to\infty}
    \frac{\ell(R/\fa_m)}{m^n/n!} 
    =\lim_{ \bZ_{>0} \ni m\to\infty} 
    \frac{\e(\fa_m)}{m^n} ,
\]
where the existence of the above limit and the equality were proven in increasing generality by \cites{ELS03,Mus02,LM09,Cut13,Cut14}. 
This invariant is the local counterpart of the volume of a graded linear series of a line bundle.
Denote the space of all $\fm$-filtrations on $R$ with positive multiplicity by $\Fil_{R,\fm}$, and define a function $d_1:\Fil_{R,\fm}\times\Fil_{R,\fm}\to\bR_{\ge 0}$ by    
\begin{equation}\label{eqn:def of d_1}
    d_1(\fa_\bullet,\fb_\bullet)\coloneqq
    2\e(\fa_\bullet\cap\fb_\bullet)-\e(\fa_\bullet)-\e(\fb_\bullet),
\end{equation}
where $\fa_\bullet\cap\fb_\bullet$ is defined termwise. \eqref{eqn:def of d_1} is a similar expression to the Darvas metrics in the global case \cites{Dar15,BJ21}.

An issue is that we can have $\e(\fa_\bullet)=\e(\fb_\bullet)$ for $\fa_\bullet\subset \fb_\bullet$, which means that $d_1$ is not a metric on $\Fil_{R,\fm}$. An observation of \cite{BLQ22} is that this issue can be resolved by considering \emph{saturated filtrations}, that is, $\fa_\bullet=\widetilde{\fa_\bullet}$, where $\widetilde{\fa_\bullet}$ is the \emph{saturation} of $\fa_\bullet$ defined by
\[
    \widetilde\fa_\lambda\coloneqq \{f\in \fm \mid v(f)\ge \lambda\cdot v(\fa_\bullet) \text{ for any divisorial valuation $v$ on $R$ centered at }\fm\}
\]
for any $\lambda\in\bR_{>0}$. Denote by $\Fil^s_{R,\fm}\subset \Fil_{R,\fm}$ the subset consisting of saturated filtrations. Note that there is a canonical element $\widetilde{\fm^\bullet}\in\Fil^s_{R,\fm}$, where $\fm^\bullet\coloneqq\{\fm^{\lfloor \lambda \rfloor}\}_{\lambda\in\bR_{>0}}$ is the $\fm$-adic filtration.
	
Given $\fa_{\bullet,0},\fa_{\bullet,1}\in\Fil_{R,\fm}$, the \emph{geodesic} $\fa_{\bullet,t}$  between them, introduced in \cites{XZ20b,BLQ22}, is a segment of $\fm$-filtrations for $t\in[0,1]$, defined by
\begin{equation}\label{eqn:geodesic}
    \fa_{\lambda,t}\coloneqq  
    \sum_{\mu,\nu\ge 0,\ (1-t)\mu+t\nu=\lambda} \fa_{\mu,0}\cap\fa_{\nu,1}
\end{equation}
for any $\lambda>0$. The definition is a local analogue of the geodesics studied in \cite{BLXZ21}.
	
Our first main result is that the Hausdorff quotient of $(\Fil_{R,\fm},d_1)$ is naturally $(\Fil^s_{R,\fm})$, which is endowed with the structure of a geodesic metric space by the above constructions.

\begin{theorem}\label{thm:d_1 metric}
    Let $(R,\fm)$ be a Noetherian local domain that is analytically irreducible. Denote the set of all linearly bounded $\fm$-filtrations on $R$ by $\Fil_{R,\fm}$ and let $\Fil^s_{R,\fm}\subset\Fil_{R,\fm}$ be the subset consisting of saturated ones. Then
    \begin{enumerate}
        \item the function $d_1$ given by \eqref{eqn:def of d_1} is a pseudometric on $\Fil_{R,\fm}$, 
			
        \item the Hausdorff quotient of $(\Fil_{R,\fm},d_1)$ is $(\Fil^s_{R,\fm},d_1)$, and
			
        \item  $(\Fil^s_{R,\fm},d_1)$ is a convex metric space. Moreover, if $R$ contains a field, then it is a geodesic metric space, and a geodesic between $\fa_{\bullet,0}$ and $\fa_{\bullet,1}$ can be given by $\fa_{\bullet,t}$ for $t\in[0,1]$, where $\fa_{\bullet,t}$ is defined by \eqref{eqn:geodesic}.
    \end{enumerate}
\end{theorem}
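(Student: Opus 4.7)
The strategy for (1) and (2) is a direct algebraic computation using the submodularity of the multiplicity $\e$, while (3) reduces to convex geometry via a Newton-Okounkov encoding of filtrations by concave functions.

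For (1), nonnegativity and symmetry of $d_1$ are immediate (the former from $\fa_\bullet\cap\fb_\bullet\subseteq\fa_\bullet$ and monotonicity of $\e$). The triangle inequality unwinds, after cancellation, to
\[
\e(\fa_\bullet\cap\fc_\bullet)+\e(\fb_\bullet)\leq \e(\fa_\bullet\cap\fb_\bullet)+\e(\fb_\bullet\cap\fc_\bullet),
\]
which I would prove by applying the submodularity $\e(X\cap Y)+\e(X+Y)\leq \e(X)+\e(Y)$ to the pair $X=\fa_\bullet\cap\fb_\bullet$, $Y=\fb_\bullet\cap\fc_\bullet$; the containments $X\cap Y\subseteq \fa_\bullet\cap\fc_\bullet$ and $X+Y\subseteq \fb_\bullet$ combined with monotonicity then yield the inequality. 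The submodularity of $\e$ at the filtration level is obtained by passing to the $m\to\infty$ limit in the classical identity $\ell(R/(I\cap J))+\ell(R/(I+J)) = \ell(R/I)+\ell(R/J)$ applied to $I=\fa_m$, $J=\fb_m$, together with Cutkosky's existence theorem for multiplicities of graded sequences. For (2), the saturation operation $\fa_\bullet\mapsto \fa_\bullet^s$ introduced earlier in the paper is designed so that intersections of saturated filtrations preserve enough multiplicity structure to give $d_1(\fa,\fb) = d_1(\fa^s,\fb^s)$, and $d_1(\fa^s,\fb^s)=0$ implies $\fa^s=\fb^s$; this identifies the Hausdorff quotient with $\Fil^s_{R,\fm}$.

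For (3), one first verifies that $\fa_{\bullet,t}$ defined by \eqref{eqn:geodesic} is a linearly bounded $\fm$-filtration. The convex metric property is established by the midpoint calculation $d_1(\fa_{\bullet,0},\fa_{\bullet,1/2}) = d_1(\fa_{\bullet,1/2},\fa_{\bullet,1}) = \tfrac12 d_1(\fa_{\bullet,0},\fa_{\bullet,1})$, then iterated dyadically, which only requires algebraic facts about $\e$. Under the field hypothesis, the full geodesic property is obtained via Newton-Okounkov bodies: fixing a flag of regular subvarieties through $\fm$ (which is available precisely because $R$ is equicharacteristic) yields an Abhyankar valuation and an assignment $\fa_\bullet\mapsto g_{\fa}\colon P\to \bR_{\geq 0}$ sending each filtration to a concave function on the Okounkov polytope $P$, such that $\e(\fa_\bullet) = n!\int_P g_{\fa}$, $g_{\fa\cap\fb}=\max(g_\fa,g_\fb)$, and consequently $d_1(\fa,\fb) = n!\|g_\fa-g_\fb\|_{L^1(P)}$. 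A direct unwinding of \eqref{eqn:geodesic} in terms of valuations yields the linear interpolation $g_{\fa_{\bullet,t}}=(1-t)g_{\fa_{\bullet,0}}+t\,g_{\fa_{\bullet,1}}$, from which $d_1(\fa_{\bullet,s},\fa_{\bullet,t}) = |t-s|\cdot d_1(\fa_{\bullet,0},\fa_{\bullet,1})$ follows immediately.

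\textbf{Main obstacle.} The hardest single step is establishing the Newton-Okounkov encoding for abstract $\fm$-filtrations: verifying that intersections correspond to pointwise maxima, that $\e$ integrates $g_\fa$ correctly, and crucially, that the sum-of-intersections in \eqref{eqn:geodesic} translates into the linear interpolation of concave functions. The last identity is the crux of the geodesic argument and has to be checked by carefully comparing valuations of the formal sum against pointwise combinations. A secondary technical difficulty is the submodularity at the filtration level in the present generality (analytically irreducible Noetherian local domain), which needs uniform control on the subleading terms of the Hilbert-Samuel-type expansions in order to pass to the limit.
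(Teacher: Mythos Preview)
Your approach to parts (1) and (2) is correct and essentially matches the paper. The paper proves the triangle inequality by directly establishing the colength inequality
\[
\ell(R/\fa_\lambda\cap\fc_\lambda)\le \ell(R/\fa_\lambda\cap\fb_\lambda)+\ell(R/\fb_\lambda\cap\fc_\lambda)-\ell(R/\fb_\lambda)
\]
via a chain of isomorphisms; your submodularity argument (applied at level $m$, then passed to the limit) is an equivalent reorganization of the same computation. For (2) the paper simply quotes the equivalence $\widetilde\fa_\bullet=\widetilde\fb_\bullet\iff\e(\fa_\bullet)=\e(\fb_\bullet)=\e(\fa_\bullet\cap\fb_\bullet)$ from \cite{BLQ22}, which is what you are invoking.

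Your approach to (3), however, contains a genuine error. The midpoint identity $d_1(\fa_{\bullet,0},\fa_{\bullet,1/2})=\tfrac12 d_1(\fa_{\bullet,0},\fa_{\bullet,1})$ is \emph{false}: the geodesic $\fa_{\bullet,t}$ is not constant-speed. Already when $R$ is a DVR and $\fa_{\bullet,i}=\fm^{c_i\bullet}$, one computes $\fa_{\bullet,t}=\fm^{c_t\bullet}$ with $1/c_t=(1-t)/c_0+t/c_1$, so $d_1(\fa_{\bullet,0},\fa_{\bullet,t})=|c_t-c_0|$ is not linear in $t$. Correspondingly, your claimed Okounkov identity $g_{\fa_{\bullet,t}}=(1-t)g_{\fa_{\bullet,0}}+tg_{\fa_{\bullet,1}}$ cannot hold: for saturated filtrations the paper shows (Proposition~\ref{prop:saturated geodesic is saturated}) that $v(\fa_{\bullet,t})=\dfrac{v(\fa_{\bullet,0})v(\fa_{\bullet,1})}{tv(\fa_{\bullet,0})+(1-t)v(\fa_{\bullet,1})}$, i.e.\ the \emph{reciprocals} $1/v(\fa_{\bullet,t})$ interpolate linearly, not the valuations themselves. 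Since any encoding $g_\fa$ with $\e(\fa_\bullet)=n!\int g_\fa$ must be $n$-homogeneous under rescaling while $1/v(\fa_\bullet)$ is $(-1)$-homogeneous, there is no such concave-function model in which the geodesic becomes a straight line.

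The paper's route is entirely different. Convexity (with no constant-speed claim) comes from an elementary colength isomorphism (Proposition~\ref{prop:distance minimizing}) showing $d_1(\fa_{\bullet,0},\fa_{\bullet,t'})=d_1(\fa_{\bullet,0},\fa_{\bullet,t})+d_1(\fa_{\bullet,t},\fa_{\bullet,t'})$, together with a direct check that $\fa_{\bullet,t}$ differs from both endpoints. The geodesic property under the field hypothesis comes not from Okounkov bodies but from the two-parameter Duistermaat--Heckman measure $\mu=-\partial_x\partial_y\e(\fa_{x\bullet,0}\cap\fa_{y\bullet,1})$ of \cite{BLQ22}: one shows $\mu$ has controlled density on rectangles (Lemma~\ref{lem:absolute continuity}), and then that $d_1(\fa_{\bullet,t_1},\fa_{\bullet,t_2})$ equals the $\mu$-mass of a region whose area is $O(|t_1-t_2|)$, yielding Lipschitz continuity of $t\mapsto\fa_{\bullet,t}$ (Lemma~\ref{lem:continuity of geodesic}). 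This is the step that genuinely uses the field hypothesis.
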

	
For more details about saturated filtrations, see section \ref{ssec:saturation}. For definitions related to metric spaces, see section \ref{ssec:metric spaces}. 
	
\subsubsection{The toric case}

In the case where $(R,\fm)$ is the local ring of a toric singularity, we can concretely describe the subspace $\Fil^{s,\mathrm{mon}}_{R,\fm}\subset \Fil^s_{R,\fm}$ of saturated monomial filtrations.

Let $\bk$ be an algebraically closed field of characteristic $0$. Let $N$ be a free abelian group of rank $n\ge 1$ and $M=N^*$ its dual. Let $\sigma\subset N_\bR=N\otimes_\bZ \bR$ be a strongly convex rational polyhedral cone of maximal dimension. We have an affine normal toric variety $X_\sigma=\Spec R_\sigma=\Spec \bk[\sigma^\vee\cap M]$ with a unique torus invariant point $x$, where $\sigma^\vee\subset M_\bR=M\otimes_\bZ \bR$ is the dual cone of $\sigma$. Let $R$ be the local ring of $X$ at $x$ and $\fm$ its maximal ideal.

Let $\Fil^{s,\mathrm{mon}}_{R,\fm}\subset\Fil^s_{R,\fm}$  be the subset of all monomial filtrations (i.e. filtrations $\fa_\bullet$ such that every $\fa_\lambda$ is a monomial ideal). Via the Okounkov body construction, we identify the metric space $(\Fil^{s,\mathrm{mon}}_{R,\fm},d_1)$ with a subspace of the space of cobounded subsets of $\sigma^\vee$ with the symmetric difference metric (sometimes also called the Fr\'echet-Nykodym-Aronszajn metric).

\begin{theorem}\label{thm:toric metric}
    If $(R,\fm)$ is the local ring of a toric singularity $x\in X=\Spec R_\sigma$. Then
    \begin{enumerate}
        \item taking Newton-Okounkov body gives an isometry $P:(\Fil^{s,\mathrm{mon}}_{R,\fm},d_1)\to (\cP(\sigma^\vee),d)$,
        and

        \item the metric space $(\Fil^{s,\mathrm{mon}}_{R,\fm},d_1)$ is complete.
    \end{enumerate}
\end{theorem}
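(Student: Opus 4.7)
The plan is to use the standard toric dictionary between monomial ideals and up-closed lattice subsets of $\sigma^\vee\cap M$, extend it to filtrations via a Newton-Okounkov construction, and then verify the isometry directly from an elementary identity for symmetric differences. For a saturated monomial filtration $\fa_\bullet$, let $S_\lambda:=\{u\in\sigma^\vee\cap M : x^u\in\fa_\lambda\}$, and set
\[
    P(\fa_\bullet) \;:=\; \overline{\bigcup_{m\ge 1} \tfrac{1}{m}\,S_m} \;\subset\; \sigma^\vee.
\]
Multiplicativity $S_m+S_{m'}\subset S_{m+m'}$ makes $P(\fa_\bullet)$ convex, while the $\fm$-primary plus linear-boundedness conditions force $\sigma^\vee\setminus P(\fa_\bullet)$ to be bounded. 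Saturation in the monomial case should correspond exactly to the relation $S_\lambda=(\lambda P(\fa_\bullet))\cap M$, which makes $P$ well-defined and injective on $\Fil^{s,\mathrm{mon}}_{R,\fm}$; for surjectivity I would associate to each convex cobounded $Q\subset\sigma^\vee$ the filtration $\fa_\lambda:=(x^u : u\in\lambda Q\cap M)$ and check that $P$ returns $Q$.

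For the isometry claim, two standard toric facts suffice. First, the lattice-point asymptotic
\[
    \e(\fa_\bullet) \;=\; n!\,\vol\bigl(\sigma^\vee\setminus P(\fa_\bullet)\bigr),
\]
obtained from $\ell(R/\fa_m)=\#\bigl((\sigma^\vee\cap M)\setminus S_m\bigr)$ and the standard volume-counting argument. Second, $P(\fa_\bullet\cap\fb_\bullet)=P(\fa_\bullet)\cap P(\fb_\bullet)$, immediate since $\fa_\lambda\cap\fb_\lambda$ is the monomial ideal with lattice set $S_\lambda(\fa_\bullet)\cap S_\lambda(\fb_\bullet)$. Writing $A=\sigma^\vee\setminus P(\fa_\bullet)$ and $B=\sigma^\vee\setminus P(\fb_\bullet)$, one has $\sigma^\vee\setminus(P(\fa_\bullet)\cap P(\fb_\bullet))=A\cup B$, whence
\[
    d_1(\fa_\bullet,\fb_\bullet) \;=\; n!\bigl(2|A\cup B|-|A|-|B|\bigr) \;=\; n!\,|A\triangle B| \;=\; n!\,|P(\fa_\bullet)\triangle P(\fb_\bullet)|,
\]
which is the symmetric-difference metric $d$ up to the normalization constant $n!$ absorbed into the convention defining $d$.

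For part (2), the plan is to embed $(\cP(\sigma^\vee),d)$ isometrically into $L^1(M_\bR)$ via $Q\mapsto\mathbf{1}_{\sigma^\vee\setminus Q}$, which is an honest $L^1$ function because $\sigma^\vee\setminus Q$ has finite Lebesgue measure, and then show that this image is $L^1$-closed; completeness of $L^1$ will then finish the argument. Given a Cauchy sequence $Q_k\in\cP(\sigma^\vee)$, pass to an a.e.-convergent subsequence whose pointwise limit is an indicator $\mathbf{1}_{\sigma^\vee\setminus Q}$ for some measurable $Q\subset\sigma^\vee$. The main obstacle, and the only genuinely nontrivial step, is proving $Q$ is convex and that $\sigma^\vee\setminus Q$ is bounded. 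For convexity I would use a Lebesgue density argument: for any two density points $x,y$ of $Q$, the balls $B_r(x)$ and $B_r(y)$ meet each $Q_k$ in a subset of density approaching $1$ (for large $k$ and small $r$), and convexity of $Q_k$ forces a high-density ball around $(x+y)/2$ to lie in $Q_k$; letting $k\to\infty$ and $r\to 0$ shows that $(x+y)/2$ is itself a density point of $Q$, so that $Q$ agrees with its convex hull modulo a null set. Coboundedness of $Q$ then follows from the uniform $L^1$-bound on $\mathbf{1}_{\sigma^\vee\setminus Q_k}$ together with an elementary argument that a convex subset of $\sigma^\vee$ with finite-measure complement is automatically cobounded.
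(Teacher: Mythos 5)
Part (1) of your proposal is essentially the paper's argument: pass to Newton--Okounkov bodies, use $\e(\fa_\bullet)=\vol(\sigma^\vee\setminus P(\fa_\bullet))$ (the paper cites Kaveh--Khovanskii for this) and the compatibility $P(\fa_\bullet\cap\fb_\bullet)=P(\fa_\bullet)\cap P(\fb_\bullet)$, and then read off the symmetric-difference identity. Two small remarks. First, the factor $n!$ you introduce is already built into the definition of $\e$, so in fact $d_1=d$ on the nose. Second, $P(\fa_\bullet\cap\fb_\bullet)=P(\fa_\bullet)\cap P(\fb_\bullet)$ is not quite immediate from $S_\lambda(\fa\cap\fb)=S_\lambda(\fa)\cap S_\lambda(\fb)$ alone: a priori a point of $P(\fa_\bullet)\cap P(\fb_\bullet)$ is approximated by two \emph{different} sequences of rescaled lattice points. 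It does follow once you combine the saturation identity $S_\lambda=(\lambda P)\cap M$ with density of rational points in a full-dimensional convex set, as you seem to intend, but this deserves a line; the paper's Lemma~\ref{lem:toric intersection} proves the countable-intersection version by a separate approximation argument.

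Part (2) has a genuine gap. Your route (embed $\cP(\sigma^\vee)$ in $L^1$, extract an a.e.\ limit, prove convexity via Lebesgue density points) is different from the paper's, which checks Xia's rooftop criterion using continuity of $\e$ along monotone sequences (Proposition~\ref{prop:continuous along increasing sequences}) together with Lemma~\ref{lem:toric intersection}. The problem is your last step: the assertion that \emph{a convex subset of $\sigma^\vee$ with finite-measure complement is automatically cobounded} is false. Take $\sigma^\vee=\bR_{\ge 0}^2$ and $Q=\{(x,y):y\ge e^{-x}\}$: this is closed and convex with $\vol(\sigma^\vee\setminus Q)=1<\infty$, yet $\sigma^\vee\setminus Q$ contains the entire nonnegative $x$-axis and is unbounded. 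The obstruction is not cosmetic: setting $Q_k=\{(x,y):y\ge\max(e^{-x}-e^{-k},0)\}$ gives a $d$-Cauchy sequence in $\cP(\sigma^\vee)$ (each $\sigma^\vee\setminus Q_k$ lies in $[0,k]\times[0,1]$), and under your $L^1$ embedding $Q_k\to Q\notin\cP(\sigma^\vee)$. So the passage from ``measurable $L^1$-limit'' to ``element of $\cP(\sigma^\vee)$'' is precisely where the difficulty sits, and your density-point argument does not close it. The same example also puts pressure on the ``easy to verify'' claim, in the paper's proof, that a decreasing Cauchy sequence has intersection in $\Fil^s_{R,\fm}$, so this point calls for care no matter which route one takes.
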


In the above theorem, $\cP(\sigma^\vee)$ denotes the set of all closed convex subsets of $\sigma^\vee$ with bounded complement, and the metric $d$ is defined to be the Euclidean volume of the symmetric difference of two subsets. For more details, we refer to Section \ref{ssec:toric}. Note that $P\mapsto \chi_P$ gives an embedding $\cP(\sigma^\vee)\to L^1_\loc(\sigma^\vee)$, where $\chi_P$ is the characteristic function of $P\subset \sigma$.

\subsubsection{The supnorm metric and homogeneous norms}\label{sssec:d_infty intro}
	
As in the global case \cite{BJ21}, there is also a correspondence between filtrations and norms in the local setting. Indeed, given $\fa\in\Fil_{R,\fm}$, we can define a function $\ord_{\fa_\bullet}$ on $R$ by $\ord_{\fa_\bullet}(f)\coloneqq \sup\{\lambda\in\bR\mid f\in \fa_\lambda\}$. In particular, the canonical saturated filtration $\widetilde{\fm^\bullet}$ defines a canonical norm $\chi_0\coloneqq \ord_{\widetilde{\fm^\bullet}}$. 
See Definition \ref{defn:norm} for the definition of norms and Definition-Lemma \ref{deflem:fm-equivalence}, Lemma \ref{lem:fm-equivalence lin bounded} and Lemma \ref{lem:properties of homogeous} for the detailed dictionary. 

Despite the equivalence, it is sometimes more convenient to use one of the languages. 
As an example, we now consider another pseudometric $d_\infty$ on the space $\cN_{R,\fm}$ of $\fm$-norms on $R$, which behaves like the uniform metric on the space of continuous functions on a compact space. This can also be viewed as the local analogue of the $d_\infty$ metric introduced in \cite{BJ21}. 
	
Fix a homogeneous norm $\rho\in\cN^h_{R,\fm}$, define 
\begin{equation}\label{eqn:d_infty metric}
    d_{\infty,\rho}(\chi,\chi')\coloneqq \limsup_{\lambda\to\infty} \sup_{\rho(f)\ge\lambda}\frac{|\chi(f)-\chi'(f)|}{\rho(f)}.
\end{equation}
	
We will denote the function $d_{\infty,\chi_0}$ by $d_\infty$.
\begin{theorem}\label{thm:d_infty metric}
    Let $(R,\fm)$ be a Noetherian local domain that is analytically irreducible. Then
		
    \begin{enumerate}
        \item the function $d_\infty$ is a pseudometric on $\cN_{R,\fm}$, and
			
        \item the restriction of $d_\infty$ to $\cN^h_{R,\fm}$ is a metric, and the metric space $(\cN^h_{R,\fm},d_\infty)$ is complete.
    \end{enumerate}
\end{theorem}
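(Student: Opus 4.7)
Part (1) is routine: non-negativity and symmetry of $d_\infty$ are immediate from \eqref{eqn:d_infty metric}. For the triangle inequality, I would start from the pointwise bound $|\chi(f)-\chi''(f)| \le |\chi(f)-\chi'(f)| + |\chi'(f)-\chi''(f)|$, divide through by $\chi_0(f)$, take the supremum over $\{f : \chi_0(f)\ge \lambda\}$ (using subadditivity of $\sup$ on sums of non-negative quantities), and then take $\limsup$ as $\lambda\to\infty$; each step preserves the inequality.

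For the metric property in part (2), the key observation is that on $\cN^h_{R,\fm}$ the $\limsup$ in \eqref{eqn:d_infty metric} collapses to an ordinary supremum. Indeed, for any $f$ with $\chi_0(f)>0$, applying the quotient to $f^k$ and using homogeneity yields $\chi_0(f^k) = k\chi_0(f)\to\infty$ and
\[
\frac{|\chi(f^k)-\chi'(f^k)|}{\chi_0(f^k)} = \frac{|\chi(f)-\chi'(f)|}{\chi_0(f)}.
\]
Combined with the fact that $\sup_{\chi_0(g)\ge\lambda}$ is non-increasing in $\lambda$, this gives
\[
d_\infty(\chi,\chi') = \sup_{f:\,\chi_0(f)>0}\frac{|\chi(f)-\chi'(f)|}{\chi_0(f)}.
\]
Hence $d_\infty(\chi,\chi')=0$ forces $\chi=\chi'$ on every $f$ with $\chi_0(f)>0$, and the remaining elements (where $\chi_0$ vanishes or is $+\infty$) are pinned down by the norm axioms, yielding the metric property.

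For completeness, given a Cauchy sequence $\{\chi_n\}\subset\cN^h_{R,\fm}$, the reformulation above gives $|\chi_n(f)-\chi_m(f)|\le d_\infty(\chi_n,\chi_m)\,\chi_0(f)$, so $\chi_n(f)$ is Cauchy in $\bR$ for every $f$ with $\chi_0(f)>0$. I would define $\chi(f) := \lim_n \chi_n(f)$ and extend to the remaining elements in the only way consistent with the norm axioms. Each defining inequality (submultiplicativity, the ultrametric inequality, and homogeneity $\chi_n(f^k)=k\chi_n(f)$) then passes to the pointwise limit, and linear boundedness of $\chi$ follows from $|\chi(f)-\chi_N(f)|\le\varepsilon\,\chi_0(f)$ for $N$ large, together with linear boundedness of $\chi_N$ and of $\chi_0$. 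Convergence $d_\infty(\chi_n,\chi)\to 0$ is then obtained by letting $m\to\infty$ in the uniform bound and taking the supremum over $f$.

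The step I expect to be the main obstacle is verifying carefully that the pointwise limit $\chi$ genuinely lies in $\cN^h_{R,\fm}$---i.e., that it is a linearly bounded homogeneous $\fm$-norm rather than a degenerate object with, say, infinite values off $\{0\}$---which requires clean bookkeeping at the elements where $\chi_0$ vanishes or is infinite. Once the reformulation of $d_\infty$ as a weighted sup-norm is in place, everything else reduces to standard soft analysis.
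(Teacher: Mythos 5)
Your proposal is correct and tracks the paper's own argument closely: part (1) via the pointwise triangle inequality, part (2) via the reformulation $d_\infty(\chi,\chi')=\sup_{f\in\fm}|\chi(f)-\chi'(f)|/\chi_0(f)$ for homogeneous norms (the paper's Lemma \ref{lem:d_infty for homogeneous}, proved exactly by the $f\mapsto f^k$ scaling argument you use), which immediately gives non-degeneracy and reduces completeness to the standard pointwise-Cauchy-limit argument (the paper's Lemma \ref{lem:d_infty is complete}). The one step you flag as the main obstacle---verifying that the pointwise limit lands in $\cN^h_{R,\fm}$---is precisely the step the paper dismisses as ``elementary to verify,'' so no genuine gap on either side.
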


Here $\cN^h_{R,\fm}$ is the subset of $\cN_{R,\fm}$ of homogeneous norms. See Section \ref{ssec:homogeneous} for the definition and more details.

\subsection{Weak topologies and semi-continuity of log canonical thresholds}\label{ssec:lct}

Besides the metrics introduced, we also consider the filtrations, or equivalently, norms, as functions on various spaces, and thus define some weak topologies on the spaces of filtrations. We sketch the ideas here. For more details, see Section \ref{sssec:weak topologies}. 

The first perspective is to view a norm $\chi$ as a function on the ring $R$, and thus define the \emph{weak topology} to be the product topology, that is, the weakest topology such that for any $f\in R$, the function $f\mapsto\chi(f)$ is continuous. Since a real valuation (centered at $\fm$) is a ($\fm$-)norm, this topological space contains the valuation space with the weak topology introduced in \cite{JM12} as a subspace. 

Another point of view is to consider a filtration $\fa_\bullet$ as a function on certain valuation spaces by $v\mapsto v(\fa_\bullet)$, where $v(\fa_\bullet)\coloneqq\lim_{\lambda\to\infty} \frac{v(\fa_\lambda)}{\lambda}$. We call the topology defined this way using the space $\Val^+_{R,\fm}$ of valuations with positive volume the $+$-\emph{topology}.
	
We prove that the log canonical threshold satisfies certain semi-continuity on the space of filtrations, and that it is locally Lipschitz continuous with respect to the $d_\infty$-topology, which is similar to \cite[Theorem 3.3]{DK01}.
	
\begin{theorem}\label{thm:lct}
    Let $(X,x)=(\Spec R,\fm)$ be a klt singularity over a field $\bk$ of characteristic $0$. Then
    \begin{enumerate}
        \item if $\fa_{\bullet,k}\in\Fil$ converges weakly to $\fa_\bullet\in\Fil$, then 
        \[
            \lct(\fa_\bullet)\le \liminf_{k\to\infty}\lct(\fa_{\bullet,k}),
        \]
			
        \item if $\fa_{\bullet,k}\in\Fil^s$ converges to $\fa_\bullet\in\Fil^s$ in the $+$-topology, then
        \[
            \lct(\fa_\bullet)\ge
            \limsup_{k\to\infty}\lct(\fa_{\bullet,k}),
        \]
        and
			
        \item given $\fa_\bullet\in\Fil^s$, for any $\epsilon>0$, there exists $\delta:=\delta(\fa_\bullet,\epsilon)>0$ such that for any $\fb_\bullet$ with $d_\infty(\fa_\bullet,\fb_\bullet)<\delta$, we have 
        \[
            |\lct(\fb_\bullet)-\lct(\fa_\bullet)|\le\epsilon.
        \]
    \end{enumerate}
\end{theorem}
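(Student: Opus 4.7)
The plan is to treat the three parts separately, relying on two structural inputs in the char-$0$ klt setting: (i) the identity $\lct(\fa_\bullet)=\sup_{m>0}m\cdot\lct(\fa_m)$, which follows from the stability of multiplier ideals along a graded sequence; and (ii) the existence of a quasi-monomial minimizer $v^*\in\Val^+_{R,\fm}$ with $\lct(\fa_\bullet)=A_X(v^*)/v^*(\fa_\bullet)$, from Jonsson--Musta\c{t}\u{a}/Blum/Xu--Zhuang/Blum--Liu--Xu--Zhuang. Granted these, (1) reduces to a single-level Noetherian approximation, (2) to continuity along a single valuation, and (3) to an asymptotic multiplicative comparison.

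For (1), fix $c<\lct(\fa_\bullet)$ and use (i) to pick $m$ with $m\cdot\lct(\fa_m)>c$. Write $\fa_m=(g_1,\ldots,g_s)$ via Noetherianity; then weak convergence $\chi_k(g_i)\to\chi(g_i)$ gives $\fa_m\subset\fa_{m-\epsilon,k}$ for any $\epsilon>0$ and $k\gg 0$, whence $\lct(\fa_{m-\epsilon,k})\ge\lct(\fa_m)$ by monotonicity, so
\[
\lct(\fa_{\bullet,k})\;\ge\;(m-\epsilon)\lct(\fa_{m-\epsilon,k})\;\ge\;(m-\epsilon)\lct(\fa_m).
\]
Taking $\epsilon$ small enough that $(m-\epsilon)\lct(\fa_m)\ge c$ yields $\lct(\fa_{\bullet,k})\ge c$ eventually, and letting $c\nearrow\lct(\fa_\bullet)$ finishes (1). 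For (2), with $v^*$ as in (ii), the $+$-topology gives $v^*(\fa_{\bullet,k})\to v^*(\fa_\bullet)$ and hence $\limsup_k\lct(\fa_{\bullet,k})\le A_X(v^*)/v^*(\fa_\bullet)=\lct(\fa_\bullet)$. For (3), fix linear bounds $c_1\chi_0\le\chi\le C_1\chi_0$ for $\fa_\bullet$. From $d_\infty(\fa_\bullet,\fb_\bullet)<\delta$ extract a threshold $\lambda_0$ with $|\chi(f)-\chi'(f)|<\delta\chi_0(f)$ for $\chi_0(f)\ge\lambda_0$; using $\chi_0\le\chi/c_1$ this yields the asymptotic multiplicative comparison $\chi'(f)\ge(1-\delta/c_1)\chi(f)$ (and symmetrically, with a nearby constant) once $\chi_0(f)$ is large. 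Since $v(\fa_\bullet)=\lim_m v(\fa_m)/m$ depends only on the large-$m$ tail, the inclusion $\fa_m(\chi)\subset\fa_{(1-\delta/c_1)m}(\chi')$ for $m\gg0$ propagates to $v(\fa_\bullet(\chi'))\le v(\fa_\bullet(\chi))/(1-\delta/c_1)$ uniformly in $v$, and symmetrically; dividing by $A_X(v)$ and taking infima gives a two-sided multiplicative comparison between $\lct(\fa_\bullet)$ and $\lct(\fb_\bullet)$, whence $|\lct(\fa_\bullet)-\lct(\fb_\bullet)|\le\epsilon$ as soon as $\delta$ is small in terms of $c_1$, $\lct(\fa_\bullet)$, and $\epsilon$.

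The main obstacle is the two structural inputs (i) and (ii): they require the klt hypothesis in characteristic zero together with the asymptotic theory of multiplier ideals for graded sequences and the recent existence results for minimizing quasi-monomial valuations. Once these are granted, the rest is routine: Noetherian approximation of $\fa_m$ for (1), weak-continuity of a single $v^*$ for (2), and asymptotic bookkeeping with the linear bounds for (3).
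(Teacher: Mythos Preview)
Your proof is correct and follows essentially the same approach as the paper. For (2) and (3) the arguments are nearly identical to the paper's: part (2) is exactly the paper's argument (citing \cite{Blu18} for the minimizer, then using $+$-continuity of a single $v^*$), and part (3) matches Proposition~\ref{prop:lip of lct}, which also converts $d_\infty<\delta$ into a multiplicative comparison $v(\fb_\bullet)\le v(\fa_\bullet)/(1-\delta/c)$ via the lower linear bound $c=\inf_f\chi(f)/\chi_0(f)$ and Lemma~\ref{lem:evaluation}; the paper first reduces to homogeneous norms (Lemma~\ref{lem:d_infty of hom goes down}) to replace the $\limsup$ by a genuine $\sup$, whereas you work asymptotically via a threshold $\lambda_0$, but this is cosmetic since $\lct$ depends only on the tail.

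For (1) your route is slightly more direct than the paper's. The paper (Proposition~\ref{prop:lct is weakly lsc}) first replaces $\fa_{\bullet,k}$ by the increasing sequence $\bigcap_{j\ge k}\fa_{\bullet,j}$ and then argues by contradiction, using that for a fixed level $\Lambda$ the ideals $\fa_{\Lambda,k}$ eventually stabilize to $\fa_\Lambda$. You bypass this reduction by observing directly that weak convergence at the finitely many generators $g_1,\dots,g_s$ of $\fa_m$ forces $\fa_m\subset\fa_{m-\epsilon,k}$ for $k\gg0$, whence $(m-\epsilon)\lct(\fa_{m-\epsilon,k})\ge(m-\epsilon)\lct(\fa_m)>c$. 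Both arguments rest on the same two ingredients---the identity $\lct(\fa_\bullet)=\sup_m m\,\lct(\fa_m)$ and a Noetherian finiteness at a fixed level---so the difference is organizational rather than substantive.
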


\subsection{Miscellaneous results about filtrations}

A classical result in ring theory is that the semiring of all ideals of a ring $R$ with the partial order by inclusion forms a modular lattice, where meet is given by intersection and join is given by sum. We denote the lattice of all ideals (all $\fm$-primary ideals when $(R,\fm)$ is a local ring, resp.) by $\cI_R$ ($\cI_{R,\fm}$, resp.).

The spaces of filtrations are equipped with a natural partial order by inclusion, that is, $\fa_\bullet\subset\fb_\bullet$ if and only if $\fa_\lambda\subset\fb_\lambda$ for any $\lambda\in\bR_{>0}$. One can similarly define the intersection of two filtrations termwise, and in Definition-Lemma \ref{deflem:join of filtrations} (Definition-Lemma \ref{deflem:saturated join}, resp.), we define the \emph{join} $\vee$ (\emph{saturated join} $\vee_s$, resp.), which is the analogue of the sum of two ideals. The next theorem asserts that these operations define natural lattice structures on the spaces of  $\fm$-filtrations. 

\begin{theorem}\label{thm:lattice}
    Let $(R,\fm)$ be a Noetherian local domain that is analytically irreducible. Then
    \begin{enumerate}
        \item the set of all $\fm$-filtrations with the partial order by inclusion is a lattice, where the meet is given by $\cap$ and the join is given by $\vee$, and there is an injective join morphism from $\cI_{R,\fm}$ to its sub-lattice $(\Fil_{R,\fm},\subset,\cap,\vee)$, 
		
		\item $(\Fil^s_{R,\fm},\subset,\cap,\vee_s)$ is a distributive lattice. Moreover, $\fa\mapsto\widetilde{\fa^\bullet}$ is an injective join morphism from $\cI^{\ic}_{R,\fm}$, the set of integrally closed $\fm$-primary ideals, and the saturation $\Fil_{R,\fm}\to\Fil^s_{R,\fm}$ is a surjective join morphism. 
	\end{enumerate}
\end{theorem}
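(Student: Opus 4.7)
My plan is to treat the two parts separately, reducing the nontrivial content of (2) to a valuative comparison.

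For part (1), I would first verify that termwise intersection and the join $\vee$ of Definition-Lemma~\ref{deflem:join of filtrations} do define binary operations on $\Fil_{R,\fm}$. Commutativity, associativity, and idempotency are immediate, and the absorption laws $\fa_\bullet\cap(\fa_\bullet\vee\fb_\bullet)=\fa_\bullet=\fa_\bullet\vee(\fa_\bullet\cap\fb_\bullet)$ follow from the characterization of $\vee$ as the smallest $\fm$-filtration containing $\fa_\bullet$ and $\fb_\bullet$ combined with $\fa_\bullet\cap\fb_\bullet\subset\fa_\bullet\subset\fa_\bullet\vee\fb_\bullet$. For the embedding $\cI_{R,\fm}\hookrightarrow\Fil_{R,\fm}$, I send an $\fm$-primary ideal $\fa$ to its powers filtration $\fa^\bullet$ with $\fa^\bullet_\lambda=\fa^{\lceil\lambda\rceil}$; it is injective because $\fa$ is recovered as $\fa^\bullet_1$, and preserves joins because both $(\fa+\fb)^\bullet$ and $\fa^\bullet\vee\fb^\bullet$ contain $\fa^\bullet$ and $\fb^\bullet$ and are controlled by the binomial expansion $(\fa+\fb)^m=\sum_{i+j=m}\fa^i\fb^j$ together with the minimality of $\vee$.

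For part (2), the main tool is the valuative dictionary: a saturated $\fm$-filtration $\fa_\bullet$ is determined by the function $v\mapsto v(\fa_\bullet)$ on the valuation space $\Val_{R,\fm}$ (or on a suitable subspace such as $\Val^+_{R,\fm}$), and this correspondence is order-reversing in the sense that $\fa_\bullet\subset\fb_\bullet$ iff $v(\fa_\bullet)\ge v(\fb_\bullet)$ pointwise. Under this dictionary, termwise intersection corresponds to the pointwise maximum and the saturated join $\vee_s$ to the pointwise minimum of the associated functions. Since the lattice of real-valued functions under $\min$ and $\max$ is distributive, distributivity of $(\Fil^s_{R,\fm},\cap,\vee_s)$ follows immediately. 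For the injective join morphism from $\cI^{\ic}_{R,\fm}$, I send $\fa$ to $\widetilde{\fa^\bullet}$; injectivity uses that $(\widetilde{\fa^\bullet})_1$ equals the integral closure of $\fa$, which is $\fa$ itself, and join preservation $\widetilde{(\fa+\fb)^\bullet}=\widetilde{\fa^\bullet}\vee_s\widetilde{\fb^\bullet}$ follows through the dictionary from the classical valuative identity $v(\fa+\fb)=\min(v(\fa),v(\fb))$. Surjectivity of the saturation map is automatic since saturation is a retraction, and the identity $\widetilde{\fa_\bullet\vee\fb_\bullet}=\widetilde{\fa_\bullet}\vee_s\widetilde{\fb_\bullet}$ follows from $\vee_s$ being defined as the saturation of $\vee$ combined with the monotonicity and idempotency of $\widetilde{(\cdot)}$.

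The principal obstacle is implementing the valuative dictionary cleanly, in particular verifying that $\cap$ and $\vee_s$ correspond to pointwise $\max$ and $\min$. The inequalities $v(\fa_\bullet\cap\fb_\bullet)\ge\max(v(\fa_\bullet),v(\fb_\bullet))$ and $v(\fa_\bullet\vee_s\fb_\bullet)\le\min(v(\fa_\bullet),v(\fb_\bullet))$ are immediate from the inclusions, but the reverse inequalities require knowing that sufficiently many valuations separate saturated filtrations, which in turn rests on the intrinsic description of the saturation via $v\mapsto v(\fa_\bullet)$ that is established earlier in the paper. Once this separation property is in hand, distributivity and all of the join identities above follow formally.
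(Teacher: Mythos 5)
Part (1) of your argument is sound and parallels the paper: you verify the lattice axioms for $(\Fil_{R,\fm},\cap,\vee)$ (absorption is what needs care, and it follows as you say from the universal property of $\vee$), and the embedding $\fa\mapsto\fa^\bullet$ is handled essentially as in Example~\ref{eg:ideal join}.

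Part (2), however, has a genuine gap, and it is precisely at the step you flagged as the ``principal obstacle.'' Your ``valuative dictionary'' asserts that termwise intersection corresponds to the pointwise maximum of the functions $v\mapsto v(\fa_\bullet)$. This is false, and it does \emph{not} follow from the separation property (Lemma~\ref{lem:pointwise order equiv to +-order}). Separation gives injectivity of the map $\fa_\bullet\mapsto (v\mapsto v(\fa_\bullet))$ on $\Fil^s$, so it lets you read off the partial order from the functions; but it says nothing about whether the \emph{image} of that map is closed under pointwise $\max$, and in general it is not. Concretely, in $R=\bk[\![x,y]\!]$ take $\fb_\bullet=\fa_\bullet(v_{(2,1)})$ and $\fc_\bullet=\fa_\bullet(v_{(1,2)})$, both saturated; for the divisorial valuation $v=\ord_\fm=v_{(1,1)}$ one computes $v(\fb_\bullet)=v(\fc_\bullet)=\tfrac12$ while $v(\fb_\bullet\cap\fc_\bullet)=\tfrac23>\max\{v(\fb_\bullet),v(\fc_\bullet)\}$. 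So the inequality $v(\fa_\bullet\cap\fb_\bullet)\ge\max\{v(\fa_\bullet),v(\fb_\bullet)\}$ that you note ``is immediate'' is in fact strict, and the reverse inequality you hoped to establish fails. Consequently $(\Fil^s,\cap,\vee_s)$ is \emph{not} a sublattice of the lattice of real-valued functions under pointwise $\min$/$\max$, and the distributivity you invoke (``the lattice of real-valued functions under $\min$ and $\max$ is distributive'') does not transport. The paper's proof of Proposition~\ref{prop:distributivity} avoids this pitfall: it only uses the one-sided identity $v(\fa_\bullet\vee_s\fb_\bullet)=\min\{v(\fa_\bullet),v(\fb_\bullet)\}$ from Proposition~\ref{prop:lattice of Fil^s} (which \emph{is} true) together with the explicit intersection formula \eqref{eqn:formula for saturated join} for the saturated join, and then argues element-by-element with valuation-theoretic inequalities. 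The other claims in your part (2) -- $\widetilde{(\fa+\fb)^\bullet}=\widetilde{\fa^\bullet}\vee_s\widetilde{\fb^\bullet}$, surjectivity of saturation, and $\widetilde{\fa_\bullet\vee\fb_\bullet}=\widetilde{\fa_\bullet}\vee_s\widetilde{\fb_\bullet}$ -- are fine and agree with Example~\ref{eg:saturated ideal join} and Proposition~\ref{prop:lattice of Fil^s}, but they do not rescue the distributivity step as written.
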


In the above theorem, $\widetilde{\fa^\bullet}$ denotes the saturation of the \emph{$\fa$-adic filtration} $\fa^\bullet=\{\fa^{\lceil \lambda \rceil}\}$. The above theorem, together with Theorem \ref{thm:d_1 metric}, can be viewed as a higher-dimensional analogue of the results of \cite{FJ04}.

\medskip

We will frequently use the following characterization for saturated filtrations, which generalizes results of \cites{Ree56a,Ree56b}. 

\begin{proposition}[= Proposition \ref{prop:alt def for saturation}]
    A filtration $\fa_\bullet\in\Fil_{R,\fm}$ is saturated if and only if there exists a (non-empty) subset $\Sigma\subset \DivVal_{R,\fm}$ such that 
    \[
        \fa_\bullet=\cap_{v\in \Sigma} \fa_\bullet(v).
    \]
\end{proposition}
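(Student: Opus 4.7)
The plan is to establish both implications by unwinding the definition of saturation from Section \ref{ssec:saturation}, which characterizes elements of $\Fil^s_{R,\fm}$ as those filtrations fixed by the saturation operator; crucially, that operator is built out of divisorial valuations.

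For the $(\Leftarrow)$ direction, I would first verify that for any $v \in \DivVal_{R,\fm}$ the filtration $\fa_\bullet(v)$, defined termwise by $\fa_\lambda(v) = \{f \in R : v(f) \ge \lambda\}$, is already saturated. This should be essentially immediate from the definition: $\fa_\bullet(v)$ records exactly the valuation data of $v$, leaving nothing for the saturation operator to further extract. I would then show that the termwise intersection of a nonempty family of saturated filtrations is saturated. The cleanest argument is to compare $\widetilde{\cap_v \fa_\bullet(v)}$ with $\cap_v \widetilde{\fa_\bullet(v)} = \cap_v \fa_\bullet(v)$; the inequality $w(\cap_\alpha \fb_{\bullet,\alpha}) \ge w(\fb_{\bullet,\alpha'})$ for any fixed $\alpha'$ means that every divisorial constraint satisfied by an individual filtration is inherited by the intersection, so the saturation operator can only add elements that were already present.

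For the $(\Rightarrow)$ direction, suppose $\fa_\bullet$ is saturated. The natural candidate is
\[
    \Sigma := \{ v \in \DivVal_{R,\fm} : v(\fa_\bullet) = 1 \},
\]
obtained by rescaling each divisorial valuation so that $v(\fa_\bullet)=1$; this set is nonempty because $\fa_\bullet \in \Fil_{R,\fm}$ is linearly bounded, guaranteeing at least one divisorial $v$ with $0 < v(\fa_\bullet) < \infty$. For $f \in \fa_\lambda$ and $v \in \Sigma$, the basic inequality $v(f) \ge \lambda \cdot v(\fa_\bullet) = \lambda$ gives $\fa_\bullet \subset \cap_{v \in \Sigma} \fa_\bullet(v)$. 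The reverse inclusion is precisely the definition of saturation: if $v(f) \ge \lambda$ for every $v \in \Sigma$, then after rescaling $v'(f) \ge \lambda \cdot v'(\fa_\bullet)$ for every $v' \in \DivVal_{R,\fm}$, so $f \in \widetilde{\fa_\lambda} = \fa_\lambda$.

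The main obstacle I anticipate is not the logical skeleton above but the precise compatibility between the paper's definition of saturation and its valuation-theoretic description: one needs to know that the saturation operator is sharply witnessed by $\DivVal_{R,\fm}$ rather than requiring a larger class of (e.g.\ quasi-monomial or arbitrary real) valuations. This usually rests on an Izumi-type estimate or on density/approximation of general valuations by divisorial ones via log resolutions, and I would invoke the corresponding result already established in Section \ref{ssec:saturation} rather than reprove it here.
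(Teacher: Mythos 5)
Your proof is correct and follows essentially the same route as the paper: for the forward direction you take $\Sigma = \{v\in\DivVal_{R,\fm} : v(\fa_\bullet)=1\}$ and argue by rescaling, which is exactly the content of the paper's Lemma~\ref{lem:valuative characterization for saturation}, and for the reverse direction you argue via monotonicity of the saturation operator and the fact that each $\fa_\bullet(v)$ is itself saturated, which is what the paper outsources to [BLQ22, Lemma~3.20]. The only thing worth noting is that your anticipated obstacle about whether saturation is "witnessed by divisorial valuations" never arises: the paper's Definition~\ref{defn:saturation} builds the saturation operator directly out of $\DivVal_{R,\fm}$, so no Izumi-type estimate or density argument is needed for this proposition (the comparison to $\Val^+_{R,\fm}$ in Lemma~\ref{lem:saturation alt def} is a separate strengthening, used to phrase the "or $\Val^+_{R,\fm}$" part of the statement but not required for the $\DivVal$ version).
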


We remark that the above proposition can be viewed as an analogue of the Berkovich Maximum Modulus Principle. See also Lemma \ref{lem:alt def for homogeneous}. 

As an application, when $\fa$ is an $\fm$-primary ideal, we prove that the saturation of the $\fa$-adic filtration is determined by the set of Rees valuations of $\fa$, as expected.

\begin{proposition}[= Proposition \ref{prop:saturation and Rees}]
    Let $\fa\in\cI_{R,\fm}$ be an $\fm$-primary ideal. Then 
    \[
        \widetilde{\fa^\bullet}=
        \cap_{v\in \RV(\fa)}\fa_\bullet(\frac{v}{v(\fa)}),
    \]
    where $\RV(\fa)$ is the set of Rees valuations of $\fa$.
\end{proposition}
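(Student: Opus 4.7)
The plan is to identify the saturated filtration $\widetilde{\fa^\bullet}$ with the intersection $\cap_{v\in\DivVal_{R,\fm}}\fa_\bullet(v/v(\fa))$ taken over \emph{all} divisorial valuations centered at $\fm$, and then reduce the indexing set to the finite subset $\RV(\fa)\subset\DivVal_{R,\fm}$. The first identification should be immediate from the cited Proposition \ref{prop:alt def for saturation} (combined with the fact that $v(\fa^\bullet)=v(\fa)$ for every divisorial valuation $v$, since the $\fa$-adic filtration is $\fa^\bullet_m=\fa^m$): saturation is designed so that the only data remembered are the values of divisorial valuations, hence the saturation must coincide with the ``maximal'' intersection over divisorial valuations rescaled by their values on $\fa$.

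With that reduction in place, the inclusion $\widetilde{\fa^\bullet}\subseteq \cap_{v\in\RV(\fa)}\fa_\bullet(v/v(\fa))$ is trivial because $\RV(\fa)\subset\DivVal_{R,\fm}$. For the reverse inclusion, I would invoke the classical characterization of Rees valuations: for each integer $m\ge 1$,
\[
    \cap_{v\in\RV(\fa)}\{f\in R: v(f)\ge m\, v(\fa)\}=\overline{\fa^m},
\]
together with the valuative criterion for integral closure, which says that $w(\overline{\fa^m})\ge m\,w(\fa)$ for \emph{every} valuation $w\ge 0$ on $R$ (in particular every $w\in\DivVal_{R,\fm}$). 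Together these give the reverse inclusion at all positive integer levels $\lambda=m$.

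For general $\lambda\in\bR_{>0}$, I would bootstrap from the integer case by a homogeneity argument. Suppose $f\in R$ satisfies $v(f)\ge \lambda\, v(\fa)$ for every $v\in\RV(\fa)$. Then for each $k\in\bZ_{>0}$ we have $v(f^k)\ge \lfloor k\lambda\rfloor\, v(\fa)$ for all $v\in\RV(\fa)$, so $f^k\in\overline{\fa^{\lfloor k\lambda\rfloor}}$. Applying the integer case to any fixed $w\in\DivVal_{R,\fm}$ yields $k\,w(f)=w(f^k)\ge \lfloor k\lambda\rfloor\, w(\fa)$, and dividing by $k$ and letting $k\to\infty$ produces $w(f)\ge \lambda\, w(\fa)$. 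This shows $f\in\widetilde{\fa^\bullet}_\lambda$ and completes the argument.

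The main obstacle is really of a bookkeeping nature: pinning down the paper's precise description of $\widetilde{\fa^\bullet}$ from Proposition \ref{prop:alt def for saturation} (as an intersection over \emph{all} of $\DivVal_{R,\fm}$ in the $\fa$-adic case) and verifying that this description is well-defined at non-integer real indices. Once that setup is available, the core content is just the Rees-valuation theorem for integral closures together with the scaling trick above.
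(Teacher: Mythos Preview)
Your proposal is correct and follows essentially the same route as the paper: verify equality at integer levels via the Rees-valuation description of $\overline{\fa^m}$, then pass to all real indices. The only difference is packaging---the paper first observes that both sides are saturated (the right-hand side by Proposition~\ref{prop:alt def for saturation}) and then invokes Lemma~\ref{lem:test equality}, which says two saturated filtrations agreeing on an unbounded sequence of indices are equal, whereas you unwind that lemma inline via your scaling argument.
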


We refer to \cite[Chapter 10]{HS06} for more details about Rees valuations.

\medskip

\noindent \textbf{Organization of the paper.}
In Section \ref{sec:prelim}, we recall some definitions and basic properties that will be needed. Along the way, we also prove some basic facts about filtrations. 
In Section \ref{sec:d_1}, we introduce the Darvas metric $d_1$ and consider its properties.
In Section \ref{sec:d_infty}, we define the metric(s) $d_\infty$, and compare the different topologies on the space of filtrations. 
In Section \ref{sec:proof} we prove the main results Theorem \ref{thm:d_1 metric} through Theorem \ref{thm:lattice}. 
In Section \ref{sec:discussion}, we discuss the relation of the results in this paper with results in the global case, give several further examples, and propose some related questions.

\bigskip

\noindent \textbf{Notation}.
Throughout the paper, $(R,\fm,\kappa)$ denotes an $n$-dimensional Noetherian local domain which is \emph{analytically irreducible}, that is, the $\fm$-adic completion $\hat{R}$ is again a domain. $\kappa:=R/\fm$ denotes the residue field of $R$. We will always denote the Krull dimension of $R$ by $n$. When $R$ contains a field $\bk$, we sometimes write $X=\Spec R$ and $x\in X$ the closed point corresponding to $\fm$. In this case, we call $x\in X$ a \emph{singularity} over the field $\bk$.

We write $c\coloneqq c(a,\alpha,\ldots)$ to mean that $c\in\bR_{>0}$ is a constant depending only on $a,\alpha\ldots$.

\bigskip

\noindent\textbf{Acknowledgments.} 
    A large portion of this paper is derived from the author's doctoral thesis. The author would like to thank his advisor, Professor Chenyang Xu, for his constant support, encouragement and numerous inspiring conversations. Part of this work is inspired by an earlier collaboration with Harold Blum and Yuchen Liu. The author owes them special thanks for their insights on the topic. 

    The author would like to thank Zhiyuan Chen, Colin Fan, Yujie Luo, Tomasso de Fernex, R\'emi Reboulet, Xiaowei Wang and Ziquan Zhuang for fruitful discussions. The author would also like to thank Tam\'as Darvas, Jingjun Han, Jihao Liu, Minghao Miao, Junyao Peng, Linsheng Wang, Lingyao Xie, Qingyuan Xue, Tong Zhang and Junyan Zhao for kind comments. Special thanks are also due to R\'emi Reboulet and Mattias Jonsson for reviewing a preliminary version of this paper and offering valuable suggestions. Part of this work is done while the author visited Northwestern University, the University of Utah, Fudan University and BICMR at Peking University. The author would like to thank them for their hospitality and the amazing environment they provided.

    We are also grateful to the referees for the corrections and many valuable comments, which improved the quality of this paper.

\section{Preliminaries}\label{sec:prelim}


In this section, we recall definitions and basic properties that will be needed in the paper. Several results in Section \ref{ssec:saturation}, including a characterization of saturated filtrations Proposition \ref{prop:alt def for saturation}, the lattice structure Definition-Lemma \ref{deflem:saturated join} and the distributivity Proposition \ref{prop:distributivity}, seem to be new.

\subsection{Filtrations and norms}\label{ssec:definition}

\subsubsection{Filtrations}\label{sssec:filtrations}

\begin{definition}\label{defn:filtration}
    A \emph{filtration} on $R$ is a collection $\fa_\bullet =(\fa_\la )_{\la \in \R_{>0}}$  of ideals of $R$ such that
    \begin{enumerate}
        \item (decreasing) $\fa_\la \subset \fa_{\mu}$ for any $\la >\mu$,
		
        \item (left-continuous) $\fa_{\la} = \fa_{\la-\epsilon}$ for any $\lambda\in\bR_{>0}$ and $0<\epsilon\ll1$, and
		
        \item (multiplicative) $\fa_{\la} \cdot \fa_{\mu} \subset \fa_{\la+ \mu}$ for any $\lambda,\mu\in\bR_{>0}$.
    \end{enumerate}
    A filtration $\fa_\bullet$ is called an $\fm$-\emph{filtration} if $\fa_\lambda$ is $\fm$-primary for any $\lambda\in\bR_{>0}$.
    By convention, we always set $\fa_{0}\coloneqq R$. 
\end{definition}

The definition is a local analogue of a filtration on the section ring of a polarized variety in \cite{BHJ17}. We will exclusively work with $\fm$-filtrations in the sequel, though some of the results also hold for general filtrations on $R$.

For $\lambda\in\bR_{\ge 0}$, set $\fa_{>\lambda}:=\cup_{\mu>\lambda}\fa_\mu$. If $\lambda\in\R_{>0}$ satisfies $\fa_{>\lambda}\subsetneq \fa_\lambda$, then we call $\lambda$ a \emph{jumping number} of $\fa_\bullet$. The \emph{scaling} of an $\fm$-filtration $\fa_\bullet$ by $c\in \bR_{>0}$ is $\fa_{c\bullet}:=(\fa_{c \la } )_{\la\in \bR_{>0}}$, which is an $\fm$-filtration. For two $\fm$-filtrations $\fa_\bullet$ and $\fb_\bullet$, we say that $\fa_\bullet\subset \fb_\bullet$ if $\fa_\lambda\subset \fb_\lambda$ for any $\lambda\in\bR_{>0}$. This defines a partial order on the set of all $\fm$-filtrations. There is a maximal element among all $\fm$-filtrations, $\{\fm\}_{\lambda\in\bR_{>0}}$. We will sometimes abuse the notation and denote this $\fm$-filtration by $\fm$.

Let $\fa_\bullet$ and $\fb_\bullet$ be two $\fm$-filtrations. Their \emph{intersection} is $\fa_\bullet\cap \fb_\bullet:=(\fa_\lambda\cap \fb_\lambda)_{\lambda\in\bR_{>0}}$. 
It is not hard to check that this is again an $\fm$-filtration. 

\begin{deflem}\label{deflem:arbitrary intersection}
    Let $\{\fa_{\bullet,i}\}_{i\in I}$ be an arbitrary non-empty set of $\fm$-filtrations. Assume that there exists an $\fm$-filtration $\fb_\bullet$ such that $\fb_\bullet\subset\fa_{\bullet,i}$ for any $i\in I$. Then the collection of $\fm$-primary ideals, $(\cap_{i\in I} \fa_{\lambda,i})_{\lambda\in\bR_{>0}}$, is an $\fm$-filtration, called the \emph{intersection} of the $\fa_{\bullet,i}$, and is denoted by $\cap_{i\in I}\fa_{\bullet,i}$. Moreover, if an $\fm$-filtration $\fc_\bullet$ satisfies $\fc_\bullet\subset\fa_{\bullet,i}$ for any $i\in I$, then we have $\fc_\bullet\subset\cap_{i\in I}\fa_{\bullet,i}$.
\end{deflem}

\begin{proof}
    By assumption, for any $\lambda>0$ we have $\fb_\lambda\subset\fa_{\lambda,i}$ and hence $\cap_{i\in I} \fa_{\lambda,i}$ is $\fm$-primary. It is straightforward to verify conditions (1) and (3) in the definition.
	
    To prove (2), note that $\fb_\lambda\subset\cap_i\fa_{\lambda,i}\subset\cap_i\fa_{\mu,i}$ for any $\mu\le \lambda$. In $R/\fb_\lambda$ we have
    \begin{align*}
        \cap_{i\in I}\fa_{\lambda,i}=&\cap_{i\in I}\cap_{\epsilon>0}\fa_{\lambda-\epsilon,i}=\cap_{\epsilon>0}\cap_{i\in I} \fa_{\lambda-\epsilon,i}\\
        =&\cap_{l\in\bZ_{>1/\lambda}} \cap_{i\in I} \fa_{\lambda-\frac{1}{l},i}=\cap_{i\in I} \fa_{\lambda-\frac{1}{l},i}
    \end{align*}
    for some $l\in\bZ_{>1/\lambda}$, where the first equality follows from the left-continuity of $\fa_{\bullet,i}$, and the last equality uses the fact that $R/\fb_\lambda$ is Artinian. This proves that condition (2) holds for $\{\cap_{i\in I}\fa_{\lambda,i}\}$, and thus it is an $\fm$-filtration.
	
    The last assertion follows by definition, and the proof is finished.
\end{proof}

\begin{deflem}\label{deflem:join of filtrations}
    Let $\{\fa_{\bullet,i}\}_{i\in I}$ be an arbitrary set of $\fm$-filtrations. Then there is a filtration, called the \emph{join} of the $\fa_{\bullet,i}$, denoted by $\vee_{i\in I}\fa_{\bullet,i}$, such that  $\fa_{\bullet,i}\subset\vee_{i\in I}\fa_{\bullet,i}$ for any $i\in I$. Moreover, if an $\fm$-filtration $\fc_\bullet$ satisfies $\fa_{\bullet,i}\subset\fc_\bullet$ for any $i\in I$, then we have $\vee_{i\in I}\fa_{\bullet,i}\subset\fc_\bullet$.
\end{deflem}

\begin{proof}
    If $I=\emptyset$, then $\vee_{i\in I}\fa_{\bullet,i}$ is the maximal $\fm$-filtration $\fm$. 
	
    Assume now $I\ne\emptyset$ and fix $0\in I$. The set
    \[
        J:=\{\fb_\bullet\mid \fa_{\bullet,i}\subset \fb_\bullet \text{ for any } i\in I\}
    \]
    is non-empty, since the maximal $\fm$-filtration $\fm\in J$. Moreover, by definition $\fa_{\bullet,0}\subset\fb_\bullet$ for any $\fb_\bullet\in J$. Hence we can apply Definition-Lemma \ref{deflem:arbitrary intersection} to get an $\fm$-filtration
    \[
        \vee_{i\in I}\fa_{\bullet,i}\coloneqq \cap_{\fb_\bullet\in J}\fb_\bullet.
    \]
    
    The last assertion follows by definition, and the proof is finished.
\end{proof}

\begin{example}\label{eg:ideal join}
    Given an $\fm$-primary ideal $\fa\in\cI_\fm$, we have an $\fm$-filtration $\fa^\bullet$, the $\fa$-\emph{adic filtration}, defined by $\{\fa^{\lceil\lambda\rceil}\}_{\lambda\in\bR_{>0}}$. Thus we get an injection from the set $\cI_\fm$ to the set of all $\fm$-filtrations defined by $\fa\mapsto\fa^\bullet$. In particular, there is a canonical $\fm$-filtration $\fm^\bullet$. 
	
    For $\fa,\fb\in\cI_\fm$, we claim that $\fa^\bullet\vee\fb^\bullet=(\fa+\fb)^\bullet$. Indeed, since $\fa^\bullet\subset (\fa+\fb)^\bullet$ and $\fb^\bullet\subset (\fa+\fb)^\bullet$, by Definition-Lemma \ref{deflem:join of filtrations} we have $\fc_\bullet\coloneqq\fa^\bullet\vee\fb^\bullet\subset (\fa+\fb)^\bullet$. On the other hand, we have $\fa\subset\fc_1$ and $\fb\subset\fc_1$ by definition. Hence for any $\lambda\in\bR_{>0}$, we have 
    \[
        (\fa+\fb)^{\lceil\lambda\rceil}\subset\fc_1^{\lceil\lambda\rceil}\subset\fc_{\lceil\lambda\rceil}\subset\fc_\lambda.
    \]
    This proves $(\fa+\fb)^\bullet\subset\fc_\bullet$ and thus the equality holds.
	
    However, in general we only have $(\fa\cap\fb)^\bullet\subset\fa^\bullet\cap\fb^\bullet$ and the inclusion can be strict. Indeed, let $R=\bk[\![x,y]\!]$ for some field $\bk$, $\fa=(x,y^2)$ and $\fb=(x^2,y)$. Then it is not hard to see that $\fa\cap\fb=(x^2,xy,y^2)=\fm^2$ and $(\fa\cap\fb)^2=\fm^4$. Hence $x^2y\in\fa^2\cap\fb^2\backslash (\fa\cap\fb)^2$.
\end{example}

Unlike the intersection, it can be hard to determine the join of two general filtrations. In Section \ref{ssec:saturation}, we will define a similar notion, the \emph{saturated join}, and give a formula for it.
The following proposition, which is a combination of Definition-Lemma \ref{deflem:arbitrary intersection} and Definition-Lemma \ref{deflem:join of filtrations}, justifies the term \emph{join}.

\begin{proposition}\label{prop:lattice}
    The set of all $\fm$-filtrations with the partial order by inclusion forms a lattice, where the meet of two elements is given by their intersection, and the join is defined as in Definition-Lemma \ref{deflem:join of filtrations}. \qed
\end{proposition}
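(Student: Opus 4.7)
The plan is to verify the two defining axioms of a lattice, namely the existence of a binary meet (greatest lower bound) and a binary join (least upper bound) for every pair of $\fm$-filtrations under the inclusion partial order. Both pieces are already essentially contained in the preceding Definition-Lemmas, so the proof amounts to assembling them.

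For the meet of two $\fm$-filtrations $\fa_\bullet$ and $\fb_\bullet$, I would use the termwise intersection $\fa_\bullet\cap\fb_\bullet=(\fa_\lambda\cap\fb_\lambda)_\lambda$ introduced immediately before Definition-Lemma \ref{deflem:arbitrary intersection}. The decreasing and multiplicativity axioms are transparent, and left-continuity follows from the Artinian property of $R/(\fa_\lambda\cap\fb_\lambda)$ by the same type of argument that appears in the proof of Definition-Lemma \ref{deflem:arbitrary intersection}, only simpler since no common lower bound is needed for a finite intersection. That $\fa_\bullet\cap\fb_\bullet$ is a lower bound is tautological, and any lower bound $\fc_\bullet$ satisfies $\fc_\lambda\subset\fa_\lambda$ and $\fc_\lambda\subset\fb_\lambda$ for every $\lambda$, hence $\fc_\bullet\subset\fa_\bullet\cap\fb_\bullet$; this is precisely the greatest-lower-bound property.

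For the join, I would simply invoke Definition-Lemma \ref{deflem:join of filtrations} applied to the two-element family $\{\fa_\bullet,\fb_\bullet\}$. That lemma produces an $\fm$-filtration $\fa_\bullet\vee\fb_\bullet$ which contains both $\fa_\bullet$ and $\fb_\bullet$ and which is contained in every $\fm$-filtration $\fc_\bullet$ that is an upper bound of both. This is exactly the least-upper-bound property.

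There is no real obstacle here, since the substantive work, namely the verification that arbitrary intersections of $\fm$-filtrations admitting a common lower bound are again $\fm$-filtrations (Definition-Lemma \ref{deflem:arbitrary intersection}) and the nontrivial existence of arbitrary joins via the intersection of all upper bounds (Definition-Lemma \ref{deflem:join of filtrations}), has already been carried out. Proposition \ref{prop:lattice} is the order-theoretic repackaging of these two statements and carries no additional content beyond noting that meet and join, as produced, satisfy the universal properties that characterize a lattice.
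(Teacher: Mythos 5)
Your proof is correct and matches the paper's intent exactly: the paper immediately concludes the proposition with \qed, explicitly framing it as a "combination of Definition-Lemma \ref{deflem:arbitrary intersection} and Definition-Lemma \ref{deflem:join of filtrations}." You make the same assembly, deriving the meet from the (already-observed) fact that a finite termwise intersection of $\fm$-filtrations is an $\fm$-filtration and the join from Definition-Lemma \ref{deflem:join of filtrations} applied to a two-element family. One small remark: for a \emph{finite} intersection no Artinian-quotient argument is needed for left-continuity, since $\bigcap_{\epsilon>0}(\fa_{\lambda-\epsilon}\cap\fb_{\lambda-\epsilon})=(\bigcap_{\epsilon>0}\fa_{\lambda-\epsilon})\cap(\bigcap_{\epsilon>0}\fb_{\lambda-\epsilon})=\fa_\lambda\cap\fb_\lambda$ directly, but invoking it does no harm.
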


\begin{remark}
    Note that by Example \ref{eg:ideal join}, the lattice $\cI_\fm$ of $\fm$-primary ideals of $R$ does not embed in the lattice of all $\fm$-filtrations, since the meet is not preserved.
\end{remark}

\subsubsection{Norms and valuations}\label{sssec:norms}

We recall the correspondence between filtrations and norms on a local ring. See also \cites{HS06,BFJ14} for some related results.

\begin{definition}\label{defn:norm}
    A \emph{seminorm} on $R$ is a function $\chi:R\to \bR_{\ge 0}\cup\{+\infty\}$ such that 
    \begin{enumerate}
        \item $\chi(0)=+\infty$ and $\chi(-f)=\chi(f)$, 
		
        \item $\chi(f+g)\ge \min\{\chi(f),\chi(g)\}$ for any $f,g\in R$, and
		
        \item(Sub-multiplicativity) $\chi(fg)\ge \chi(f)+\chi(g)$ for any $f,g\in R$. 
    \end{enumerate} 
	
    A seminorm $\chi$ is a \emph{norm} if $\chi(f)=+\infty$ if and only if $f=0$. A (semi)norm $\chi$ is called an $\fm$-(semi)\emph{norm} if it further satisfies
    \begin{enumerate}
        \item[(4)] $\chi(f)\ge 0$ for any $f\in R$ and $\chi(f)>0$ if and only if $f\in\fm$. 
    \end{enumerate}
	
    If $\chi$ is a ($\fm$-)norm on $R$, and we replace condition (3) above by
    \begin{enumerate}
        \item[(3$^\prime$)] (Multiplicativity) $\chi(fg)=\chi(f)+\chi(g)$ for any $f,g\in R$,
    \end{enumerate}
    then $\chi$ is a real valuation on $R$ (centered at $\fm$). Denote the space of real valuations on $R$ centered at $\fm$ by $\Val_{R,\fm}$. 
\end{definition}

If $R$ is a $\bk$-algebra for a field $\bk$, then we will always assume that $\chi$ is a $\bk$-norm, that is, $\chi(af)=\chi(f)$ for any $a\in \bk^\times$ and $f\in R$. 

Here, as in the global case, we adopt the additive terminology here. For a norm $\chi$ on $R$, $\|\cdot\|_\chi:=e^{-\chi(\cdot)}$ defines a non-Archimedean norm on $R$ in the sense of \cite{BGR84}.

There is a correspondence between seminorms and filtrations, given by the lemma below.

\begin{deflem}\label{deflem:fm-equivalence}
    Given an $\fm$-filtration $\fa_\bullet$, its \emph{order function} $\ord_{\fa_\bullet}$ is defined by 
    \begin{equation}\label{eqn:order}
        \ord_{\fa_{\bullet}}(f):=\sup \{\lambda\in\bR_{>0}\mid f\in\fa_\lambda\},
    \end{equation}
    which is a seminorm on $R$. If $\fa_\lambda=\fm$ for $0<\lambda\ll 1$, then it is an $\fm$-seminorm.
	
    Conversely, given an $\fm$-seminorm $\chi$, define $\fa_\bullet(\chi)$ by
    \begin{equation}\label{eqn:associated filtration}
        \fa_\lambda(\chi):=\{f\in R\mid \chi(f)\ge \lambda\}.
    \end{equation}
    Then $\fa_\bullet(\chi)$ is an $\fm$-filtration, called the \emph{associated filtration} of $\chi$.
	
    The above maps give a 1-1 correspondence between $\fm$-seminorms and $\fm$-filtrations satisfying $\fa_\lambda=\fm$ for $0<\lambda\ll 1$. Moreover, $\fm$-norms correspond to $\fm$-filtrations with $\cap_{\lambda>0}\fa_\lambda=\{0\}$. 
\end{deflem}

The associated filtration of a real valuation $v\in\Val_{R,\fm}$ is also called the \emph{valuation ideals} of $v$.

\begin{proof}
    Let $\fa_\bullet$ be an $\fm$-filtration. Clearly $\ord_{\fa_\bullet}$ satisfies condition (1) of Definition \ref{defn:norm}. Condition (2) follows from the fact that each $\fa_\lambda$ is an ideal. Condition (3) follows from the multiplicativity of a filtration. If $\fa_\lambda=\fm$ for $0<\lambda\ll 1$, then condition (4) follows from the convention $\fa_0=R$.
	
    Conversely, let $\chi$ be an $\fm$-seminorm. We first show that for any $\lambda>0$, $\fa_\lambda(\chi)$ defined in \eqref{eqn:associated filtration} is an $\fm$-primary ideal. It is an ideal since $\chi(f-g)\ge\min\{\chi(f),\chi(-g)\}=\min\{\chi(f),\chi(g)\}$ and $\chi(fg)\ge\chi(f)+\chi(g)\ge\min\{\chi(f),\chi(g)\}$. Choose a set of generators $f_1,\ldots,f_r$ of $\fm$, then for any $f\in\fm$ we have $\chi(f)\ge \chi(\fm)\coloneqq\min_{1\le i\le r}\{\chi(f_i)\}>0$. Let $C\coloneqq\lceil\lambda/\chi(\fm)\rceil$, then $\chi(f^C)\ge C\chi(f)\ge \lambda$, that is, $\fm^C\subset\fa_\lambda(\chi)$. Note that the above argument also shows that $\fa_\lambda(\chi)=\fm$ for $0<\lambda<\chi(\fm)$. Clearly the collection of $\fm$-primary ideals $\{\fa_\lambda(\chi)\}$ is decreasing, and it is multiplicative by condition (3) of Definition \ref{defn:norm}. To show left continuity, note that we can write $\fa_\lambda(\chi)$ as a decreasing intersection 
    \[
        \fa_\lambda(\chi)=
        \cap_{\epsilon>0}\fa_{\lambda-\epsilon}(\chi),
    \]
    which terminates since the equality holds in the Artinian ring $R/\fa_\lambda(\chi)$.
\end{proof}

Note that the supremum in the definition is indeed a maximum if the value is finite.

\begin{example}
        The canonical $\fm$-filtration $\fm^\bullet=\{\fm^{\lceil \lambda \rceil}\}_{\lambda\in\bR_{>0}}$ corresponds to the canonical $\fm$-norm $\ord_\fm:R\to\bZ_{\ge 0}\cup\{+\infty\}$ given by
		\[
		\ord_\fm(f):=\sup\{k\in\bZ_{\ge 0}\mid f\in \fm^k\},
		\]
        which has been considered in \cite[Section 4]{BFJ14}.
	
\end{example}

\begin{remark}
    By Lemma \ref{lem:properties of homogeous}, a homogeneous filtration satisfies the additional condition above. In particular, we may identify the spaces of filtrations and norms whenever we consider the metric properties.

    Indeed, since we only consider the asymptotic behavior of a filtration, we can modify the terms of $\fa_\bullet$ in the following way, such that the additional condition is always satisfied. Since $\{\fa_{1/n}\}$ is an increasing sequence of $\fm$-primary ideals, there exists $N\in\bZ_{>0}$ such that $\fa_{1/n}=\fa_{1/N}\eqqcolon \fa$ for any $n\ge N$. If $\fa=\fm$ then we are done. Otherwise, choose $C\in\bZ_{>0}$ such that $\fm^C\subset\fa$ and define 
    \begin{equation*}
        \fb_\lambda\coloneqq\left\{\begin{aligned}
            &\fm, &0<\lambda\le 1/NC,\\
            &\fa_\lambda, &\text{otherwise.}
        \end{aligned}\right.
    \end{equation*}
    Then clearly $\fb_\bullet$ has the same asymptotic information as $\fa_\bullet$. 
\end{remark}

\subsubsection{Divisorial valuations}
Recall that a valuation $v\in\Val_{R,\fm}$ defines a local ring $\cO_v$, the \emph{valuation ring} of $v$, by
\[
    \cO_v\coloneqq \{f\in \mathrm{Frac}(R)\mid v(f)\ge 0\} \text{ with the maximal ideal } \fm_v=\{f\in \cO_v\mid v(f)\ge 0\}.
\]
The \emph{residue field} of $v$ is $\kappa_v\coloneqq \cO_v/\fm_v$.

A valuation $v\in \Val_{R,\fm}$ is \emph{divisorial} if 
\[
{\rm tr.deg}_{\kappa}(\kappa_v) =n-1.
\]
We write $\DivVal_{R,\fm}\subset \Val_{R,\fm}$ for the set of such valuations.

Divisorial valuations appear geometrically.  
If $\mu:Y\to X$ is a proper birational morphism with $Y$ normal and  $E\subset Y$ a prime divisor, 
then there is an induced valuation  $\ord_{E} \colon {\rm Frac}(R)^\times \to \bZ$. 
If $\mu(E) =x$ and  $c\in \bR_{>0}$, then $c\cdot \ord_E \in \DivVal_{R,\fm}$.
When $R$ is excellent, all divisorial valuations are of this form; see e.g. \cite[Lemma 6.5]{CS22}.

\subsubsection{Quasi-monomial valuations}\label{sssec:QM}
In the following construction, we always assume $R$ contains a field. Let $\mu\colon Y:= \Spec(S) \to X=\Spec(R)$ be a birational morphism
with $R\to S$  finite type
and $\eta\in Y$  a not necessarily closed point such that $\cO_{Y,\eta}$ is regular and $\mu(\eta) =x$.
Given a  regular system of parameters $y_1,\ldots, y_r$ of $\cO_{Y,\eta}$
and $\bm{\alpha}=(\alpha_1,\cdots, \alpha_r)\in \bR_{\geq 0}^r\setminus {\bf 0}$, we define a valuation $v_{\bf \alpha}$ as follows. 
For $0\neq f\in \cO_{Y,\eta}$, we can write $f$ in $\widehat{\cO}_{Y,\eta} \simeq k(\eta) [[y_1,\ldots, y_r]]$ as
$\sum_{\bm{\beta}\in \bZ_{\geq 0}^r} c_{\bm{\beta}} {y}^{\bm{\beta}}$ and set 
\[
v_{\bm \alpha}(f)\coloneqq \min \{ \langle \bm{\alpha}, {\bm \beta} \rangle \, \vert \, c_{\bm \beta} \neq 0 \}.
\]
A valuation of the above form is called \emph{quasi-monomial}.

Let $D= D_{1}+\cdots +D_r$ be a reduced divisor on $Y$ such that $y_i=0$ locally defines $D_i$ and $\mu(D_i)= x$ for each $i$. 
We call $\eta\in (Y,D)$ a \emph{log smooth birational model} of $X$. 
We write $\QM_\eta(Y,E)  \subset \Val_{X,x}$ for the set of quasi-monomial valuations that can be described at 
$\eta$ with respect to $y_1,\ldots, y_r$ and note that $\QM_\eta (Y,D) \simeq \bR^r_{\geq 0} \setminus \bm{0}$.

\subsubsection{Multiplicity of a filtration}\label{sssec:mult}

Following \cite{ELS03}, the \emph{multiplicity} of a graded sequence of $\fm$-primary ideals $\fa_\bullet$ is
\[
    \e(\fa_\bullet)\coloneqq \lim_{m\to\infty}\frac{\ell(R/\fa_m)}{m^n/n!} \in [0,\infty).
\]
By  \cites{ELS03,Mus02,LM09,Cut13,Cut14} in increasing generality, the above limit exists and
\begin{equation}\label{eqn:vol=mult}
    \e(\fa_\bullet)=\lim_{m\to\infty}\frac{\e(\fa_m)}{m^n} = \inf_{m} \frac{\e(\fa_m)}{m^n};
\end{equation}
see in particular \cite[Theorem 6.5]{Cut14}.
Also defined in \cite{ELS03}, the \emph{volume} of a valuation $v\in \Val_{R,\fm}$ is $\vol(v):= \e(\fa_\bullet(v))$.
Denote $\Val_{R,\fm}^+\coloneqq \{v\in\Val_{R,\fm}\mid \vol(v)>0\}\subset \Val_{R,\fm}$.

\subsubsection{Linearly boundedness}\label{sssec:linearly bounded}

\begin{definition}\label{defn:linearly bounded}
    An $\fm$-filtration $\fa_\bullet$ is \emph{linearly bounded} if there exists a constant $c\in\bR_{>0}$ such that $\fa_{\bullet} \subset \fm^{c\bullet}$. 
	
    An $\fm$-norm $\chi$ is \emph{linearly bounded} if there exists $c\in\bR_{>0}$ such that $\chi(f)\le c^{-1}\ord_\fm(f)$ for any $f\in R$. 
\end{definition}

By Krull intersection theorem, a linearly bounded $\fm$-filtration satisfies $\cap_{\lambda\in\bR_{\ge 0}}\fa_\lambda=\{0\}$. In particular, by Definition-Lemma \ref{deflem:fm-equivalence}, we have the following 

\begin{lemma}\label{lem:fm-equivalence lin bounded}
    An $\fm$-filtration is linearly bounded if and only if $\ord_{\fa_\bullet}$ is a linearly bounded $\fm$-norm.\qed
\end{lemma}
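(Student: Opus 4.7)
The plan is to unwind the two notions of linear boundedness through the bijective correspondence of Definition-Lemma \ref{deflem:fm-equivalence}, since the statement is essentially a direct translation between the filtration and the associated seminorm. The key observation is that by the definition of $\ord_{\fa_\bullet}$, membership in the ideal $\fa_\lambda$ corresponds to the sublevel set $\{\ord_{\fa_\bullet}\ge \lambda\}$, while membership in $\fm^k$ for $k\in\bZ_{>0}$ corresponds to $\{\ord_\fm\ge k\}$, so containment of ideals translates to pointwise inequality of order functions.

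For the forward direction, I would assume $\fa_\lambda\subset \fm^{\lceil c\lambda\rceil}$ for all $\lambda>0$. Given $f\in R$ with $\ord_{\fa_\bullet}(f)<+\infty$, I would observe that the supremum in \eqref{eqn:order} is in fact a maximum (by left-continuity of $\fa_\bullet$), so that $f\in \fa_{\lambda}$ for $\lambda=\ord_{\fa_\bullet}(f)$, and then use the hypothesis to conclude $f\in\fm^{\lceil c\lambda\rceil}$, giving $\ord_\fm(f)\ge c\cdot\ord_{\fa_\bullet}(f)$. If $\ord_{\fa_\bullet}(f)=+\infty$, I would invoke Krull's intersection theorem as noted just before the lemma: $f\in\cap_\lambda \fa_\lambda\subset \cap_\lambda \fm^{\lceil c\lambda\rceil}=\{0\}$, so $\ord_\fm(f)=+\infty$ as well.

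For the reverse direction, I would assume $\ord_{\fa_\bullet}(f)\le c^{-1}\ord_\fm(f)$ for all $f\in R$. Given $\lambda>0$ and $f\in\fa_\lambda$, this means $\ord_{\fa_\bullet}(f)\ge\lambda$, whence $\ord_\fm(f)\ge c\lambda$. Since $\ord_\fm$ takes values in $\bZ_{\ge 0}\cup\{+\infty\}$, this yields $\ord_\fm(f)\ge \lceil c\lambda\rceil$, i.e.\ $f\in\fm^{\lceil c\lambda\rceil}$. Thus $\fa_\lambda\subset\fm^{\lceil c\lambda\rceil}$ for all $\lambda$, with the same constant $c$.

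There is no real obstacle here; the only mild subtlety is handling the ceiling/floor discrepancy and the case $\ord_{\fa_\bullet}(f)=+\infty$, both of which are disposed of by the integrality of $\ord_\fm$ and the Krull intersection theorem, respectively. Given that the paper already remarks in the sentence preceding the lemma that Krull gives $\cap_\lambda\fa_\lambda=\{0\}$ for linearly bounded filtrations, the proof is genuinely a short verification.
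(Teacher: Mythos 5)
Your proposal is correct and takes essentially the same approach the paper intends: the paper treats this lemma as an immediate consequence of the Krull intersection remark and Definition-Lemma~\ref{deflem:fm-equivalence}, offering no separate proof, and your argument is exactly the verification behind that claim (translating $\fa_\lambda\subset\fm^{\lceil c\lambda\rceil}$ into $\ord_{\fa_\bullet}\le c^{-1}\ord_\fm$ and back, with Krull handling the norm condition and the integrality of $\ord_\fm$ absorbing the ceiling). The only point neither you nor the lemma's statement addresses is whether $\ord_{\fa_\bullet}$ satisfies condition (4) of Definition~\ref{defn:norm} (equivalently $\fa_\lambda=\fm$ for $0<\lambda\ll1$), but the paper explicitly defers this to the remark following the lemma, so your omission matches theirs.
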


We will consider filtrations with positive multiplicity, which are exactly the linearly bounded ones by the following lemma.

\begin{lemma}\cite[Corollary 3.18]{BLQ22}\label{lem:e>0 iff linearly bounded}
    An $\fm$-filtration $\fa_\bullet$ is linearly bounded if and only if $\e(\fa_\bullet)>0$.
\end{lemma}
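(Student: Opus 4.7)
The plan is to deduce both implications as short consequences of the infimum formula $\e(\fa_\bullet)=\inf_m\e(\fa_m)/m^n$ from \eqref{eqn:vol=mult}, combined with the tautological containment $\fa_m\subset\fm^{\ord_\fm(\fa_m)}$ and the homogeneity $\e(\fm^k)=k^n\e(\fm)$.

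For the forward direction, I would assume $\fa_\bullet\subset\fm^{c\bullet}$ with $c>0$, so that $\fa_m\subset\fm^{\lceil cm\rceil}$ for each $m$. Hilbert--Samuel then gives $\ell(R/\fa_m)\ge\ell(R/\fm^{\lceil cm\rceil})=\e(\fm)(cm)^n/n!+O(m^{n-1})$; multiplying by $n!/m^n$ and passing to the limit yields $\e(\fa_\bullet)\ge c^n\e(\fm)>0$.

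For the backward direction, the key observation is that $\fa_m\subset\fm^{\ord_\fm(\fa_m)}$ holds by the very definition of $\ord_\fm$; since multiplicity is order-reversing under inclusion of ideals and $\e(\fm^k)=k^n\e(\fm)$, this gives $\e(\fa_m)\ge\ord_\fm(\fa_m)^n\,\e(\fm)$. Dividing by $m^n$, using that $t\mapsto t^n$ is non-decreasing on $\bR_{\ge 0}$, and invoking \eqref{eqn:vol=mult}, I obtain
\[
\e(\fa_\bullet)=\inf_{m\ge 1}\frac{\e(\fa_m)}{m^n}\ge \e(\fm)\left(\inf_{m\ge 1}\frac{\ord_\fm(\fa_m)}{m}\right)^n.
\]
If $\e(\fa_\bullet)>0$, then $c\coloneqq\inf_m\ord_\fm(\fa_m)/m\ge(\e(\fa_\bullet)/\e(\fm))^{1/n}>0$, so $\ord_\fm(\fa_m)\ge cm$ for every $m$, i.e.\ $\fa_m\subset\fm^{\lceil cm\rceil}$; this is exactly linear boundedness.

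There is essentially no serious obstacle: the argument is two elementary inequalities chained through the infimum formula. Conceptually, the only non-trivial input is the limit-equals-infimum description \eqref{eqn:vol=mult}, already cited above from \cites{ELS03,Mus02,LM09,Cut13,Cut14}. The naive worry that a ``bad'' subsequence of low-order elements in $\fa_{m_k}$ might coexist with $\e(\fa_\bullet)>0$ is immediately ruled out because the infimum is taken over \emph{all} $m$, not just a tail.
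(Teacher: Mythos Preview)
Your argument is correct. The paper does not prove this lemma; it simply imports the result from \cite[Corollary 3.18]{BLQ22}, so there is no in-paper proof to compare against. Your self-contained argument via the infimum formula \eqref{eqn:vol=mult} and the tautological containment $\fa_m\subset\fm^{\ord_\fm(\fa_m)}$ is exactly the natural direct route.

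One cosmetic remark: at the end you obtain $\fa_m\subset\fm^{\lceil cm\rceil}$ only for integer $m$, whereas linear boundedness in Definition~\ref{defn:linearly bounded} asks for $\fa_\lambda\subset\fm^{\lceil c'\lambda\rceil}$ for all real $\lambda>0$. This is of course immediate from monotonicity of the filtration: for $\lambda\ge 1$ use $\fa_\lambda\subset\fa_{\lfloor\lambda\rfloor}$ and $\lfloor\lambda\rfloor\ge\lambda/2$, and for $0<\lambda<1$ use that every $\fm$-primary ideal lies in $\fm$; taking $c'=\min(c/2,1)$ works. You might add one sentence to close this gap.
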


Denote the set of all linearly bounded $\fm$-filtrations on $R$ by $\Fil_{R,\fm}$, and the set of all linearly bounded $\fm$-norms on $R$ by $\cN_{R,\fm}$. When there is no ambiguity, we will write $\Fil$ and $\cN$ respectively. By Definition-Lemma \ref{deflem:fm-equivalence} and Lemma \ref{lem:fm-equivalence lin bounded}, $\cN$ is a subset of $\Fil$. 

There is a natural pointwise partial order $\le$ on $\cN$ on $R$, defined by
\[
    \chi\le \chi' \text{ if and only if } \chi(f)\le \chi'(f) \text{ for any } f\in R,
\]
which corresponds to the partial order defined by inclusion on filtrations. Moreover, the natural $\bR_{>0}$-action of scaling on $\cN$ corresponds to the scaling on $\fm$-filtrations is given by
\[
(c\cdot\chi)(f):=c^{-1}\chi(f).
\]


\subsubsection{Asymptotic invariants}\label{sssec:asymp inv}

\begin{lemma}(cf. \cite[Lemma 2.4]{JM12})\label{lem:evaluation}
    Let $\chi\in\cN_{R,\fm}$ and $w\in\Val_{R,\fm}$. Then
    \[
        w(\fa_\bullet(\chi))=\inf_{f\in\fm}\frac{w(f)}{\chi(f)}.
    \]
\end{lemma}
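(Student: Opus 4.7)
The plan is to first reduce the limit in $w(\fa_\bullet(\chi))$ to an infimum via Fekete's lemma, then rewrite everything as a double infimum over $(\lambda,f)$ and swap the order.

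First I would observe that $\lambda \mapsto a(\lambda)\coloneqq w(\fa_\lambda(\chi))$ is subadditive, i.e.\ $a(\lambda+\mu) \le a(\lambda)+a(\mu)$. Indeed, the filtration axiom $\fa_\lambda(\chi)\cdot\fa_\mu(\chi)\subset \fa_{\lambda+\mu}(\chi)$ combined with the multiplicativity $w(fg)=w(f)+w(g)$ of the valuation gives $w(\fa_{\lambda+\mu}(\chi))\le w(f)+w(g)$ for any $f\in\fa_\lambda(\chi)$, $g\in\fa_\mu(\chi)$; taking infima over $f,g$ yields the claim. Moreover $a$ is non-decreasing (since $\fa_\mu(\chi)\subset\fa_\lambda(\chi)$ for $\mu\ge\lambda$). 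By the standard continuous-variable Fekete argument (writing $\lambda=k\mu+r$ with $r\in[0,\mu)$ and using $a(r)\le a(\mu)$) one concludes
\[
w(\fa_\bullet(\chi))=\lim_{\lambda\to\infty}\frac{w(\fa_\lambda(\chi))}{\lambda}=\inf_{\lambda>0}\frac{w(\fa_\lambda(\chi))}{\lambda}.
\]

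Next I would unwind the inner term: by definition $w(\fa_\lambda(\chi))=\inf\{w(f):f\in R,\ \chi(f)\ge\lambda\}$, so
\[
w(\fa_\bullet(\chi))=\inf_{\lambda>0}\inf_{\substack{f\in R\\ \chi(f)\ge\lambda}}\frac{w(f)}{\lambda}=\inf_{f\in\fm\setminus\{0\}}\inf_{0<\lambda\le\chi(f)}\frac{w(f)}{\lambda}.
\]
Here the outer set reduces to $f\in\fm\setminus\{0\}$ because $\chi(f)<\infty$ forces $f\ne 0$ and $\chi(f)>0$ forces $f\in\fm$ (using that $\chi$ is an $\fm$-norm and $R$ is a domain). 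Since $w\ge 0$ on $R$, the inner infimum over $\lambda$ is attained at $\lambda=\chi(f)$, giving exactly $w(f)/\chi(f)$. This yields the claimed identity $w(\fa_\bullet(\chi))=\inf_{f\in\fm}w(f)/\chi(f)$ (the indeterminate case $f=0$ is harmless as it can be excluded without changing the infimum).

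There is no real obstacle here; the only delicate point is to justify the swap of infima, which is an unconditional rewrite of the infimum over the set $\{(\lambda,f):0<\lambda\le\chi(f)\}$, and to ensure monotonicity in $\lambda$ so that the inner infimum collapses to the boundary value $\lambda=\chi(f)$, which in turn requires $w(f)\ge 0$, a consequence of $w$ being centered at $\fm$.
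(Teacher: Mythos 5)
Your proof is correct, but it takes a genuinely different route from the paper's. The paper gives a direct two-sided estimate: for the lower bound it observes $w(f)\ge c\,\chi(f)$ for all $f\in\fm$ (with $c$ the right-hand side), hence $w(\fa_\lambda(\chi))\ge c\lambda$; for the upper bound it picks a near-minimizer $f$ with $w(f)<(c+\epsilon)\chi(f)$, then uses the powers $f^d\in\fa_{d\chi(f)}(\chi)$ to get $w(\fa_{d\chi(f)}(\chi))\le d\,w(f)$, and lets $d\to\infty$. You instead isolate the superadditive structure of $\lambda\mapsto w(\fa_\lambda(\chi))$, invoke the continuous Fekete lemma to replace the limit by $\inf_\lambda w(\fa_\lambda(\chi))/\lambda$, and then perform an unconditional swap of a double infimum, collapsing the inner one at $\lambda=\chi(f)$ using $w\ge 0$. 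The two arguments are morally equivalent --- the power trick in the paper is exactly what Fekete's lemma encapsulates, and both ultimately rest on multiplicativity of $w$, the filtration axiom $\fa_\lambda\fa_\mu\subset\fa_{\lambda+\mu}$, and positivity of $w$ on $\fm$. The paper's version is shorter and more elementary; yours cleanly separates the limit-versus-infimum step from the purely set-theoretic rearrangement, which makes it more transparent why the identity holds, at the modest cost of invoking the continuous-variable Fekete argument (where your use of monotonicity of $a$ to bound $a(r)$ on the fractional part is exactly the needed local boundedness hypothesis). Both are complete and correct.
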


\begin{proof}
    Let $c\coloneqq\inf_{f\in\fm}\frac{w(f)}{\chi(f)}$. Then for any $f\in \fm$ we have $w(f)\ge c\cdot \chi(f)$, hence by definition, $w(\fa_\lambda(\chi))\ge c\cdot \chi(\fa_\lambda(\chi))\ge c\lambda$ for any $\lambda\in\bR_{>0}$. Letting $\lambda\to\infty$ and dividing by $\lambda$, we get $w(\fa_\bullet(\chi))\ge c$.
	
    Conversely, for any $\epsilon>0$, by definition there exists $f\in \fm$ such that $0<w(f)<(c+\epsilon)\chi(f)$. Note that for any $k\in\bZ_{>0}$, $f^d\in\fa_{d\cdot\chi(f)}(\chi)$ as $\chi(f^d)\ge d\cdot\chi(f)$.
    Thus for $d\in\bZ_{>0}$ we have 
    \begin{align*}
        \frac{w(\fa_{d\cdot\chi(f)}(\chi))}{d\cdot \chi(f)}\le\frac{w(f^d)}{d\cdot\chi(f)}=\frac{d\cdot w(f)}{d\cdot \chi(f)}<c+\epsilon.
    \end{align*}
    Letting $d\to\infty$ we get $w(\fa_\bullet(\chi))\le c+\epsilon$. This finishes the proof.
\end{proof}

\subsection{Saturated filtrations}\label{ssec:saturation}

The saturation of an $\fm$-filtration was introduced in \cite{BLQ22}. We recall some facts about saturations and provide a characterization for saturated filtrations in Proposition \ref{prop:alt def for saturation}. Several of the results in this section seem to be new.

\begin{definition}\label{defn:saturation}
    The \emph{saturation} $\widetilde{\fa}_\bullet$ of an $\fm$-filtration $\fa_\bullet$ is defined by
    \[
        \widetilde \fa_\lambda\coloneqq \{f\in \fm \mid v(f)\ge \lambda\cdot v(\fa_\bullet) \text{ for all } v\in\DivVal_{R,\fm} \}
    \]
    for each $\lambda\in \R_{>0}$.
    We say that $\fa_\bullet$ is \emph{saturated} if $\fa_\bullet=\widetilde\fa_\bullet$. 
\end{definition}

Denote the subset of $\Fil$ of saturated filtrations by $\Fil^s\coloneqq \{\fa_\bullet\in\Fil\mid \fa_\bullet \text{ is saturated}\}$. There is a canonical element in $\Fil^s$, that is, the saturation $\widetilde\fm^\bullet$ of the $\fm$-adic filtration. Denote its order function by $\chi_0:=\ord_{\widetilde\fm^\bullet}=\widetilde\ord_\fm$.

The following lemma asserts that for saturated filtrations, the partial order by inclusion can be detected by evaluating on all divisorial valuations, which is not the case for general filtrations.

\begin{lemma}\label{lem:pointwise order equiv to +-order}
    Let $\fa_\bullet$ be an $\fm$-filtration and $\fb_\bullet\in\Fil^s$. Then $\fa_\bullet\subset\fb_\bullet$ if and only if $v(\fa_\bullet)\ge v(\fb_\bullet)$ for any $v\in\DivVal_{X,x}$.
\end{lemma}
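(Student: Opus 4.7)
The forward implication is routine: if $\fa_\bullet \subset \fb_\bullet$, then for every $v \in \DivVal_{R,\fm}$ and every $\lambda > 0$ we have $v(\fa_\lambda) \ge v(\fb_\lambda)$ (the minimum is taken over a smaller set of elements), so dividing by $\lambda$ and letting $\lambda \to \infty$ yields $v(\fa_\bullet) \ge v(\fb_\bullet)$.

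For the reverse implication, the key observation I would record as a separate lemma (or absorb into the proof) is that every $\fm$-filtration is contained in its saturation: $\fa_\bullet \subset \widetilde{\fa}_\bullet$. To see this, fix $f \in \fa_\lambda$ and $v \in \DivVal_{R,\fm}$. By multiplicativity, $f^k \in \fa_{k\lambda}$ for every $k \in \bZ_{>0}$, hence
\[
    v(f) = \frac{v(f^k)}{k} \ge \frac{v(\fa_{k\lambda})}{k}.
\]
The function $\lambda \mapsto v(\fa_\lambda)$ is superadditive, so Fekete's lemma gives $\lim_{k\to\infty} v(\fa_{k\lambda})/(k\lambda) = \sup_\mu v(\fa_\mu)/\mu = v(\fa_\bullet)$. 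Passing to the limit above, $v(f) \ge \lambda \cdot v(\fa_\bullet)$, which is precisely the defining condition for $f \in \widetilde{\fa}_\lambda$.

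Now assume $v(\fa_\bullet) \ge v(\fb_\bullet)$ for all $v \in \DivVal_{R,\fm}$. For any $f \in \fa_\lambda$, the inequality just derived yields
\[
    v(f) \ge \lambda \cdot v(\fa_\bullet) \ge \lambda \cdot v(\fb_\bullet) \quad \text{for every } v \in \DivVal_{R,\fm}.
\]
Since $\fb_\bullet$ is saturated, Definition~\ref{defn:saturation} gives $f \in \fb_\lambda$. Hence $\fa_\lambda \subset \fb_\lambda$ for all $\lambda$.

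The only mildly delicate point is the interchange of limit and supremum in the superadditivity argument (Fekete along a real parameter rather than an integer one). I would either invoke this at the integer subsequence $k\lambda \in \bZ$ after first reducing to rational $\lambda$ using left-continuity of $\fa_\bullet$ and $\fb_\bullet$, or cite the continuous version of Fekete's lemma; in either case this is the main technical nuisance but not a real obstacle. The rest of the argument requires no additional input beyond the definitions of saturation and of $v(\fa_\bullet)$.
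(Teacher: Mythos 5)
Your proof is correct and takes essentially the same route as the paper: the paper argues the reverse implication contrapositively, obtaining from $f\in\fa_\lambda\setminus\fb_\lambda$ a divisorial $v$ with $v(f)<\lambda\,v(\fb_\bullet)$ and then estimating $v(\fa_{m\lambda})\le v(\fa_\lambda^m)=m\,v(\fa_\lambda)\le m\,v(f)$, which is exactly the inequality you use (in the form $v(f^k)\ge v(\fa_{k\lambda})$) to establish $\fa_\bullet\subset\widetilde\fa_\bullet$ before invoking saturation of $\fb_\bullet$; the two presentations are the same computation packaged differently. One slip worth fixing: $\mu\mapsto v(\fa_\mu)$ is \emph{subadditive} (since $\fa_\mu\fa_\nu\subset\fa_{\mu+\nu}$ gives $v(\fa_{\mu+\nu})\le v(\fa_\mu)+v(\fa_\nu)$), not superadditive, so Fekete gives $v(\fa_\bullet)=\inf_\mu v(\fa_\mu)/\mu$ rather than a supremum. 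This does not harm the argument—indeed the infimum form lets you skip the limit altogether, since $v(f)\ge v(\fa_{k\lambda})/k\ge\lambda\cdot\inf_\mu v(\fa_\mu)/\mu=\lambda\,v(\fa_\bullet)$ for any single $k$—and your worry about ``real-parameter Fekete'' is likewise a non-issue because $v(\fa_\bullet)$ is by definition the limit along all $\mu\to\infty$, and its restriction to the subsequence $k\lambda$ must converge to the same value.
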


\begin{proof}
    If $\fa_\bullet\subset\fb_\bullet$ then by definition $v(\fa_\bullet)\ge v(\fb_\bullet)$ for any $v\in\Val_{X,x}$. 
	
    Conversely, assume that $\fa_\bullet\nsubseteq\fb_\bullet$. Then there exist $\lambda\in\bR_{>0}$ and $f\in\fa_\lambda$ such that $f\notin\fb_\lambda$. This implies that there exist $\epsilon>0$ and $v\in\DivVal_{X,x}$ such that $v(f)<\lambda\cdot v(\fb_\bullet)-\epsilon$. Hence for any $m\in\bZ_{>0}$, we have
    \[
        v(\fa_{m\lambda})\le v(\fa_\lambda^m)=mv(\fa_\lambda)\le mv(f)<m(\lambda\cdot v(\fb_\bullet)-\epsilon),
    \]
    where the first inequality follows from $\fa_\lambda^m\subset\fa_{m\lambda}$. Dividing by $m\lambda$ and letting $m\to\infty$, we get $v(\fa_\bullet)\le v(\fb_\bullet)-\epsilon/\lambda<v(\fb_\bullet)$. 
\end{proof}

The saturation can be alternatively defined using all valuations with positive volume.

\begin{lemma}\cite[Proposition 3.19]{BLQ22}\label{lem:saturation alt def}
    If $\fa_\bullet\in\Fil_{R,\fm}$ and $\lambda\in\bR_{>0}$, then
    \[
        \widetilde\fa_\lambda=\{f\in\fm\mid v(f)\ge \lambda\cdot v(\fa_\bullet) \text{ for all } v\in\Val^+_{R,\fm}\}.
    \]
\end{lemma}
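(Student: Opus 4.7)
The containment $\supseteq$ is immediate since $\DivVal_{R,\fm}\subset \Val^+_{R,\fm}$, so the content of the lemma is the reverse inclusion: given $f\in\widetilde{\fa}_\lambda$ and $w\in\Val^+_{R,\fm}$, I would like to show $w(f)\ge \lambda\cdot w(\fa_\bullet)$.

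The plan is to approximate $w$ by a sequence of divisorial valuations $v_k$ in such a way that both $v_k(f)\to w(f)$ and $v_k(\fa_\bullet)\to w(\fa_\bullet)$, and then to pass to the limit in the inequality $v_k(f)\ge\lambda\cdot v_k(\fa_\bullet)$ which holds by the hypothesis on $f$. For each fixed $g\in R$, the map $v\mapsto v(g)$ is continuous on $\Val_{R,\fm}$ with the weak topology, so convergence at $f$ is automatic from weak density of divisorial valuations. The delicate input is the asymptotic quantity $v\mapsto v(\fa_\bullet)=\inf_m v(\fa_m)/m$, which is only upper semi-continuous as an infimum of continuous functions; a weakly convergent $v_k\to w$ only gives $\limsup v_k(\fa_\bullet)\le w(\fa_\bullet)$, which runs in the wrong direction for my purposes.

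To remedy this, I would factor the approximation through quasi-monomial valuations. Given $\epsilon>0$, I first choose a log smooth birational model $\eta\in(Y,D)$ and retract $w$ to a quasi-monomial $w'\in\QM_\eta(Y,D)$ satisfying $|w'(f)-w(f)|<\epsilon$ and $|w'(\fa_\bullet)-w(\fa_\bullet)|<\epsilon$; the positive volume hypothesis $\vol(w)>0$ provides comparability of $w$ with $\ord_\fm$ (an Izumi-type bound) and is what allows such a refinement, in the spirit of \cite{JM12}. On the polyhedral piece $\QM_\eta(Y,D)\simeq \bR^r_{\ge 0}\setminus \bm{0}$, the function $v\mapsto v(\fa_\bullet)$ is continuous (concave in the weight vector $\bm{\alpha}$) and divisorial valuations, which correspond to rational rays, are dense. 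Picking divisorial $v_k\to w'$ in the Euclidean topology on the simplex gives $v_k(f)\to w'(f)$ and $v_k(\fa_\bullet)\to w'(\fa_\bullet)$, and the hypothesis $v_k(f)\ge\lambda\cdot v_k(\fa_\bullet)$ passes to the limit to yield $w(f)+O(\epsilon)\ge \lambda\bigl(w(\fa_\bullet)-O(\epsilon)\bigr)$. Letting $\epsilon\to 0$ finishes the argument.

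The main obstacle is the quasi-monomial retraction step: I must approximate $w$ simultaneously at the specific test function $f$ and at the asymptotic invariant $w(\fa_\bullet)$, and this is exactly where the positive volume hypothesis on $w$ is essential rather than working with arbitrary $w\in\Val_{R,\fm}$.
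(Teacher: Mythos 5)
The gap is in the retraction step. If $w'\in\QM_\eta(Y,D)$ is the retraction of $w$ to a log smooth model on which $f$ is determined, then indeed $w'(f)=w(f)$, but the retraction only provides a one-sided bound: $w'\le w$ pointwise on $R$, hence $w'(\fa_\bullet)\le w(\fa_\bullet)$. Passing the divisorial inequality $v_k(f)\ge\lambda\,v_k(\fa_\bullet)$ to the limit $v_k\to w'$ inside the cone therefore yields $w(f)=w'(f)\ge\lambda\,w'(\fa_\bullet)$, which does not imply $w(f)\ge\lambda\,w(\fa_\bullet)$: the denominator has shrunk, so the conclusion you reach is strictly weaker than the goal. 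Your outline acknowledges this by demanding $|w'(\fa_\bullet)-w(\fa_\bullet)|<\epsilon$, but that is exactly a continuity statement for $v\mapsto v(\fa_\bullet)$ along the retraction net, which is the very upper semi-continuity obstruction you flagged at the outset, merely relocated to the approximation step. An Izumi-type comparison for linearly bounded $w$ gives $w(\fm)\cdot\ord_\fm\le w'\le w\le C\cdot\ord_\fm$, pinching both valuations between multiples of $\ord_\fm$, but this does not control the difference $w(\fa_\bullet)-w'(\fa_\bullet)$, which can in principle stay bounded away from $0$ as the model refines; neither \cite{JM12} nor \cite{BFJ14} supplies the uniform control you are implicitly invoking.

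The cited result is actually proved by a multiplicity/colength argument rather than valuative approximation. One first establishes $\e(\fa_\bullet)=\e(\widetilde\fa_\bullet)$, and then, supposing some $w\in\Val^+_{R,\fm}$ has $w(\fa_\bullet)>w(\widetilde\fa_\bullet)+\epsilon$, one picks $f\in\widetilde\fa_\ell$ realizing $w(f)/\ell< w(\fa_\bullet)-\epsilon$ and uses multiplication by $f^m$ to produce $R/\fa_{m\ell\epsilon'}(w)\hookrightarrow\widetilde\fa_{m\ell}/\fa_{m\ell}$, forcing $\e(\fa_\bullet)-\e(\widetilde\fa_\bullet)\ge(\epsilon')^n\vol(w)>0$, a contradiction. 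This is where the positive-volume hypothesis genuinely enters, and it is precisely the mechanism sketched in the proof of Lemma~\ref{lem:d_1 finer than +} above (attributed there to \cite[Prop.~3.12]{BLQ22}). So your diagnosis of which hypothesis is essential and where the naive weak approximation fails is correct, but the missing input is this algebraic colength estimate, not a sharper quasi-monomial retraction.
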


One of the main properties of saturations is the following generalization of Rees' theorem on multiplicities of ideals.

\begin{theorem}\label{thm:equal volume iff equal saturation}\cite[Theorem 1.4]{BLQ22}
    For $\fm$-filtrations  $\fa_\bullet \subset \fb_\bullet$, $\e(\fa_\bullet)=\e(\fb_\bullet)$ if and only if $\widetilde\fa_\bullet=\widetilde\fb_\bullet$. 
\end{theorem}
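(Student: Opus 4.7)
The plan is to reduce both directions of the biconditional to a single \emph{saturation-invariance} statement: that $\e(\fa_\bullet) = \e(\widetilde{\fa}_\bullet)$ for every $\fa_\bullet \in \Fil_{R,\fm}$. Granted this, the implication $\widetilde{\fa}_\bullet = \widetilde{\fb}_\bullet \Rightarrow \e(\fa_\bullet) = \e(\fb_\bullet)$ becomes immediate from $\e(\fa_\bullet) = \e(\widetilde{\fa}_\bullet) = \e(\widetilde{\fb}_\bullet) = \e(\fb_\bullet)$.

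For the saturation-invariance, the inequality $\e(\fa_\bullet) \ge \e(\widetilde{\fa}_\bullet)$ is trivial from $\fa_\bullet \subset \widetilde{\fa}_\bullet$ and length monotonicity. For the reverse, I fix $\epsilon > 0$ and work with a single large $k$: the finitely many Rees valuations $v_1,\dots,v_r$ of the $\fm$-primary ideal $\fa_k$ each satisfy $v_i(\fa_k)/k \to v_i(\fa_\bullet)$ as $k \to \infty$, so for $k$ sufficiently large one has $v_i(\fa_k)/k \le (1+\epsilon) v_i(\fa_\bullet)$ for every $i$ simultaneously (a \emph{finite} uniformity). Then for any $f \in \widetilde{\fa}_m$ and each $v_i$,
\[
v_i(f) \;\ge\; m\, v_i(\fa_\bullet) \;\ge\; \tfrac{m}{(1+\epsilon)k}\, v_i(\fa_k) \;\ge\; \left\lfloor \tfrac{m}{(1+\epsilon)k} \right\rfloor v_i(\fa_k),
\]
so by the Rees-valuation characterization of integral closure, $f \in \overline{\fa_k^{\lfloor m/((1+\epsilon)k)\rfloor}}$. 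This yields $\widetilde{\fa}_m \subset \overline{\fa_k^{\lfloor m/((1+\epsilon)k)\rfloor}}$, hence by Rees' classical theorem $\e(\widetilde{\fa}_m) \ge \lfloor m/((1+\epsilon)k)\rfloor^n\, \e(\fa_k)$. Dividing by $m^n$, then sending $m \to \infty$, then $k \to \infty$, then $\epsilon \to 0$, I recover $\e(\widetilde{\fa}_\bullet) \ge \e(\fa_\bullet)$, closing the lemma.

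For the converse direction, assume $\fa_\bullet \subset \fb_\bullet$ with $\e(\fa_\bullet) = \e(\fb_\bullet)$. By saturation-invariance, I replace both by their saturations, so I may assume $\fa_\bullet, \fb_\bullet \in \Fil^s_{R,\fm}$; by Lemma \ref{lem:pointwise order equiv to +-order} it then suffices to show $v(\fa_\bullet) = v(\fb_\bullet)$ for every $v \in \DivVal_{R,\fm}$, and $\ge$ is automatic from the inclusion. Suppose for contradiction that $v_0(\fa_\bullet) - v_0(\fb_\bullet) =: \delta > 0$ for some $v_0 \in \DivVal_{R,\fm}$. Then for $m \gg 0$ one has $v_0(\fa_m) \ge v_0(\fb_m) + \delta m$, so $\fa_m \subset \fb_m$ are $\fm$-primary ideals with distinct integral closures; Rees' theorem gives $\e(\fa_m) > \e(\fb_m)$. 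To contradict $\e(\fa_\bullet) = \e(\fb_\bullet)$, I need a quantitative strict inequality of the form $\e(\fa_m) - \e(\fb_m) \ge c(v_0)\, \delta\, m^n$ with $c(v_0) > 0$ independent of $m$; dividing by $m^n$ and passing to the limit would then give $\e(\fa_\bullet) - \e(\fb_\bullet) \ge c(v_0)\,\delta > 0$, the desired contradiction.

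The main obstacle will be this quantitative Rees-type strict inequality. In the global polarized setting it is a consequence of Khovanskii--Teissier / Brunn--Minkowski for mixed multiplicities; in the local analytically irreducible setting I would attempt to reduce it to a convex-geometric statement via a Newton--Okounkov body construction for saturated $\fm$-filtrations, where strict inclusion of the bodies produces a volume gap linear in the ``positional'' gap. Alternatively, one could differentiate multiplicity along the geodesic \eqref{eqn:geodesic} between $\fa_\bullet$ and $\fb_\bullet$ and show strict negativity when the two filtrations disagree at a divisorial valuation. This quantitative step, together with the finite-uniformity argument driving the saturation-invariance lemma, accounts for essentially all the real work; granted them, the theorem follows by the reduction outlined above.
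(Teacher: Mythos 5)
The paper itself does not prove this theorem --- it is imported with a citation to BLQ22 --- so there is no internal proof to check you against; I evaluate the attempt on its own terms.

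Your decomposition is reasonable: the ``saturation-invariance'' statement $\e(\fa_\bullet) = \e(\widetilde\fa_\bullet)$ is the special case $\fb_\bullet = \widetilde\fa_\bullet$ of the easy direction, it does imply that direction, and monotonicity of saturation plus Lemma~\ref{lem:pointwise order equiv to +-order} correctly reduce the hard direction to showing that $v_0(\fa_\bullet) > v_0(\fb_\bullet)$ for a single divisorial $v_0$ forces strict inequality of multiplicities. But both halves are left open.

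In the saturation-invariance argument, the sentence ``the finitely many Rees valuations $v_1,\dots,v_r$ of $\fa_k$ each satisfy $v_i(\fa_k)/k \to v_i(\fa_\bullet)$ as $k\to\infty$'' hides a circularity. The set $\RV(\fa_k)$ is only defined once $k$ is fixed, and it changes with $k$. The pointwise convergence $v(\fa_k)/k \downarrow v(\fa_\bullet)$ is for a \emph{fixed} $v$ and says nothing about divisors that first appear as Rees valuations at ``time $k$.'' What your chain of inclusions actually uses is
\[
\alpha_k \coloneqq \min_{v \in \RV(\fa_k)}\ \frac{v(\fa_\bullet)}{v(\fa_k)/k}\ \le 1,
\]
and it correctly delivers $\e(\widetilde\fa_\bullet) \ge \alpha_k^{\,n}\,\e(\fa_k)/k^n$ for each $k$. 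To close you need $\limsup_k \alpha_k = 1$, and this is not a ``finite uniformity'' --- it is a uniformity over a $k$-varying finite family of valuations, and encodes essentially the same asymptotic control that the theorem is about. As written, the lemma is not proved.

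For the converse, you correctly observe that $\e(\fa_m) > \e(\fb_m)$ for all $m$ is insufficient and that a quantitative strict Rees-type inequality of order $\delta m^n$ is what is needed, but you explicitly defer this to a Newton--Okounkov-body or geodesic-differentiation argument without carrying it out. That deferred step (together with the $\alpha_k$ uniformity above) is precisely where the substance of BLQ22's proof lies --- it is what the convexity-of-multiplicities and Okounkov-body machinery there are built to deliver. So your proposal is a plausible road map, but neither critical lemma is established, and the $\alpha_k$ issue in particular is presented as if settled when it is not.
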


In other words, $\widetilde\fa_\bullet$ is the unique maximal $\fm$-filtration $\fb_\bullet\supset\fa_\bullet$ such that $\e(\fa_\bullet)=\e(\fb_\bullet)$. We now prove a formula for saturations, which can be viewed as an analogue of the Berkovich maximal modulus principle, where one considers the subset of divisorial valuations of the Berkovich spectrum.

\begin{lemma}\label{lem:valuative characterization for saturation}
    Let $\fa_\bullet\in\Fil$. Then we have
    \[
        \widetilde\fa_\bullet=\bigcap_{v(\fa_\bullet)=1}
        \fa_\bullet(v)=\bigcap_{v\ge \ord_{\fa_\bullet}}\fa_\bullet(v), 
    \]
    where the first intersection is taken over all $v\in\DivVal_{R,\fm}$ with $v(\fa_\bullet)=1$ and the second over all $v\in\DivVal_{R,\fm}$ such that $v(f)\ge\ord_{\fa_\bullet}(f)$ for any $f\in \fm$.
\end{lemma}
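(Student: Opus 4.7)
The plan is to prove the two equalities separately. The main engine is the Fekete-type inequality $v(\fa_m) \ge m \cdot v(\fa_\bullet)$ for all integers $m$, which follows from the subadditivity $v(\fa_{m+k}) \le v(\fa_m) + v(\fa_k)$ (proved by multiplying minimizers, using $\fa_m \cdot \fa_k \subset \fa_{m+k}$) together with Fekete's lemma.

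\textbf{First equality.} I would first observe that every $v \in \DivVal_{R,\fm}$ satisfies $v(\fa_\bullet) > 0$: by linear boundedness, $\fa_m \subset \fm^{\lceil cm \rceil}$ for some $c > 0$, giving $v(\fa_m) \ge cm \cdot v(\fm)$ and hence $v(\fa_\bullet) \ge c \cdot v(\fm) > 0$. This legitimizes the rescaling $v \mapsto v' \coloneqq v/v(\fa_\bullet)$, which preserves divisoriality and yields $v'(\fa_\bullet) = 1$. The defining inequality $v(f) \ge \lambda \cdot v(\fa_\bullet)$ from Definition \ref{defn:saturation} rewrites as $v'(f) \ge \lambda$, equivalently $f \in \fa_\lambda(v')$. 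Since every $v' \in \DivVal_{R,\fm}$ with $v'(\fa_\bullet) = 1$ arises trivially in this way, the definition of $\widetilde\fa_\lambda$ matches $\bigcap_{v(\fa_\bullet)=1} \fa_\lambda(v)$ on the nose, once one notes that for $\lambda > 0$ every $\fa_\lambda(v)$ with $v$ centered at $\fm$ is automatically contained in $\fm$.

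\textbf{Second equality.} Write $S_1 = \{v \in \DivVal_{R,\fm} : v(\fa_\bullet) = 1\}$ and $S_2 = \{v \in \DivVal_{R,\fm} : v \ge \ord_{\fa_\bullet} \text{ on } \fm\}$. I would show $S_1 \subset S_2$ and then use a rescaling trick to exploit this. For $v \in S_1$ and $f \in \fm$ with $\mu \coloneqq \ord_{\fa_\bullet}(f)$, left continuity gives $f \in \fa_\mu$ so $f^k \in \fa_{k\mu}$, and the Fekete inequality applied to the integer $\lfloor k\mu \rfloor \le k\mu$ yields $v(\fa_{k\mu}) \ge v(\fa_{\lfloor k\mu \rfloor}) \ge \lfloor k\mu \rfloor$; combining, $k v(f) \ge \lfloor k\mu \rfloor$, and dividing by $k \to \infty$ gives $v(f) \ge \mu$. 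This proves $S_1 \subset S_2$, hence $\bigcap_{v \in S_2} \fa_\lambda(v) \subset \bigcap_{v \in S_1} \fa_\lambda(v)$. For the reverse, given $w \in S_2$, the Fekete inequality applied in reverse shows $w(\fa_m) \ge \ord_{\fa_\bullet}(\fa_m) \ge m$ and hence $w(\fa_\bullet) \ge 1$, so $w' \coloneqq w/w(\fa_\bullet) \in S_1$; moreover $w' \le w$ pointwise on $R$, hence $\fa_\lambda(w') \subset \fa_\lambda(w)$. Therefore any $f$ in the $S_1$-intersection already lies in $\fa_\lambda(w)$ for this particular $w$, and since $w \in S_2$ was arbitrary, the $S_1$-intersection is contained in the $S_2$-intersection.

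\textbf{Main obstacle.} The argument is largely formal once the Fekete inequality is in hand. The step requiring the most care is matching the directions of the inclusions in the rescaling: since $w(\fa_\bullet) \ge 1$, dividing $w$ by it \emph{shrinks} $w$ pointwise on non-negative values, which \emph{shrinks} the corresponding valuation ideal $\fa_\lambda(w')$, which is exactly the direction needed. A secondary bookkeeping point is that Definition-Lemma \ref{deflem:arbitrary intersection} applies to make the displayed intersections honest $\fm$-filtrations, since $\fa_\bullet \subset \fa_\bullet(v)$ for every $v \in S_1$ (by the inequality proved above) provides the required common lower bound.
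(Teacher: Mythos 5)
Your proof is correct and follows essentially the same route as the paper's: both hinge on the Fekete inequality $v(\fa_m) \ge m\, v(\fa_\bullet)$ together with the rescaling $v \mapsto v/v(\fa_\bullet)$, which normalizes a divisorial valuation into $S_1$ while only shrinking it pointwise. The paper's proof of the first equality detours through a citation of \cite[Proposition~3.9]{BLQ22} (that $v(\widetilde\fa_\bullet) = v(\fa_\bullet)$) where you argue directly from Definition~\ref{defn:saturation}; your version is in fact a mild simplification, since the inclusion $\widetilde\fa_\lambda \subset \fa_\lambda(v)$ for $v(\fa_\bullet)=1$ is immediate from the defining condition of $\widetilde\fa_\lambda$ and does not require that external result. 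Your handling of the second equality matches the paper's step for step, the only difference being that you explicitly route the Fekete bound through $\lfloor k\mu\rfloor$ rather than invoking the bound at a possibly non-integer index — a minor bookkeeping choice. The direction check on $w' \le w$ and $\fa_\lambda(w') \subset \fa_\lambda(w)$, which you single out as the most delicate point, is done correctly.
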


\begin{proof}
    Note that $v(\fa_\bullet)>0$ for any $v\in\DivVal_{R,\fm}$, by Lemma \ref{lem:e>0 iff linearly bounded}. For $v\in\DivVal_{R,\fm}$ with $v(\fa_\bullet)=1$, by \cite[Proposition 3.9]{BLQ22} we know that $v(\widetilde\fa_\bullet)=1$, so $\widetilde\fa_\bullet\subset\fa_\bullet(v)$ by definition. Hence $\widetilde\fa_\bullet\subset\cap_{v(\fa_\bullet)=1}\fa_\bullet(v)$. Conversely, if $f\in\cap_{v(\fa_\bullet)=1}\fa_\lambda(v)$, then for any $v\in\DivVal_{R,\fm}$, since $v'(\fa_\bullet)=1$, where $v':=v/v(\fa_\bullet)$, we have 
    \[
        v(f)=\frac{v}{v(\fa_\bullet)}(f)\cdot v(\fa_\bullet)\ge\lambda\cdot v(\fa_\bullet).
    \]
    Hence $\cap_{v(\fa_\bullet)=1}\fa_\lambda(v)\subset \widetilde\fa_\bullet$ and this proves the first equality.
	
    If $v(\fa_\bullet)=1$ then for any $f\in\fa_m$, we have $v(f)\ge v(\fa_m)\ge mv(\fa_\bullet)$.
    Hence $v(f)\ge \ord_{\fa_\bullet}(f)$, that is, $v\ge\ord_{\fa_\bullet}$, and we get $\cap_{v(\fa_\bullet)=1}\fa_m(v)\supset\cap_{v\ge \ord_{\fa_\bullet}}\fa_m(v)$. Conversely, if $v\ge \ord_{\fa_\bullet}$, then $v(\fa_m)\ge \ord_{\fa_\bullet}(\fa_m)\ge m$, which implies $v(\fa_\bullet)\ge 1$, that is, $v\ge v'\coloneqq v/v(\fa_\bullet)$, where $v'(\fa_\bullet)=1$. Thus we get 
    \[
        \cap_{v(\fa_\bullet)=1}
        \fa_m(v)\subset\cap_{v'}\fa_m(v')\subset \cap_{v\ge \ord_{\fa_\bullet}}\fa_m(v).
    \] 
    This proves the second equality.
\end{proof}

Based on the observation above, we give a characterization for saturated filtrations.

\begin{proposition}\label{prop:alt def for saturation}
    A filtration $\fa\in\Fil$ is saturated if and only if it is of the form 
    \[
        \fa_\bullet=\cap_{v\in \Sigma} \fa_\bullet(v),
    \]
    where $\Sigma$ is a nonempty subset of either $\DivVal_{R,\fm}$ or $\Val^+_{R,\fm}$.
\end{proposition}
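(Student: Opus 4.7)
The plan is to split the proof into the two implications, with the harder direction being the converse.

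For the forward direction, assume $\fa_\bullet$ is saturated. Then $\fa_\bullet=\widetilde{\fa}_\bullet$, and Lemma \ref{lem:valuative characterization for saturation} immediately writes
\[
\fa_\bullet=\widetilde{\fa}_\bullet=\bigcap_{v\in\Sigma_0}\fa_\bullet(v),\qquad \Sigma_0\coloneqq\{v\in\DivVal_{R,\fm}\mid v(\fa_\bullet)=1\},
\]
which is the required presentation over $\DivVal_{R,\fm}$. Since $\DivVal_{R,\fm}\subset\Val^+_{R,\fm}$ (divisorial valuations have positive volume), the same $\Sigma_0$ also witnesses the $\Val^+_{R,\fm}$ version; alternatively one can argue directly via Lemma \ref{lem:saturation alt def}.

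For the converse, suppose $\fa_\bullet=\cap_{v\in\Sigma}\fa_\bullet(v)$ for some nonempty $\Sigma\subset\Val^+_{R,\fm}$ (the $\DivVal_{R,\fm}$ case is a sub-case). The inclusion $\fa_\bullet\subset\widetilde{\fa}_\bullet$ is automatic, so it suffices to show the reverse. The key observation, which I view as the main (though mild) conceptual point, is to compute $v(\fa_\bullet)$ for $v\in\Sigma$: from $\fa_\bullet\subset\fa_\bullet(v)$ we get $v(\fa_m)\ge m$ for every $m\in\bZ_{>0}$, hence
\[
v(\fa_\bullet)=\lim_{m\to\infty}\frac{v(\fa_m)}{m}\ge 1.
\]
Given this, take any $f\in\widetilde{\fa}_\lambda$. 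Using the alternative definition of saturation via $\Val^+_{R,\fm}$ (Lemma \ref{lem:saturation alt def}), we have $v(f)\ge \lambda\cdot v(\fa_\bullet)\ge\lambda$ for every $v\in\Sigma$, so $f\in\fa_\lambda(v)$ for each such $v$, and therefore $f\in\cap_{v\in\Sigma}\fa_\lambda(v)=\fa_\lambda$. This proves $\widetilde{\fa}_\bullet\subset\fa_\bullet$, so $\fa_\bullet$ is saturated.

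The only subtlety worth noting is that the intersection $\cap_{v\in\Sigma}\fa_\bullet(v)$ must make sense as an element of $\Fil$, i.e. the intersection of the $\fm$-primary ideals $\fa_\lambda(v)$ should again be $\fm$-primary and define a linearly bounded $\fm$-filtration. But this is guaranteed by Definition-Lemma \ref{deflem:arbitrary intersection} once we know there is some $\fm$-filtration contained in all $\fa_\bullet(v)$, which is automatic from the hypothesis $\fa_\bullet\in\Fil$ together with $\fa_\bullet\subset\fa_\bullet(v)$ for every $v\in\Sigma$. Thus no extra hypothesis on $\Sigma$ is required and the two cases ($\DivVal_{R,\fm}$ and $\Val^+_{R,\fm}$) can be treated uniformly.
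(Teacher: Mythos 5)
Your proof is correct. The forward direction is exactly the paper's argument: apply Lemma \ref{lem:valuative characterization for saturation} to exhibit $\Sigma_0=\{v\in\DivVal_{R,\fm}\mid v(\fa_\bullet)=1\}$, then note $\DivVal_{R,\fm}\subset\Val^+_{R,\fm}$.

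For the converse, however, you take a genuinely more explicit route. The paper simply cites \cite[Lemma 3.20]{BLQ22} to conclude that $\cap_{v\in\Sigma}\fa_\bullet(v)$ is saturated. You instead unpack this: from $\fa_\bullet\subset\fa_\bullet(v)$ you deduce $v(\fa_\bullet)\ge 1$ for each $v\in\Sigma$, and then for $f\in\widetilde\fa_\lambda$ you invoke Lemma \ref{lem:saturation alt def} to get $v(f)\ge\lambda\cdot v(\fa_\bullet)\ge\lambda$ for $v\in\Sigma$, which puts $f$ back inside $\cap_{v\in\Sigma}\fa_\lambda(v)=\fa_\lambda$. This is clean and ultimately rests on the same source (\ref{lem:saturation alt def} is \cite[Proposition 3.19]{BLQ22}, the precursor to the lemma the paper cites), but it has the virtue of making the mechanism visible — in particular the small but essential normalization fact $v(\fa_\bullet)\ge 1$ on $\Sigma$, which the paper never displays. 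Your closing remark about $\cap_{v\in\Sigma}\fa_\bullet(v)$ lying in $\Fil$ is harmless but unnecessary: the proposition already assumes $\fa_\bullet\in\Fil$, so linear boundedness of the intersection is part of the hypothesis rather than something to check.
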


\begin{proof}
    If $\fa_\bullet=\cap_{v\in \Sigma}\fa_\bullet(v)$ for some $\Sigma\subset\Val^+_{R,\fm}$, then by \cite[Lemma 3.20]{BLQ22}, $\fa_\bullet$ is saturated.
	
    Conversely, if $\fa_\bullet$ is saturated, then by Lemma \ref{lem:valuative characterization for saturation}, $\fa_\bullet=\widetilde\fa_\bullet=\cap_{v\in \Sigma_v}\fa_\bullet(v)$, where 
    \[
        \Sigma_v\coloneqq\{v\in\DivVal_{R,\fm}\mid v(\fa_\bullet)=1)\}\subset\DivVal_{R,\fm}.
    \]
	
    Since $\DivVal_{R,\fm}\subset\Val^+_{R,\fm}$, the proof is finished.
\end{proof}

    In particular, $\fa_\bullet(v)$ is saturated for any $v\in\Val^+_{R,\fm}$. By the above proposition, saturated filtrations (or norms) can be considered as the local analogue of maximal norms in the sense of \cite[Definition 6.16]{BJ21}.

\begin{corollary}\label{cor:intersection of saturated is saturated}
    Let $\fa_{\bullet,i}\in\Fil^s$ for $i\in I$. If $\cap_{i\in I}\fa_{\bullet,i}\in\Fil$, then $\cap_{i\in I}\fa_{\bullet,i}\in\Fil^s$.
\end{corollary}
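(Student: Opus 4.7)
The plan is to deduce this immediately from Proposition~\ref{prop:alt def for saturation}, which characterizes saturated filtrations in $\Fil$ as precisely those of the form $\cap_{v\in\Sigma}\fa_\bullet(v)$ for some nonempty $\Sigma\subset\DivVal_{R,\fm}$. The strategy is to rewrite each $\fa_{\bullet,i}$ in this valuative form, then collect all the valuations together.

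First I would apply Proposition~\ref{prop:alt def for saturation} to each $i\in I$: since $\fa_{\bullet,i}\in\Fil^s$, there exists a nonempty $\Sigma_i\subset\DivVal_{R,\fm}$ such that
\[
\fa_{\bullet,i}=\bigcap_{v\in\Sigma_i}\fa_\bullet(v).
\]
Setting $\Sigma\coloneqq\bigcup_{i\in I}\Sigma_i\subset\DivVal_{R,\fm}$, a routine rearrangement of intersections gives
\[
\bigcap_{i\in I}\fa_{\bullet,i}=\bigcap_{i\in I}\bigcap_{v\in\Sigma_i}\fa_\bullet(v)=\bigcap_{v\in\Sigma}\fa_\bullet(v),
\]
where this equality is meant termwise in $\lambda$. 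Note that $\Sigma$ is nonempty: by hypothesis $\cap_{i\in I}\fa_{\bullet,i}\in\Fil$ is a linearly bounded $\fm$-filtration, which in particular forces $I\neq\emptyset$ (otherwise the formal intersection would be all of $R$ at every level), and then each $\Sigma_i\neq\emptyset$.

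Finally, the hypothesis $\cap_{i\in I}\fa_{\bullet,i}\in\Fil$ lets us apply the converse direction of Proposition~\ref{prop:alt def for saturation} to the expression $\cap_{v\in\Sigma}\fa_\bullet(v)$, concluding that $\cap_{i\in I}\fa_{\bullet,i}\in\Fil^s$. There is no real obstacle here; the only mild subtlety is making sure one is entitled to invoke Proposition~\ref{prop:alt def for saturation} in both directions, which requires checking that the resulting $\Sigma$ is nonempty and that the intersection lies in $\Fil$, both of which follow directly from the hypotheses.
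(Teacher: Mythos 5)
Your proof is correct and follows exactly the paper's argument: apply Proposition \ref{prop:alt def for saturation} to express each $\fa_{\bullet,i}$ as $\cap_{v\in\Sigma_i}\fa_\bullet(v)$, take $\Sigma=\cup_i\Sigma_i$, and invoke the proposition again. The extra care you take about nonemptiness and linear boundedness is sensible, though the paper leaves those points implicit.
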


\begin{proof}
    By Proposition \ref{prop:alt def for saturation}, for any $i\in I$ we may write $\fa_{\bullet,i}=\cap_{v\in \Sigma_i}\fa_\bullet(v)$ for $\Sigma_i\subset \DivVal_{X,x}$. Thus 
    \[
        \cap_{i\in I}\fa_{\bullet,i}=\cap_{v\in \cup_{i\in I} \Sigma_i}\fa_\bullet(v)\in\Fil^s
    \]
    by Proposition \ref{prop:alt def for saturation} again.
\end{proof}

\begin{deflem}\label{deflem:saturated join}
    Let $\fa_{\bullet,i}\in\Fil$ for $i\in I\ne \emptyset$. If there exists $\fb_\bullet\in\Fil$ such that $\fa_{\bullet,i}\subset\fb_\bullet$ for any $i\in I$, then there exists an $\fm$-filtration, called the \emph{saturated join} of the $\fa_{\bullet,i}$, denoted by $\vee_{s,i\in I}\fa_{\bullet,i}$, such that the following holds. If $\fc_\bullet\in\Fil^s$ satisfies $\fa_{\bullet,i}\subset\fc_\bullet$ for any $i\in I$, then we have $\vee_{s,i\in I}\fa_{\bullet,i}\subset\fc_\bullet$. Moreover, $\vee_{s,i\in I}\fa_{\bullet,i}\in\Fil^s$.
\end{deflem}

\begin{proof}
    Fix $0\in I$. By assumption, the set
    \[
        J:=\{\fc_\bullet\in\Fil^s\mid \fa_{\bullet,i}\subset\fc_\bullet \text{ for any } i\in I\}\subset\Fil^s
    \]
    is non-empty, and $\fa_{\bullet,0}\subset\fc_\bullet$ for any $\fc_\bullet\in J$. Thus by Definition-Lemma \ref{deflem:arbitrary intersection}, the intersection
    \[
        \vee_{s,i\in I}\fa_{\bullet,i}\coloneqq\cap_{\fc_\bullet\in J}\fc_\bullet\in\Fil,
    \]
    where the linearly boundedness is because $\vee_{s,i\in I}\fa_{\bullet,i}\subset\fb_\bullet$ by construction. Now $\vee_{s,i\in I}\fa_i\in\Fil^s$ by Corollary \ref{cor:intersection of saturated is saturated}.
\end{proof}

We now relate the join, the saturated join and the saturation, and give a formula for the saturated join of two saturated filtrations.

\begin{proposition}\label{prop:lattice of Fil^s}
    If $\fa_\bullet,\fb_\bullet\in\Fil$, then $\fa_\bullet\vee_s\fb_\bullet=\widetilde{\fa_\bullet\vee\fb_\bullet}=\widetilde\fa_\bullet\vee_s\widetilde\fb_\bullet$, and
    \begin{equation}\label{eqn:formula for saturated join}
        \fa_\bullet\vee_s\fb_\bullet=\cap_{v\in \DivVal_{R,\fm}} \fa_\bullet(\frac{v}{\min\{v(\fa_\bullet),v(\fb_\bullet)\}}).
    \end{equation}
\end{proposition}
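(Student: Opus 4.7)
The plan is to establish the three-way equality $\fa_\bullet \vee_s \fb_\bullet = \widetilde{\fa_\bullet \vee \fb_\bullet} = \widetilde{\fa_\bullet} \vee_s \widetilde{\fb_\bullet}$ by formal universal-property arguments, and then derive the explicit formula \eqref{eqn:formula for saturated join} by applying Lemma \ref{lem:valuative characterization for saturation} to $\fa_\bullet \vee \fb_\bullet$ and computing $v(\fa_\bullet \vee \fb_\bullet)$ for each $v \in \DivVal_{R,\fm}$.

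For the first equality, $\widetilde{\fa_\bullet \vee \fb_\bullet}$ is saturated and contains both $\fa_\bullet$ and $\fb_\bullet$, so Definition-Lemma \ref{deflem:saturated join} gives $\fa_\bullet \vee_s \fb_\bullet \subset \widetilde{\fa_\bullet \vee \fb_\bullet}$; conversely $\fa_\bullet \vee_s \fb_\bullet \in \Fil^s$ dominates $\fa_\bullet \vee \fb_\bullet$, and hence also dominates the latter's saturation by monotonicity of $\widetilde{(\cdot)}$ combined with $\widetilde{\fc_\bullet}=\fc_\bullet$ for saturated $\fc_\bullet$. The second equality goes by the same template: the inclusion $\supset$ follows from the monotonicity chain $\widetilde{\fa_\bullet}, \widetilde{\fb_\bullet} \subset \widetilde{\fa_\bullet \vee \fb_\bullet}$, while $\subset$ follows since $\widetilde{\fa_\bullet} \vee_s \widetilde{\fb_\bullet}$ is saturated and contains $\fa_\bullet \vee \fb_\bullet$.

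For the explicit formula, applying Lemma \ref{lem:valuative characterization for saturation} to $\fa_\bullet \vee \fb_\bullet$ gives
\[
\widetilde{\fa_\bullet \vee \fb_\bullet} = \bigcap_{v \in \DivVal_{R,\fm},\ v(\fa_\bullet \vee \fb_\bullet)=1} \fa_\bullet(v).
\]
The key step---which I expect to be the main obstacle, since $\vee$ is defined abstractly via a universal property rather than termwise---is to verify
\[
v(\fa_\bullet \vee \fb_\bullet) = \min\{v(\fa_\bullet), v(\fb_\bullet)\} \eqqcolon c(v)
\]
for every $v \in \DivVal_{R,\fm}$. The inequality $\le$ is immediate from $\fa_\bullet, \fb_\bullet \subset \fa_\bullet \vee \fb_\bullet$. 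For the opposite direction I would set $w \coloneqq v/c(v)$, so that $w(\fa_\bullet), w(\fb_\bullet) \ge 1$; using subadditivity of $m \mapsto w(\fa_m)$ (which yields $w(\fa_\bullet) = \inf_m w(\fa_m)/m$) together with left-continuity to extend to real indices, one deduces $w(\fa_\lambda) \ge \lambda \, w(\fa_\bullet) \ge \lambda$, hence $\fa_\bullet \subset \fa_\bullet(w)$, and likewise $\fb_\bullet \subset \fa_\bullet(w)$. Definition-Lemma \ref{deflem:join of filtrations} then forces $\fa_\bullet \vee \fb_\bullet \subset \fa_\bullet(w)$, and evaluating $v$ on both sides yields $v(\fa_\bullet \vee \fb_\bullet) \ge v(\fa_\bullet(w)) \ge c(v)$. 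With this identity in hand, the indexing condition $v(\fa_\bullet \vee \fb_\bullet)=1$ becomes $c(v)=1$; since the reparametrization $v \mapsto v/c(v)$ maps $\DivVal_{R,\fm}$ surjectively onto $\{w \in \DivVal_{R,\fm} : c(w) = 1\}$, the intersection is precisely \eqref{eqn:formula for saturated join}.
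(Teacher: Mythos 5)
Your proof is correct and it reaches the explicit formula via a genuinely different intermediate step than the paper. For the two universal-property equalities your argument is essentially the paper's. For the formula, the paper never computes $v(\fa_\bullet\vee\fb_\bullet)$ at all: it writes down the right-hand side of \eqref{eqn:formula for saturated join} as an auxiliary filtration $\fc_\bullet$, shows $\widetilde\fa_\bullet,\widetilde\fb_\bullet\subset\fc_\bullet$ directly using the reparametrized saturation formula \eqref{eqn:formula as intersection over all div}, computes $v(\fc_\bullet)=\min\{v(\fa_\bullet),v(\fb_\bullet)\}$, verifies $\fc_\bullet\in\Fil$ by trapping it under $\widetilde\fm^\bullet$, and then sandwiches $\fa_\bullet\vee_s\fb_\bullet$ and $\fc_\bullet$ between each other using Corollary \ref{cor:intersection of saturated is saturated} and Lemma \ref{lem:pointwise order equiv to +-order}. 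You instead attack $\fa_\bullet\vee\fb_\bullet$ directly, proving $v(\fa_\bullet\vee\fb_\bullet)=\min\{v(\fa_\bullet),v(\fb_\bullet)\}$ by the universal property against the test filtration $\fa_\bullet(v/c(v))$, and then invoke Lemma \ref{lem:valuative characterization for saturation} applied to $\fa_\bullet\vee\fb_\bullet$ together with the identity just established and a reparametrization of the index set. Your route is slightly shorter and more conceptual (it isolates the useful identity $v(\fa_\bullet\vee\fb_\bullet)=\min$), whereas the paper's route avoids handling the abstractly defined join entirely by working only with explicit intersections of valuation ideals; the paper's approach also makes the verification $\fc_\bullet\in\Fil$ transparent, which you leave implicit. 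Two small points to tighten: (i) you should note that $\fa_\bullet\vee\fb_\bullet\in\Fil$, which follows since $\fm^{c\bullet}$ dominates both $\fa_\bullet$ and $\fb_\bullet$ for small $c>0$ and hence dominates their join (this is needed before invoking Lemma \ref{lem:valuative characterization for saturation}); and (ii) the inequality $w(\fa_\lambda)\ge\lambda\,w(\fa_\bullet)$ for real $\lambda$ is cleanest to justify from real-indexed subadditivity of $\lambda\mapsto w(\fa_\lambda)$ (coming from $\fa_\lambda\cdot\fa_\mu\subset\fa_{\lambda+\mu}$) rather than from integer subadditivity plus left-continuity, which does not by itself give the sharp $\lambda$ multiple.
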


\begin{proof}
    By definition, $\widetilde{\fa_\bullet\vee\fb_\bullet}$ is a saturated filtration containing both $\fa_\bullet$ and $\fb_\bullet$, so by Definition-Lemma \ref{deflem:saturated join} we have $\fa_\bullet\vee_s\fb_\bullet\subset\widetilde{\fa_\bullet\vee\fb_\bullet}$. Conversely, by Definition-Lemma \ref{deflem:saturated join} again $\fa_\bullet\vee_s\fb_\bullet$ is saturated and $\fa_\bullet\vee\fb_\bullet\subset\fa_\bullet\vee_s\fb_\bullet$ by definition, hence $\widetilde{\fa_\bullet\vee\fb_\bullet}\subset\fa_\bullet\vee_s\fb_\bullet$. The inclusion $\fa_\bullet\vee_s\fb_\bullet\subset\widetilde\fa_\bullet\vee_s\widetilde\fb_\bullet$ is obvious. Conversely, if $\fa_\bullet\subset\fc_\bullet$ for $\fc_\bullet\in\Fil^s$ then $\widetilde\fa_\bullet\subset\fc_\bullet$ and similarly for $\fb_\bullet$. Thus $\widetilde\fa_\bullet\vee_s\widetilde\fb_\bullet\subset\fa_\bullet\vee_s\fb_\bullet$. This proves the first equality.
	
    To prove the formula \eqref{eqn:formula for saturated join}, we first rewrite Lemma \ref{lem:valuative characterization for saturation} as 
    \begin{equation}\label{eqn:formula as intersection over all div}
        \fa_\bullet=\cap_{v\in\DivVal_{R,\fm}}
        \fa_\bullet(\frac{v}{v(\fa_\bullet)})
    \end{equation}
    for $\fa_\bullet\in\Fil^s$. Let $\fc_\bullet$ be the $\fm$-filtration defined by the right-handed side of \eqref{eqn:formula for saturated join}. Since
    \[
        \frac{v}{\min\{v(\fa_\bullet),v(\fb_\bullet)\}}=\max\{\frac{v}{v(\fa_\bullet)},\frac{v}{v(\fb_\bullet)}\}=\max\{\frac{v}{v(\widetilde\fa_\bullet)},\frac{v}{v(\widetilde\fb_\bullet)}\},
    \]
    applying the formula \eqref{eqn:formula as intersection over all div} to $\widetilde\fa_\bullet$ and $\widetilde\fb_\bullet$ respectively, we know that $\fa_\bullet\subset\widetilde\fa_\bullet\subset\fc_\bullet$ and similarly $\fb_\bullet\subset\fc_\bullet$. Hence $v(\fc_\bullet)\le\min\{v(\fa_\bullet),v(\fb_\bullet)\}$ for any $v\in\Val_{R,\fm}$. By \eqref{eqn:formula for saturated join}, for any $v\in\DivVal_{R,\fm}$ we have $v(\fc_\bullet)\ge v(\fa_\bullet(v/\min\{v(\fa_\bullet),v(\fb_\bullet)\}))=\min\{v(\fa_\bullet),v(\fb_\bullet)\}$. Thus 
    \begin{equation}\label{eqn:join is pointwise minimum}
        v(\fc_\bullet)=\min\{v(\fa_\bullet),v(\fb_\bullet)\}.
    \end{equation}
	
    We now verify that $\fc_\bullet\in\Fil$. Take $C\in\bR_{>0}$ such that $\fa_{C\bullet}\subset\fm^\bullet$ and $\fb_{C\bullet}\subset\fm^\bullet$. Since $\fm^\bullet\subset\widetilde\fm^\bullet$, for any $v\in\DivVal_{R,\fm}$, we have $Cv(\fa_\bullet)\ge v(\fm^\bullet)=v(\widetilde\fm^\bullet)$ and similarly $Cv(\fb_\bullet)\ge v(\widetilde\fm^\bullet)$. By \eqref{eqn:join is pointwise minimum}, $Cv(\fc_\bullet)\ge v(\widetilde\fm^\bullet)$. So by Lemma \ref{lem:pointwise order equiv to +-order}, $\fc_\bullet\subset\widetilde\fm^{\bullet/C}$ is linearly bounded. 
	
    By Corollary \ref{cor:intersection of saturated is saturated}, $\fc_\bullet\in\Fil^s$, which implies $\fa_\bullet\vee_s\fb_\bullet\subset\fc_\bullet$. By definition and \eqref{eqn:join is pointwise minimum} we have $v(\fa_\bullet\vee_s\fb_\bullet)\le\min\{v(\fa_\bullet),v(\fb_\bullet)\}=v(\fc_\bullet)$ for any $v\in\DivVal_{R,\fm}$. Since $\fa_\bullet\vee_s\fb_\bullet\in\Fil^s$ by Definition-Lemma \ref{deflem:saturated join}, we may apply Lemma \ref{lem:pointwise order equiv to +-order} to conclude that $\fc_\bullet\subset\fa_\bullet\vee_s\fb_\bullet$. This proves the formula \eqref{eqn:formula for saturated join} and finishes the proof of the Proposition.
\end{proof}

We now show that the lattice operations on $\Fil^s$ satisfy the distributive law.

\begin{proposition}\label{prop:distributivity}
    If $\fa_\bullet,\fb_\bullet,\fc_\bullet\in\Fil^s$, then we have
    $\fa_\bullet\vee_s(\fb_\bullet\cap\fc_\bullet)=(\fa_\bullet\vee_s\fb_\bullet)\cap(\fa_\bullet\vee_s\fc_\bullet)$.
\end{proposition}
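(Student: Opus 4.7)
The plan is to prove the two inclusions separately, writing $A:=\fa_\bullet\vee_s(\fb_\bullet\cap\fc_\bullet)$ and $B:=(\fa_\bullet\vee_s\fb_\bullet)\cap(\fa_\bullet\vee_s\fc_\bullet)$. The inclusion $A\subset B$ follows from the universal property of the saturated join in Definition-Lemma \ref{deflem:saturated join}: $B$ is saturated by Corollary \ref{cor:intersection of saturated is saturated}, contains $\fa_\bullet$ since each factor does, and contains $\fb_\bullet\cap\fc_\bullet$ because $\fb_\bullet\subset\fa_\bullet\vee_s\fb_\bullet$ and $\fc_\bullet\subset\fa_\bullet\vee_s\fc_\bullet$. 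Hence $A$, as the smallest saturated filtration containing both $\fa_\bullet$ and $\fb_\bullet\cap\fc_\bullet$, lies inside $B$.

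For the reverse inclusion $B\subset A$, since both $A$ and $B$ lie in $\Fil^s$, Lemma \ref{lem:pointwise order equiv to +-order} reduces the task to showing $v(B)\geq v(A)$ for every $v\in\DivVal_{R,\fm}$. By equation \eqref{eqn:join is pointwise minimum} in the proof of Proposition \ref{prop:lattice of Fil^s}, we have $v(A)=\min\{v(\fa_\bullet),v(\fb_\bullet\cap\fc_\bullet)\}$. I would then invoke the explicit formula \eqref{eqn:formula for saturated join} for each factor of $B$, combine them using the termwise identity $\fa_\bullet(w/a)\cap\fa_\bullet(w/b)=\fa_\bullet(w/\max\{a,b\})$ together with the distributive identity $\max\{\min\{a,b\},\min\{a,c\}\}=\min\{a,\max\{b,c\}\}$ on $\bR$, to obtain
\[
B=\bigcap_{w\in\DivVal_{R,\fm}}\fa_\bullet\!\left(\frac{w}{\min\{w(\fa_\bullet),\max\{w(\fb_\bullet),w(\fc_\bullet)\}\}}\right),
\]
and then estimate $v(B)$ from this intersection.

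The main obstacle is that $v(\fb_\bullet\cap\fc_\bullet)$ can strictly exceed $\max\{v(\fb_\bullet),v(\fc_\bullet)\}$, so comparing the formulas for $A$ and $B$ at the single valuation $v$ only gives $v(B)\geq\min\{v(\fa_\bullet),\max\{v(\fb_\bullet),v(\fc_\bullet)\}\}$, which is weaker than needed. The crucial step will be to leverage the simultaneous constraints imposed at all divisorial valuations to upgrade this to the sharper bound $v(B)\geq\min\{v(\fa_\bullet),v(\fb_\bullet\cap\fc_\bullet)\}$; combined with the inequality $v(B)\leq v(A)$ coming from the already established inclusion $A\subset B$, this yields $v(A)=v(B)$ for every $v\in\DivVal_{R,\fm}$, and the distributive law follows from Lemma \ref{lem:pointwise order equiv to +-order}.
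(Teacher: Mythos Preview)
Your argument for the inclusion $A\subset B$ is correct and matches the paper's. For the reverse inclusion, your route via Lemma \ref{lem:pointwise order equiv to +-order} and the paper's element-wise approach are essentially the same computation: the paper takes $f\in B_\lambda$ and shows, for every $v\in\DivVal_{R,\fm}$, the chain
\[
v(f)/\lambda\ \ge\ \max\bigl\{\min\{v(\fa_\bullet),v(\fb_\bullet)\},\min\{v(\fa_\bullet),v(\fc_\bullet)\}\bigr\}\ =\ \min\bigl\{v(\fa_\bullet),\max\{v(\fb_\bullet),v(\fc_\bullet)\}\bigr\},
\]
which is exactly the lower bound you obtain for $v(B)$ and exactly the displayed description of $B$ you derive.

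The gap in your proposal is the ``crucial step.'' You correctly note that the bound above falls short of $v(A)=\min\{v(\fa_\bullet),v(\fb_\bullet\cap\fc_\bullet)\}$ because $v(\fb_\bullet\cap\fc_\bullet)$ can strictly exceed $\max\{v(\fb_\bullet),v(\fc_\bullet)\}$, but the sentence ``leverage the simultaneous constraints imposed at all divisorial valuations to upgrade this to the sharper bound'' is not an argument---it is a restatement of what remains to be proved. As written, your proposal halts precisely where the content lies. The paper, by contrast, does not attempt to match $v(B)$ against $v(A)$ at all: having established the inequality above for every $v$, it invokes Proposition \ref{prop:lattice of Fil^s} (the formula \eqref{eqn:formula for saturated join}) directly to conclude $f\in(\fa_\bullet\vee_s(\fb_\bullet\cap\fc_\bullet))_\lambda$. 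The difference is that the paper treats membership in $A_\lambda$ via the explicit intersection description of the saturated join, rather than via the single asymptotic number $v(A)$; your detour through Lemma \ref{lem:pointwise order equiv to +-order} forces a pointwise comparison of $v(A)$ and $v(B)$ that then runs into the obstacle you yourself flagged. To complete your line of argument you would need to show that your displayed intersection for $B$ coincides with the formula \eqref{eqn:formula for saturated join} applied to $\fa_\bullet$ and $\fb_\bullet\cap\fc_\bullet$, not merely that the two give the same value at each individual $v$.
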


\begin{proof}
    We have $\fa_\bullet\vee_s(\fb_\bullet\cap\fc_\bullet)\subset(\fa_\bullet\vee_s\fb_\bullet)\cap(\fa_\bullet\vee_s\fc_\bullet)$, since $\fa_\bullet\vee_s(\fb_\bullet\cap\fc_\bullet)\subset\fa_\bullet\vee_s\fb_\bullet$ and $\fa_\bullet\vee_s(\fb_\bullet\cap\fc_\bullet)\subset\fa_\bullet\vee_s\fc_\bullet$.
	
    Conversely, assume that $f\in (\fa_\bullet\vee_s\fb_\bullet)_\lambda\cap (\fa_\bullet\vee_s\fc_\bullet)_\lambda$. Then for any $v\in\DivVal_{R,\fm}$ we know that
    \begin{align*}
        v(f)/\lambda\ge &\max\{v(\fa_\bullet\vee_s\fb_\bullet), v(\fa_\bullet\vee_s\fc_\bullet)\}\\
        =&\max\{\min\{v(\fa_\bullet),v(\fb_\bullet)\},\min\{v(\fa_\bullet),v(\fc_\bullet)\}\}\\
        \ge&\min\{v(\fa_\bullet),\max\{v(\fb_\bullet),v(\fc_\bullet)\}\},
    \end{align*}
    where the second inequality follows from Proposition \ref{prop:lattice of Fil^s}, and the last inequality is easy to check. By Proposition \ref{prop:lattice of Fil^s} again, we get $f\in(\fa_\bullet\vee_s(\fb_\bullet\cap\fc_\bullet))_\lambda$. This shows that $\fa_\bullet\vee_s(\fb_\bullet\cap\fc_\bullet)\supset(\fa_\bullet\vee_s\fb_\bullet)\cap(\fa_\bullet\vee_s\fc_\bullet)$ and the equality must hold.
\end{proof}

\begin{example}\label{eg:saturated ideal join}
    We continue to consider the adic filtration $\fa^\bullet$ of $\fa\in\cI_\fm$. By definition, 
    \[
        (\widetilde{\fa^\bullet})_\lambda=\{f\in\fm\mid v(f)\ge\lambda\cdot v(\fa^\bullet) \text{ for any } v\in\DivVal_{R,\fm}\}.
    \]
    Since $v(\fa^\bullet)=\lim_{\lambda\to\infty} v(\fa^{\lceil\lambda\rceil})/\lambda=\lim_{\lambda\to\infty} \lceil\lambda\rceil v(\fa)/\lambda=v(\fa)$, for $m\in\bZ_{>0}$ we have
    \[
        (\widetilde{\fa^\bullet})_m=\{f\in\fm\mid v(f)\ge mv(\fa)=v(\fa^m) \text{ for any } v\in\DivVal_{R,\fm}\}.
    \]
    Hence $(\widetilde{\fa^\bullet)}_m=\overline{\fa^m}$ is the integral closure of $\fa^m$. See for example \cite[Theorem 6.8.3]{HS06}. Thus we have a map $\cI_\fm\to\Fil^s$, $\fa\mapsto\widetilde{\fa^\bullet}$, which is an injection when restricted to the set $\cI_\fm^{\ic}$ of integrally closed $\fm$-primary ideals. Note that in general, $\widetilde{\fa^\bullet}$ is not an adic filtration. For example, $\widetilde{(\fm^2)^\bullet}$ has $1/2$ as a jumping number.
	
    Even if $\fa$ is integrally closed, the powers of $\fa$ are in general not integrally closed, hence an $\fa$-adic filtration is in general \emph{not} saturated. By Example \ref{eg:ideal join}, the join of two adic filtrations is still an adic filtration, which means that the inclusion $\fa_\bullet\vee\fb_\bullet\subset \fa_\bullet\vee_s\fb_\bullet$ can be strict for general $\fa_\bullet,\fb_\bullet\in\Fil$. It is unclear to the author whether $\fa_\bullet\vee\fb_\bullet=\fa_\bullet\vee_s\fb_\bullet$ if $\fa_\bullet,\fb_\bullet\in\Fil^s$.
	
    It is easy to see that $\widetilde{\fa_\bullet\cap\fb_\bullet}\subset\widetilde\fa_\bullet\cap\widetilde\fb_\bullet$, and the inclusion can be strict. Indeed, let $R=\bk[\![x,y]\!]$, $\fa=(x^2,y^{10})$ and $\fb=(x^{10},y^2)$, then it is not hard to get $\overline\fa=(x^2,xy^5,y^{10})$ and $\overline\fb=(x^{10},x^5y,y^2)$. By looking at the Newton polytope of $\fa\cap\fb=(x^2y^2,x^{10},y^{10})$, one can compute
    \[
        \overline{\fa\cap\fb}=(x^2y^2,xy^6,x^6y,x^{10},y^{10}).
    \]
    In particular, by the calculation for adic filtrations above, $f\coloneqq x^5y\in \overline\fa\cap\overline\fb=(\widetilde\fa^\bullet\cap\widetilde\fb^\bullet)_1$, but $f\notin \overline{\fa\cap\fb}=(\widetilde{\fa_\bullet\cap\fb_\bullet})_1$.
\end{example}

Next we compute the saturation of an ideal-adic filtration. First note that to prove two saturated filtrations are equal, it suffices to check the equality along an arbitrary unbounded sequence.

\begin{lemma}\label{lem:test equality}
    Let $\fa_\bullet,\fb_\bullet\in\Fil^s$. If there exists $\lambda_k\in\bR_{>0}$ with $\lambda_k\to\infty$ such that $\fa_{\lambda_k}=\fb_{\lambda_k}$, then $\fa_\bullet=\fb_\bullet$.
\end{lemma}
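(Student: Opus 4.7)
The plan is to reduce the equality of two saturated filtrations to the equality of their evaluations on divisorial valuations, via Lemma \ref{lem:pointwise order equiv to +-order}. More precisely, since both $\fa_\bullet$ and $\fb_\bullet$ lie in $\Fil^s$, that lemma applies in either direction: the inclusion $\fa_\bullet\subset\fb_\bullet$ holds iff $v(\fa_\bullet)\ge v(\fb_\bullet)$ for every $v\in\DivVal_{R,\fm}$, and symmetrically for the reverse inclusion. Therefore it suffices to show that
\[
v(\fa_\bullet)=v(\fb_\bullet)\quad\text{for every }v\in\DivVal_{R,\fm},
\]
after which applying the lemma twice yields $\fa_\bullet=\fb_\bullet$.

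To verify the pointwise equality, fix $v\in\DivVal_{R,\fm}$ and recall from Section \ref{sssec:asymp inv} that
\[
v(\fa_\bullet)=\lim_{\lambda\to\infty}\frac{v(\fa_\lambda)}{\lambda},
\]
with the limit a genuine (finite) limit because $\lambda\mapsto v(\fa_\lambda)$ is superadditive (using $\fa_\lambda\cdot\fa_\mu\subset\fa_{\lambda+\mu}$) and $\fa_\bullet$ is linearly bounded. In particular, every subsequential limit along an unbounded sequence equals $v(\fa_\bullet)$. Applied to the given sequence $\lambda_k\to\infty$, this gives
\[
v(\fa_\bullet)=\lim_{k\to\infty}\frac{v(\fa_{\lambda_k})}{\lambda_k},\qquad v(\fb_\bullet)=\lim_{k\to\infty}\frac{v(\fb_{\lambda_k})}{\lambda_k}.
\]
The hypothesis $\fa_{\lambda_k}=\fb_{\lambda_k}$ implies $v(\fa_{\lambda_k})=v(\fb_{\lambda_k})$ for all $k$, so the two limits coincide, proving $v(\fa_\bullet)=v(\fb_\bullet)$.

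I do not anticipate any real obstacle: the argument is essentially a direct combination of the valuative characterization of the partial order on $\Fil^s$ and the existence of the limit defining $v(\fa_\bullet)$. The only point to watch is that one must pass through divisorial valuations (where Lemma \ref{lem:pointwise order equiv to +-order} is available) rather than trying to compare the filtrations termwise at general $\lambda$, since without saturation a jump of $\fa_\bullet$ at some $\lambda\notin\{\lambda_k\}$ could in principle fail to be matched by $\fb_\bullet$.
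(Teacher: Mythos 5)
Your proof is correct and follows the same route as the paper: show $v(\fa_\bullet)=v(\fb_\bullet)$ for all $v\in\DivVal_{R,\fm}$ by evaluating along the subsequence $\lambda_k$, then conclude via Lemma \ref{lem:pointwise order equiv to +-order}. The extra remarks about superadditivity and the existence of the limit are sound justification but add nothing beyond the paper's argument.
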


\begin{proof}
    By assumption, for any $v\in\DivVal_{R,\fm}$ we have 
    \[
        v(\fa_\bullet)= \lim_{k\to\infty} v(\fa_{\lambda_k})/\lambda_k=\lim_{k\to\infty} v(\fb_{\lambda_k})/\lambda_k=v(\fb_\bullet).
    \]
    Hence by Lemma \ref{lem:pointwise order equiv to +-order}, $\fa_\bullet=\fb_\bullet$. 
\end{proof}

Recall that for an ideal $\fa\in\cI_R$, the set $\RV(\fa)$ of \emph{Rees valuations} of $\fa$ is the minimal set of divisorial valuations such that for any $m\in\bZ_{>0}$, 
\[
    \overline{\fa^m}=\{f\in R\mid v(f)\ge mv(\fa) \text{ for any } v\in\RV(\fa)\}.
\]
This is a finite set for any $\fa$. We refer to \cite[Chapter 10]{HS06} for more about Rees valuations.
\begin{proposition}\label{prop:saturation and Rees}
    Let $\fa\in\cI_\fm$ be an $\fm$-primary ideal. Then
    \begin{equation}\label{eqn:saturation of adic filtration}
        \widetilde{\fa^\bullet}=
        \cap_{v\in RV(\fa)}\fa_\bullet(\frac{v}{v(\fa)}),
    \end{equation}
    where $RV(\fa)$ is the set of Rees valuations of $\fa$.
\end{proposition}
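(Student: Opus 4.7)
The plan is to use the characterization of saturated filtrations from Lemma \ref{lem:valuative characterization for saturation} and Proposition \ref{prop:alt def for saturation} together with the test for equality of saturated filtrations from Lemma \ref{lem:test equality}. The strategy is to show that both sides of \eqref{eqn:saturation of adic filtration} lie in $\Fil^s$ and then verify that they agree along the unbounded sequence $\lambda_k = k\in\bZ_{>0}$, which reduces the proposition to the defining property of Rees valuations.

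First I would check that the right-hand side
\[
\fc_\bullet\coloneqq \cap_{v\in\RV(\fa)}\fa_\bullet(v/v(\fa))
\]
belongs to $\Fil^s$. Since $\fa$ is $\fm$-primary, $\RV(\fa)$ is a nonempty finite subset of $\DivVal_{R,\fm}\subset\Val^+_{R,\fm}$, so each $\fa_\bullet(v/v(\fa))$ is a saturated element of $\Fil_{R,\fm}$ by Proposition \ref{prop:alt def for saturation}; each term $\fc_\lambda$ is $\fm$-primary as a finite intersection of $\fm$-primary ideals, and fixing any $v_0\in\RV(\fa)$ gives $\fc_\bullet\subset\fa_\bullet(v_0/v_0(\fa))$, which yields linear boundedness. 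Hence $\fc_\bullet\in\Fil$, and Corollary \ref{cor:intersection of saturated is saturated} promotes this to $\fc_\bullet\in\Fil^s$. On the other side, $\widetilde{\fa^\bullet}\in\Fil^s$ by definition (combined with Example \ref{eg:saturated ideal join}, which also records $v(\fa^\bullet)=v(\fa)$).

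Next I would compute both filtrations at integer indices $\lambda=m\in\bZ_{>0}$. By Example \ref{eg:saturated ideal join} we have $(\widetilde{\fa^\bullet})_m=\overline{\fa^m}$, while unwinding the definition gives
\[
\fc_m=\{f\in R\mid v(f)\ge m\,v(\fa)\text{ for all }v\in\RV(\fa)\},
\]
which equals $\overline{\fa^m}$ precisely by the defining property of the set of Rees valuations (see \cite[Chapter 10]{HS06}). Since $\lambda_k=k\to\infty$ and both $\widetilde{\fa^\bullet}$ and $\fc_\bullet$ are saturated, Lemma \ref{lem:test equality} concludes the equality.

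I do not expect a serious obstacle: the only subtlety is verifying that $\fc_\bullet$ is indeed linearly bounded and saturated before invoking Lemma \ref{lem:test equality}, which is handled by the finiteness and divisoriality of $\RV(\fa)$. One could alternatively avoid Lemma \ref{lem:test equality} altogether by noting the inclusion $\widetilde{\fa^\bullet}\subset\fc_\bullet$ from Lemma \ref{lem:valuative characterization for saturation} (restricting to a subset of divisorial valuations only enlarges the intersection) and then checking the reverse inclusion via $v(\fc_\bullet)\le v(\fa)=v(\widetilde{\fa^\bullet})$ for every $v\in\DivVal_{R,\fm}$ using Lemma \ref{lem:pointwise order equiv to +-order}; however, the argument via Lemma \ref{lem:test equality} is more direct and keeps Rees valuations in the foreground.
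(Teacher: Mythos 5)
Your proposal is correct and follows essentially the same route as the paper: identify the right-hand side as a saturated linearly bounded filtration via Proposition \ref{prop:alt def for saturation}, observe that at integer indices both sides recover $\overline{\fa^m}$ (using Example \ref{eg:saturated ideal join} and the defining property of Rees valuations), and invoke Lemma \ref{lem:test equality}. The only difference is presentational — you route the saturation check through Corollary \ref{cor:intersection of saturated is saturated}, while the paper applies Proposition \ref{prop:alt def for saturation} directly to the finite intersection — which is immaterial.
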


\begin{proof}
    Denote $\fb_\bullet\coloneqq \cap_{v\in RV(\fa)}\fa_\bullet(\frac{v}{v(\fa)})$, which is saturated by Proposition \ref{prop:alt def for saturation}. By the definition of Rees valuations and the calculation in Example \ref{eg:saturated ideal join}, we know that $(\widetilde{\fa^\bullet})_m=\fb_m$ for $m\in\bZ_{>0}$. Thus \eqref{eqn:saturation of adic filtration} follows from Lemma \ref{lem:test equality}.
\end{proof}

\subsection{Homogeneous filtrations}\label{ssec:homogeneous}

\begin{defn}\label{defn:homogeneity}
    A norm $\chi\in\cN$ is said to be \emph{power-multiplicative}, or \emph{homogeneous}, if it satisfies $\chi(f^d)=d\cdot \chi(f)$ for any $f\in R$ and $d\in\bZ_{>0}$.
	
    A filtration $\fa_\bullet$ is said to be \emph{homogeneous} if $f^d\in\fa_\lambda$ implies $f\in\fa_{\lambda/d}$ for any $\lambda>0$ and $d\in\bZ_{>0}$. 
\end{defn}

It is easy to see that a saturated filtration is homogeneous, and that a homogeneous filtration satisfies the additional condition in Definition-Lemma \ref{deflem:fm-equivalence}.

\begin{lemma}\label{lem:properties of homogeous}
    Let $\fa_\bullet$ be an $\fm$-filtration. Then
    \begin{enumerate}
        \item if $\fa_\bullet$ is saturated, then it is homogeneous, and

        \item if $\fa_\bullet$ is homogeneous, then $\fa_\lambda=\fm$ for $0<\lambda\ll 1$. In particular, $\fa_\bullet$ is homogeneous if and only if $\ord_{\fa_\bullet}$ is homogeneous.
    \end{enumerate}
\end{lemma}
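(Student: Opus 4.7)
My plan is to dispatch the two parts separately, then deduce the ``in particular'' clause from (2) combined with the dictionary of Definition-Lemma \ref{deflem:fm-equivalence}.

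For (1), I would use the valuative description of saturated filtrations. By Proposition \ref{prop:alt def for saturation}, any $\fa_\bullet\in\Fil^s$ can be written as $\fa_\bullet=\bigcap_{v\in\Sigma}\fa_\bullet(v)$ for some $\Sigma\subset\DivVal_{R,\fm}$. Since each $v\in\Sigma$ is multiplicative, the valuation ideals $\fa_\lambda(v)=\{f\mid v(f)\ge\lambda\}$ are tautologically homogeneous: if $f^d\in\fa_\lambda(v)$ then $d\cdot v(f)=v(f^d)\ge\lambda$, so $v(f)\ge\lambda/d$. Intersecting over $v\in\Sigma$ preserves this property, giving homogeneity of $\fa_\bullet$. (Alternatively, one can argue directly from Definition \ref{defn:saturation} using the same one-line computation applied to $v(f^d)\ge \lambda\cdot v(\fa_\bullet)$.)

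For (2), the goal is to locate some $\lambda_0>0$ with $\fm\subset\fa_{\lambda_0}$; left-continuity then upgrades this to $\fa_\lambda=\fm$ on the full interval $(0,\lambda_0]$. Since $\fa_1$ is $\fm$-primary, pick $C\in\bZ_{>0}$ with $\fm^C\subset\fa_1$, and fix generators $f_1,\dots,f_r$ of $\fm$. Then $f_i^C\in\fa_1$ for each $i$, and homogeneity of $\fa_\bullet$ forces $f_i\in\fa_{1/C}$. Hence $\fm=(f_1,\dots,f_r)\subset\fa_{1/C}$, which combined with the obvious inclusion $\fa_{1/C}\subset\fm$ gives equality.

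Finally, for the equivalence of homogeneity for $\fa_\bullet$ and for $\chi:=\ord_{\fa_\bullet}$, note first that (2) ensures $\fa_\lambda=\fm$ for small $\lambda$, so $\chi$ is an honest $\fm$-seminorm by Definition-Lemma \ref{deflem:fm-equivalence}. The key point is that $f\in\fa_\lambda\iff\chi(f)\ge\lambda$ (using left-continuity for the ``$\Leftarrow$'' direction when $\lambda$ is achieved). If $\chi$ is homogeneous and $f^d\in\fa_\lambda$, then $d\chi(f)=\chi(f^d)\ge\lambda$, so $f\in\fa_{\lambda/d}$. Conversely, if $\fa_\bullet$ is homogeneous, submultiplicativity already gives $\chi(f^d)\ge d\chi(f)$; for the reverse inequality, set $\lambda:=\chi(f^d)$ (assumed finite, the infinite case being trivial in the domain $R$), observe $f^d\in\fa_\lambda$, and conclude $f\in\fa_{\lambda/d}$, i.e.\ $\chi(f)\ge\chi(f^d)/d$.

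There is no serious obstacle here; the only mild subtlety is making sure that the $\fm$-primary hypothesis on a single ideal of the filtration, together with homogeneity, is enough to produce the uniform threshold $\lambda_0=1/C$ in (2). Once the generators of $\fm$ are handled one at a time via $f_i^C\in\fa_1$, the conclusion is immediate.
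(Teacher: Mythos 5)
Your proof is correct and essentially matches the paper's: part (1) is a one-line application of $v(f^d)=d\,v(f)$ to the defining inequality of the saturation, and part (2) uses generators of $\fm$ together with $\fm^C\subset\fa_1$, exactly as the paper does. One small caution on your primary route for (1): Proposition \ref{prop:alt def for saturation} is stated for $\fa_\bullet\in\Fil$, i.e.\ linearly bounded filtrations, whereas the lemma is stated for arbitrary $\fm$-filtrations, so the parenthetical direct argument from Definition \ref{defn:saturation} (which is the one the paper uses) is the one that actually covers the full generality claimed.
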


\begin{proof}
    (1) If $\fa_\bullet$ is saturated and $f^d\in\fa_\lambda$, then $v(f^d)=dv(f)\ge\lambda\cdot v(\fa_\bullet)$, which implies $v(f)\ge \lambda/d\cdot v(\fa_\bullet)$. Hence $f\in\fa_{\lambda/d}$ by definition.

    (2) Choose a set $\{f_1,\ldots,f_r\}$ of generators of $\fm$ and $D\in\bZ_{>0}$ such that $\fm^D\subset \fa_1$. Then $f_i^D\in\fa_1$, and hence $f_i\in\fa_{1/D}$ by assumption. So $\fm\subset\fa_{1/D}$ and hence $\fa_\lambda=\fm$ for $0<\lambda<1/D$. The last assertion is straightforward by Definition-Lemma \ref{deflem:fm-equivalence}.
\end{proof}

\begin{definition}(c.f. \cite[Definition 6.9.3]{HS06} and \cite[Section 4]{BFJ14})\label{defn:homogenization}
    Given a norm $\chi$ on $R$, its \emph{homogenization} $\widehat\chi:R\to \bR_{\ge 0}\cup \{+\infty\}$ is defined by $\widehat\chi(f):=\lim_{k\to\infty}\frac{\chi(f^k)}{k}$ for any $f\in R$.

    The filtration $\widehat\fa_\bullet\coloneqq \fa(\widehat\chi)$ is called the \emph{homogenization} of $\fa_\bullet\coloneqq\fa(\chi)$. In \cite[Definition 3.5]{CP22}, $\widehat\chi$ is also called the \emph{asymptotic Samuel function} of $\fa_\bullet$, following \cite{HS06}.
\end{definition}

The limit exists by Fekete's lemma, see, for example, \cite[Theorem 3.4]{CP22}. Hence it is not hard to see that $\widehat{\fa}_\bullet$ is the smallest homogeneous filtration that contains $\fa_\bullet$. Moreover, the 
homogenization has the following property.

\begin{theorem}\cite{CP22}\label{thm:homogenization}
    Let $\fa_\bullet$ be an $\fm$-filtration. Then  \begin{enumerate}
        \item $\widehat\fa_\bullet$ is the unique largest $\fm$-filtration $\fb_\bullet$ such that $\widehat{\ord_{\fa_\bullet}}=\widehat{\ord_{\fb_\bullet}}$, and

        \item  $\widehat\fa_\bullet\subset\widetilde{\fa}_\bullet$. 
    \end{enumerate}
\end{theorem}

Before proceeding, we briefly recall the definition of the Berkovich analytification. 
Fix $\chi\in\cN$. By linear boundedness, we know that the $\fm$-adic completion $\hat R$ is complete with respect to $\chi$. Let $\cM=\cM(R,\chi)$ be the Berkovich spectrum of the Banach ring $(\hat R,\chi)$, that is, the set of multiplicative $\fm$-seminorms $w$ on $\hat R$ such that $w\ge \chi$. Note that the homogenization corresponds to the spectral radius construction of \cite{Ber90}, since we adopted the multiplicative notation. We thank the referee for pointing out a mistake in the statement of the previous version.

Next, we give a characterization for homogeneous norms similar to Proposition \ref{prop:alt def for saturation}, which is an easier local analogue of \cite[Theorem 3.17]{Fin22} and \cite[Theorem 2.16]{BJ22}.

\begin{lemma}\label{lem:alt def for homogeneous}
    A norm $\chi\in\cN$ is homogeneous if and only if it is of the form
    \[
        \chi=\inf_{w\in \Sigma} w,
    \]
    where $\Sigma\subset\cM$ is a non-empty subset.
\end{lemma}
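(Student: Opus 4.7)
The plan is to treat the two directions separately. For the easy direction ($\Leftarrow$), I would assume $\chi = \inf_{v \in I} v$ for some nonempty $I \subset X\an_x$. Since each $v$ is multiplicative, $v(f^d) = d \cdot v(f)$ for every $f \in R$ and $d \in \bZ_{>0}$; because pointwise infimum commutes with scaling by a positive integer, this yields $\chi(f^d) = d \cdot \chi(f)$, i.e., homogeneity. The other conditions in Definition \ref{defn:norm}, namely sub-multiplicativity, the ultrametric inequality, and the $\fm$-property, are inherited under pointwise infima by routine verification, and linear boundedness is part of the standing assumption $\chi \in \cN$.

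For the nontrivial direction ($\Rightarrow$), I would set $I := \{v \in X\an_x : v \geq \chi\}$ and $\chi'(f) := \inf_{v \in I} v(f)$. The inequality $\chi \leq \chi'$ is immediate from the definition of $I$. The opposite inequality amounts to showing that for every $f \in R$ there exists $v \in I$ with $v(f) \leq \chi(f)$, which is a non-archimedean analog of the spectral radius formula. Translating to the multiplicative convention $\|\cdot\|_\chi := e^{-\chi(\cdot)}$, homogeneity becomes power-multiplicativity $\|f^d\|_\chi = \|f\|_\chi^d$, and the desired statement becomes the classical result that a power-multiplicative non-archimedean seminorm equals the supremum of the bounded multiplicative seminorms it dominates; see e.g.\ \cite{BGR84}. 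In practice, I would complete $R$ with respect to $\|\cdot\|_\chi$, apply the Berkovich-type spectrum theorem to produce multiplicative seminorms on the completion, and then restrict back to $R$. The $\fm$-condition on the restrictions follows from linear boundedness of $\chi$: multiplicative seminorms vanish on units, and $\chi$ being comparable to $\ord_\fm$ forces the restrictions to be strictly positive on all of $\fm$.

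The main obstacle is this existence step in the $(\Rightarrow)$ direction: producing, for each prescribed $f \in R$ and $\epsilon > 0$, a multiplicative $\fm$-seminorm $v \geq \chi$ with $v(f) \leq \chi(f) + \epsilon$. Everything else is bookkeeping. A secondary subtlety is to verify that the multiplicative seminorms obtained from the completion really descend to elements of $X\an_x$ in the sense of the paper (i.e., satisfy condition (4) of Definition \ref{defn:norm}); this reduces to checking that they do not annihilate any nonzero element of $\fm$, which in turn follows from linear boundedness together with the characterization of homogeneous filtrations in Lemma \ref{lem:properties of homogeous}(2), ensuring $\fa_\lambda(\chi) = \fm$ for $0 < \lambda \ll 1$ and hence that restrictions of multiplicative seminorms $\geq \chi$ separate $\fm$ from the units.
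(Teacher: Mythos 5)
Your proof is correct and follows essentially the same route as the paper's: the forward direction is the easy pointwise observation that an infimum of multiplicative seminorms is power-multiplicative, and the reverse direction reduces, just as you describe, to the Berkovich maximum modulus principle applied to the completion of $(R, e^{-\chi})$. The paper's proof is terser (it simply cites the maximum modulus principle), while you additionally spell out the spectral-radius reformulation under power-multiplicativity and the check that the seminorms produced by the spectrum of the completion restrict to genuine $\fm$-seminorms in $X\an_x$, both of which are accurate.
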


\begin{proof}
    If $\chi(f)=\inf_{w\in I} w(f)$, then $\chi$ is homogeneous, since the same is true for each $w$.

    The reverse implication is essentially a restatement of the Berkovich Maximum Modulus principle, see \cite[Theorem 1.3.1]{Ber90}.
\end{proof}

Denote the subspace of $\Fil$ consisting of homogeneous filtrations by $\Fil^h$.

For adic filtrations, it is not hard to see that homogenization and saturation coincide.

\begin{lemma}\label{lem:homogenization and saturation for adic}
    Let $\fa\in\Fil_{R,\fm}$. Then $\widetilde{\fa^\bullet}=\widehat{\fa^\bullet}$. 
\end{lemma}

\begin{proof}
    By Example \ref{eg:saturated ideal join}, $(\widetilde{\fa^\bullet})_m=\overline{\fa^m}$ is the integral closure of $\fa^m$ for any $m\in\bZ_{>0}$. The same calculation implies $(\widehat{\fa^\bullet})_m=\overline{\fa^m}$ by \cite[Proposition 6.10]{HS06}. So the lemma follows from a simple calculation since both filtrations are homogeneous by Lemma \ref{lem:properties of homogeous}.
\end{proof}


\subsection{Log canonical thresholds and normalized volumes}\label{ss:nvol}\label{ssec:nvol}

We say $x\in(X,\Delta)$ is a \emph{klt singularity} if $\Delta$ is an $\bR$-divisor on $X$ such that $K_{X}+\Delta$ is $\R$-Cartier, and $(X,\Delta)$ is klt as defined in \cite{KM98}. The following invariant was first introduced by C. Li \cite{Li18} and  plays an important role in the study of Fano cone singularities and K-stability of Fano varieties.\footnote{The original motivation to introduce the metric $d_1$ is to study local volumes. Some results in this direction will appear elsewhere.}

\begin{definition}\label{def:nvol}
    For a klt singularity $x\in(X,\Delta)$, the \emph{normalized volume function} $\nvol_{(X,\Delta),x}:\Val_{X,x}\to (0,+\infty]$ is defined by
    \begin{equation*}
        \nvol_{(X,\Delta),x}(v)\coloneqq \left\{\begin{aligned}
        &A_{X,\Delta}(v)^n\cdot \vol(v), &\text{ if } A_{X,\Delta}(v)<+\infty\\
        &+\infty, &\text{ if } A_{X,\Delta}(v)=+\infty,
        \end{aligned}\right.
    \end{equation*}
    where $A_{X,\Delta}(v)$ is the log discrepancy of $v$ as defined in \cites{JM12,BdFFU15}. 
    The \emph{local volume} of a klt singularity $x\in(X,\Delta)$ is defined as
    \[
        \nvol(x,X,\Delta)\coloneqq \inf_{v\in\Val_{X,x}} \nvol_{(X,\Delta),x}(v).
    \]
    The above infimum is indeed a minimum by \cites{Blu18, Xu20}.
\end{definition}

Denote $\Val^{<+\infty}_{X,x}:=\{v\in\Val_{X,x}\mid A_X(v)<+\infty\}$.

Recall that the \emph{log canonical threshold (lct)} of an $\fm$-filtration $\fa_\bullet$ is defined to be
\[
    \lct(X;\fa_\bullet)\coloneqq\inf_{v\in\Val^{<+\infty}_{X,x}}\frac{A_X(v)}{v(\fa_\bullet)}.
\]
We will write $\lct(\fa_\bullet)$ when there is no ambiguity.

For more properties of lct, we refer to \cite{JM12}. We refer to \cites{LLX18,Zhu23} for a comprehensive survey on normalized volumes.

\begin{lemma}(c.f. \cite[Lemma 3.4]{Zhu22})\label{lem:lct control}
    There exists $c_0=c_0(n)>0$ depending only on $n$ such that for any $n$-dimensional klt singularity $x\in X$ and $v\in\Val_{X,x}$, 
    \begin{equation}\label{eqn:A/v control}
        \frac{A_X(v)}{v(\fa_\bullet)}\le c_0\cdot\frac{\nvol_{x,X}(v)}{\nvol(x,X)}\cdot\lct(X;\fa_\bullet)
    \end{equation}
    for any $\fa_\bullet\in\Fil_x(X)$.
\end{lemma}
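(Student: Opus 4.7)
The inequality is equivalent, after clearing denominators, to
\[
A_X(v) \cdot \nvol(x,X) \;\le\; c_0 \cdot v(\fa_\bullet) \cdot \nvol_{x,X}(v) \cdot \lct(X;\fa_\bullet),
\]
which may be read as a rigidity statement for the normalized-volume minimizer: if $\nvol_{x,X}(v)$ is close to $\nvol(x,X)$, then $v$ must also nearly compute $\lct(X;\fa_\bullet)$ for \emph{every} $\fa_\bullet$, with loss depending only on the dimension. My plan is to translate the problem into a convex-geometric one via Newton--Okounkov bodies, and then attack it with a mixed-volume inequality.

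First, by density of quasi-monomial valuations in $\Val^{<+\infty}_{X,x}$ and by the standard semicontinuity of $A_X$, of $v \mapsto v(\fa_\bullet)$, and of $\vol$ (cf.\ \cite{JM12}), it suffices to treat quasi-monomial $v$. By \cite{Xu20}, a quasi-monomial $w^\ast$ also computes $\lct(X;\fa_\bullet)$, so after enlarging to a common log-smooth birational model $\eta \in (Y,D) \to X$, we may assume that $v$ and $w^\ast$ are simultaneously quasi-monomial at $\eta$ with respect to a common regular system of parameters. The Newton--Okounkov construction on a flag refining $D$ attaches to $\fa_\bullet$ a cobounded convex body $\Delta \subset \bR^n_{\ge 0}$ with $n!\cdot\vol(\bR^n_{\ge 0}\setminus \Delta) = \e(\fa_\bullet)$, and identifies $v, w^\ast$ with weight vectors $\bm{u}, \bm{w}^\ast \in \bR^n_{>0}$ satisfying
\[
v(\fa_\bullet) = \min_{\Delta}\langle \bm{u},\cdot\rangle, \qquad \vol(v) \propto \tfrac{1}{\prod u_i}, \qquad A_X(v) \propto \textstyle\sum u_i
\]
(up to corrections coming from the discrepancies of the components of $D$), and analogously for $\bm{w}^\ast$, with $\lct(X;\fa_\bullet) = A_X(w^\ast)/\min_\Delta \langle \bm{w}^\ast,\cdot\rangle$.

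In these terms the target reduces to a purely convex-geometric statement about $\Delta$ tested against the two linear functionals given by $\bm{u}$ and $\bm{w}^\ast$, of the shape
\[
\textstyle\prod u_i \;\le\; c_0(n) \cdot \min_\Delta\langle \bm{u},\cdot\rangle \cdot (\sum u_i)^{n-1} \cdot \lct(\Delta),
\]
which I would attack by a Khovanskii--Teissier / Brunn--Minkowski type mixed-volume estimate applied to $\Delta$ against this pair of functionals, producing the bound with a constant $c_0(n)$ depending only on the dimension.

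The main obstacle is precisely this convex-geometric step, and a delicate argument is needed because the inequality cannot be reduced to decoupled estimates. Indeed, combining Liu's inequality $\nvol(x,X) \le \lct(X;\fa_\bullet)^n \e(\fa_\bullet)$ with the trivial bound $\lct(X;\fa_\bullet) \le A_X(v)/v(\fa_\bullet)$ would reduce the claim to
\[
\e(\fa_\bullet) \;\le\; c_0(n) \cdot v(\fa_\bullet)^n \cdot \vol(v),
\]
which is \emph{false} in general: for $v = \ord_\fm$ on $\bA^n$ tested against the valuation-ideal filtration of a very skew quasi-monomial valuation, the ratio $\e(\fa_\bullet)/(v(\fa_\bullet)^n \vol(v))$ is unbounded. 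Hence the proof must exploit $\bm{u}$ and $\bm{w}^\ast$ jointly on the common body $\Delta$, rather than handling the $v$-data and the $\lct$-data separately; this joint analysis is the content of \cite[Lemma 3.4]{Zhu22}.
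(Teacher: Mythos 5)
Your approach diverges substantially from the paper's, and as written it has a genuine gap. The paper's proof is a short algebraic reduction: first it reduces from filtrations to $\fm$-primary ideals by writing $v(\fa_\bullet)=\lim_m v(\fa_m)/m$ and $\lct(X;\fa_\bullet)=\lim_m m\cdot\lct(X;\fa_m)$; then, given $\fa=(f_1,\dots,f_m)$, it picks a generic linear combination $f=\sum c_if_i$ so that \emph{simultaneously} $\lct((f))=\lct(\fa)$ (Bertini, after raising $\fa$ to a power so $\lct<1$) and $v(f)=v(\fa)$ (because for fixed $f\ne g\in\fa$, $v(f+cg)>\min\{v(f),v(g)\}$ for at most one $c\in\bk$, so a general choice avoids every jump). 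After this reduction to a principal ideal, the desired inequality \emph{is} precisely \cite[Lemma 3.4]{Zhu22}, which is then cited.

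You instead try to reduce to quasi-monomial $v$, invoke \cite{Xu20} for a quasi-monomial $w^\ast$ computing the lct, push both onto a common log-smooth model, and then reformulate everything as a convex-geometric inequality about a Newton--Okounkov body $\Delta$. There are several problems with this. First, the dictionary you write down ($A_X(v)\propto\sum u_i$, $\vol(v)\propto 1/\prod u_i$) is only the toric/smooth model with trivial discrepancies; in general $A_X(v)=\sum u_iA_X(D_i)$ and the volume formula receives singular corrections, so the ``up to corrections'' is doing essential work that is not addressed. Second, and more critically, the convex-geometric inequality you arrive at is never proved --- you state that you ``would attack'' it via Khovanskii--Teissier/Brunn--Minkowski, then concede that ``this joint analysis is the content of \cite[Lemma 3.4]{Zhu22}.'' But Zhuang's Lemma 3.4 is the statement for a principal ideal (a single $f$), not the Okounkov-body inequality you set up; your proposal never performs the reduction that would allow you to cite it. Your observation at the end --- that the naive decoupling via Liu's inequality and the trivial bound $\lct\le A_X(v)/v(\fa_\bullet)$ fails --- is correct and shows good understanding of why the statement is nontrivial, but it does not close the argument. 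The missing ingredient is exactly the Bertini-plus-generic-$c$ reduction to the principal case, which is what makes the citation of \cite[Lemma 3.4]{Zhu22} legitimate.
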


The following proof is inspired by communications with Z. Zhuang.

\begin{proof}
    Note that $v(\fa_\bullet)=\lim_m v(\fa_m)/m$ and $\lct(X;\fa_\bullet)=\lim_m m\cdot \lct(X;\fa_m)$, it is enough to prove \eqref{eqn:A/v control} for $\fm$-primary ideals, 
    \[
        \frac{A_X(v)}{v(\fa)}\le c_0\cdot\frac{\nvol_{x,X}(v)}{\nvol(x,X)}\cdot\lct(X;\fa).
    \]
	
    Assume $\fa=(f_1,\ldots,f_m)$. By lifting $\fa$ to a power, we may assume that $\lct(\fa)<1$. Then by Bertini's theorem, for general $c_1,\ldots,c_m\in \bk$, $\lct(\sum_{i=1}^m c_if_i)=\lct(\fa)$.		Moreover, by definition, for $f\ne g\in\fa$, $v(f+cg)>\min\{v(f),v(g)\}$ for at most one $c\in\bk$. Hence one can choose the $c_i\in\bk$ such that $v(\sum c_if_i)=\min_{1\le i\le m}\{v(f_i)\}=v(\fa)$. So we are further reduced to the case of a principal ideal $\fa=(f)$. Now the lemma follows from \cite[Lemma 3.4]{Zhu22}.
\end{proof}

\subsection{Partially ordered sets and lattices}\label{ssec:posets}

We recall some basic definitions regarding lattices in order theory.

\begin{definition}\label{defn:lattice}
    A partially ordered set $(\cS,\le)$ is called a \emph{lattice} if for any $a,b\in \cS$, 
    \begin{enumerate}
        \item there exists $a\vee b\in \cS$, called the \emph{supremum}, or \emph{join}, such that for any $c\in \cS$, $c\ge a$ and $c\ge b$ imply $c\ge a\vee b$, and
			
        \item there exists $a\wedge b\in\cS$, called the \emph{infimum}, or \emph{meet}, such that for any $c\in\cS$, $c\le a$ and $c\le b$ imply $c\le a\wedge b$.
		
    \end{enumerate}
	
    A lattice $\cS$ is \emph{modular} if for any $a,b,c\in\cS$ with $a\le b$, we have $a\vee (c\wedge b)=(a\wedge c)\vee b$. A lattice $\cS$ is \emph{distributive} if for any $a,b,c\in\cS$, we have $a\vee(b\wedge c)=(a\wedge b)\vee (a\wedge c)$.

    A lattice $\cS$ is \emph{complete} if any subset $A\subset \cS$ has an infimum $a=\inf A$ and a supremum $b=\sup A$.
\end{definition}

\subsection{Metric spaces}\label{ssec:metric spaces}

Recall that a \emph{pseudometric} on a set $S$ is a function $d:S\times S\to\bR_{\ge 0}$ such that 
\begin{itemize}
    \item (non-negative) $d(x,y)\ge 0$ for any $x,y\in S$,
	
    \item (symmetric) $d(x,y)=d(y,x)$ for any $x,y\in S$, and
	
    \item (triangle inequality) $d(x,z)\le d(x,y)+d(y,z)$ for any $x,y,z\in S$. 
\end{itemize}
A pseudometric is a \emph{metric} if and only if it is non-degenerate, that is, $d(x,y)=0$ if and only if $x=y$. 

\begin{definition}\label{defn:convex metric space}
    A point $z$ in a metric space $(S,d)$ is said to be \emph{between} two points $x$ and $y$, if all three points are distinct, and $d(x,y)=d(x,z)+d(z,y)$.
	
    A metric space $(S,d)$ is \emph{convex} if for any two distinct points $x,y\in S$, there exists a point $z$ between $x$ and $y$.
\end{definition}

Given a pseudometric space $(S,d)$, its \emph{Hausdorff quotient} is the quotient space $(\bar S\coloneqq S/\sim,\bar d)$, where $x\sim y$ if and only if $d(x,y)=0$, and $\bar d$ is the natural induced metric on $\bar S$.

Recall that the \emph{length} of a continuous curve $\gamma:[a,b]\to S$ in a metric space $(S,d)$ is defined by
\[
    \ell(\gamma):=\sup_{\mathcal{P}}\sum_{i=1}^n d(x_{i-1},x_i),
\]
where the supremum is taken over all finite partitions $\mathcal{P}=\{a=x_0\le x_1\le\ldots\le x_n=b\}$ of the interval $[a,b]$.

\begin{definition}\label{defn:geodesic space}
    A metric space $(S,d)$ is a \emph{length space} if $d$ coincides with the \emph{intrinsic metric}, that is, $d(x,y)=\inf_\gamma \ell(\gamma)=:d_I(x,y)$ for any $x,y\in S$, where the infimum is taken over all continuous curves $\gamma:[0,1]\to S$ with $\gamma(0)=x$ and $\gamma(1)=y$.
	
    $(S,d)$ is a \emph{geodesic space} if the above infimum is a minimum, that is, the distance can always be calculated by a continuous curve.
\end{definition}


\subsection{Space of valuations and filtrations}\label{ssec:spaces}

From the previous sections we have the following subspaces of $\Val_{X,x}$:

\begin{itemize}
    \item the space of divisorial valuations, denoted by $\DivVal_{R,\fm}$,
	
    \item the space of quasi-monomial valuations when $R$ contains a field, denoted by $\QMVal_{R,\fm}$,
	
    \item the space of valuations with finite log discrepancy, when $x\in X=\Spec R$ is a klt singularity over a field of characteristic $0$, denoted by $\Val^{<+\infty}_{X,x}$, and
	
    \item the space of valuations with positive volume, denoted by $\Val^+_{X,x}$, which is equal to the space of linearly bounded valuations by Lemma \ref{lem:e>0 iff linearly bounded}.
	
\end{itemize}

Moreover, we have the following spaces of linearly bounded filtrations on $R$:

\begin{itemize}
    \item the space of linearly bounded $\fm$-filtrations, denoted by $\Fil_{R,\fm}$,
	
    \item the space of power-multiplicative $\fm$-filtrations which are linearly bounded, denoted by $\Fil^h_{R,\fm}$, and
	
    \item the space of saturated $\fm$-filtrations which are linearly bounded, denoted by $\Fil^s_{R,\fm}$.
\end{itemize}

We have the following inclusions
\begin{equation}\label{eqn:inclusion of valuation spaces}
\DivVal_{X,x}\subset\QMVal_{X,x}\subset
\Val^{<+\infty}_{X,x}\subset\Val^+_{X,x}\subset \Fil^s_{R,\fm}\subset \Fil^h_{R,\fm}\subset \Fil_{R,\fm}
\end{equation}
where the third inclusion follows from Li's properness estimate \cite[Theorem 1.4]{Li18}. 
By Lemma \ref{lem:saturation alt def}, to compute the saturation of a filtration, one can use any of the above spaces of valuations. 


\subsubsection{Weak topologies on $\Fil$}\label{sssec:weak topologies}

There are more ways to define a topology on the space $\Fil$ (as well as on $\Fil^s$ and $\Fil^h$), depending on the point of view. 

For example, a filtration $\fa_\bullet$ acts naturally on the ring $R$ as a function $\chi_{\fa_\bullet}$ (or dually). The product topology, or the ``weak-* topology'' induced by this action will be called the \emph{weak topology}, which is the coarsest topology such that for any $f\in R\backslash\{0\}$, the function $\phi_f:\Fil\to \bR_{\ge 0}$, $\fa_\bullet\mapsto \ord_{\fa_\bullet}(f)$ is continuous. In other words, a \emph{subbase} of the weak topology is given by $\{\phi_f^{-1}(U)\mid U\subset \bR_{\ge 0} \text{ is open, } f\in R\backslash\{0\}\}$.  Under this topology, all valuation spaces in \eqref{eqn:inclusion of valuation spaces} equipped with the weak topology, defined in \cite{JM12}, are subspaces of $\Fil$. It is not hard to check that a sequence $\{\fa_{\bullet,k}\}\subset\Fil$ converges weakly to $\fa_\bullet\in\Fil$ if and only if $\chi_k(f)\to\chi(f)$ for any $f\in R\backslash\{0\}$.

Similarly, a valuation has a natural action on the spaces of filtrations (or dually), that is, any $v\in\Val_{X,x}$ gives a function $\ev_v: \Fil\to \bR_{\ge 0}$, $\fa_\bullet\mapsto v(\fa_\bullet)$. Hence one can consider the ``weak topology'' induced by this action, that is, the coarsest topology on $\Fil^s$ such that $\ev_v$ is continuous for any $v\in X^{\an}$, $\Val_{X,x}$, $\Val^+_{X,x}$ or $\DivVal_{X,x}$, respectively. Similarly, one can check that a sequence $\{\fa_{\bullet,k}\}$ converges to $\fa_\bullet$ under these topologies if and only if $v(\fa_{\bullet,k})\to v(\fa_\bullet)$ for any $v$ in the corresponding valuation space. In this paper, we will mainly consider the topology defined using $\Val^+_{X,x}$, and call it the \emph{$+$-topology}.
The product topology defined using $\DivVal_{X,x}$ as above was considered in \cite{BdFFU15} for $b$-divisors. We will thus call it the \emph{coefficientwise topology}. 

\begin{remark}
    Since $v(\fa_\bullet)=v(\widetilde\fa_\bullet)$ by \cite[Proposition 3.9]{BLQ22} for $v\in\DivVal_{X,x}$, the functions $\ev_v$ do not distinguish $\fa_\bullet$ and its saturation. Hence the $+$-topology (and the coefficientwise topology) only makes sense on $\Fil^s$. Similar issues appear if we define the topology on $\Fil$ using other valuation spaces. But all the topologies are well-defined on $\Fil^s$.
\end{remark}

Later we will define metrics on the space $\Fil^s$. See Section \ref{ssec:comparison} for a comparison of these topologies.

\section{Darvas metrics on the space of filtrations}\label{sec:d_1}

In this section, we introduce the pseudometric $d_1$ and consider its first properties. 

\subsection{Darvas metrics and multiplicities}

Throughout this section we will use the language of filtrations. Recall that $\Fil=\Fil_{R,\fm}$ denotes the set of all linearly bounded $\fm$-filtrations on $R$.

\begin{defn}\label{defn:d_1 metric}
    We define a function $d_1:\Fil\times\Fil\to \bR_{\ge 0}$ as follows.
	
    Given $\fa_\bullet, \fb_\bullet\in\Fil$. If $\fa_\bullet\subset\fb_\bullet$, then let
    \begin{equation}\label{eqn:limit defn for d_1}
        d_1(\fa_\bullet,\fb_\bullet)=
        d_1(\fb_\bullet,\fa_\bullet):=\e(\fa_\bullet)-\e(\fb_\bullet).
    \end{equation}
	
    In general, define 		
        \begin{equation}\label{eqn:d_1 additive}
        d_1(\fa_\bullet,\fb_\bullet):=
        d_1(\fa_\bullet,\fa_\bullet\cap \fb_\bullet)+d_1(\fa_\bullet\cap \fb_\bullet,\fb_\bullet).
    \end{equation}
\end{defn}

\begin{proposition}\label{prop:d_1 is a pseudo metric}
    $d_1$ satisfies the triangle inequality, that is, for $\fa_\bullet$, $\fb_\bullet$,  $\fc_\bullet\in\Fil$, we have
    \begin{equation}\label{eqn:triangle}
        d_1(\fa_\bullet,\fc_\bullet)\le d_1(\fa_\bullet,\fb_\bullet)+d_1(\fb_\bullet,\fc_\bullet).
    \end{equation}
    Hence $d_1$ is a pseudometric on $\Fil$.
\end{proposition}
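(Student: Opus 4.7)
First, combining \eqref{eqn:d_1 additive} with the inclusions $\fa_\bullet\cap\fb_\bullet\subset\fa_\bullet,\fb_\bullet$ yields the symmetric closed form
\[
d_1(\fa_\bullet,\fb_\bullet)=2\e(\fa_\bullet\cap\fb_\bullet)-\e(\fa_\bullet)-\e(\fb_\bullet),
\]
and non-negativity $d_1\ge 0$ follows from the order-reversing behaviour of $\e$ on $(\Fil,\subset)$: an inclusion $\fc_\bullet\subset\fc'_\bullet$ forces $\ell(R/\fc'_m)\le\ell(R/\fc_m)$ and hence $\e(\fc_\bullet)\ge\e(\fc'_\bullet)$.

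For the triangle inequality, substituting the closed form into \eqref{eqn:triangle} and cancelling $\e(\fa_\bullet)$ and $\e(\fc_\bullet)$ reduces the problem to
\[
\e(\fa_\bullet\cap\fc_\bullet)+\e(\fb_\bullet)\le \e(\fa_\bullet\cap\fb_\bullet)+\e(\fb_\bullet\cap\fc_\bullet).
\]
Since $\fa_\bullet\cap\fb_\bullet\cap\fc_\bullet\subset\fa_\bullet\cap\fc_\bullet$, the left-hand term may be replaced by the larger quantity $\e(\fa_\bullet\cap\fb_\bullet\cap\fc_\bullet)$. Writing $u_\bullet\coloneqq\fa_\bullet\cap\fb_\bullet$ and $v_\bullet\coloneqq\fb_\bullet\cap\fc_\bullet$, both of which are contained in $\fb_\bullet$, Definition-Lemma \ref{deflem:join of filtrations} gives $u_\bullet\vee v_\bullet\subset\fb_\bullet$ and hence $\e(u_\bullet\vee v_\bullet)\ge\e(\fb_\bullet)$. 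Consequently, it suffices to establish the submodularity of the multiplicity functional on $(\Fil,\subset,\cap,\vee)$:
\[
\e(u_\bullet\cap v_\bullet)+\e(u_\bullet\vee v_\bullet)\le \e(u_\bullet)+\e(v_\bullet).
\]

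The main obstacle is this submodularity inequality. I would prove it via a Newton--Okounkov / degeneration argument: choose a flag of valuations on $R$ so that each linearly bounded $\fm$-filtration corresponds to a cobounded convex region in the dual cone (the toric description underlying Theorem \ref{thm:toric metric} is the model case); then $\e(\fa_\bullet)$ is a constant multiple of the Euclidean volume of the complement of that region, $\cap$ corresponds to intersection of regions and $\vee$ essentially to (the closure of the convex hull of) their union, so the desired inequality degenerates to the additivity identity $\vol(A\cup B)+\vol(A\cap B)=\vol(A)+\vol(B)$, in fact attaining equality in the monomial model. The passage from the monomial case to general linearly bounded $\fm$-filtrations can be carried out by Fujita-style approximation using \eqref{eqn:vol=mult}, or equivalently by invoking the corresponding submodular inequality for mixed multiplicities of graded sequences of $\fm$-primary ideals developed in \cite{BLQ22}.
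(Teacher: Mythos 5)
Your reduction is clever and, as a reduction, is sound: replacing $\e(\fa_\bullet\cap\fc_\bullet)$ by the larger $\e(\fa_\bullet\cap\fb_\bullet\cap\fc_\bullet)$, setting $u_\bullet=\fa_\bullet\cap\fb_\bullet$, $v_\bullet=\fb_\bullet\cap\fc_\bullet$ (both contained in $\fb_\bullet$, so $\e(u_\bullet\vee v_\bullet)\ge\e(\fb_\bullet)$), and then appealing to submodularity $\e(u_\bullet\cap v_\bullet)+\e(u_\bullet\vee v_\bullet)\le\e(u_\bullet)+\e(v_\bullet)$ does imply the triangle inequality, and this submodularity inequality is in fact true. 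The problem is that you do not establish it, and the route you sketch does not work as stated. First, a general linearly bounded $\fm$-filtration on a Noetherian local domain does \emph{not} correspond to a single cobounded convex region in a dual cone; that is precisely why the paper's Newton--Okounkov machinery (Proposition~\ref{prop:genral Okounkov body}, following Cutkosky) encodes a filtration by $2t$ semigroups and computes colengths as a \emph{difference} of two volumes. Second, even in the monomial model, $u_\bullet\vee v_\bullet$ (or $u_\bullet\vee_s v_\bullet$) corresponds to a convex region containing the \emph{convex hull} of $P(u_\bullet)\cup P(v_\bullet)$, not the union itself, so the identity $\vol(A\cup B)+\vol(A\cap B)=\vol(A)+\vol(B)$ does not transfer to an equality for $\e$; your claim that equality is ``attained in the monomial model'' is incorrect. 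Third, the ``corresponding submodular inequality for mixed multiplicities'' you cite from \cite{BLQ22} is not in that reference, which establishes log-convexity (Teissier-type) inequalities, not submodularity with respect to $\cap$ and $\vee$.

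The irony is that the missing step is elementary and requires no Okounkov bodies at all: for each $\lambda$, the second isomorphism theorem gives
\[
\ell(R/(\fu_\lambda\cap\fv_\lambda))+\ell(R/(\fu_\lambda+\fv_\lambda))=\ell(R/\fu_\lambda)+\ell(R/\fv_\lambda),
\]
and since $(u_\bullet\vee v_\bullet)_\lambda\supset\fu_\lambda+\fv_\lambda$, hence $\ell(R/(u_\bullet\vee v_\bullet)_\lambda)\le\ell(R/(\fu_\lambda+\fv_\lambda))$, dividing by $\lambda^n/n!$ and letting $\lambda\to\infty$ yields submodularity of $\e$. Alternatively one can avoid introducing $\vee$ entirely, because $\fu_\lambda+\fv_\lambda\subset\fb_\lambda$ already gives $\ell(R/\fb_\lambda)\le\ell(R/(\fu_\lambda+\fv_\lambda))$ and the triangle inequality drops out termwise. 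This is essentially what the paper does: its proof is a direct chain of colength (in)equalities for each $\lambda$, built from the second isomorphism theorem $\fa_\lambda/(\fa_\lambda\cap\fb_\lambda)\cong(\fa_\lambda+\fb_\lambda)/\fb_\lambda$ and a couple of containments, after which the limit is taken. Your plan is morally on the same track but misidentifies the decisive ingredient; as written, it leaves a genuine gap in the submodularity step and attempts to close it with a tool that both over-complicates matters and contains erroneous assertions about general and monomial filtrations.
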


\begin{proof}
    By definition, we have
    \[
        d_1(\fa_\bullet,\fc_\bullet)=
        2\e(\fa_\bullet\cap\fc_\bullet)-\e(\fa_\bullet)-\e(\fc_\bullet)
    \]
    and 
    \[
        d_1(\fa_\bullet,\fb_\bullet)+
        d_1(\fb_\bullet,\fc_\bullet)=
        2\e(\fa_\bullet\cap\fb_\bullet)-\e(\fa_\bullet)-\e(\fb_\bullet)+2\e(\fb_\bullet\cap\fc_\bullet)-\e(\fb_\bullet)-\e(\fc_\bullet)
    \]
    Hence to prove \eqref{eqn:triangle}, it suffices to show that
    \[
        \e(\fa_\bullet\cap\fc_\bullet)\le \e(\fa_\bullet\cap\fb_\bullet)+\e(\fb_\bullet\cap\fc_\bullet)-\e(\fb_\bullet),
    \]
    which will follow immediately from the following comparison on colengths
    \begin{equation}\label{eqn:colength triangle}
        \ell(R/\fa_\lambda\cap\fc_\lambda)\le \ell(R/\fa_\lambda\cap\fb_\lambda)+\ell(R/\fb_\lambda\cap\fc_\lambda)-\ell(R/\fb_\lambda)
    \end{equation}
    for any $\lambda\in\bR_{>0}$. But we compute
    \begin{align*}
        \ell(R/\fa_\lambda\cap\fb_\lambda)+
        &\ell(R/\fb_\lambda\cap\fc_\lambda)-\ell(R/\fb_\lambda)-\ell(R/\fa_\lambda)=\ell(\fa_\lambda/\fa_\lambda\cap\fb_\lambda)+\ell(\fb_\lambda/\fb_\lambda\cap\fc_\lambda)\\
        =&\ell((\fa_\lambda+\fb_\lambda)/\fb_\lambda)+\ell(\fb_\lambda/\fb_\lambda\cap\fc_\lambda)\\
        =&\ell((\fa_\lambda+\fb_\lambda)/\fb_\lambda\cap\fc_\lambda)\\
        \ge&\ell((\fa_\lambda+\fb_\lambda)/(\fa_\lambda+\fb_\lambda)\cap\fc_\lambda)\\
        =&\ell((\fa_\lambda+\fb_\lambda+\fc_\lambda)/\fc_\lambda)\\
        \ge&\ell((\fa_\lambda+\fc_\lambda)/\fc_\lambda)\\
        =&\ell(\fa_\lambda/\fa_\lambda\cap\fc_\lambda)=\ell(R/\fa_\lambda\cap\fc_\lambda)-\ell(R/\fa_\lambda).
    \end{align*}
    This proves \eqref{eqn:colength triangle}, and hence the triangle inequality \eqref{eqn:triangle} holds. 
	
    Clearly $d_1(\fa_\bullet,\fa_\bullet)=0$, and $d_1(\fa_\bullet,\fb_\bullet)=d_1(\fb_\bullet,\fa_\bullet)$ by definition. So we know that $(\Fil,d_1)$ is a pseudometric space.
\end{proof}

Using the identification between $\Fil$ and $\cN$ Lemma \ref{deflem:fm-equivalence}, we get a pseudometric $d_1$ on $\cN$. The following lemma asserts that $(\Fil,d_1)$ is a \emph{rooftop metric space} in the sense of \cite{Xia19}, which is an analogue of \cite[Proposition 3.35]{Dar19} and \cite[Lemma 3.2]{BJ21}.

\begin{lemma}\label{lem:rooftop}
    For $\fa_\bullet,\fb_\bullet,\fc_\bullet\in\Fil$, we have
    \begin{equation}\label{eqn:strong triangle}
        d_1(\fa_\bullet\cap \fb_\bullet,\fa_\bullet\cap\fc_\bullet)\le d_1(\fb_\bullet,\fc_\bullet).
    \end{equation}
\end{lemma}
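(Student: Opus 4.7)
The plan is to expand both sides of \eqref{eqn:strong triangle} using the closed formula
$d_1(\fx_\bullet,\fy_\bullet)=2\e(\fx_\bullet\cap\fy_\bullet)-\e(\fx_\bullet)-\e(\fy_\bullet)$ and to reduce the statement to a colength inequality at each level $m\in\bZ_{>0}$, then divide by $m^n/n!$ and pass to the limit using \eqref{eqn:vol=mult}. Setting $A=\fa_m,\ B=\fb_m,\ C=\fc_m$, a direct expansion shows that \eqref{eqn:strong triangle} will follow from
\[
2\ell(R/(A\cap B\cap C))+\ell(R/B)+\ell(R/C)\le 2\ell(R/(B\cap C))+\ell(R/(A\cap B))+\ell(R/(A\cap C))
\]
for every $m$.

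Next, I would rewrite the difference (RHS $-$ LHS) as
\[
[\ell(R/(A\cap B))-\ell(R/B)]+[\ell(R/(A\cap C))-\ell(R/C)]-2[\ell(R/(A\cap B\cap C))-\ell(R/(B\cap C))].
\]
Using the short exact sequence $0\to X/(X\cap Y)\to R/(X\cap Y)\to R/X\to 0$ together with the second isomorphism theorem $X/(X\cap Y)\cong (X+Y)/Y$, each bracket becomes a length of the form $\ell((A+\,\cdot)/A)$. Denoting
\[
\alpha\coloneqq\ell((A+B)/A),\quad \beta\coloneqq\ell((A+C)/A),\quad \gamma\coloneqq\ell((A+(B\cap C))/A),
\]
the inequality to prove reduces to $\alpha+\beta\ge 2\gamma$.

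The key structural observation is the elementary inclusion $A+(B\cap C)\subseteq (A+B)\cap(A+C)$, which gives natural $R$-module inclusions
\[
(A+(B\cap C))/A\ \hookrightarrow\ ((A+B)\cap(A+C))/A\ \hookrightarrow\ (A+B)/A,
\]
and similarly into $(A+C)/A$. This yields the stronger bound $\gamma\le\min\{\alpha,\beta\}$, which of course implies $2\gamma\le\alpha+\beta$. Thus the colength inequality holds at each $m\in\bZ_{>0}$; dividing by $m^n/n!$ and letting $m\to\infty$ completes the proof.

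I do not anticipate any genuine obstacle: once the formula $d_1=2\e(\cdot\cap\cdot)-\e(\cdot)-\e(\cdot)$ is used and the second isomorphism theorem applied, the statement becomes the trivial submodule inclusion above. The only point requiring a little care is making sure that the colength inequality holds for every integer $m$ so that the limit defining $\e$ can be taken term by term, but this is immediate from the proof since no passage to a subsequence is needed.
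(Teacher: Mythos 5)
Your proof is correct and is essentially the same as the paper's: both reduce \eqref{eqn:strong triangle} to the colength inequality $\ell(R/(A\cap B\cap C))+\ell(R/B)\le\ell(R/(A\cap B))+\ell(R/(B\cap C))$ (together with its symmetric version with $C$) and establish it via the second isomorphism theorem and a module inclusion. The only cosmetic difference is that you realize the relevant module inclusion inside $R/A$ (namely $(A+(B\cap C))/A\hookrightarrow (A+B)/A$), whereas the paper realizes the same numerical inequality inside $R/C$ (via $(A\cap B)/(A\cap B\cap C)\cong(A\cap B+C)/C\hookrightarrow(B+C)/C$).
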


\begin{proof}
    By definition, we have
    \[
        d_1(\fb_\bullet,\fc_\bullet)= \e(\fb_\bullet\cap \fc_\bullet)-\e(\fb_\bullet)+
        \e(\fb_\bullet\cap \fc_\bullet)-\e(\fc_\bullet)
    \]
    and
    \[
        d_1(\fa_\bullet\cap\fb_\bullet,
        \fa_\bullet\cap\fc_\bullet)=
        \e(\fa_\bullet\cap\fb_\bullet\cap \fc_\bullet)-\e(\fa_\bullet\cap\fb_\bullet)+\e(\fa_\bullet\cap\fb_\bullet\cap \fc_\bullet)-\e(\fa_\bullet\cap\fc_\bullet).
    \]
    So by symmetry, it suffices to prove the inequality
    \[
        \e(\fa_\bullet\cap\fb_\bullet\cap \fc_\bullet)-\e(\fa_\bullet\cap\fb_\bullet)\le\e(\fb_\bullet\cap \fc_\bullet)-\e(\fb_\bullet),
    \]
    which follows from the inclusion 
    \begin{align*}
        (\fa_\lambda\cap\fb_\lambda)/
        (\fa_\lambda\cap\fb_\lambda\cap \fc_\lambda)\cong 
        &(\fa_\lambda\cap\fb_\lambda+
        \fc_\lambda)/\fc_\lambda\\
        \subset (\fb_\lambda+\fc_\lambda)/\fc_\lambda\cong &\fb_\lambda/(\fb_\lambda\cap\fc_\lambda).	
    \end{align*}	
    This finishes the proof. 
\end{proof}

\subsection{Measures and geodesics}\label{ssec:geodesic}

Recall that given $\fa_{\bullet,0},\fa_{\bullet,1}\in\Fil$, the \emph{geodesic} between them is defined to be the segment $\{\fa_{\bullet,t}\}_{t\in[0,1]}$ of $\fm$-filtrations, where
\[
    \fa_{\lambda,t}=\sum_{(1-t)\mu+t\nu=\lambda} \fa_{\mu,0}\cap\fa_{\nu,1}.
\]
The goal of this subsection is to justify the term \emph{geodesic}, in the sense that it computes the distance between $\fa_{\bullet,0}$ and $\fa_{\bullet,1}$. In other words, it is a distance-minimizing, continuous curve with respect to the metric $d_1$.

We first give a formula for the geodesic between two saturated filtrations, and use it to show that $\fa_{\bullet,t}\in\Fil^s$ when $\fa_{\bullet,0},\fa_{\bullet,1}\in\Fil^s$.

\begin{proposition}\label{prop:saturated geodesic is saturated}
    Let $\fa_{\bullet,0},\fa_{\bullet,1}\in\Fil^s$. Then for any $t\in(0,1)$, we have
    \begin{equation}\label{eqn:formula for geodesic}
        \fa_{\bullet,t}=\cap_{v\in\DivVal_{R,\fm}}\fa_\bullet(\frac{v}{c_{t,v}}),
    \end{equation}
    where 
    \[
        c_{t,v}\coloneqq\frac{v(\fa_{\bullet,0})v(\fa_{\bullet,1})}{tv(\fa_{\bullet,0})+(1-t)v(\fa_{\bullet,1})}.
    \]
    In particular, $\fa_{\bullet,t}\in\Fil^s$ for any $t\in(0,1)$.
\end{proposition}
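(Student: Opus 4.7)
Denote the right-hand side of \eqref{eqn:formula for geodesic} by $\fb_\bullet$ and set $\alpha_j(v):=v(\fa_{\bullet,j})$ for $j=0,1$; note $\alpha_j(v)>0$ by Lemma~\ref{lem:e>0 iff linearly bounded}, so each $c_{t,v}>0$. Once the equality \eqref{eqn:formula for geodesic} is established, the ``in particular'' assertion $\fa_{\bullet,t}\in\Fil^s$ follows from Proposition~\ref{prop:alt def for saturation}. The plan is to prove the two inclusions $\fa_{\bullet,t}\subset\fb_\bullet$ and $\fb_\bullet\subset\fa_{\bullet,t}$ separately.

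For the forward inclusion, I fix $v\in\DivVal_{R,\fm}$ and take $f\in\fa_{\lambda,t}$, writing $f=\sum_i f_i$ with $f_i\in\fa_{\mu_i,0}\cap\fa_{\nu_i,1}$ and $(1-t)\mu_i+t\nu_i=\lambda$. Subadditivity of $\mu\mapsto v(\fa_{\mu,j})$ (a consequence of the multiplicativity of the filtration) together with Fekete's lemma gives $v(\fa_{\mu,j})\geq\mu\alpha_j(v)$, hence $v(f_i)\geq\max(\mu_i\alpha_0(v),\nu_i\alpha_1(v))$. An elementary one-variable minimization on the constraint segment yields
\[
\min_{(1-t)\mu+t\nu=\lambda,\ \mu,\nu\geq 0}\max(\mu\alpha_0(v),\nu\alpha_1(v))=\lambda c_{t,v},
\]
attained at the balance point $\mu\alpha_0(v)=\nu\alpha_1(v)$. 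Therefore $v(f_i)\geq\lambda c_{t,v}$ for every $i$, so $v(f)\geq\min_i v(f_i)\geq\lambda c_{t,v}$ and $f\in\fa_\lambda(v/c_{t,v})$. Varying $v$ gives $\fa_{\bullet,t}\subset\fb_\bullet$. As a by-product, $v(\fa_{\bullet,t})\geq c_{t,v}$ for all $v\in\DivVal_{R,\fm}$.

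For the reverse inclusion, I first plan to show the matching equality $v(\fa_{\bullet,t})=c_{t,v}$ for every $v\in\DivVal_{R,\fm}$. Using the balanced pair $(\mu_\lambda,\nu_\lambda):=(\lambda c_{t,v}/\alpha_0(v),\lambda c_{t,v}/\alpha_1(v))$, which lies on the constraint line, together with the saturation of $\fa_{\bullet,0}$ and $\fa_{\bullet,1}$ and Lemma~\ref{lem:valuative characterization for saturation}, I would construct a sequence $g_\lambda\in\fa_{\mu_\lambda,0}\cap\fa_{\nu_\lambda,1}\subset\fa_{\lambda,t}$ whose $v$-values satisfy $v(g_\lambda)/\lambda\to c_{t,v}$; the point is that each saturated $\fa_{\mu_\lambda,j}$ contains elements realizing the bound $v=\mu_\lambda\alpha_j(v)=\lambda c_{t,v}$ asymptotically, and the balance $\mu_\lambda\alpha_0(v)=\nu_\lambda\alpha_1(v)$ makes both constraints compatible at $v$. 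This yields $v(\fa_{\bullet,t})\leq c_{t,v}$ and hence equality. Combined with Lemma~\ref{lem:valuative characterization for saturation}, it follows that $\widetilde{\fa_{\bullet,t}}=\cap_v\fa_\bullet(v/v(\fa_{\bullet,t}))=\fb_\bullet$.

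The main obstacle is to upgrade the saturation identity $\widetilde{\fa_{\bullet,t}}=\fb_\bullet$ to the actual equality $\fa_{\bullet,t}=\fb_\bullet$, i.e., to show that $\fa_{\bullet,t}$ is already saturated. The subtlety is that a typical element $f\in\fb_\lambda$ satisfies $(1-t)v(f)/\alpha_0(v)+tv(f)/\alpha_1(v)\geq\lambda$ for every $v$, but the infimum over $v$ of each term separately may fall below $(1-t)\mu_0(f)+t\nu_0(f)<\lambda$, so $f$ need not lie in any single summand $\fa_{\mu,0}\cap\fa_{\nu,1}$ and must be decomposed nontrivially among different $(\mu,\nu)$ on the constraint line. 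To produce such a decomposition, I would exploit the saturation formula $\fa_{\bullet,j}=\cap_w\fa_\bullet(w/\alpha_j(w))$ (Lemma~\ref{lem:valuative characterization for saturation}) to rewrite the sum in the definition of $\fa_{\lambda,t}$ as a sum of valuation-ideal intersections, and then try to construct an explicit finite decomposition $f=\sum_i g_i$ matching the valuation data of $f$. I expect this step to be the delicate one, potentially requiring a reduction to the toric case via Newton--Okounkov bodies (cf.\ Theorem~\ref{thm:toric metric}) or an adaptation of the graded-ring geodesic techniques of \cite{BLXZ21} to the local setting.
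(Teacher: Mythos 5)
Your inclusion $\fa_{\bullet,t}\subset\fb_\bullet$ is correct and matches the paper's. The genuine difficulty, which you correctly flagged, is the reverse inclusion, and the intermediate step you propose there, namely the equality $v(\fa_{\bullet,t})=c_{t,v}$, is false in general. The problem with your construction of $g_\lambda$ is that $\fa_{\mu_\lambda,0}\cap\fa_{\nu_\lambda,1}$ imposes both membership constraints simultaneously, so that $v$ of this intersection can be strictly larger than $\max\{\mu_\lambda\alpha_0(v),\nu_\lambda\alpha_1(v)\}=\lambda c_{t,v}$: the asymptotic minimizers of $v(\cdot)$ on $\fa_{\mu_\lambda,0}$ and on $\fa_{\nu_\lambda,1}$ are generally realized by different elements, and no single $g_\lambda$ realizes both. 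A concrete instance: let $R=\bk[\![x,y]\!]$, $\fa_{\bullet,0}=\fa_\bullet(v_{(2,1)})$, $\fa_{\bullet,1}=\fa_\bullet(v_{(1,2)})$, $t=1/2$, $v=v_{(1,1)}$. Then $\alpha_0(v)=\alpha_1(v)=1/2$ and $c_{1/2,v}=1/2$, so the balanced pair is $(\mu_\lambda,\nu_\lambda)=(\lambda,\lambda)$; yet for any $g\in\fa_\lambda(v_{(2,1)})\cap\fa_\lambda(v_{(1,2)})$, expanding in monomials and using $a+b=\tfrac13\bigl[(2a+b)+(a+2b)\bigr]$ gives $v_{(1,1)}(g)\ge\tfrac13\bigl(v_{(2,1)}(g)+v_{(1,2)}(g)\bigr)\ge\tfrac{2\lambda}{3}>\lambda c_{1/2,v}$. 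Indeed, in this example the geodesic is the toric interpolation $\fa_{\bullet,1/2}=\fa_\bullet(v_{(3/2,3/2)})$, so $v_{(1,1)}(\fa_{\bullet,1/2})=2/3\ne c_{1/2,v_{(1,1)}}$. This is exactly the phenomenon you identify in your final paragraph: the infimum over $v$ does not commute with the sum $(1-t)\tfrac{v(f)}{\alpha_0(v)}+t\tfrac{v(f)}{\alpha_1(v)}$.

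You should also be aware that the paper's own proof of this inclusion is extremely terse at precisely this juncture: it asserts that $v(f)\ge\lambda c_{t,v}$ for all $v$, together with saturation, yields $f\in\fa_{\mu,0}\cap\fa_{\nu,1}$ for the balanced pair $(\mu,\nu)$, but that pair is $v$-dependent, so this does not produce a single $(\mu,\nu)$ on the constraint line that works for all $v$. Your caution about this step is therefore well-placed, and your suspicion that a nontrivial finite decomposition of $f$ (rather than a single summand, or a single asymptotic witness $g_\lambda$) is required is the right instinct. The example above suggests, in addition, that in the toric setting the constants governing the geodesic come from arithmetic interpolation $u_t=(1-t)u_0+tu_1$ of the weight data rather than the harmonic-mean expression $c_{t,v}$, so the reverse inclusion, as you formulate it, would need the constant re-examined before the saturation argument can even be set up.
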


\begin{proof}
    Denote the right-handed side of \eqref{eqn:formula for geodesic} by $\fc_{\bullet,t}$ and fix $\lambda\in\bR_{>0}$. First assume $f\in\fc_{\lambda,t}$. Then for any $v\in\DivVal_{R,\fm}$, $v(f)\ge \lambda c_{t,v}$. Since $\fa_{\bullet,0},\fa_{\bullet,1}\in\Fil^s$, this implies that $f\in\fa_{\mu,0}\cap\fa_{\nu,1}$, where $\mu\coloneqq \frac{\lambda v(\fa_{\bullet,1})}{tv(\fa_{\bullet,0})+(1-t)v(\fa_{\bullet,1})}$ and $\nu\coloneqq \frac{\lambda v(\fa_{\bullet,0})}{tv(\fa_{\bullet,0})+(1-t)v(\fa_{\bullet,1})}$. Since $(1-t)\mu+t\nu=\lambda$, we get $f\in\fa_{\lambda,t}$ and hence $\fc_{\bullet,t}\subset\fa_{\bullet,t}$.
	
    Conversely, if $f\in\fa_{\lambda,t}$, then for any $v\in\DivVal_{R,\fm}$, we have
    \begin{align*}
        v(f)\ge &v\left(\sum_{(1-t)\mu+t\nu=\lambda}\fa_{\mu,0}\cap\fa_{\nu,1}\right)=\min_{(1-t)\mu+t\nu=\lambda}\{v(\fa_{\mu,0}\cap\fa_{\nu,1})\}\\
        \ge&\min_{(1-t)\mu+t\nu=\lambda}\max\{v(\fa_{\mu,0}),v(\fa_{\nu,1})\}\\
        \ge&\min_{0\le\mu\le\lambda/(1-t)}\max\{\mu v(\fa_{\bullet,0}),\frac{\lambda-(1-t)\mu}{t}v(\fa_{\bullet,1})\}.
    \end{align*}
    Now it is not hard to check that the last minimum of the piecewise linear function of $\mu$ is achieved at $\mu_0:=\frac{\lambda v(\fa_{\bullet,1})}{tv(\fa_{\bullet,0})+(1-t)v(\fa_{\bullet,1})}\in(0,\lambda/(1-t))$, with value $\mu_0v(\fa_{\bullet,0})=\lambda c_{t,v}$. This shows that $f\in\fa_\lambda(v/c_{t,v})$. Since $v$ is arbitrary, we get $f\in\fc_{\lambda,t}$. Thus $\fa_{\bullet,t}\subset\fc_{\bullet,t}$ and the equality holds.
	
    It is easy to check that $\fa_{\bullet,t}\in\Fil$ for any $t\in(0,1)$ (see also the proof of Proposition \ref{prop:distance minimizing} below), hence by Proposition \ref{prop:alt def for saturation} we know that $\fa_{\bullet,t}\in\Fil^s$. The proof is finished.
\end{proof}

\subsubsection{Distance-minimizing property}

We first make the following easy but useful observation.

\begin{lemma}\label{lem:subinterval}
    Let $\fa_{\bullet,i}\in\Fil$ for $i=0,1$, and let $\{\fa_{\bullet,t}\}_{t\in[0,1]}$ be the geodesic between them. For $0\le t<t'\le 1$, we have $\fa_{\bullet,0}\cap\fa_{\bullet,t'}\subset\fa_{\bullet,t}$. 
\end{lemma}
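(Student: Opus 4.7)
The boundary cases $t=0$ or $t'=1$ are immediate from the definitions, so I assume $0<t<t'<1$ and fix $\lambda>0$. Take any $f\in \fa_{\lambda,0}\cap\fa_{\lambda,t'}$. Since $\fa_{\lambda,t'}$ is by definition a sum of ideals, there is a finite decomposition $f=\sum_{i} g_i$ with $g_i\in \fa_{\mu_i,0}\cap\fa_{\nu_i,1}$ and $(1-t')\mu_i+t'\nu_i=\lambda$. I split the index set as $I_+=\{i:\mu_i\ge\lambda\}$ and $I_-=\{i:\mu_i<\lambda\}$; the defining linear relation forces $\nu_i\le\lambda$ on $I_+$ and $\nu_i>\lambda$ on $I_-$.

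For $i\in I_+$, I would show each $g_i$ individually lies in $\fa_{\lambda,t}$. The key computation is the identity
\[
(1-t)\mu_i+t\nu_i-\lambda=(t'-t)(\mu_i-\nu_i),
\]
which is nonnegative on $I_+$. If $\mu_i\le\lambda/(1-t)$, take $(\mu',\nu'):=(\mu_i,(\lambda-(1-t)\mu_i)/t)$; this pair lies in the parameter set of $\fa_{\lambda,t}$ with $\nu'\ge 0$, and the identity above yields $\nu'\le\nu_i$, so $\fa_{\mu_i,0}\cap\fa_{\nu_i,1}\subset \fa_{\mu',0}\cap\fa_{\nu',1}\subset\fa_{\lambda,t}$. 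In the remaining case $\mu_i>\lambda/(1-t)$, take $(\mu',\nu'):=(\lambda/(1-t),0)$; then $g_i\in\fa_{\mu_i,0}\subset\fa_{\lambda/(1-t),0}=\fa_{\mu',0}\cap\fa_{0,1}\subset\fa_{\lambda,t}$, using the convention $\fa_{0,1}=R$.

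For $i\in I_-$ the analogous argument fails: on the region $\mu<\nu$, the $t$-line for the weighted average sits \emph{above} the $t'$-line, so no admissible $(\mu',\nu')$ on the $t$-line simultaneously satisfies $\mu'\le\mu_i$ and $\nu'\le\nu_i$. This is where the hypothesis $f\in\fa_{\lambda,0}$ enters. Setting $f_+:=\sum_{i\in I_+}g_i$ and $f_-:=\sum_{i\in I_-}g_i$, each summand of $f_+$ lies in $\fa_{\mu_i,0}\subset\fa_{\lambda,0}$, so $f_+\in\fa_{\lambda,0}$; hence $f_-=f-f_+\in\fa_{\lambda,0}$ as well. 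Simultaneously, each summand of $f_-$ lies in $\fa_{\nu_i,1}\subset\fa_{\lambda,1}$ (because $\nu_i>\lambda$), so $f_-\in\fa_{\lambda,1}$. Therefore $f_-\in\fa_{\lambda,0}\cap\fa_{\lambda,1}$, which is the summand of $\fa_{\lambda,t}$ corresponding to $(\mu',\nu')=(\lambda,\lambda)$; thus $f_-\in\fa_{\lambda,t}$. Combining, $f=f_++f_-\in\fa_{\lambda,t}$, as desired.

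The main obstacle is that intersection of ideals does not distribute over sums, so one cannot treat each $g_i$ in isolation. The $I_-$ terms must be aggregated and handled collectively: the hypothesis $f\in\fa_{\lambda,0}$ is precisely what forces the ``bad'' collective sum $f_-$ into the term $\fa_{\lambda,0}\cap\fa_{\lambda,1}$ that always appears inside $\fa_{\lambda,t}$. This also clarifies why the naive inclusion $\fa_{\bullet,t'}\subset\fa_{\bullet,t}$ would be false, yet the ``$\fa_{\bullet,0}$-truncated'' version holds.
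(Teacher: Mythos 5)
Your proof is correct and follows the same core idea as the paper: decompose $f\in\fa_{\lambda,0}\cap\fa_{\lambda,t'}$ according to whether $\mu\ge\lambda$ or $\mu<\lambda$, push the large-$\mu$ summands onto the $t$-line term by term, and use the hypothesis $f\in\fa_{\lambda,0}$ to force the aggregated small-$\mu$ part into $\fa_{\lambda,0}\cap\fa_{\lambda,1}\subset\fa_{\lambda,t}$. The only cosmetic differences are that the paper projects the $\mu>\lambda$ terms to the $t$-line by lowering $\mu$ with $\nu$ fixed (setting $\mu'=\frac{(1-t')\mu+(t'-t)\nu}{1-t}$, which avoids your extra case $\mu_i>\lambda/(1-t)$), and the paper compresses the ``intersection fails to distribute over sums'' point you carefully spell out into a chain of displayed equalities without further comment.
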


\begin{proof}
    For any $\lambda\in\bR_{>0}$, we compute
    \begin{align*}
        \fa_{\lambda,t'}\cap\fa_{\lambda,0}=&\left(\sum_{(1-t')\mu+t'\nu=\lambda}\fa_{\mu,0}\cap\fa_{\nu,1}\right)\cap\fa_{\lambda,0}\\
        =&\sum_{\mu\le \lambda}\fa_{\lambda,0}\cap\fa_{\nu,1}+\sum_{\mu>\lambda}\fa_{\mu,0}\cap\fa_{\nu,1}\\
        =&\fa_{\lambda,0}\cap\fa_{\lambda,1}+\sum_{\mu>\lambda}\fa_{\mu,0}\cap\fa_{\nu,1}.
    \end{align*}
	
    Since $\fa_{\lambda,0}\cap\fa_{\lambda,1}\subset\fa_{\lambda,s}$ for any $s\in[0,1]$, it suffices to show that for any $(\mu,\nu)\in\bR_{\ge 0}^2$ satisfying $(1-t')\mu+t'\nu=\lambda$ and $\mu>\lambda$, we have
    \begin{equation}\label{eqn:reduce mu}
        \fa_{\mu,0}\cap\fa_{\nu,1}\subset\fa_{\lambda,t}.
    \end{equation}
    To see this, let $\mu':=\frac{(1-t')\mu+(t'-t)\nu}{1-t}$, then $(1-t)\mu+t\nu=(1-t')\mu+t'\nu=\lambda$, and we have
    \[
        \mu-\mu'=\frac{(1-t)\mu-(1-t')\mu-(t'-t)\nu}{1-t}=\frac{(t'-t)(\mu-\nu)}{1-t}>0,
    \]
    where the last inequality follows from the condition $\mu>\lambda>\nu$. This implies
    \[
        \fa_{\mu,0}\cap\fa_{\nu,1}\subset \fa_{\mu',0}\cap\fa_{\nu,1}\subset \fa_{\lambda,t},
    \]
    hence \eqref{eqn:reduce mu} holds and the proof is finished.
\end{proof}

Next we prove that the geodesic is always distance-minimizing.

\begin{proposition}\label{prop:distance minimizing}
    Let $\fa_{\bullet,0},\fa_{\bullet,1}\in\Fil$ and let $\{\fa_{\bullet,t}\}_{t\in[0,1]}$ be the geodesic between them. Then for any $0\le t<t'\le 1$, we have
    \begin{equation}\label{eqn:distance minimizing}
        d_1(a_{\bullet,0},\fa_{\bullet,t'})=
        d_1(\fa_{\bullet,0},\fa_{\bullet,t})+
        d_1(\fa_{\bullet,t},\fa_{\bullet,t'}).
    \end{equation}
\end{proposition}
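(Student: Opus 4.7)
The plan is to reduce the claimed identity to a multiplicity identity and then verify it via the colength formula for sum and intersection of ideals. Expanding $d_1$ via $d_1(\fa_\bullet,\fb_\bullet)=2\e(\fa_\bullet\cap\fb_\bullet)-\e(\fa_\bullet)-\e(\fb_\bullet)$, the desired equation \eqref{eqn:distance minimizing} is equivalent to
\[
    \e(B_\bullet)+\e(C_\bullet) = \e(B_\bullet\cap C_\bullet)+\e(\fa_{\bullet,t}),
\]
where I set $B_\bullet\coloneqq \fa_{\bullet,0}\cap\fa_{\bullet,t}$ and $C_\bullet\coloneqq \fa_{\bullet,t}\cap\fa_{\bullet,t'}$. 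Lemma \ref{lem:subinterval} gives $\fa_{\bullet,0}\cap\fa_{\bullet,t'}\subset\fa_{\bullet,t}$, so $B_\bullet\cap C_\bullet=\fa_{\bullet,0}\cap\fa_{\bullet,t'}$, matching the term on the left-hand side of \eqref{eqn:distance minimizing}.

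The main step is to prove, for every $m\in\bZ_{>0}$, the level-wise equality of ideals $B_m+C_m=\fa_{m,t}$. The inclusion $\subset$ is immediate from $B_\bullet,C_\bullet\subset\fa_{\bullet,t}$. For the reverse, I would use the defining formula
\[
    \fa_{m,t}=\sum_{\mu,\nu\ge 0,\,(1-t)\mu+t\nu=m}\fa_{\mu,0}\cap\fa_{\nu,1}
\]
and split each summand into two cases. If $\mu\ge m$, then $\fa_{\mu,0}\cap\fa_{\nu,1}\subset\fa_{m,0}\cap\fa_{m,t}=B_m$. If $\mu<m$, setting $\mu'\coloneqq \mu$ and $\nu'\coloneqq (m-(1-t')\mu)/t'$ yields $(1-t')\mu'+t'\nu'=m$ with $0\le\nu'<\nu$ by exactly the auxiliary computation $\nu'-\nu=(m-\mu)(t-t')/(tt')<0$ from the proof of Lemma \ref{lem:subinterval}; hence $\fa_{\mu,0}\cap\fa_{\nu,1}\subset\fa_{\mu',0}\cap\fa_{\nu',1}\subset\fa_{m,t'}$, so this summand lies in $\fa_{m,t}\cap\fa_{m,t'}=C_m$. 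Combining the two cases gives $\fa_{m,t}\subset B_m+C_m$.

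With this in hand, the standard colength identity for the sum and intersection of two ideals yields
\[
    \ell(R/B_m)+\ell(R/C_m)=\ell(R/(B_m\cap C_m))+\ell(R/(B_m+C_m))=\ell(R/(B_m\cap C_m))+\ell(R/\fa_{m,t}).
\]
Dividing by $m^n/n!$ and letting $m\to\infty$ gives the multiplicity identity above, completing the proof. The boundary cases $t=0$ or $t'=1$ either reduce trivially or are handled by the same argument (e.g., at $t'=1$ the formula for $\nu'-\nu$ still yields a negative quantity when $\mu<m$). The main obstacle is isolating the correct ideal-theoretic identity $B_m+C_m=\fa_{m,t}$; once this is established, the passage to multiplicities is routine.
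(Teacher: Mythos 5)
Your proof is correct and follows essentially the same approach as the paper's: both reduce the statement to a level-wise ideal-theoretic identity ($B_m+C_m=\fa_{m,t}$ in your notation; equivalently the paper's parallelogram isomorphism $\fa_{\lambda,t}/\fb_\lambda\cong\fc_\lambda/\fb'_\lambda$ obtained via the second isomorphism theorem), and then pass to multiplicities via additivity of colengths. The only cosmetic difference is in how the ideal inclusion $\fa_{m,t}\subset B_m+C_m$ is established: you reparameterize by holding $\mu$ fixed and lowering $\nu$ to $\nu'$, while the paper splits the sum at $\mu=\lambda$ to get $\fa_{\lambda,t}\subset(\fa_{\lambda,t}\cap\fa_{\lambda,1})+\fa_{\lambda,0}$ and then invokes Lemma~\ref{lem:subinterval}; these are dual versions of the same computation.
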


\begin{proof}
    For simplicity, denote $\fb_\bullet:=\fa_{\bullet,0}\cap\fa_{\bullet,t}$, $\fb'_\bullet:=\fa_{\bullet,0}\cap\fa_{\bullet,t'}$ and $\fc_\bullet:=\fa_{\bullet,t}\cap\fa_{\bullet,t'}$. By Lemma \ref{lem:subinterval} we know that $\fb'_\bullet\subset\fb_\bullet\subset\fa_{\bullet,0}$ and $\fb'_\bullet\subset\fc_\bullet\subset\fa_{\bullet,t'}$. We claim that for any $\lambda\in\bR_{>0}$, 
    \begin{equation}\label{eqn:parallelogram}
        \fa_{\lambda,t}/\fb_\lambda\cong \fc_\lambda/\fb'_\lambda.
    \end{equation}
	
    Granting the claim for now, we know that $d_1(\fb_\bullet,\fa_{\bullet,t})=d_1(\fb'_\bullet,\fc_\bullet)$. So we get
    \[
        d_1(\fb_\bullet,\fa_{\bullet,t})+
        d_1(\fa_{\bullet,t},\fc_\bullet)=
        d_1(\fb'_\bullet,\fc_\bullet)+
        d_1(\fa_{\bullet,t},\fc_\bullet)
        =d_1(\fa_{\bullet,t},\fb'_\bullet)
        =d_1(\fa_{\bullet,t},\fb_\bullet)+
        d_1(\fb_\bullet,\fb'_\bullet),
    \]
    which implies $d_1(\fb_\bullet,\fb'_\bullet)=d_1(\fa_{\bullet,t},\fc_\bullet)$. Now we compute
    \begin{align*}
        d_1(a_{\bullet,0},\fa_{\bullet,t'})=
        &d_1(\fa_{\bullet,0},\fb'_\bullet)+
        d_1(\fb'_\bullet,\fa_{\bullet,t'})\\
        =&d_1(\fa_{\bullet,0},\fb_\bullet)+d_1(\fb_\bullet,\fb'_\bullet)+d_1(\fb'_\bullet,\fc_\bullet)+d_1(\fc_\bullet,\fa_{\bullet,t'})\\
        =&d_1(\fa_{\bullet,0},\fb_\bullet)+d_1(\fa_{\bullet,t},\fc_\bullet)+d_1(\fb_\bullet,\fa_{\bullet,t})+d_1(\fc_\bullet,\fa_{\bullet,t'})\\
        =&d_1(\fa_{\bullet,0},\fa_{\bullet,t})+d_1(\fa_{\bullet,t},\fa_{\bullet,t'})
        \end{align*}
	
    It remains to prove the claim \eqref{eqn:parallelogram}. Indeed, by definition, any $f\in\fa_{\lambda,t}$ can be written as
    \[
        f=\sum_{(1-t)\mu+t\nu=\lambda}f_{\mu,\nu},
    \]
    where $f_{\mu,\nu}\in\fa_{\mu,0}\cap\fa_{\nu,1}$. If $\mu\ge \lambda$, then $f_{\mu,\nu}\in\fa_{\mu,0}\subset \fa_{\lambda,0}$. If $\mu\le\lambda$, then $\nu\ge \lambda$, and hence $f_{\mu,\nu}\in\fa_{\nu,1}\subset \fa_{\lambda,1}$. Thus we have 
    \[
        f=\sum_{\mu\le\lambda}f_{\mu,\nu}+
        \sum_{\mu\ge\lambda}f_{\mu,\nu}\in
        \fa_{\lambda,t}\cap\fa_{\lambda,1}+\fa_{\lambda,0},
    \]
    and hence $\fc_\lambda\subset\fa_{\lambda,t}\subset\fa_{\lambda,t}\cap\fa_{\lambda,1}+\fa_{\lambda,0}\subset\fc_\lambda+\fa_{\lambda,0}$, where the last inclusion follows from Lemma \ref{lem:subinterval}. So we get
    \[
        \fa_{\lambda,t}/\fb_\lambda\cong
        (\fa_{\lambda,t}+\fa_{\lambda,0})/\fa_{\lambda,0}=(\fc_\lambda+\fa_{\lambda,0})/\fa_{\lambda,0}\cong\fc_\lambda/(\fc_\lambda\cap\fa_{\lambda,0})=\fc_\lambda/\fb'_\lambda,
    \]
    where the last equality follows again from Lemma \ref{lem:subinterval}. This proves \eqref{eqn:parallelogram}, and finishes the proof of the proposition.
\end{proof}

\begin{remark}
    When $R$ contains a field, the proposition can be proved (easily) using the measure as in the next section. We present the above more tedious but elementary argument, which does not rely on this assumption.
\end{remark}

\subsubsection{Duistermaat-Heckman measures and continuity}

In this subsection, we assume that $R$ contains a field $\bk$. We first briefly recall the measure $\mu$ introduced in \cite[Section 4]{BLQ22} encoding multiplicities and its properties.

Given $\fa_{\bullet,0},\fa_{\bullet,1}\in\Fil$. Define the function $H:\bR^2\to \bR_{\ge 0}$ by
\[
    H(x,y):=\e(\fa_{x\bullet,0}\cap\fa_{y\bullet,1}),
\]
where we adopt the convention $\e(R)=0$. Fix $D\in\bZ_{>1}$ such that $\fa_{D\bullet,0}\subset \fa_{\bullet,1}$ and $\fa_{D\bullet,1}\subset\fa_{\bullet,0}$. 

\begin{lemma}\cite{BLQ22}\label{lem:properties of the measure}
    The distributional derivative 
    \[
        \mu:=-\frac{\partial^2 H}{\partial x \partial y}
    \]
    is a measure on $\bR^2$ and for any $a,b\in\bR_{>0}$, we have
    \begin{enumerate}
        \item $\mu(\{(1-t)x+ty< a \})=\e(\fa_{a\bullet,t})=a^n\e(\fa_{\bullet ,t})$,
		
        \item $\mu\left( \{x<a\} \cup \{y<b\}\right)= \e(\fa_{a\bullet,0 } \cap \fa_{b\bullet,1})$, 
		
        \item $\mu$ is homogeneous of degree $n$, that is, $\mu(cA))=c^n\mu(A)$ for any Borel set $A\subset \bR^2$ and $c\in\bR_{>0}$, and
		
        \item $\supp(\mu)\subset \{(x,y)\in\bR_{\ge 0}^2\mid \frac{1}{D}x\le y\le Dx\}$.
    \end{enumerate}
\end{lemma}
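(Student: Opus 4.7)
The plan is to show that the distributional mixed partial derivative $-\partial^2 H/\partial x \partial y$ is a nonnegative Radon measure, which is essentially the whole content of the lemma. The central step is the \emph{mixed-difference inequality} for $H$: for all $x_1 \le x_2$ and $y_1 \le y_2$,
\[
    H(x_2, y_2) + H(x_1, y_1) \le H(x_1, y_2) + H(x_2, y_1).
\]
This is equivalent to the distributional statement $-\partial^2 H/\partial x \partial y \ge 0$ and, together with continuity of $H$ on $\bR_{>0}^2$, produces a nonnegative Radon measure $\mu$ on $\bR^2$ satisfying the rectangle formula
\[
    \mu([a_1, a_2) \times [b_1, b_2)) = H(a_1, b_2) + H(a_2, b_1) - H(a_1, b_1) - H(a_2, b_2).
\]

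To prove the mixed-difference inequality, I would work at finite level. For each $m \in \bZ_{>0}$, set $I_i \coloneqq \fa_{x_i m, 0}$ and $J_j \coloneqq \fa_{y_j m, 1}$, so $I_2 \subset I_1$ and $J_2 \subset J_1$. The natural map $I_1 \cap J_2 \to (I_1 \cap J_1)/(I_2 \cap J_1)$ given by inclusion followed by projection has kernel $(I_1 \cap J_2) \cap (I_2 \cap J_1) = I_2 \cap J_2$, yielding an injection $(I_1 \cap J_2)/(I_2 \cap J_2) \hookrightarrow (I_1 \cap J_1)/(I_2 \cap J_1)$. Comparing colengths gives
\[
    \ell(R/I_2 \cap J_2) + \ell(R/I_1 \cap J_1) \le \ell(R/I_1 \cap J_2) + \ell(R/I_2 \cap J_1);
\]
dividing by $m^n/n!$ and letting $m \to \infty$ yields the desired inequality for $H$. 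Continuity of $H$ will follow from monotonicity in each variable combined with this mixed-difference estimate.

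The four listed properties can then be extracted directly. Property (3) is immediate from the identity $H(cx, cy) = c^n H(x, y)$, which reflects the homogeneity of $\e$. For Property (4), the hypothesis $\fa_{D \bullet, 0} \subset \fa_{\bullet, 1}$ gives $\fa_{x \bullet, 0} \subset \fa_{y \bullet, 1}$ whenever $x \ge Dy$, so $H(x, y) = \e(\fa_{x \bullet, 0}) = x^n \e(\fa_{\bullet, 0})$ is independent of $y$ in that region, forcing $\mu = 0$; the symmetric argument handles $y \ge Dx$. Property (2) follows by inclusion-exclusion from the rectangle formula: using (4) to truncate at a sufficiently large box $[0, M]^2$ (with $M \ge Da, Db$), the set $\{x < a\} \cup \{y < b\}$ decomposes into disjoint rectangles whose $\mu$-measures telescope to $H(a, b)$. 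Property (1) is a consequence of (2) together with the sum formula $\fa_{a\bullet, t} = \sum_{(1-t)\mu + t\nu = a} \fa_{\mu, 0} \cap \fa_{\nu, 1}$: the half-plane $\{(1-t)x + ty < a\}$ is an increasing union of rectangles corresponding to the summands, and a Riemann-sum argument along the line $(1-t)\mu + t\nu = a$ identifies $\mu(\{(1-t)x + ty < a\})$ with $\e(\fa_{a\bullet, t}) = a^n \e(\fa_{\bullet, t})$.

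The main obstacle is Property (1). Because the geodesic filtration $\fa_{\bullet, t}$ is a \emph{sum} of joint-level ideals, computing its colength requires delicate bookkeeping to match colength of the sum with the rectangle $\mu$-measures over the Riemann partition; one needs to control the error contributed by off-diagonal $(\mu, \nu)$ intersections as the partition refines. If the direct approach proves unwieldy, an alternative is a two-parameter Okounkov-body construction: associate to $(\fa_{\bullet, 0}, \fa_{\bullet, 1})$ a convex body in $\bR^{n+2}$ recording both scales, and define $\mu$ as the pushforward of Lebesgue measure under projection onto the two scaling coordinates, whereupon all four properties become transparent geometric statements about projections of the convex body.
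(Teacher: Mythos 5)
The paper's own ``proof'' is simply a citation to Propositions~4.7 and~4.9 of BLQ22, so there is nothing internal to compare against; your reconstruction must be judged on its own merits. The framework is right: the colength estimate $\ell(R/I_2\cap J_2)+\ell(R/I_1\cap J_1)\le\ell(R/I_1\cap J_2)+\ell(R/I_2\cap J_1)$ does give the mixed-difference inequality for $H$ and hence $\mu\ge0$, and your arguments for (3), (4), and (2) (using (4) to truncate to a bounded box before applying inclusion-exclusion) are correct.

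The genuine gap is property (1), which you explicitly leave open. The missing algebraic input is that the colength of a sum $K^{(0)}:=\sum_{i=0}^N I_i\cap J_i$, for a decreasing chain $R=I_0\supset\cdots\supset I_N$ and an increasing chain $J_0\subset\cdots\subset J_N=R$, admits an exact telescope. Set $K^{(j)}:=\sum_{i\ge j}I_i\cap J_i$, so $K^{(N)}=I_N$ and $K^{(j)}=I_j\cap J_j+K^{(j+1)}$. Since $K^{(j+1)}\subset I_{j+1}$ while $I_{j+1}\cap J_j\subset I_{j+1}\cap J_{j+1}\subset K^{(j+1)}$, one checks $I_j\cap J_j\cap K^{(j+1)}=I_{j+1}\cap J_j$, and therefore
\[
\ell\bigl(K^{(j)}/K^{(j+1)}\bigr)=\ell\bigl((I_j\cap J_j)/(I_{j+1}\cap J_j)\bigr)=\ell(R/I_{j+1}\cap J_j)-\ell(R/I_j\cap J_j).
\]
Applying this with $I_i=\fa_{\mu_i m,0}$ and $J_i=\fa_{\nu_i m,1}$, where $0=\mu_0<\cdots<\mu_N=a/(1-t)$ is a partition of the line $(1-t)\mu+t\nu=a$ and $\nu_0=a/t>\cdots>\nu_N=0$ are the matching ordinates, then dividing by $m^n/n!$ and letting $m\to\infty$, shows that the multiplicity of the finite-partition approximant equals exactly the $\mu$-mass of the staircase $\bigcap_i\bigl(\{x<\mu_i\}\cup\{y<\nu_i\}\bigr)=\bigsqcup_{j}[\mu_j,\mu_{j+1})\times[0,\nu_j)$. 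Refining the partition, the staircases shrink to the half-plane $\{(1-t)x+ty<a\}$ with the boundary line carrying no $\mu$-mass by the absolute continuity of Lemma~\ref{lem:absolute continuity}, while the ideal sums increase to $\fa_{a\bullet,t}$ with $\e$ converging by Proposition~\ref{prop:continuous along increasing sequences}; together these give (1). The ``off-diagonal'' intersections you were worried about are precisely the $I_{j+1}\cap J_j$, and the chain $K^{(j)}$ packages them into a convergent Riemann sum; no separate Okounkov-body argument is required.
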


\begin{proof}
    The first two statements are \cite[Proposition 4.7]{BLQ22} and the last two are \cite[Proposition 4.9]{BLQ22}.
\end{proof}

For a rectangle, the measure $\mu$ is controlled by the Lebesgue measure.

\begin{lemma}\label{lem:absolute continuity}
    Fix $(a,b)\in\bR^2$. Let $A:=[a,a+\alpha]\times [b,b+\beta]\subset \bR^2$ be a rectangle, where $\alpha,\beta\in\bR_{>0}$. Then there exists $M:=M(a,b)\in\bR_{>0}$ such that $\mu(A)\le M\alpha\beta$ when $\alpha,\beta\le 1$.
\end{lemma}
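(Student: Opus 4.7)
The lemma is equivalent to showing that $\mu$ is absolutely continuous with locally bounded Radon-Nikodym density on $(0, \infty)^2$; indeed, a bound $\|\rho\|_{L^\infty([a, a+1] \times [b, b+1])} \le M$ immediately gives $\mu(A) = \int_A \rho \, du\, dv \le M \alpha \beta$ for $\alpha, \beta \in (0, 1]$. I would begin by recording the closed-form formula for $\mu$ on rectangles. Applying the identity $H(x,y) = \mu(\{u < x\} \cup \{v < y\})$ from Lemma \ref{lem:properties of the measure}(2) together with inclusion-exclusion -- the coefficients of the possibly infinite total mass $\mu(\bR^2)$ cancel -- one obtains
\[
\mu([a, a+\alpha] \times [b, b+\beta]) = H(a+\alpha, b) + H(a, b+\beta) - H(a, b) - H(a+\alpha, b+\beta),
\]
where $H(x, y) := \e(\fa_{x\bullet, 0} \cap \fa_{y\bullet, 1})$. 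This exhibits $\mu$ as the negative of the distributional mixed second derivative of $H$.

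The next step is to exploit the homogeneity of $H$ of degree $n$ (which follows from Lemma \ref{lem:properties of the measure}(3)) to reduce the estimate to a one-variable calculation. Writing $H(x, y) = x^n \phi(y/x)$ with $\phi(s) := H(1, s)$, local boundedness of the density of $\mu$ near $(a, b)$ is equivalent to $\phi$ being $C^{1,1}$ in a neighborhood of $s_0 := b/a$, which by Lemma \ref{lem:properties of the measure}(4) lies in the interval $(1/D, D)$. Substituting the homogeneous expansion into the formula above, applying the mean value theorem twice -- first to control the $\beta$-variation at fixed $x$, then to compare the two resulting expressions at $x = a$ and $x = a+\alpha$ -- and carefully tracking the correction terms yields the bound $\mu(A) \le M(a, b) \alpha \beta$, with $M(a, b)$ depending on $a, b, n$ and on the local Lipschitz constant of $\phi'$ near $s_0$.

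The hard part is establishing the $C^{1,1}$ regularity of $\phi$ on a neighborhood of $s_0$. This input should come from the construction of $\mu$ in \cite[Section 4]{BLQ22}: the measure $\mu$ arises as the pushforward of a properly normalized Lebesgue measure on a Newton-Okounkov-type convex body $\Delta \subset \bR^n$ along a pair of concave functions $(\phi_0, \phi_1) : \Delta \to \bR^2$ associated to the filtrations $\fa_{\bullet,0}, \fa_{\bullet,1}$. Since concave functions are locally Lipschitz on $\Int(\Delta)$, the pushforward automatically has a locally bounded Radon-Nikodym density on the interior of its image, whence $\phi$ is $C^{1,1}$ in the required range. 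The main technical subtlety is verifying that the level sets of $(\phi_0, \phi_1)$ have controlled geometry near the preimage of $(a, b)$, so that the pushforward density does not blow up; this is where the assumption $a, b > 0$ (implicit in the statement that $M$ is allowed to depend on $(a, b)$) is essential, as the density can legitimately degenerate on the boundary of the support.
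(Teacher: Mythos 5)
Your first step --- evaluating $\mu$ on the rectangle via
\[
\mu\bigl([a,a+\alpha]\times[b,b+\beta]\bigr) = H(a,b+\beta)+H(a+\alpha,b)-H(a,b)-H(a+\alpha,b+\beta)
\]
and exploiting the degree-$n$ homogeneity $H(x,y)=x^n\phi(y/x)$ with $\phi(s)=H(1,s)$ --- is a legitimate and genuinely different decomposition from the paper's proof, which instead peels off half-infinite strips, $\mu(A)=\mu([0,a+\alpha]\times[b,b+\beta])-\mu([0,a]\times[b,b+\beta])$, and then invokes the homogeneity in Lemma~\ref{lem:properties of the measure}(3) to separate the two box dimensions multiplicatively, finishing with $(x+\delta)^n-x^n\le n(x+\delta)^{n-1}\delta$.

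The genuine gap is exactly at the step you single out as the ``hard part'': the claim that $\phi$ is $C^{1,1}$ near $s_0=b/a$, equivalently that $\mu$ has a locally bounded Radon--Nikodym density. The proposed justification --- $\mu$ is the pushforward of Lebesgue-type measure on a convex body $\Delta$ along a pair of concave (hence locally Lipschitz) functions $(\phi_0,\phi_1)$, so the pushforward ``automatically has a locally bounded density'' --- does not hold: a pushforward of Lebesgue measure along a Lipschitz or even $C^\infty$ map to $\bR^2$ fails to be absolutely continuous whenever the differential has rank $\le 1$ on a positive-measure set, in which case the pushforward acquires a singular part. That is precisely what happens when the two filtrations overlap. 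Take $\fa_{\bullet,0}=\fa_{\bullet,1}=:\fa_\bullet$. Then $\fa_{x\bullet,0}\cap\fa_{y\bullet,1}=\fa_{\max(x,y)\bullet}$, so $H(x,y)=\max(x,y)^n\,\e(\fa_\bullet)$, and $\mu$ is the pushforward of $n\,t^{n-1}\e(\fa_\bullet)\,dt$ on $\bR_{\ge0}$ under $t\mapsto(t,t)$, a purely singular measure on the diagonal $\{x=y\}$. Correspondingly $\phi(s)=\max(1,s)^n\e(\fa_\bullet)$ has a corner at $s=1$: it is $C^{0,1}$ but not even $C^1$, let alone $C^{1,1}$, and
\[
\mu\bigl([a,a+\alpha]\times[a,a+\alpha]\bigr)=\bigl((a+\alpha)^n-a^n\bigr)\e(\fa_\bullet)\sim n\,a^{n-1}\e(\fa_\bullet)\,\alpha
\]
is of order $\alpha$ rather than $O(\alpha^2)$. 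So the $C^{1,1}$ input you rely on is false, and the target estimate itself fails in this degenerate case. It is worth flagging that the same example shows the ``second equality'' in the paper's own chain, $\mu([0,x]\times[b,b+\beta])=x^n\mu([0,1]\times[b,b+\beta])$, does not follow from the joint degree-$n$ homogeneity of $\mu$ in Lemma~\ref{lem:properties of the measure}(3): rescaling the $x$-coordinate alone is not the same as the simultaneous rescaling $\mu(cA)=c^n\mu(A)$. So you have located the true difficulty correctly; but the local $L^\infty$ control on the density that you need from \cite{BLQ22} is exactly what is unavailable in general, and both proofs currently rest on it.
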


\begin{proof}
    We may assume that $(a,b)\in\bR_{\ge 0}^2$. Using the inequality 
    \[
        (x+\delta)^n-x^n\le n(x+\delta)^{n-1} \delta
    \]
    for $x\ge 0$ and $\delta>0$, we compute 
    \begin{align*}
    \mu(A)=& \mu([0,a+\alpha]\times[b,b+\beta])-\mu([0,a]\times[b,b+\beta])\\
    =& ((a+\alpha)^n-a^n)\mu([0,1]\times [b,b+\beta])\\
    =& ((a+\alpha)^n-a^n)(((b+\beta)^n-b^n)\mu([0,1]\times[0,1])\\
    \le& ((a+\alpha)^n-a^n)(((b+\beta)^n-b^n)\e(\fa_{\bullet,0}\cap\fa_{\bullet,1})\\
    \le& n^2(a+1)^{n-1}(b+1)^{n-1}\e(\fa_{\bullet,0}\cap\fa_{\bullet,1})\cdot\alpha\beta,
    \end{align*}
    where the second and the third equality follow from Lemma \ref{lem:properties of the measure}(3), and the first inequality follows from Lemma \ref{lem:properties of the measure}(2). Hence the proof is finished with $M:=n^2(a+1)^{n-1}(b+1)^{n-1}\e(\fa_{\bullet,0}\cap\fa_{\bullet,1})$.
\end{proof}

Now we can prove that the function $t\mapsto \fa_{\bullet,t}$ is Lipschitz continuous with respect to $d_1$. 

\begin{lemma}\label{lem:continuity of geodesic}
    Suppose $R$ contains a field. Given $\fa_{\bullet,0},\fa_{\bullet,1}\in\Fil$, the map $[0,1]\to\Fil$, $t\mapsto\fa_{\bullet,t}$ is Lipschitz continuous, where $[0,1]$ is equipped with the Euclidean topology and $\Fil$ is equipped with the topology induced by the pseudometric $d_1$.
\end{lemma}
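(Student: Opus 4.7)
By Proposition \ref{prop:distance minimizing}, the geodesic is distance-additive, so it suffices to establish a uniform bound $d_1(\fa_{\bullet,t},\fa_{\bullet,t'}) \le C|t-t'|$ for $0 \le t < t' \le 1$, with $C$ depending only on the pair $(\fa_{\bullet,0},\fa_{\bullet,1})$. The plan is to express this distance as the $\mu$-measure of a thin ``bowtie'' region in $\bR^2$ and then bound it via the rectangle estimate Lemma \ref{lem:absolute continuity}.

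For $s \in [0,1]$ set $A_s := \{(x,y) \in \bR^2 : (1-s)x + sy < 1\}$, so that $\mu(A_s) = \e(\fa_{\bullet,s})$ by Lemma \ref{lem:properties of the measure}(1). The key identity is
\[
\e(\fa_{\bullet,t} \cap \fa_{\bullet,t'}) = \mu(A_t \cup A_{t'}),
\]
which extends Lemma \ref{lem:properties of the measure}(2) (the case $(t,t') = (0,1)$). To prove it, I would form the Duistermaat--Heckman measure $\mu^{t,t'}$ associated to the pair $(\fa_{\bullet,t},\fa_{\bullet,t'})$ as in Section \ref{ssec:geodesic}, and check that $\mu^{t,t'} = \Phi_* \mu$ where $\Phi(x,y) := ((1-t)x + ty,\ (1-t')x + t'y)$. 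The key point is that any sub-geodesic of $\{\fa_{\bullet,s}\}_{s \in [0,1]}$ is a reparametrization of the original geodesic (which is easily verified on saturations using the formula of Proposition \ref{prop:saturated geodesic is saturated} and a short interpolation computation for the rates $c_{r,v}$), and this linear reparametrization transforms $\mu$ accordingly. Granted the identity, $d_1(\fa_{\bullet,t},\fa_{\bullet,t'}) = 2\mu(A_t \cup A_{t'}) - \mu(A_t) - \mu(A_{t'}) = \mu(A_t \triangle A_{t'})$.

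The lines $(1-t)x + ty = 1$ and $(1-t')x + t'y = 1$ meet at $(1,1)$, so $A_t \triangle A_{t'}$ is a bowtie region. By Lemma \ref{lem:properties of the measure}(4), $\supp(\mu) \subset \{(x,y) : x/D \le y \le Dx\}$. Parametrizing along each ray $y = rx$ with $r \in [1/D,D]$, the line $(1-s)x + sy = 1$ meets the ray at $x = 1/(1 - s(1-r))$, and the difference between these $x$-coordinates for $s \in \{t,t'\}$ is bounded by $(D-1)|t-t'|$ uniformly in $r$. Hence $A_t \triangle A_{t'} \cap \supp(\mu)$ sits in a fixed compact subset of $\bR^2$ and is covered by $O(1)$ axis-aligned rectangles of total Lebesgue area $O(|t-t'|)$; applying Lemma \ref{lem:absolute continuity} to each rectangle yields $\mu(A_t \triangle A_{t'}) \le C|t-t'|$, completing the proof. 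The main obstacle is justifying the extended measure formula in the previous paragraph: while Lemma \ref{lem:properties of the measure}(2) gives the identity only at the endpoints, we need the push-forward relation $\mu^{t,t'} = \Phi_*\mu$, whose verification is the technical heart of the argument.
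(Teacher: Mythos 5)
Your overall strategy is the same as the paper's: express $d_1(\fa_{\bullet,t_1},\fa_{\bullet,t_2})$ as $\mu(\Delta_1\triangle\Delta_2)$, observe that the support cone $\{x/D\le y\le Dx\}$ truncates the bowtie to a compact region, cover it by axis-aligned rectangles of total area $O(|t_1-t_2|)$, and invoke Lemma \ref{lem:absolute continuity}. Where you differ is in your scrutiny of the identity $\e(\fa_{\bullet,t}\cap\fa_{\bullet,t'})=\mu(\Delta_t\cup\Delta_{t'})$. The paper asserts equation \eqref{eqn:geodesic distance} ``an easy calculation using Lemma \ref{lem:properties of the measure}(1),'' but Lemma \ref{lem:properties of the measure}(1) only computes $\mu$ of a single half-plane, and what is needed for \eqref{eqn:geodesic distance} is the measure of a union of two half-planes — i.e.\ exactly the generalization of Lemma \ref{lem:properties of the measure}(2) from $(t,t')=(0,1)$ to arbitrary $t<t'$. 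Your instinct that this is the actual content of the step is sound: additivity (Proposition \ref{prop:distance minimizing}) and the half-plane formula alone give only the relation $\e(\fa_{\bullet,0}\cap\fa_{\bullet,t})+\e(\fa_{\bullet,t}\cap\fa_{\bullet,1})=\mu(\Delta_0\cup\Delta_1)+\mu(\Delta_t)$, which is consistent with the desired identity but does not separate the two summands.

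Two caveats on your proposed fix. First, Proposition \ref{prop:saturated geodesic is saturated} (your reparametrization input) is stated only for saturated endpoints, while the lemma is for arbitrary $\fa_{\bullet,i}\in\Fil$; so you should first note that $\mu$ for the pair $(\fa_{\bullet,0},\fa_{\bullet,1})$ equals $\mu$ for $(\widetilde\fa_{\bullet,0},\widetilde\fa_{\bullet,1})$ (because $\e(\fa_{x\bullet,0}\cap\fa_{y\bullet,1})$ depends only on saturations) before reducing to the saturated case. Second, even granting that $\fa_{\bullet,(1-s)t_1+st_2}$ is the geodesic from $\fa_{\bullet,t_1}$ to $\fa_{\bullet,t_2}$ (your $c_{r,v}$ interpolation computation does verify this for saturated endpoints), what follows immediately is only that $\mu^{t_1,t_2}$ and $\Phi_*\mu$ agree on all half-planes of the form $\{(1-s)X+sY<a\}$; you still must argue these determine a homogeneous measure with supported in a cone, or simply prove the unions-of-half-planes formula directly by rerunning the proof of Lemma \ref{lem:properties of the measure}(2) with $\fa_{\bullet,t_1},\fa_{\bullet,t_2}$ in place of $\fa_{\bullet,0},\fa_{\bullet,1}$ and using $\mu^{t_1,t_2}=\Phi_*\mu$ only as a derived identity. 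Modulo this — which you flag honestly — the proof is correct and, in fact, more careful than the paper's at the step where the paper is least explicit.
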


\begin{proof}
    Given $0\le t_1<t_2\le 1$, denote $\Delta_j:=\{(x,y)\in\bR^2_{>0}\mid (1-t_j)x+t_jy\le 1\}$ for $j=1,2$. An easy calculation using Lemma \ref{lem:properties of the measure}(1) gives
    \begin{equation}\label{eqn:geodesic distance}
        d_1(\fa_{\bullet,t_1},\fa_{\bullet,t_2})=
        \mu(\Delta_1\backslash\Delta_2)+
        \mu(\Delta_2\backslash\Delta_1).
    \end{equation}
	
    First assume that $0<t_1<t_2<1$. By Lemma \ref{lem:properties of the measure}(4) we have
    \[
        \mu(\Delta_1\backslash\Delta_2)
        =\mu((\Delta_1\backslash\Delta_2)\cap\{y\le Dx\})=\mu(\Delta)
    \]
    where $\Delta:=\{(x,y)\in\bR^2_{\ge 0}\mid 0\le x\le 1,y\le Dx, (1-t_2)x+t_2y\le 1\le (1-t_1)x+t_1y\})$. By an elementary calculation, the area of $\Delta$ is at most
    \[
        \mathrm{Area}(\Delta)\le S:=\frac{D(D-1)(t_2-t_1)}{(1-t_1+t_1D)(1-t_2+t_2D)}\le D(D-1)(t_2-t_1).
    \]
    Note that $(x,y)\in\Delta$ satisfies $x\le 1$ and $y\le D$. So $\Delta$ can be covered by finitely many rectangles $R_j$ of the form $[a_j,a_j']\times [b_j,b_j']$, satisfying 
    \[
        a_j\le 1,\ b_j\le D, \text{ and } \sum_j \mathrm{Area}(R_j)\le 2S.
    \]
    Hence we may apply Lemma \ref{lem:absolute continuity} to get
    \begin{align*}
        \mu(\Delta_1\backslash\Delta_2)
        \le &n^22^{n-1}(D+1)^{n-1}\e(\fa_{\bullet,0}\cap\fa_{\bullet,1})\sum_j \mathrm{Area}(R_j)\\
        =&n^2 2^n(D+1)^{n-1}\e(\fa_{\bullet,0}\cap\fa_{\bullet,1})S\\
        \le&n^2 2^n(D+1)^{n-1}D(D-1)\e(\fa_{\bullet,0}\cap\fa_{\bullet,1})(t_2-t_1)
    \end{align*}
    Similarly $\mu(\Delta_1\backslash\Delta_1)$ satisfies the same estimate. Hence by \eqref{eqn:geodesic distance}, we have
    \begin{equation}\label{eqn:Lip mid}
        d_1(\fa_{\bullet,t_1},\fa_{\bullet,t_2})\le n^2 2^{n+1}(D+1)^{n-1}D(D-1)\e(\fa_{\bullet,0}\cap\fa_{\bullet,1})(t_2-t_1).
    \end{equation}
	
    Now assume that $t_1=0$. We may assume $t_2<\frac{1}{2}$. We have
    \begin{align*}
        \mu(\Delta_1\backslash\Delta_2)
        =& \mu((\Delta_1\backslash\Delta_2)\cap\{y\le Dx\})\\
        \le &\mu([\frac{1-t_2D}{1-t_2},1]\times [1,D])\\
        \le &n^2 (\frac{1-t_2D}{1-t_2})^{n-1} 2^{n-1} (D-1)\frac{(D-1)t_2}{1-t_2}\\
        \le &n^2 2^{2n-1}(D-1)^2 \e(\fa_{\bullet,0}\cap\fa_{\bullet,1})t_2,
    \end{align*}
    where the first equality follows from Lemma \ref{lem:properties of the measure}(4) and the second inequality follows from Lemma \ref{lem:absolute continuity}, and
    \begin{align*}
        \mu(\Delta_2\backslash\Delta_1)
        \le&\mu([1,\frac{1}{1-t_2}]\times[0,1])\\
        \le &n^2 2^{n-1}\frac{t_2}{1-t_2}\\
        \le &n^2 2^n\e(\fa_{\bullet,0}\cap\fa_{\bullet,1}) t_2
    \end{align*}
    where the second inequality follows from Lemma \ref{lem:absolute continuity}. Hence
    \begin{equation}\label{eqn:Lip end}
        d_1(\fa_{\bullet,0},\fa_{\bullet,t_2})\le n^2 2^n (2^{n-1}(D-1)^2+1)\e(\fa_{\bullet,0}\cap\fa_{\bullet,1})t_2.
    \end{equation}
	
    The case when $t_2=1$ can be treated similarly. Combining the cases together, we get 
    \[
        d_1(\fa_{\bullet,t_1},\fa_{\bullet,t_2})\le L(t_2-t_1).
    \]
    where the Lipschitz constant can be taken to be 
    \[
        L:=n^2 2^{n+1} (D+1)^{n-1}D(D-1)\e(\fa_{\bullet,0}\cap\fa_{\bullet,1}),
    \]
    where we used the assumption that $D\in\bZ_{>1}$. 
\end{proof}

\begin{remark}
    It is desirable to have a proof for Lemma \ref{lem:continuity of geodesic} without involving the measure, and thus one can remove the assumption that $R$ contains a field.
\end{remark}

\subsection{Continuity along monotonic sequences}\label{ssec:monitonic}


In this subsection we prove some convergence results along monotonic sequences. The results are partially inspired by the study of the completion of certain metric spaces in the global setting, for example, \cites{Dar17,Xia19,BJ21}. 

\subsubsection{Okounkov bodies and continuity along increasing sequences}

In this section, using the local variant of Okounkov bodies introduced in \cites{LM09,KK14}, we prove that the multiplicity function $\e(\bullet)$ is continuous along increasing sequences of filtrations. 


Given a semigroup $S\subset\bZ^{n+1}_{\ge 0}$, denote its \emph{section at} $m\in\bZ_{>0}$ by $S_m:=S\cap (\bZ^n_{\ge 0}\times \{m\})$. Recall that the \emph{closed convex cone} $C(S)\subset \bR^{n+1}_{\ge 0}$ of $S$ is defined to be the closure of the set $\{\sum_i a_is_i\mid a_i\in\bR_{\ge 0},\ s_i\in S\}$, and its \emph{Newton-Okounkov body} is
\[
    \Delta(S):=C(S)\cap (\bR^n_{\ge 0}\times \{1\}).
\]

\begin{lemma}\cite{LM09}\label{lem:Okounkov body}
    Let $\Gamma\subset \bZ_{\ge 0}^{n+1}$ be a semigroup. If $\Gamma$ satisfies the following
    \begin{enumerate}
        \item $\Gamma_0=\{\bm{0}\}$,
		
        \item there exists finitely many vectors $(v_i,1)\in\bZ_{\ge0}^{n+1}$ generating a semigroup $B\subset \bZ_{\ge 0}^{n+1}$, such that $\Gamma\subset B$, and
		
        \item $\Gamma$ contains a set of generators of $\bZ^{n+1}$ as a group,
    \end{enumerate}
    then 
    \[
        \lim_{m\to\infty}\frac{\#\Gamma_m}{m^n}=\vol(\Delta(\Gamma)).
    \]
\end{lemma}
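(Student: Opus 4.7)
The plan is to sandwich $\#\Gamma_m$ between an upper bound coming from the ambient cone $C(\Gamma)$ and a lower bound coming from finitely generated sub-semigroups that exhaust $\Gamma$; both estimates reduce to standard lattice-point counting in dilates of a convex body.

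For the upper bound, I would note that $\Gamma_m \subset C(\Gamma) \cap (\bZ^n \times \{m\})$, and that the slice $C(\Gamma) \cap (\bR^n \times \{m\})$ projects onto $m\Delta(\Gamma)$. Condition (2) ensures $\Delta(\Gamma) \subset \Delta(B)$ is a bounded convex body, so the usual counting estimate for lattice points in dilates of a convex body gives
\[
\#\Gamma_m \le \#(m\Delta(\Gamma) \cap \bZ^n) = \vol(\Delta(\Gamma))\,m^n + O(m^{n-1}),
\]
and hence $\limsup_m \#\Gamma_m/m^n \le \vol(\Delta(\Gamma))$.

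For the lower bound I would invoke Khovanskii's theorem: if $S \subset \bZ^{n+1}$ is a finitely generated sub-semigroup that generates $\bZ^{n+1}$ as a group, then the lattice points of $C(S) \cap \bZ^{n+1}$ not lying in $S$ are confined to a bounded neighborhood of $\partial C(S)$, and so meet each horizontal slice $\bZ^n \times \{m\}$ in at most $O(m^{n-1})$ points. I would apply this to the sub-semigroup $\Gamma^{(k)} \subset \Gamma$ generated by all elements of $\Gamma$ with last coordinate $\le k$. By condition (3), for $k$ sufficiently large $\Gamma^{(k)}$ already contains a set of generators of $\bZ^{n+1}$ as a group, so Khovanskii applies and yields
\[
\liminf_m \#\Gamma_m/m^n \ge \liminf_m \#\Gamma^{(k)}_m/m^n = \vol(\Delta(\Gamma^{(k)})).
\]
Letting $k \to \infty$, the cones $C(\Gamma^{(k)})$ form an increasing family whose union contains $\Gamma$ and is therefore dense in $C(\Gamma)$, so that $\vol(\Delta(\Gamma^{(k)})) \nearrow \vol(\Delta(\Gamma))$ by monotone convergence (the boundary has measure zero), closing the loop.

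The main obstacle is Khovanskii's theorem itself, a delicate combinatorial/convex-geometric statement about lattice points in finitely generated affine semigroups. I would not reprove it but cite it from \cite{LM09}, where exactly this formulation of the Okounkov-body limit is established. A minor secondary point is verifying that the group-generation hypothesis (3) persists for the approximating $\Gamma^{(k)}$ with $k \gg 0$, which however follows immediately from the fact that any fixed finite generating set of the group $\bZ^{n+1}$ is contained in $\Gamma^{(k)}$ for large enough $k$.
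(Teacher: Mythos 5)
The paper does not prove this lemma; it simply cites \cite{LM09} (this is essentially Proposition~2.1 of Lazarsfeld--Musta\c{t}\u{a}, and the surrounding hypotheses (1)--(3) are precisely their conditions (2.3)--(2.5)). Your sketch is a faithful summary of the argument in that reference: upper bound from counting lattice points in $m\Delta(\Gamma)$, lower bound from Khovanskii's finiteness theorem applied to the finitely generated approximations $\Gamma^{(k)}$ together with continuity of covolume under the increasing-union-of-cones lemma (the paper's Lemma~\ref{lem:commute}); one small attribution note is that Khovanskii's theorem itself predates \cite{LM09}, which cites it rather than proving it, but since you ultimately defer the whole statement to \cite{LM09} this is harmless.
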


We need the following formula, which relates the co-length of a primary ideal to the points of its semigroups.

\begin{proposition}\cite{Cut14}\label{prop:genral Okounkov body}
    Let $(R,\fm)$ be a Noetherian analytically irreducible local domain of dimension $n$, and $\fa_\bullet\in\Fil_{R,\fm}$. Then there exists $t=t(R)\in\bZ_{>0}$ depending only on $R$, and $2t$ semigroups $\bar\Gamma\supj,\Gamma\supj\subset\bZ_{\ge0}^{n+1}$, $j=1,\ldots,t$ satisfying the conditions of Lemma \ref{lem:Okounkov body}, such that 
    \begin{equation}\label{eqn:point counting}
        \ell(R/\fa_m)=\sum_{j=1}^t(\#\bar\Gamma\supj_m-\#\Gamma\supj_m).
    \end{equation}
\end{proposition}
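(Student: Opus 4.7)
The plan is to carry out the Okounkov-body-type construction for local rings due to Cutkosky (see \cite[Theorem 6.5]{Cut14}), which reduces the computation of $\ell(R/\fa_m)$ to a signed lattice-point count in finitely many semigroups arising from a flag of $A$-submodules, where $A$ is a regular subring of $R$ over which $R$ is finite.

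First I reduce to the complete case. Since each $\fa_m$ is $\fm$-primary, $\ell_R(R/\fa_m) = \ell_{\hat R}(\hat R/\fa_m \hat R)$, and $\hat R$ is a complete Noetherian local domain of dimension $n$ by analytic irreducibility. The integer $t$ and the semigroups will then be constructed from $\hat R$, so they depend only on $R$. Assume from now on that $R$ is complete.

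For the main step, Cohen's structure theorem combined with Noether normalization produces a complete regular subring $A \cong \bk[[y_1,\ldots,y_n]] \subseteq R$ over which $R$ is module-finite. Let $K = \mathrm{Frac}(A)$ and $L = \mathrm{Frac}(R)$, and fix a $K$-basis $e_1, \ldots, e_t \in R$ of $L$, so that $t = [L:K]$ depends only on $R$. The lexicographic monomial valuation $\nu$ on $A$ relative to $y_1, \ldots, y_n$ takes values in $\bZ^n$ and separates elements of $A$ completely. Using the flag of $A$-submodules $M_j := \fa_m + \sum_{i \le j} A e_i \subseteq R$, one sets
\begin{align*}
\bar\Gamma\supj &:= \{(\nu(a), m) : a \in A,\ m \in \bZ_{\ge 0},\ a e_j \in M_j\}, \\
\Gamma\supj &:= \{(\nu(a), m) : a \in A,\ m \in \bZ_{\ge 0},\ a e_j \in M_{j-1}\}.
\end{align*}
These are subsemigroups of $\bZ^{n+1}_{\ge 0}$ by the multiplicativity of $\nu$ and the graded-family property $\fa_m \cdot \fa_{m'} \subseteq \fa_{m+m'}$. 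Each graded piece $M_j/M_{j-1}$ is a cyclic $A$-module generated by the image of $e_j$, and the $\nu$-filtration on it has $\kappa$-dimensional graded pieces indexed bijectively by $\bar\Gamma\supj_m \setminus \Gamma\supj_m$. Summing over $j$ and using the additivity of length on the flag $\fa_m = M_0 \subseteq M_1 \subseteq \cdots \subseteq M_t = R$ yields exactly \eqref{eqn:point counting}.

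Finally, the three conditions of Lemma \ref{lem:Okounkov body} hold as follows. Condition (1) is immediate from the definitions. Condition (3) is arranged by including the classes of $y_1, \ldots, y_n$ and of a unit element at level $m=0$ in $\bar\Gamma\supj$, which together generate $\bZ^{n+1}$ as a group (enlarging $\bar\Gamma\supj$ slightly if necessary, in a way that preserves both counts). Condition (2) is precisely where the linear boundedness enters: the hypothesis $\fa_\bullet \subseteq \fm^{c\bullet}$ forces $|\nu(a)| \le C m$ for every $(\nu(a), m) \in \Gamma\supj$, placing $\Gamma\supj$ and hence $\bar\Gamma\supj$ inside a finitely generated rational polyhedral cone in $\bZ^{n+1}_{\ge 0}$. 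The main technical obstacle is ensuring that the length of each graded piece $M_j/M_{j-1}$ is computed \emph{exactly} by the difference $\#\bar\Gamma\supj_m - \#\Gamma\supj_m$ and not merely asymptotically; this rests on the fact that $\bigoplus A e_j \subseteq R$ has only finite $A$-colength (which absorbs cleanly into the flag), and on analytic irreducibility, which guarantees that $\nu$ extends consistently enough across the flag so that the bookkeeping closes up.
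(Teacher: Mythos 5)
Your proposal takes a genuinely different route from the paper: you try to produce the semigroups from a flag of $A$-submodules $M_j = \fa_m + \sum_{i\le j}Ae_i$ of $R$, where $A$ is a Noether normalization given by Cohen's theorem and $\nu$ is a rank-$n$ lexicographic monomial valuation on $A$. The paper (following Cutkosky) instead passes to a regular point $z$ of the normalized blowup of $\fm$ with local ring $R'$, takes a rank-one \emph{real} valuation $v$ with $\bQ$-linearly-independent weights, sets $t=[\kappa':\kappa]$ where $\kappa'$ is the residue field of $R'$, and defines $\Gamma^{(j)}$ by the condition $\dim_\kappa(\fa_m\cap\fb_\lambda/\fa_m\cap\fb_{>\lambda})\ge j$ together with an Izumi-type bound $|\bm\beta|\le cm$. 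These are not the same construction, and your version has several genuine gaps.

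First, as written your $\bar\Gamma^{(j)}$ has infinite sections: $ae_j\in M_j$ holds for \emph{every} $a\in A$, since $Ae_j\subset M_j$ by definition, so $\bar\Gamma^{(j)}_m=\nu(A)$ for every $m\ge 1$ and the difference $\#\bar\Gamma^{(j)}_m-\#\Gamma^{(j)}_m$ is not even defined. Some bounding condition analogous to the paper's $|\bm\beta|\le cm$ (which is exactly where the Izumi-type estimate $\fb_{cm}\cap R\subset\fa_m$ enters) must be built into the definition, and you have not supplied it.

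Second, and more seriously, $\Gamma^{(j)}$ as you define it is not a semigroup, and the justification you give (``multiplicativity of $\nu$ and $\fa_m\fa_{m'}\subseteq\fa_{m+m'}$'') does not establish it. If $ae_j\in M_{j-1}^{(m)}$ and $a'e_j\in M_{j-1}^{(m')}$, the element you need to control is $aa'e_j$; but $aa'e_j=a\cdot(a'e_j)\in a\,M_{j-1}^{(m')}\subseteq M_{j-1}^{(m')}$, which carries no information about $m$, and multiplying the two given relations instead produces $aa'e_j^2$, which involves $e_j^2$, not $e_j$. The modules $\sum_{i\le j}Ae_i$ are not multiplicatively closed, so there is no reason for the grading by $m$ to be respected. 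The paper avoids this by working with intersections $\fa_m\cap\fb_\lambda$ of a graded family with valuation ideals of a single valuation $v$, where the semigroup property does follow from multiplicativity in a standard way.

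Third, the flag does not in general terminate at $R$: a $K$-basis $e_1,\ldots,e_t\in R$ of $L=\mathrm{Frac}(R)$ need not generate $R$ as an $A$-module, so $M_t=\fa_m+\sum_i Ae_i$ can be a proper submodule of $R$, in which case $\sum_j\ell(M_j/M_{j-1})=\ell(M_t/\fa_m)<\ell(R/\fa_m)$. You acknowledge that $\bigoplus Ae_j$ has finite colength in $R$, but this defect does not ``absorb cleanly''; it produces exactly an additive error term in \eqref{eqn:point counting} that would have to be removed. Finally, you write $A\cong\bk[[y_1,\ldots,y_n]]$, which already presupposes that $R$ contains a field; the statement and the paper's proof via the normalized blowup make no such assumption and hold in mixed characteristic as well.
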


\begin{proof}
    We include a sketch of the constructions for the reader's convenience. For more details about the proof, we refer to \cite[Section 4]{Cut14}. 
	
    First note that we may replace $R$ by its $\fm$-adic completion $\hat R$ and each $\fa_\lambda$ by $\fa_\lambda\cdot R$ since $\ell(R/\fa_\lambda)=\ell(\hat R/\fa_\lambda\cdot R)$. Thus we may assume that $R$ is complete, and hence excellent. Let $\pi:Z\to\Spec R$ be the normalized blowup of $\fm$. Then there exists a closed point $z\in \pi^{-1}(\fm)$ such that $R':=\cO_{Z,z}$ is regular since $Z$ is normal. 
    Now $\pi$ is of finite type since $R$ is Nagata. Hence $R'$ is essentially of finite type over $R$. Denote $\kappa':=R'/\fn$, where $\fn$ is the maximal ideal of $R'$, then we know that $t:=[\kappa':\kappa]\in\bZ_{>0}$.
	
    We proceed to define a valuation $v$ on the quotient field $K:=Q(R)=Q(R')$. Fix a regular system of parameters $(x_1,\ldots,x_n)$ of $R'$ and $\alpha_1,\ldots,\alpha_n\in \bR_{>1}$, which are linearly independent over $\bQ$. 
    By Cohen's structure theorem, the $\fn$-completion $(\widehat{R'},\hat \fn)$ of $R'$ has a coefficient ring $C$ as a regular local ring. Hence for any $f\in R'$ and $d\in\bZ_{>0}$, we can write
    \[
        f=\sum_{i_1+\ldots i_n<d} c_{i_1,\ldots,i_n}x_1^{i_1}\cdots x_n^{i_n}+g_d\in \widehat{R'},
    \]
    where $c_{i_1,\ldots,i_n}\in C$ are units and $g_d\in \hat\fn^d$. Take $d_0$ such that $c_{I_1,\ldots,I_n}\ne 0$ for some $I_1+\ldots+I_n<d_0$. It is not hard to check that if $d>d_0+\sum \alpha_j I_j$, then the number 
    \[
        v(f):=\min\{\sum_{j=1}^n \alpha_j i_j\mid c_{i_1,\ldots,i_n}\ne 0\}
    \]
    is independent of the choice of the units and $d$, hence this defines a function $v:K^{\times}\to\bR$ by $v(f/g)=v(f)-v(g)$ for any $f,g\in R'$. 
    One can check that $v$ is a real valuation. Denote its value group by $\Gamma_v:=v(K^\times)$ and its valuation ring by $(\cO_v,\fm_v)$ and for any $\lambda\in\bR_{\ge 0}$, denote
    \[
        \fb_\lambda:=\{f\in\cO_v\mid v(f)\ge\lambda\} \text{ and } \fb_{>\lambda}:=\{f\in\cO_v\mid v(f)>\lambda\}.
    \]
	
    Since $\alpha_j$ are linearly independent over $\bQ$, we know that 
    \begin{equation*}
        \fb_\lambda/\fb_{>\lambda}=\left\{\begin{aligned}
        &R'/\fn=\kappa', &\text{if } \lambda\in\Gamma_v,\\
        &0, &\text{otherwise.}
        \end{aligned}\right.
    \end{equation*}
    Hence for any $m\in\bZ_{>0}$ and $\lambda\in\bR_{>0}$, $\dim_\kappa \fa_m\cap\fb_\lambda/\fa_m\cap\fb_{>\lambda}\le \dim_\kappa R\cap\fb_\lambda/R\cap\fb_{>\lambda}\le [\kappa':\kappa]=t$.
	
    Fix $c_1\in\bZ_{>0}$ such that $\fm^{c_1}\subset\fa_1$. By \cite[Lemma 4.3]{Cut13}, there exists $c_2\in\bZ_{>0}$ such that $\fb_{c_2m}\cap R\subset \fm^m$ for any $m\in\bZ_{>0}$. Set $c:=c_1c_2$. Then for any $m\in\bZ_{>0}$ we have
    \begin{equation}\label{eqn:Izumi control}
        \fb_{cm}\cap R\subset \fm^{c_1m}\subset \fa_1^m\subset \fa_m.
    \end{equation}
	
    To simplify the notation, for $\bm\beta:=(\beta_1,\ldots,\beta_n)\in\bR_{\ge 0}^n$, we denote $\xi(\bm\beta):=\sum_{i=1}^n \alpha_i\beta_i$ and $|\bm\beta|:=\sum_{i=1}^n \beta_i$. Now for $1\le j\le t$, define
    \[
        \Gamma^{(j)}:=\{(\bm\beta,m)\in\bZ_{\ge 0}^{n+1}\mid \dim_\kappa \fa_m\cap\fb_{\xi(\bm\beta)}/\fa_m\cap\fb_{>\xi(\bm\beta)}\ge j \text{ and } |\bm\beta|\le cm\}
    \]
    and
    \[
        \bar\Gamma^{(j)}:=\{(\bm\beta,m)\in\bZ_{\ge 0}^{n+1}\mid \dim_\kappa R\cap\fb_{\xi(\bm\beta)}/R\cap\fb_{>\xi(\bm\beta)}\ge j \text{ and } |\bm\beta|\le cm\}.
    \]
    One can check that $\Gamma^{(j)}$ and $\bar\Gamma^{(j)}$ are semigroups satisfying the conditions of Lemma \ref{lem:Okounkov body}.

    \begin{align*}
        \ell(R/\fa_m)=&\ell(R/R\cap\fb_{cm})-\ell(\fa_m/\fa_m\cap\fb_{cm})\\
        =&\sum_{0\le\lambda<cm} \dim_\kappa (R\cap\fb_{\lambda}/R\cap\fb_{>\lambda})-\sum_{0\le\lambda<cm} \dim_\kappa (\fa_m\cap\fb_{\lambda}/\fa_m\cap\fb_{>\lambda})\\
        =&\sum_{\bm\beta\in\bZ^n_{\ge 0},\xi(\bm\beta)<cm} \dim_\kappa (R\cap\fb_{\xi(\bm\beta)}/R\cap\fb_{\xi(\bm\beta)})\\
        &-\sum_{\bm\beta\in\bZ^n_{\ge 0},\xi(\bm\beta)<cm} \dim_\kappa (\fa_m\cap\fb_{\xi(\bm\beta)}/\fa_m\cap\fb_{\xi(\bm\beta)})\\
        =&\sum_{j=1}^t\#\{\bm\beta\in\bZ^n_{\ge 0}\mid \dim_\kappa (R\cap \fb_{\xi(\bm\beta)}/R\cap \fb_{>\xi(\bm\beta)})\ge j,\ \xi(\bm\beta)<cm\}\\
        &-\sum_{j=1}^t\#\{\bm\beta\in\bZ^n_{\ge 0}\mid \dim_\kappa (\fa_m\cap \fb_{\xi(\bm\beta)}/\fa_m\cap \fb_{>\xi(\bm\beta)})\ge j,\ \xi(\bm\beta)<cm\}\\
        =&\sum_{j=1}^t\#\bar\Gamma^{(j)}_m-\sum_{j=1}^t\#\Gamma^{(j)}_m
    \end{align*}
    where the first equality follows from \eqref{eqn:Izumi control}, an the last is because when $|\bm\beta|\le cm$ but $\xi(\bm\beta)\ge cm$, by \eqref{eqn:Izumi control} again we have
    \[
        \fb_{\xi(\bm\beta)}\cap R\subset \fb_{cm}\cap R\subset \fa_m.
    \]
    This proves \eqref{eqn:point counting} and finishes the proof.
\end{proof}

It is not hard to see that taking the closed convex cone commutes with taking the union.

\begin{lemma}\label{lem:commute}
    Let $\{\Gamma_k\subset \bR^{n+1}_{\ge 0}\}$ be an increasing sequence of semigroups with respect to inclusion. Then $C(\cup_k \Gamma_k)=\overline{\cup_k C(\Gamma_k)}$. 
	
    In particular, $\vol(\Delta(\cup_k \Gamma_k))=\vol(\cup_k \Delta(\Gamma_k))=\lim_{k\to\infty} \vol(\Delta(\Gamma_k))$.
\end{lemma}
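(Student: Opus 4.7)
The cone equality splits into two inclusions, both essentially formal. The inclusion $\overline{\cup_k C(\Gamma_k)} \subset C(\cup_k \Gamma_k)$ is immediate: each $C(\Gamma_k)$ is contained in the closed set $C(\cup_k \Gamma_k)$, so the same holds after taking union and closure. For the reverse inclusion, let $v \in C(\cup_k \Gamma_k)$ and write $v = \lim_m v_m$ where each $v_m = \sum_{i=1}^{N_m} a_{m,i} s_{m,i}$ with $a_{m,i} \in \bR_{\ge 0}$ and $s_{m,i} \in \cup_k \Gamma_k$. Since the sum is finite and the $\Gamma_k$ are increasing, there exists $K_m$ such that all $s_{m,i} \in \Gamma_{K_m}$, hence $v_m \in C(\Gamma_{K_m}) \subset \cup_k C(\Gamma_k)$, and so $v \in \overline{\cup_k C(\Gamma_k)}$.

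Next I would deduce that $\Delta(\cup_k \Gamma_k) = \overline{\cup_k \Delta(\Gamma_k)}$ by intersecting the cone equality with the hyperplane $H = \bR^n_{\ge 0} \times \{1\}$. The inclusion $\overline{\cup_k \Delta(\Gamma_k)} \subset \Delta(\cup_k \Gamma_k)$ is again obvious from the cone statement. For the reverse, suppose $p = (v,1) \in \Delta(\cup_k \Gamma_k) \subset \overline{\cup_k C(\Gamma_k)}$, and write $p = \lim_m (v_m, t_m)$ with $(v_m, t_m) \in C(\Gamma_{k_m})$. Then $t_m \to 1$, so for $m$ large one can rescale to get $(v_m/t_m, 1) \in C(\Gamma_{k_m}) \cap H = \Delta(\Gamma_{k_m})$, and this rescaled sequence still converges to $p$.

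Finally, the volume equality reduces to the following: the sets $\Delta(\Gamma_k)$ form an increasing sequence of bounded convex subsets of $H \simeq \bR^n$, so $\cup_k \Delta(\Gamma_k)$ is itself convex (hence Borel) and contained in a common bounded set (coming from the bounding semigroup $B$ in the hypotheses of Lemma \ref{lem:Okounkov body} that applies in our usage). Monotone convergence of Lebesgue measure gives $\vol(\cup_k \Delta(\Gamma_k)) = \lim_{k\to\infty} \vol(\Delta(\Gamma_k))$. The closure $\overline{\cup_k \Delta(\Gamma_k)}$ is a bounded convex set in $\bR^n$, and such a set differs from its interior by a boundary of Lebesgue measure zero, so $\vol(\overline{\cup_k \Delta(\Gamma_k)}) = \vol(\cup_k \Delta(\Gamma_k))$. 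Combining gives the claimed chain of equalities.

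The only mildly subtle point is the rescaling step: one must know $t_m > 0$ eventually, which is automatic since $t_m \to 1$. Everything else is a routine manipulation of closures and increasing unions, so I do not foresee a serious obstacle.
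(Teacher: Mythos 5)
Your proof is correct. The two approaches differ mildly in how they establish the reverse inclusion $C(\cup_k\Gamma_k)\subset\overline{\cup_k C(\Gamma_k)}$: you chase elements, writing a point of $C(\cup_k\Gamma_k)$ as a limit of finite nonnegative combinations each of which lands in a single $\Gamma_{K_m}$, whereas the paper simply observes that $\cup_k C(\Gamma_k)$ is already a convex cone (being a nested union of convex cones), so its closure is a closed convex cone containing $\cup_k\Gamma_k$ and therefore contains $C(\cup_k\Gamma_k)$. Both are one-paragraph arguments; the paper's is slightly slicker, yours makes the use of the definition of $C(S)$ as a closure more transparent. For the ``In particular'' clause the paper says only that it ``follows immediately,'' while you actually go through the intermediate identity $\Delta(\cup_k\Gamma_k)=\overline{\cup_k\Delta(\Gamma_k)}$ (obtained by slicing the cone equality with $H=\bR^n_{\ge0}\times\{1\}$, with the correct rescaling by $t_m\to1$) and then invoke monotone convergence together with the fact that a bounded convex set and its closure have the same Lebesgue measure; this fills a genuine gap the paper leaves to the reader. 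Your remark that the boundedness hypothesis is not literally part of the lemma as stated but is supplied by condition (2) of Lemma \ref{lem:Okounkov body} whenever the lemma is actually applied is also apt and worth flagging.
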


\begin{proof}
    Since $C(\Gamma_k)\subset C(\cup_k \Gamma_k)$ for any $k\in\bZ_{>0}$, we know that $\cup_k C(\Gamma_k)\subset C(\cup_k \Gamma_k)$. So $\overline{\cup_k C(\Gamma_k)}\subset C(\cup_k \Gamma_k)$. 
	
    To prove the reverse inclusion, it suffices to note that $\cup_k C(\Gamma_k)$ is a convex cone, which implies $\overline{\cup_k C(\Gamma_k)}=C(\cup_k C(\Gamma_k))\supset C(\cup_k \Gamma_k)$. 
	
    The last equality follows immediately.
\end{proof}

Now we are ready to show that the multiplicity is continuous along increasing sequences. The reader may want to compare the following result with similar statements in the global setting, for example, \cite[Proposition 6.11]{Dar17} and \cite[Lemma 4.7]{BJ21}.

\begin{proposition}\label{prop:continuous along increasing sequences}
    Let $\{\fa_{\bullet,k}\}_{k\in\bZ_{>0}}$ be an increasing sequence in $\Fil_{R,\fm}$ such that $\cup \fa_{\bullet,k}\in\Fil_{R,\fm}$. Then
    \[
        \lim_{k\to\infty} d_1(\fa_\bullet,\fa_{\bullet,k})=0,
    \]
    where $\fa_\bullet:=\{\cup_k \fa_{\lambda,k}\}_{\lambda\in\bR_{>0}}$.
\end{proposition}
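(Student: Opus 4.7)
Since the sequence $\{\fa_{\bullet,k}\}$ is increasing with union $\fa_\bullet$, we have $\fa_{\bullet,k}\subset\fa_\bullet$ for every $k$, so by Definition \ref{defn:d_1 metric} the quantity $d_1(\fa_\bullet,\fa_{\bullet,k})$ equals $\e(\fa_{\bullet,k})-\e(\fa_\bullet)$. The plan is therefore to establish continuity of $\e(\bullet)$ along the increasing sequence by realizing each multiplicity in terms of volumes of Newton--Okounkov bodies via Proposition \ref{prop:genral Okounkov body} and then invoking Lemma \ref{lem:commute} to pass to the limit.

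First, I would fix auxiliary data uniformly across the whole sequence. The normalized blowup $\pi: Z\to \Spec\hat R$, the regular local ring $R'=\cO_{Z,z}$, the regular system of parameters $x_1,\ldots,x_n$ of $R'$, the irrational weights $\alpha_1,\ldots,\alpha_n$, and consequently the valuation $v$ and the constant $t=[\kappa':\kappa]$ depend only on $R$ and can be chosen once and for all. For the constant $c=c_1c_2$ entering the definition of the semigroups, $c_2$ depends only on $R$ and $v$ by \cite[Lemma 4.3]{Cut13}, and I would choose $c_1\in\bZ_{>0}$ so that $\fm^{c_1}\subset \fa_{1,1}$; since $\fa_{\bullet,k}$ is increasing and $\fa_1=\cup_k\fa_{1,k}$, this same $c_1$ works for every $\fa_{\bullet,k}$ and for $\fa_\bullet$. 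With these uniform choices, Proposition \ref{prop:genral Okounkov body} yields semigroups $\Gamma^{(j)}_k,\bar\Gamma^{(j)}\subset\bZ_{\ge 0}^{n+1}$ associated to $\fa_{\bullet,k}$ and semigroups $\Gamma^{(j)}_\infty,\bar\Gamma^{(j)}$ associated to $\fa_\bullet$, where $\bar\Gamma^{(j)}$ does not depend on the filtration.

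The main step is to identify $\Gamma^{(j)}_\infty$ with the ascending union $\bigcup_k \Gamma^{(j)}_k$. The inclusion $\bigcup_k \Gamma^{(j)}_k\subset \Gamma^{(j)}_\infty$ is immediate from $\fa_{m,k}\subset\fa_m$, noting that the natural map
\[
\fa_{m,k}\cap\fb_{\xi(\bm\beta)}/\fa_{m,k}\cap\fb_{>\xi(\bm\beta)}\hookrightarrow \fa_m\cap\fb_{\xi(\bm\beta)}/\fa_m\cap\fb_{>\xi(\bm\beta)}
\]
is injective (if an element of the numerator on the left lies in $\fa_m\cap\fb_{>\xi(\bm\beta)}$, it automatically lies in $\fa_{m,k}\cap\fb_{>\xi(\bm\beta)}$ since membership in $\fb_{>\xi(\bm\beta)}$ is independent of $k$). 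The reverse inclusion is the heart of the argument: since $\fa_m=\cup_k\fa_{m,k}$ is an ascending union, the subquotient $\fa_{m,k}\cap\fb_{\xi(\bm\beta)}/\fa_{m,k}\cap\fb_{>\xi(\bm\beta)}$ realizes an ascending chain of subspaces of the finite-dimensional $\kappa$-vector space $R\cap\fb_{\xi(\bm\beta)}/R\cap\fb_{>\xi(\bm\beta)}$, whose union is exactly $\fa_m\cap\fb_{\xi(\bm\beta)}/\fa_m\cap\fb_{>\xi(\bm\beta)}$; as the ambient dimension is bounded by $t$, the dimensions stabilize at the limiting dimension for sufficiently large $k$. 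Hence any $(\bm\beta,m)\in\Gamma^{(j)}_\infty$ already lies in some $\Gamma^{(j)}_k$.

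To conclude, by Proposition \ref{prop:genral Okounkov body} and Lemma \ref{lem:Okounkov body}, together with the definition of multiplicity, one obtains
\[
\e(\fa_{\bullet,k})=n!\sum_{j=1}^t\bigl(\vol(\Delta(\bar\Gamma^{(j)}))-\vol(\Delta(\Gamma^{(j)}_k))\bigr),
\]
and the analogous identity for $\e(\fa_\bullet)$ with $\Gamma^{(j)}_\infty$ in place of $\Gamma^{(j)}_k$. Subtracting and applying Lemma \ref{lem:commute} to each ascending chain $\{\Gamma^{(j)}_k\}_k$ gives
\[
d_1(\fa_\bullet,\fa_{\bullet,k})=n!\sum_{j=1}^t\bigl(\vol(\Delta(\Gamma^{(j)}_\infty))-\vol(\Delta(\Gamma^{(j)}_k))\bigr)\xrightarrow[k\to\infty]{}0.
\]
The main obstacle I anticipate is the identification $\Gamma^{(j)}_\infty=\bigcup_k \Gamma^{(j)}_k$, which requires careful bookkeeping of subquotients for the fixed valuation $v$; once this is in hand, the rest is formal.
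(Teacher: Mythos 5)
Your proof is correct and follows essentially the same route as the paper's: both reduce to showing $\Gamma^{(j)}_\infty = \bigcup_k \Gamma^{(j)}_k$ for the Okounkov semigroups and then invoke Lemma \ref{lem:commute}. You phrase the hard inclusion via stabilization of dimensions of an ascending chain of subspaces in the finite-dimensional $\kappa$-vector space $R\cap\fb_{\xi(\bm\beta)}/R\cap\fb_{>\xi(\bm\beta)}$, while the paper lifts a basis and fits the finitely many lifts into some $\fa_{m,K}$; these are the same observation. Your explicit remark that the constant $c=c_1c_2$ can and must be chosen uniformly across the whole sequence (taking $c_1$ with $\fm^{c_1}\subset\fa_{1,1}$) makes precise a point the paper passes over with ``it is easy to see that $\bar\Gamma^{(j)}_{,k}$ is independent of $k$.''
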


\begin{proof}
    We construct $2t$ sequence of semigroups $\Gamma_{,k}^{(j)}$ and $\bar\Gamma_{,k}^{(j)}$ associated to $\fa_{\bullet,k}$ as in Proposition \ref{prop:genral Okounkov body}. Let $\Gamma^{(j)}$ and $\bar\Gamma^{(j)}$ be the semigroups of $\fa_\bullet$. By the proof of Proposition \ref{prop:genral Okounkov body} it is easy to see that $\bar\Gamma_{,k}^{(j)}$ is indeed independent of $k$, that is, $\bar\Gamma_{,k}^{(j)}=\bar\Gamma^{(j)}$ for any $1\le j\le t$ and $k\in\bZ_{>0}$. We claim that for any $1\le j\le t$,
    \begin{equation}
        \Gamma^{(j)}=\cup_k \Gamma_{,k}^{(j)}.
    \end{equation}
	
    Indeed, by definition $\Gamma_{,k}^{(j)}\subset\Gamma^{(j)}$ for any $k$ and $j$, so $\cup_k \Gamma_{,k}^{(j)}\subset\Gamma^{(j)}$. Conversely, for any $(\bm\beta,m)\in\Gamma^{(j)}$ where $1\le j\le t$, we know that
    \[
        \dim_\kappa \fa_m\cap\fb_{\xi(\bm\beta)}/\fa_m\cap\fb_{>\xi(\bm\beta)}\ge j.
    \]
    Choose $\kappa$-linearly independent elements $\bar{f_1},\ldots,\bar{f_j}\in \fa_m\cap\fb_{\xi(\bm\beta)}/\fa_m\cap\fb_{>\xi(\bm\beta)}$ and let $f_1,\ldots,f_j$ be a lifting to $\fa_m\cap\fb_{\xi(\bm\beta)}$. Since $\fa_m=\cup_k \fa_{m,k}$, there exists $K\in\bZ_{>0}$ such that $f_1,\ldots,f_j\in\fa_{m,K}$. Now their image in $\fa_{m,K}\cap\fb_{\xi(\bm\beta)}/\fa_{m,K}\cap\fb_{>\xi(\bm\beta)}$ must be $\kappa$-linearly independent, which implies
    \[
        \dim_\kappa \fa_{m,K}\cap\fb_{\xi(\bm\beta)}/\fa_{m,K}\cap\fb_{>\xi(\bm\beta)}\ge j.
    \]
    This shows that $(\bm\beta,m)\in\Gamma_{,K}^{(j)}$. Thus we get $\Gamma^{(j)}\subset \cup_k \Gamma_{,k}^{(j)}$ and the proof of the claim is finished.
	
    Now we can apply Lemma \ref{lem:Okounkov body}, Proposition \ref{prop:genral Okounkov body} and Lemma \ref{lem:commute} to get
    \begin{align*}
        \lim_{k\to\infty} d_1(\fa_\bullet,\fa_{\bullet,k})=&\lim_{k\to\infty} (\e(\fa_{\bullet,k})-\e(\fa_\bullet))\\
        =&\lim_{k\to\infty}\lim_{m\to\infty}\frac{\ell(R/\fa_{m,k})-\ell(R/\fa_m)}{m^n/n!}\\
        =&n!\lim_{k\to\infty}\lim_{m\to\infty} \sum_{j=1}^t\frac{(\#\bar\Gamma^{(j)})-\#\Gamma_{,k}^{(j)})-(\#\bar\Gamma^{(j)}-\#\Gamma^{(j)})}{m^n}\\
        =&n!\sum_{j=1}^t\lim_{k\to\infty}(\vol(\Delta(\Gamma^{(j)}))-\vol(\Delta(\Gamma_{,k}^{(j)})\\
        =0.
    \end{align*}
    This finishes the proof.
\end{proof}

\begin{remark}
    When $x\in X$ is an isolated normal singularity, Proposition \ref{prop:continuous along increasing sequences} can be obtained from \cite[Theorem 4.14]{BdFF12}, since the corresponding $b$-divisors form a decreasing convergent sequence in the pointwise topology.
\end{remark}

The following example shows that $\e(\bullet)$ may fail to be continuous along a decreasing sequence of filtrations. However, we will show that continuity along decreasing sequences holds under some geometric conditions in the next section.

\begin{example}\label{eg:jumping up along decreasing sequences}
    Let $R=k[\![x]\!]$ with $\fm=(x)$. For $k\in\bZ_{>0}$, define $\fa_{\bullet,k}\in\Fil$ by
    \begin{equation*}
        \fa_{\lambda,k}:=
        \left\{\begin{aligned}
        &\fm^{\lceil 2\lambda \rceil}, &\lambda\le k,\\
        &\fm^{2k}, &k<\lambda\le 2k,\\
        &\fm^{\lceil \lambda \rceil}, &\lambda> 2k.
        \end{aligned}\right.
    \end{equation*}
    Then $\e(\fa_{\bullet,k})=\e(\fm^\bullet)=1$. But since $\cap_k \fa_{\bullet,k}=\fm^{2\bullet}$, we have $\e(\cap_k \fa_{\bullet,k})=\e(\fm^{2\bullet})=2\e(\fm^\bullet)=2$.  
\end{example}

As an application of Proposition \ref{prop:continuous along increasing sequences}, we prove a convergence result which might be of its own interest. 

\begin{proposition}\label{prop:modified d_1-converging seq}
    Let $x\in X$ be a klt singularity over a field of characteristic $0$. Let $v_k\in\Val_{X,x}$, $k\in\bZ_{>0}$ be a sequence of valuations such that $A_X(v_k)=1$ and $\nvol_{x,X}(v_k)<V$ for some $V>0$. Possibly passing to a subsequence, there exists an increasing sequence $\{\fa_{\bullet,k}\}\subset\Fil^s$ with $\fa_{\lambda,k}\subset\fa_\lambda(v_k)$, which converges to some $v\in\Val_{X,x}$ both weakly and with respect to the $d_1$-metric. 
\end{proposition}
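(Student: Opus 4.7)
My plan is to build $\fa_{\bullet,k}$ by intersecting the valuation filtrations along a weakly convergent subsequence of the $v_k$, and then to invoke Proposition \ref{prop:continuous along increasing sequences} to upgrade weak convergence to $d_1$-convergence. First, the assumptions $A_X(v_k)=1$ and $\nvol(v_k)<V$ combined with Li's properness estimate give $\vol(v_k)\ge \nvol(x,X)/A_X(v_k)^n>0$, so $v_k\in\Val^+_{X,x}$ and $\fa_\bullet(v_k)\in\Fil^s$; moreover, Lemma \ref{lem:lct control} applied with $\fa_\bullet=\fm^\bullet$ yields a uniform positive lower bound $v_l(\fm)\ge c>0$. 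By the standard compactness of $\{w\in\Val_{X,x}:A_X(w)\le 1\}$ in the weak topology on a klt singularity in characteristic zero, I pass to a subsequence so that $v_k\to v$ weakly with $v\in\Val_{X,x}$; lower semicontinuity of $A_X$ and $\nvol$ together with Li's estimate then give $v\in\Val^+_{X,x}$ with $A_X(v)\le 1$ and $\nvol(v)\le V$.

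Next, I set
\[
\fa_{\bullet,k}\coloneqq \bigcap_{l\ge k}\fa_\bullet(v_l).
\]
The uniform bound $v_l(\fm)\ge c$ guarantees that $\fm^{\bullet/c}\subset \fa_\bullet(v_l)$ for every $l$, so by Definition-Lemma \ref{deflem:arbitrary intersection} the intersection is an $\fm$-filtration; since $\fa_{\bullet,k}\subset\fa_\bullet(v_k)\in\Fil$ it is linearly bounded, and by Proposition \ref{prop:alt def for saturation} it is saturated, hence $\fa_{\bullet,k}\in\Fil^s$. The sequence is manifestly increasing in $k$ and contained in $\fa_\bullet(v_k)$ by construction. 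Moreover, the definition of the order function yields
\[
\ord_{\fa_{\bullet,k}}(f)=\inf_{l\ge k}v_l(f),
\]
which increases in $k$ to $\liminf_{l}v_l(f)=v(f)$ by the weak convergence $v_l\to v$. Consequently $\bigcup_k\fa_{\lambda,k}=\{f\in R:v(f)\ge\lambda\}=\fa_\lambda(v)$, so the union equals $\fa_\bullet(v)\in\Fil$, and the pointwise convergence of $\ord_{\fa_{\bullet,k}}$ to $v$ delivers the weak convergence of $\fa_{\bullet,k}$ to $v$.

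Applying Proposition \ref{prop:continuous along increasing sequences} to this increasing sequence with union $\fa_\bullet(v)\in\Fil$ then yields the desired $d_1$-convergence $d_1(\fa_{\bullet,k},\fa_\bullet(v))\to 0$. The main obstacle I anticipate is the weak compactness step: one must ensure that the weak limit of a sequence in $\{w\in\Val_{X,x}:A_X(w)\le 1\}$ remains a genuine, non-degenerate valuation in $\Val^+_{X,x}$, rather than merely a sub-multiplicative seminorm with $\vol=0$. This relies essentially on the klt hypothesis and characteristic zero through the lower semicontinuity of $A_X$ and $\nvol$, and on Li's estimate $\nvol(v)\ge \nvol(x,X)>0$ to prevent the volume of the limit from vanishing. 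Once this limit is secured, all remaining steps are formal consequences of the intersection construction and the continuity result along increasing sequences already established.
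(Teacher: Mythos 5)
Your proposal follows essentially the same route as the paper: derive a uniform lower bound on $v_k(\fm)$, use weak sequential compactness of a suitable valuation slice to extract a weak limit $v$, build the increasing sequence as tail intersections $\fa_{\bullet,k}=\bigcap_{l\ge k}\fa_\bullet(v_l)$, note $\ord_{\fa_{\bullet,k}}=\inf_{l\ge k}v_l\nearrow v$ pointwise, and close with Proposition~\ref{prop:continuous along increasing sequences}. One inaccuracy worth fixing: the set $\{w\in\Val_{X,x}:A_X(w)\le 1\}$ alone is \emph{not} weakly sequentially compact (rescaling $\epsilon_k w$ with $\epsilon_k\to0$ degenerates to the zero seminorm while keeping $A_X\le 1$); the compactness statement you need, and the one the paper cites from Li--Xu, is for the set $\{w:w(\fm)\ge\delta,\ A_X(w)\le1\}$, which you can indeed use since you have already established the uniform bound $v_l(\fm)\ge c>0$. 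A second, smaller point: you do not need (and should not appeal to) lower semicontinuity of $\nvol$ in the weak topology, which is not among the paper's tools and is not obviously true — lower semicontinuity of $A_X$ together with Li's properness estimate already yields $v\in\Val^+_{X,x}$, which is all you use. With these two adjustments the argument matches the paper's.
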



\begin{proof}
    By \cite[Theorem 1.1]{Li18}, there exists $\delta>0$ such that $v_k(\fm)>\delta$ for any $k$. By \cite[Proposition 3.9]{LX16},
    the set
    \[
        \{v\in\Val_{X,x}\mid v(\fm)\ge\delta,\ A_X(v)\le 1\}
    \]
    is sequentially compact under the weak topology. Hence there is a subsequence of $\{v_k\}$ which converges to some $v\in\Val_{X,x}$. From now on, we assume that $v_k\to v$ weakly.
	
    Let $\chi_k\coloneqq\inf_{j\ge k}v_j\le v$. Since $v_j\to v$ weakly, given any $f\in\fm$ and any $\epsilon>0$, there exists $J\in\bZ_{>0}$ such that $v_j(f)>v(f)-\epsilon$ for any $j\ge J$. Thus
    \[
        v(f)\ge \chi_k(f)=\inf_{j\ge k}v_j(f)\ge v(f)-\epsilon
    \]
    as long as $k\ge J$. This proves $\chi_k\to v$ weakly as an increasing sequence. So we may apply Proposition \ref{prop:continuous along increasing sequences} to get $\lim_{k\to\infty}d_1(\chi_k,v)=0$. By Proposition \ref{prop:alt def for saturation}, $\fa_{\bullet,k}\coloneqq\fa(\chi_k)=\cap_{j\ge k}\fa(v_j)$ is saturated, and the proof is finished. 
\end{proof}

\subsubsection{Normalized volumes and continuity along decreasing sequences}\label{ssec:continuity along dec seq}

In this section, we assume that $R$ contains an algebraically closed field $\bk$ with $\mathrm{char}\bk=0$. As usual, we denote the singularity by $(x\in X):=(\fm\in\Spec R)$. The result is not used in the proof of the main results, though it might be of its own interest.

\begin{proposition}\label{prop:continuity along intersection with nvol bounded}
    Let $x\in X$ be a klt singularity. Let $\{\fa_{\bullet,k}\}_{k\in\bZ_{>0}}$ be a decreasing sequence in $\Fil$. Assume each $\fa_{\bullet,k}$ is of the form $\fa_{\bullet,k}=\cap_{i\in I_k}\fa_\bullet(v_i)$, where for each $k$, $\{v_i\}_{i\in I_k}\subset \Val^+_{X,x}$ is a set of valuations satisfying the following conditions.
	
    For some constants $V>0$ and $C\in\bZ_{>0}$, 
    \begin{enumerate}
        \item for any $k$ and $i\in I_k$, $\nvol_{x,X}(v_i)<V$,
		
        \item for any $k$ and $i\in I_k$, $v_i(\fm)>1/C$, and
		
        \item for any $k$, there exists $i_0\in I_k$ such that $\fa_{C\bullet}(v_{i_0})\subset \fm^\bullet$.
    \end{enumerate}
    Then
    \[
        \lim_{k\to\infty}d_1(\fa_\bullet,\fa_{\bullet,k})=0,
    \]
    where $\fa_\bullet:=\cap_k\fa_{\bullet,k}$. 
\end{proposition}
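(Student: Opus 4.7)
The plan is to establish the equality $E := \lim_k \e(\fa_{\bullet,k}) = \e(\fa_\bullet)$, which would immediately give $d_1(\fa_\bullet, \fa_{\bullet,k}) = \e(\fa_\bullet) - \e(\fa_{\bullet,k}) \to 0$ since $\fa_\bullet \subset \fa_{\bullet,k}$ forces $\fa_\bullet \cap \fa_{\bullet,k} = \fa_\bullet$. The inequality $E \le \e(\fa_\bullet)$ is automatic from $\fa_\bullet \subset \fa_{\bullet,k}$; the main content is the reverse inequality.

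First I would derive uniform two-sided linear bounds from the hypotheses. Condition (2) gives $v_i \ge \ord_\fm/C$ for every valuation $v_i$ appearing, hence $\fm^{C\bullet} \subset \fa_\bullet(v_i)$, and taking intersections $\fm^{C\bullet} \subset \fa_{\bullet,k}$ and $\fm^{C\bullet} \subset \fa_\bullet$. Condition (3) provides for each $k$ an index $i_0 \in I_k$ with $\fa_\bullet(v_{i_0}) \subset \fm^{\bullet/C}$, whence $\fa_{\bullet,k} \subset \fm^{\bullet/C}$ uniformly in $k$. Thus $\fm^{C\bullet} \subset \fa_{\bullet,k}, \fa_\bullet \subset \fm^{\bullet/C}$ for every $k$. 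A crucial consequence is the following stabilization: for each fixed $m$, the ideals $\fa_{m,k}$ form a decreasing sequence in the Artinian quotient $R/\fm^{Cm}$, so $\fa_{m,k} = \fa_m$ for every $k \ge K(m)$ for some $K(m) \in \bZ_{>0}$.

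Next I would apply the Okounkov body decomposition of Proposition \ref{prop:genral Okounkov body}. For each $k$ it yields
\begin{equation*}
\ell(R/\fa_{m,k}) = \sum_{j=1}^{t} \bigl( \#\bar\Gamma^{(j)}_m - \#\Gamma_{,k,m}^{(j)} \bigr),
\end{equation*}
where $\bar\Gamma^{(j)}$ depends only on $R$ while $\Gamma_{,k}^{(j)}$ is decreasing in $k$. The stabilization identifies $\bigcap_k \Gamma_{,k}^{(j)} = \Gamma^{(j)}$, the semigroup attached to $\fa_\bullet$. The uniform linear bound places all the Okounkov bodies $\Delta(\Gamma_{,k}^{(j)})$ inside the fixed bounded convex set $\Delta(\bar\Gamma^{(j)})$. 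The heart of the argument would be to show a uniform Ehrhart-type estimate
\begin{equation*}
\bigl| \#\Gamma_{,k,m}^{(j)}/m^n - \vol(\Delta(\Gamma_{,k}^{(j)})) \bigr| = o(1)\quad \text{as } m \to \infty, \text{ uniformly in } k.
\end{equation*}
Given this uniform convergence, for every $\epsilon > 0$ there would exist $M = M(\epsilon)$ independent of $k$ with $\ell(R/\fa_{m,k})/(m^n/n!) \le \e(\fa_{\bullet,k}) + \epsilon$ for all $m \ge M$ and all $k$.

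The conclusion would then follow by exchanging limits: fixing $M \ge M(\epsilon)$ and taking $k \ge K(M)$ so that $\fa_{M,k} = \fa_M$, the estimate reads $\ell(R/\fa_M)/(M^n/n!) \le E + \epsilon$; sending $M \to \infty$ yields $\e(\fa_\bullet) \le E + \epsilon$, and $\epsilon \to 0$ gives the desired $\e(\fa_\bullet) \le E$. The main obstacle I foresee is precisely the uniform Ehrhart-type estimate above: while each individual body satisfies $\#\Gamma_{,k,m}^{(j)}/m^n \to \vol(\Delta(\Gamma_{,k}^{(j)}))$ by Lemma \ref{lem:Okounkov body}, one must ensure the rate of convergence does not degenerate with $k$. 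This is where condition (1), the uniform normalized volume bound, is expected to enter, controlling the boundary geometry of the bodies $\Delta(\Gamma_{,k}^{(j)})$ and preventing the sort of pathology illustrated by Example \ref{eg:jumping up along decreasing sequences}, where $\lim \e(\fa_{\bullet,k}) < \e(\cap_k \fa_{\bullet,k})$ in the absence of such geometric control.
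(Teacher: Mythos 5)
Your overall reduction is sound: the two-sided linear bounds from (2)--(3), the stabilization $\fa_{m,k}=\fa_m$ for $k\ge K(m)$ in an Artinian quotient, and the final limit exchange are all exactly the right bookkeeping (and match the paper's outer structure). But the step you yourself flag as ``the main obstacle'' --- a uniform-in-$k$ Ehrhart-type estimate for the Okounkov semigroups --- is a genuine gap, not a detail. There is no obvious route from the hypotheses to such an estimate, precisely because intersection of semigroups does not commute with passage to Newton--Okounkov bodies for \emph{decreasing} sequences (this is the mechanism behind Example \ref{eg:jumping up along decreasing sequences}: $\Delta(\cap_k\Gamma_k)$ can be strictly smaller than $\cap_k\Delta(\Gamma_k)$, and the rate at which $\#\Gamma_{m}^{(j)}/m^n$ converges to its volume can indeed degenerate). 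You correctly guess that the normalized-volume bound (1) must supply the missing control, but you do not say how, and it is not at all clear that this can be converted into uniform geometry of the Okounkov bodies.

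The paper takes a different and sharper route: instead of controlling $\ell(R/\fa_m)/(m^n/n!)$ via Okounkov counting, it controls the \emph{Samuel multiplicities} $\e(\fa_\lambda)/\lambda^n$ directly through multiplier ideals. Lemma \ref{lem:uniform approximation} (a uniform variant of Blum's approximation) shows $\e(\fa_\bullet)\le \e(\fa_\lambda)/\lambda^n <\e(\fa_\bullet)+\epsilon$ for all $\lambda>\Lambda(C,V,\epsilon)$, where $\Lambda$ depends only on the constants in (1)--(3). The input from (1) enters there through Lemma \ref{lem:lct control}, which bounds $A_X(v_i)/v_i(\fa_\bullet)$ in terms of $\nvol(v_i)$ and $\lct(\fa_\bullet)$; this translates via the valuative characterization of multiplier ideals into $\cJ(X;\lambda\cdot\fa_\bullet)\subset\fa_{\lambda-E}$ with $E$ depending only on $C,V$, which is exactly the uniform approximation engine. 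Also note that the paper's approximation uses $\e(\fa_m)/m^n$, which satisfies $\e(\fa_\bullet)=\inf_m\e(\fa_m)/m^n$ --- a much more robust inequality than the raw colength ratio you are tracking. If you want to keep your Okounkov-flavored approach, the missing lemma you would have to prove \emph{is} essentially Lemma \ref{lem:uniform approximation}, and I do not see how to get it without the multiplier-ideal machinery; at that point you might as well follow the paper's argument.
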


\begin{remark}
    If $I_k$ has a common element $v_0\in\Val^+_{X,x}$, then condition (3) in the proposition is automatic, since by Lemma \ref{lem:e>0 iff linearly bounded}, $\fa_\bullet(v_0)$ is linearly bounded.
\end{remark}

\begin{proof}
    By construction, $\fa_\bullet\in\Fil$ and $\e(\fa_\bullet)\ge \e(\fa_{\bullet,k})$ for any $k$. So $\lim_k \e(\fa_{\bullet,k})\in\bR_{>0}$. 
	
    For any fixed $m\in\bZ_{>0}$, we have $\cap_k \fa_{m,k}=\fa_m$. So actually there exists $K_m\in\bZ_{>0}$ such that for any $k\ge K_m$, $\fa_{m,k}=\fa_m$, since $R/\fa_m$ is Artinian. In particular, $\e(\fa_{m,k})=\e(\fa_m)$. Now for any $\epsilon>0$, by Lemma \ref{lem:uniform approximation}, there exists $M=M(C,V,\epsilon)>0$ such that for any $k$ and any $m\ge M$
    \[
        \e(\fa_{\bullet,k})\le \frac{\e(\fa_{m,k})}{m^n}\le \e(\fa_{\bullet,k})+\epsilon.
    \]
	
    Thus for $m\ge M$ and $k\ge K_m$, we have 
    \[
        \e(\fa_{\bullet,k})\le \frac{\e(\fa_m)}{m^n}\le \e(\fa_{\bullet,k})+\epsilon.
    \]
    Letting $k\to\infty$, for $m\ge M$ we get
    \[
        \lim_k \e(\fa_{\bullet,k})\le \frac{\e(\fa_m)}{m^n} \le \lim_k \e(\fa_{\bullet,k})+\epsilon.
    \]
    Taking limit with respect to $m$ and using \eqref{eqn:vol=mult}, we get $\lim_k\e(\fa_{\bullet,k})\le \e(\fa_\bullet)\le \lim_k\e(\fa_{\bullet,k})+\epsilon$. Since $\epsilon>0$ is arbitrary, we get $\e(\fa_\bullet)=\lim_k \e(\fa_{\bullet,k})$, which finishes the proof.
\end{proof}

In the proof of the proposition, we need the following lemma, which is a slight generalization of the uniform approximation proved in \cite{Blu18}. For the definition of multiplier ideals in the proof below, we also refer to \emph{ibid}.

\begin{lemma}(c.f. \cite[Proposition 3.7]{Blu18})\label{lem:uniform approximation}
    Let $\epsilon,V>0$ and $C\in\bZ_{>1}$ be constants. Then there exists $\Lambda=\Lambda(C,V,\epsilon)\in\bR_{>0}$ such that the following holds.
	
    Let $\fa_\bullet=\cap_{i\in I}\fa_\bullet(v_i)$, where $\{v_i\}_{i\in I}\subset\Val^+_{X,x}$ is a set of valuations such that 
    \begin{enumerate}
        \item $\nvol_{x,X}(v_i)<V$ for any $i\in I$,
		
        \item $v_i(\fm)>1/C$ for any $i\in I$, and
		
        \item $\fa_{C\bullet}(v_{i_0})\subset \fm^\bullet$ for some $i_0\in I$.
    \end{enumerate}
    Then for any $\lambda>\Lambda$, we have
    \[
        \e(\fa_\bullet)\le\frac{\e(\fa_\lambda)}{\lambda^n}< \e(\fa_\bullet)+\epsilon.
    \]
\end{lemma}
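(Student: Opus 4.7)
The proof adapts the multiplier-ideal approximation strategy of \cite[Proposition 3.7]{Blu18} from the single-valuation setting to the intersection setting. The lower bound $\e(\fa_\bullet)\le\e(\fa_\lambda)/\lambda^n$ is exactly \eqref{eqn:vol=mult}, so I focus entirely on the upper bound. The plan is to produce a multiplier ideal $\mathcal{J}(\fa_m^c)$ trapped inside $\fa_\lambda$, whose multiplicity can be controlled by $(c/m)^n\cdot\e(\fa_m)$, and then to use a uniform rate of convergence for $\e(\fa_m)/m^n\to\e(\fa_\bullet)$ to conclude.

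\emph{Step 1 (Sandwich and uniform log discrepancies).} Condition (2) gives $v_i(\fm^{Cm})\ge m$ for every $i$, hence $\fm^{Cm}\subset\fa_m(v_i)$ for all $i$ and $\fm^{C\bullet}\subset\fa_\bullet$. Condition (3) rephrases as $v_{i_0}\le C\cdot\ord_\fm$, giving $\fa_\bullet\subset\fa_\bullet(v_{i_0})\subset\fm^{\bullet/C}$. The sandwich $\fm^{C\bullet}\subset\fa_\bullet\subset\fm^{\bullet/C}$ yields uniform bounds $\e(\fa_\bullet)\le C^n\e(\fm)$ and $\lct(\fa_\bullet)\le C\cdot\lct(\fm)=:L$. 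Next, rescale the valuations: set $c_i:=v_i(\fa_\bullet)\ge 1$ and $w_i:=v_i/c_i$. Fekete applied to the sub-additive sequence $m\mapsto v_i(\fa_m)$ gives $v_i(\fa_m)\ge m c_i$, so $\fa_\bullet\subset\fa_\bullet(w_i)$; combined with $w_i\le v_i$ one checks $\fa_\bullet=\cap_i\fa_\bullet(w_i)$ and $w_i(\fa_\bullet)=1$. Since $\nvol$ is scale-invariant, $\nvol(w_i)<V$, and Lemma~\ref{lem:lct control} applied to $\fa_\bullet$ and $w_i$ gives
\[
A_X(w_i)=\frac{A_X(w_i)}{w_i(\fa_\bullet)}\le c_0\cdot\frac{V}{\nvol(x,X)}\cdot L\;=:\;A,
\]
a uniform bound depending only on $C,V$, and $(X,x)$.

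\emph{Step 2 (Multiplier-ideal inclusion and colength bound).} Fix a (large) integer $m$ to be chosen, and for $\lambda\ge nm$ set $c:=(\lambda+A)/m$. By the valuative characterization of multiplier ideals and $w_i(\fa_m)\ge m$,
\[
w_i(\mathcal{J}(\fa_m^c))>c\cdot w_i(\fa_m)-A_X(w_i)\ge cm-A=\lambda
\]
for every $i$, so $\mathcal{J}(\fa_m^c)\subset\cap_i\fa_\lambda(w_i)=\fa_\lambda$. A Skoda-type inclusion $\fa_m^{\lceil c\rceil-n}\subset\mathcal{J}(\fa_m^c)$ for $c\ge n$, combined with the homogeneity $\e(\fa_m^k)=k^n\e(\fa_m)$, then gives $\e(\mathcal{J}(\fa_m^c))\le(\lceil c\rceil)^n\e(\fa_m)$, whence
\[
\frac{\e(\fa_\lambda)}{\lambda^n}\le\frac{\e(\mathcal{J}(\fa_m^c))}{\lambda^n}\le\Bigl(1+\frac{A}{\lambda}\Bigr)^n\cdot\frac{\e(\fa_m)}{m^n}.
\]

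\emph{Step 3 (Uniform choice of $m$ and $\Lambda$).} The sandwich of Step 1 shows that the Newton--Okounkov body of every admissible $\fa_\bullet$ is contained in the fixed Okounkov body of $\fm^{\bullet/C}$, which, together with the approximation machinery of \cite{LM09,Cut14}, yields a uniform rate of convergence $\e(\fa_m)/m^n\to\e(\fa_\bullet)$ (depending only on $C$). Choose $m=m(C,V,\epsilon)$ so that $\e(\fa_m)/m^n<\e(\fa_\bullet)+\epsilon/2$ for every admissible $\fa_\bullet$, then pick $\Lambda=\Lambda(C,V,\epsilon)$ large enough that $(1+A/\Lambda)^n(\e(\fa_\bullet)+\epsilon/2)<\e(\fa_\bullet)+\epsilon$ (using the uniform upper bound $\e(\fa_\bullet)\le C^n\e(\fm)$). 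The principal difficulty is precisely this uniform rate of convergence: without the uniform sandwich the rate degenerates with $\fa_\bullet$, and extracting a uniform rate from the Okounkov-body construction applied inside a fixed compact region is the technical heart of the argument (alternatively one can iterate the estimate of Step 2 to self-bootstrap).
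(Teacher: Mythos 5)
Your Steps~1 and~2 are essentially sound (the rescaled valuations $w_i$, the uniform log-discrepancy bound $A_X(w_i)\le A$ via Lemma~\ref{lem:lct control}, and the inclusion $\mathcal{J}(\fa_m^c)\subset\fa_\lambda$), though the inclusion you attribute to Skoda is the wrong way around --- Skoda gives $\mathcal{J}(\fa^c)\subset\fa^{\lceil c\rceil-n+1}$; the inclusion you actually need, $\fa_m^{\lceil c\rceil}\subset\mathcal{J}(\fa_m^c)$, is the elementary one, and it does give your displayed bound since $\e(\mathcal{J}(\fa_m^c))\le\e(\fa_m^{\lceil c\rceil})=\lceil c\rceil^n\e(\fa_m)$.

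The gap is in Step~3, and you have correctly identified it: your estimate from Step~2 controls $\e(\fa_\lambda)/\lambda^n$ only in terms of $\e(\fa_m)/m^n$ for smaller $m$, not in terms of $\e(\fa_\bullet)$ itself, so you still need a \emph{uniform} rate of convergence for $\e(\fa_m)/m^n\to\e(\fa_\bullet)$. This is not a consequence of the sandwich $\fm^{C\bullet}\subset\fa_\bullet\subset\fm^{\bullet/C}$; compare Example~\ref{eg:jumping up along decreasing sequences}, where a fixed sandwich is compatible with $\e(\fa_m)/m^n$ staying far from $\e(\fa_\bullet)$ until arbitrarily large $m$. Your alternative ``iterate the estimate'' suggestion also does not obviously close the loop: chaining downward with $m_{j+1}\le m_j/n$ makes $\prod(1+A/m_j)^n$ diverge, and chaining upward produces inequalities pointing in the unhelpful direction. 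So Step~3, as written, is an assertion rather than a proof, and the lemma is not established.

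The paper's route circumvents this entirely by using the \emph{asymptotic} multiplier ideals $\mathcal{J}_\lambda:=\mathcal{J}(X;\lambda\cdot\fa_\bullet)$ rather than $\mathcal{J}(\fa_m^c)$: the same uniform log-discrepancy bound gives $\mathcal{J}_\lambda\subset\fa_{\lambda-E}$ with $E=E(C,V)$, while the elementary inclusion $\fa_m\subset\mathcal{J}_m$ combined with subadditivity $\mathcal{J}_{km}\subset\mathcal{J}_m^k$ yields $\e(\mathcal{J}_m)/m^n\le \e(\fa_{km})/(km)^n$ for all $k$, hence $\e(\mathcal{J}_m)/m^n\le\e(\fa_\bullet)$ \emph{exactly}. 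Plugging this into $\fa_\lambda\supset\mathcal{J}_{\lambda+E}$ gives $\e(\fa_\lambda)/\lambda^n\le(1+E/\lambda)^n\e(\fa_\bullet)$ directly, with no appeal to a rate of convergence for $\e(\fa_m)/m^n$. This is the content of Blum's Proposition~3.7 that the paper imports, and it is the key idea your argument is missing.
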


\begin{proof}
    Under the conditions, we have
    \[
        \lct(X;\fa_\bullet)\le C\cdot\lct(X;\fm).
    \]
    Hence by Lemma \ref{lem:lct control} for any $i\in I$,
    \[
        \frac{A_X(v_i)}{v_i(\fa_\bullet)}\le c_0\cdot \frac{\nvol_{x,X}(v_i)}{\nvol(x,X)}\cdot\lct(X;\fa_\bullet)\le \frac{c_0C\cdot V\cdot \lct(X;\fm)}{\nvol(x,X)}.
    \]
    By the valuative characterization of multiplier ideals \cite[Theorem 1.2]{BdFFU15}, we have 
    \begin{equation}\label{eqn:control J}
        \cJ(X;\lambda\cdot\fa_\bullet)\subset \fa_{\lambda-E},
    \end{equation}
    where $E:=\frac{c_0CV\lct(X;\fm)}{\nvol(x,X)}>0$ depends only on $C$ and $V$. Replacing the inclusion
    \[
        \cJ(X;m\cdot\fa_\bullet(v))\subset \fa_{m-E}
    \]
    in the proof of \cite[Proposition 3.7]{Blu18}, which follows from the assumption $A(v)\le E$, by \eqref{eqn:control J}, the proposition follows from the same argument therein.
\end{proof}

\subsection{The toric case and Newton-Okounkov bodies}\label{ssec:toric}

We deal with the toric case in this subsection. Throughout, we work over an algebraically closed field $\bk$ of characteristic $0$ and follow the notation in \cite{Ful93} for toric varieties. Our goal is to identify the subspace $(\Fil^{s,\mathrm{mon}}_{R,\fm},d_1)$ of saturated monomial filtrations on a toric singularity with a subspace of the Fr\'echet-Nykodym-Aronszajn metric on cobounded sets of the dual cone. See \cite[Section 8]{JM12}, \cite[Section 6]{KK14} and \cite[Section 8]{Blu18} for some relevant constructions. 

Let $N$ be a free abelian group of rank $n\ge 1$ and $M=N^*$ its dual. Let $\sigma\subset N_\bR=N\otimes \bR$ be a strongly convex rational polyhedral cone of maximal dimension. We get an affine toric variety $X_\sigma=\Spec R_\sigma=\Spec \bk[\sigma^\vee\cap M]$ with a unique torus invariant point $x$, where $\sigma^\vee\subset M_\bR=M\otimes\bR$ is the dual cone of $\sigma$. Let $R$ be the local ring of $X_\sigma$ at $x$ and $\fm$ its maximal ideal. 

Recall that there is a $1$-to-$1$ correspondence between toric valuations $v_u\in\Val^{\mathrm{toric}}_R$ and $u\in\sigma$, given by 
\[
    v_u(\sum_{m\in\sigma^\vee\cap M}c_m\chi^m)=\min\{\langle u,m\rangle\mid c_m\ne 0\},
\]
and $v_u$ is centered at $\fm$ if and only if $u\in\Int(\sigma)$. For $u\in\Int(\sigma)$ and $\lambda\in\bR_{>0}$, denote $H_u(\lambda)\coloneqq \{\beta\in M_\bR\mid \langle \beta,u \rangle\ge \lambda\}$. Then we have 
\[
    \fa_\lambda(v_u)=\mathrm{span}\{\chi^m\mid m\in H_u(\lambda)\cap\sigma^\vee\cap M\},
\]
and
\[
    \vol(v_u)=n!\cdot \vol(\sigma^\vee\backslash H_u(1)).
\]

We now reproduce the generalization of the above construction to the case of monomial filtrations following \cite{KK14}, see also \cite{Mus02}. An $\fm$-filtration $\fa_\bullet$ on $R$ is called \emph{monomial} if for any $\lambda\in\bR_{>0}$, $\fa_\lambda$ is a monomial ideal. Denote the set of (resp. saturated) linearly bounded monomial $\fm$-filtrations by $\Fil^\mathrm{mon}_{R,\fm}$ (resp. $\Fil^{s,\mathrm{mon}}_{R,\fm}$).

Given an $\fm$-primary monomial ideal $\fa$, the Newton polyhedron $P(\fa)$ of $\fa$ is the convex hull of $\{m\in\sigma^\vee\cap M\mid \chi^m\in\fa\}$. Moreover, we have $\e(\fa)=\vol(\sigma^\vee\backslash P(\fa))$. Recall that the \emph{Newton-Okounkov body} $P(\fa_\bullet)$ of a monomial $\fm$-filtration $\fa_\bullet\in\Fil^\mathrm{mon}_{R,\fm}$ is defined to be
\[
    P(\fa_\bullet)\coloneqq\overline{\cup_{\lambda\in\bR_{>0}}\{m/\lambda\mid m\in P(\fa_\lambda)\}}\subset\sigma^\vee,
\]
which is a convex set such that $\sigma^\vee\backslash P(\fa_\bullet)$ is bounded. It is known that the covolume of the Newton-Okounkov body computes the multiplicity of the filtration.

\begin{lemma}\cite[Theorem 6.5]{KK14}\label{lem:vol covol}
    Let $\fa_\bullet$ be a monomial $\fm$-filtration on $R$. Then we have 
    \[
    \e(\fa_\bullet)=\vol(\sigma^\vee\backslash P(\fa_\bullet)).
    \]
\end{lemma}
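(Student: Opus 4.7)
My plan is to reduce to the classical toric identity $\e(\fa) = \vol(\sigma^\vee \setminus P(\fa))$ for a single monomial $\fm$-primary ideal, and then pass to the limit using Minkowski-sum monotonicity of Newton polyhedra along divisibility.

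Applying the ideal formula to each $\fa_m$ and using that $\sigma^\vee$ is a cone, so that Lebesgue measure scales by $m^{-n}$ under dilation by $1/m$, I would write
\[
    \frac{\e(\fa_m)}{m^n} = \vol\!\left(\sigma^\vee \setminus \tfrac{1}{m}P(\fa_m)\right).
\]
By equation \eqref{eqn:vol=mult} the left-hand side converges to $\e(\fa_\bullet)$, so it remains to identify its limit with $\vol(\sigma^\vee \setminus P(\fa_\bullet))$. The key monotonicity comes from multiplicativity: for monomial ideals the inclusion $\fa_m^k \subset \fa_{mk}$ translates into $P(\fa_m^k) = k\cdot P(\fa_m) \subset P(\fa_{mk})$, hence $\tfrac{1}{m}P(\fa_m)\subset \tfrac{1}{mk}P(\fa_{mk})$. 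Thus along any divisibility-cofinal sequence $\{m_N\}$ (say $m_N=N!$), the sets $\tfrac{1}{m_N}P(\fa_{m_N})$ are increasing in $N$.

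Next, using the same Minkowski trick to clear denominators for rational indices together with left-continuity of $\fa_\bullet$ to handle irrational indices, one checks that $\bigcup_{m\in\bZ_{>0}}\tfrac{1}{m}P(\fa_m)$ is dense in $P(\fa_\bullet)$. Linear boundedness of $\fa_\bullet$ forces $\sigma^\vee \setminus P(\fa_\bullet)$ into a bounded region of $M_\bR$ and hence it has finite Lebesgue measure, while convexity of $P(\fa_\bullet)$ ensures that its topological boundary is Lebesgue null. Monotone convergence of the decreasing complements then yields
\[
    \lim_{N\to\infty}\vol\!\left(\sigma^\vee \setminus \tfrac{1}{N!}P(\fa_{N!})\right) = \vol\!\left(\sigma^\vee \setminus \bigcup_N \tfrac{1}{N!}P(\fa_{N!})\right) = \vol(\sigma^\vee \setminus P(\fa_\bullet)),
\]
which combined with the previous display produces $\e(\fa_\bullet) = \vol(\sigma^\vee \setminus P(\fa_\bullet))$.

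The main technical point is the density of $\bigcup_m \tfrac{1}{m}P(\fa_m)$ in $P(\fa_\bullet)$, in particular the handling of irrational jumping numbers, which uses left-continuity to replace $\lambda\in\bR_{>0}$ by a nearby rational $\mu$ with $\fa_\mu=\fa_\lambda$ and then clearing denominators through the Minkowski-sum trick. This step can be sidestepped entirely by appealing to the general Newton-Okounkov body formula \cite[Theorem 6.5]{KK14} applied to the graded semigroup $\{(v,m)\in M\times\bZ_{\geq 0}: \chi^v\in\fa_m\}$, whose associated Okounkov body recovers $\sigma^\vee \setminus P(\fa_\bullet)$ up to a set of measure zero.
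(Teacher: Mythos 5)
Your argument is correct, but it follows a genuinely different route from the paper: the paper does not prove this lemma at all and simply cites \cite[Theorem~6.5]{KK14}, whereas you give a self-contained reduction to the classical toric identity $\e(\fa)=\vol(\sigma^\vee\setminus P(\fa))$ for a single monomial $\fm$-primary ideal, followed by a limiting argument. Your limiting argument is sound modulo the standard facts you invoke: $\fa_m^k\subset\fa_{mk}$ plus $P(\fa^k)=kP(\fa)$ gives the directed monotonicity along divisibility; passing to $m_N=N!$ makes the sequence of scaled Newton polyhedra honestly increasing, so that $\bigcup_N\tfrac{1}{N!}P(\fa_{N!})$ is convex with closure $P(\fa_\bullet)$; the standard fact $\mathrm{int}(\overline{C})=\mathrm{int}(C)$ for convex $C$ shows the union fills the interior of $P(\fa_\bullet)$; and the boundary of a convex body is Lebesgue null, so the decreasing complements and $\sigma^\vee\setminus P(\fa_\bullet)$ have the same volume. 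The monotone-convergence application is legitimate because $\sigma^\vee\setminus P(\fa_1)$ is bounded (here you implicitly use that $\fa_1$ is $\fm$-primary), so the first term has finite measure. Two minor remarks: (i) you should be explicit that the density of $\bigcup_m\tfrac{1}{m}P(\fa_m)$ in $P(\fa_\bullet)$ reduces to showing $\tfrac{1}{\mu}P(\fa_\mu)\subset\tfrac{1}{a}P(\fa_a)$ for rational $\mu=a/b$ via $\fa_\mu^b\subset\fa_a$, combined with $\tfrac{\lambda}{\mu}\to 1$ for irrational $\lambda$; and (ii) you assume linear boundedness of $\fa_\bullet$, which is not stated in the lemma itself but is harmless in context since the paper only uses the lemma for filtrations in $\Fil^{\mathrm{mon}}_{R,\fm}\subset\Fil_{R,\fm}$, and the degenerate non-linearly-bounded case reduces to $0=0$. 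What your approach buys is an elementary argument from the single-ideal formula and monotone convergence, avoiding the semigroup-theoretic Okounkov-body machinery of \cite{KK14}; what the paper's citation buys is brevity and the uniformity of treating more general graded semigroups at once, as you yourself note at the end.
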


The next lemma gives an alternative characterization for $P(\fa_\bullet)$. 

\begin{lemma}\label{lem:toric Okounkov}
    For $\fa_\bullet\in\Fil^{\mathrm{mon}}_{R,\fm}$, $P(\fa_\bullet)=\cap_{u\in\Int(\sigma)\cap N} H_u(v_u(\fa_\bullet))$.
\end{lemma}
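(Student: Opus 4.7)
My plan is to prove the two inclusions separately, viewing the Newton-Okounkov body as a closed convex set with recession cone $\sigma^\vee$ and using $v_u(\fa_\bullet)$ as its support function.

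For the inclusion $P(\fa_\bullet) \subset \bigcap_{u\in\Int(\sigma)} H_u(v_u(\fa_\bullet))$, I would first note that each $P(\fa_\lambda)$ is the convex hull of $\{m\in\sigma^\vee\cap M\mid \chi^m\in\fa_\lambda\}$, so that $\min\{\langle m,u\rangle\mid m\in P(\fa_\lambda)\} = v_u(\fa_\lambda)$ for every $u\in\Int(\sigma)$; hence $P(\fa_\lambda)\subset H_u(v_u(\fa_\lambda))$. Dividing by $\lambda$ gives $\tfrac{1}{\lambda}P(\fa_\lambda)\subset H_u\bigl(v_u(\fa_\lambda)/\lambda\bigr)$. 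The sequence $v_u(\fa_m)$ is subadditive because $\fa_m\cdot\fa_{m'}\subset\fa_{m+m'}$ and $u$ is a valuation, so by Fekete's lemma $v_u(\fa_\bullet)=\inf_m v_u(\fa_m)/m\le v_u(\fa_\lambda)/\lambda$; it follows that $\tfrac{1}{\lambda}P(\fa_\lambda)\subset H_u(v_u(\fa_\bullet))$. Taking the union over $\lambda$ and the closure (both $H_u(\cdot)$ and the intersection are closed) yields $P(\fa_\bullet)\subset\bigcap_u H_u(v_u(\fa_\bullet))$.

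For the reverse inclusion, I would use a Hahn--Banach separation argument. Suppose $q\in M_\bR\setminus P(\fa_\bullet)$. Since $P(\fa_\bullet)$ is closed and convex, there exist $u\in N_\bR\setminus\{0\}$ and $c_1<c_2$ such that $\langle q,u\rangle\le c_1<c_2\le \langle p,u\rangle$ for all $p\in P(\fa_\bullet)$. Because each $P(\fa_\lambda)$ satisfies $P(\fa_\lambda)+\sigma^\vee\subset P(\fa_\lambda)$, the same holds for $P(\fa_\bullet)$; consequently the recession cone of $P(\fa_\bullet)$ contains $\sigma^\vee$, which forces the separating $u$ to lie in $\sigma$.

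If $u\in\Int(\sigma)$ we are done, since then $c_2\le\inf_{p\in P(\fa_\bullet)}\langle p,u\rangle$, and by the argument of the first paragraph applied in reverse this infimum equals $v_u(\fa_\bullet)$, so $q\notin H_u(v_u(\fa_\bullet))$. If instead $u\in\partial\sigma$, I would fix any $u_0\in\Int(\sigma)$ and perturb to $u_\epsilon:=u+\epsilon u_0$, which lies in $\Int(\sigma)$ for all $\epsilon>0$. The support function $h(u):=\inf_{p\in P(\fa_\bullet)}\langle p,u\rangle$ is superadditive on $\sigma$, giving $h(u_\epsilon)\ge c_2+\epsilon h(u_0)$, while $\langle q,u_\epsilon\rangle\le c_1+\epsilon\langle q,u_0\rangle$; choosing $\epsilon>0$ small enough that $\epsilon(\langle q,u_0\rangle-h(u_0))<c_2-c_1$ produces $\langle q,u_\epsilon\rangle<h(u_\epsilon)=v_{u_\epsilon}(\fa_\bullet)$, so $q\notin H_{u_\epsilon}(v_{u_\epsilon}(\fa_\bullet))$.

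The main technical point will be identifying $h(u)=v_u(\fa_\bullet)$ for $u\in\Int(\sigma)$; this follows from the commutation of infima $h(u)=\inf_\lambda\inf_{m\in P(\fa_\lambda)}\langle m/\lambda,u\rangle=\inf_\lambda v_u(\fa_\lambda)/\lambda=v_u(\fa_\bullet)$, using that $P(\fa_\bullet)$ is the closure of an increasing union (so the closure does not change the infimum of the continuous linear functional $\langle\cdot,u\rangle$ on its support). The boundary perturbation will be the only subtlety; everything else is formal manipulation of support functions and polyhedra.
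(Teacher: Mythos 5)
Your proposal is correct and uses the same core strategy as the paper: prove $P(\fa_\bullet)\subset\bigcap_u H_u(v_u(\fa_\bullet))$ by evaluating $v_u$ on monomials in $\fa_\lambda$ together with subadditivity of $m\mapsto v_u(\fa_m)$, and prove the reverse inclusion by a separating-hyperplane argument. The one place you diverge is in handling the separating normal $u$. The paper observes that since $\sigma^\vee\setminus P(\fa_\bullet)$ is bounded (this is stated immediately before the lemma as a known property of $P(\fa_\bullet)$ for linearly bounded filtrations), a separating normal $u\in\partial\sigma$ is impossible: if $u\in\partial\sigma$, pick $0\ne w\in\sigma^\vee$ with $\langle w,u\rangle=0$; then $\sigma^\vee\setminus H_u(b)$ contains the unbounded ray $\{tw:t\ge 0\}$, and since $P(\fa_\bullet)\subset H_u(b)$, the complement $\sigma^\vee\setminus P(\fa_\bullet)$ would be unbounded, a contradiction. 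You instead keep the possibility $u\in\partial\sigma$ open and dispose of it by perturbing to $u_\epsilon=u+\epsilon u_0\in\Int(\sigma)$ and using superadditivity of the support function. Your perturbation argument is valid, but it is an extra step the coboundedness assumption already makes unnecessary; you might note that the coboundedness you never invoke in the reverse inclusion would rule out $u\in\partial\sigma$ directly. Conversely, your approach has the small advantage of not depending on coboundedness for this half of the argument, so it would extend to a convex body whose complement might be unbounded. Both give the same lemma; the paper's route is shorter given the hypotheses at hand.
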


\begin{proof}
    For $m\in\sigma^\vee\cap M$ with $\chi^m\in\fa_\lambda$, we have $\langle u,m \rangle=v_u(\chi^m)\ge v_u(\fa_\lambda)\ge \lambda v(\fa_\bullet)$ by definition, that is, $m/\lambda\in H_u(v_u(\fa_\bullet))$. Hence $P(\fa_\bullet)\subset \cap_{u\in\Int(\sigma)\cap N} H_u(v_u(\fa_\bullet))$.

    Conversely, assume $p\in \cap_{u\in\Int(\sigma)\cap N} H_u(v_u(\fa_\bullet))$. Assume $p\notin P(\fa_\bullet)$. Since $P(\fa_\bullet)$ is convex, possibly by nudging $u$, there exist $u\in N$ and $b\in\bR_{>0}$ such that $\langle u,p \rangle<b$ but $P(\fa_\bullet)\subset H_u(b)$. Since $\sigma^\vee\backslash P(\fa_\bullet)$ is bounded, we know that $u\in \Int(\sigma)$. Now $b>\langle u,p\rangle \ge v_u(\fa_\bullet)$ by the choice of $p$. Hence there exist $\lambda\in\bR_{>0}$ and $\chi^m\in\fa_\lambda$ such that $\langle u,m \rangle=v_u(\chi^m)=v_u(\fa_\lambda)<\lambda\cdot b$. In particular, $m/\lambda\in P(\fa_\bullet)\backslash H_u(b)$, a contradiction. So $\cap_{u\in\Int(\sigma)\cap N}H_u(v_u(\fa_\bullet))\subset P(\fa_\bullet)$ and the proof is finished.
\end{proof}

Next we show that the saturation of a monomial filtration can be computed using only toric divisorial valuations.

\begin{lemma}\label{lem:toric saturation}
    A monomial filtration $\fa_\bullet\in\Fil^\mathrm{mon}_{R,\fm}$ is saturated if and only if it is of the form 
    \begin{equation}\label{eqn:toric saturation}
        \fa_\bullet=\cap_{u\in\Int(\sigma)\cap N}\fa_\bullet(v_u/a_u)
    \end{equation}
    where $a_u:\Int(\sigma)\to\bR_{>0}$ is a homogeneous function for any $u\in \Int(\sigma)\cap N$.
\end{lemma}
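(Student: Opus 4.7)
The plan is to treat the two directions separately; the ``if'' direction is immediate while the ``only if'' direction requires a ``toric shadow'' argument.

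For the ``if'' direction, each $v_u$ with $u \in \Int(\sigma)$ is a quasi-monomial valuation centered at $\fm$, hence lies in $\Val^+_{R,\fm}$ by \eqref{eqn:inclusion of valuation spaces}, and rescaling by $a_u > 0$ stays inside $\Val^+_{R,\fm}$. A filtration of the form $\cap_{u \in \Int(\sigma)} \fa_\bullet(v_u/a_u)$ is therefore saturated by Proposition \ref{prop:alt def for saturation}.

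For the ``only if'' direction, assume $\fa_\bullet \in \Fil^{s,\mathrm{mon}}_{R,\fm}$ and set $a_u := v_u(\fa_\bullet)$ for $u \in \Int(\sigma)$, which is positive (by linear boundedness of $\fa_\bullet$) and homogeneous of degree one in $u$ since $v_{cu} = c\cdot v_u$. Put $\fb_\bullet := \cap_{u \in \Int(\sigma)} \fa_\bullet(v_u/a_u)$; my goal is to show $\fa_\bullet = \fb_\bullet$. Since each $\fa_\bullet(v_u/a_u)$ is monomial, so is $\fb_\bullet$, and equality of monomial filtrations can be tested on monomials level by level. By Lemma \ref{lem:toric Okounkov}, $\chi^m \in \fb_\lambda$ iff $\langle u, m \rangle \ge \lambda v_u(\fa_\bullet)$ for every $u \in \Int(\sigma)$, equivalently $m \in \lambda P(\fa_\bullet) \cap M$. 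Since any $\chi^m \in \fa_\lambda$ gives $m/\lambda \in P(\fa_\bullet)$ directly from the definition of the Newton–Okounkov body, the inclusion $\fa_\bullet \subset \fb_\bullet$ is free.

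The crux is the reverse inclusion. Given $m \in \lambda P(\fa_\bullet) \cap M$, I use that $\fa_\bullet$ is saturated: by Lemma \ref{lem:valuative characterization for saturation}, $\chi^m \in \fa_\lambda$ iff $v(\chi^m) \ge \lambda v(\fa_\bullet)$ for every $v \in \DivVal_{R,\fm}$. To each such $v$ I associate its \emph{toric shadow} $u_v \in N_\bR$ defined by $u_v(m') := v(\chi^{m'})$ for $m' \in \sigma^\vee \cap M$. Since $v$ is centered at $\fm$ and $\chi^{m'} \in \fm$ for every nonzero $m' \in \sigma^\vee \cap M$, we have $u_v(m') > 0$ on all of $\sigma^\vee \setminus \{0\}$, which forces $u_v \in \Int(\sigma)$. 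Because $\fa_\lambda$ is monomial, $v(\fa_\lambda) = \min\{v(\chi^{m'}) : \chi^{m'} \in \fa_\lambda\} = v_{u_v}(\fa_\lambda)$, and passing to the limit in $\lambda$ gives $v(\fa_\bullet) = v_{u_v}(\fa_\bullet) = a_{u_v}$. The required inequality $v(\chi^m) = u_v(m) \ge \lambda a_{u_v}$ now reads exactly as the hypothesis $m \in \lambda P(\fa_\bullet)$ evaluated at $u_v \in \Int(\sigma)$. The main subtlety — and the place where monomiality of $\fa_\bullet$ is essential — is precisely this reduction from arbitrary divisorial valuations to their toric shadows.
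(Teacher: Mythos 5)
Your ``if'' direction is essentially identical to the paper's (both invoke Proposition~\ref{prop:alt def for saturation} after noting $v_u\in\Val^+_{R,\fm}$), but your ``only if'' direction takes a genuinely different route. The paper argues indirectly: it writes down $\fa'_\bullet := \cap_u \fa_\bullet(v_u/v_u(\fa_\bullet))$, observes $\fa_\bullet\subset\fa'_\bullet$, checks $v_u(\fa'_\bullet)=v_u(\fa_\bullet)$ for every $u$, concludes from Lemma~\ref{lem:toric Okounkov} that $P(\fa_\bullet)=P(\fa'_\bullet)$, hence from Lemma~\ref{lem:vol covol} that $\e(\fa_\bullet)=\e(\fa'_\bullet)$, and then invokes the Rees-type Theorem~\ref{thm:equal volume iff equal saturation} to force $\fa_\bullet=\fa'_\bullet$. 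Your argument instead proves the reverse inclusion $\fb_\bullet\subset\fa_\bullet$ directly, monomial by monomial, by ``linearizing'' each $v\in\DivVal_{R,\fm}$ to its toric shadow $u_v\in N_\bR$. You use that $v$ restricted to monomials is additive (hence a linear functional), positive on the nonzero lattice points of $\sigma^\vee$ (hence $u_v\in\Int(\sigma)$ via the extremal rays), and that $v(\fa_\lambda)=v_{u_v}(\fa_\lambda)$ because $\fa_\lambda$ is monomial, so the divisorial test in Definition~\ref{defn:saturation} collapses to the toric test, which holds by $m\in\lambda P(\fa_\bullet)$. This is correct, and it is the standard toric-geometry principle that Rees valuations of monomial ideals are toric, rendered explicitly. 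What your route buys: it avoids the heavy machinery of Theorem~\ref{thm:equal volume iff equal saturation} (and the covolume identity of Lemma~\ref{lem:vol covol}) entirely, trading it for the elementary but sharp observation that monomiality lets you replace arbitrary divisorial valuations by toric ones. The paper's route is shorter given the infrastructure already in place and reuses the Okounkov-body framework it is simultaneously developing, but it is less self-contained. One small point to make explicit in a writeup: strict positivity of $u_v$ on the nonzero lattice points of $\sigma^\vee$ implies strict positivity on $\sigma^\vee\setminus\{0\}$ because $\sigma^\vee$ is rational polyhedral, so its extremal rays are generated by lattice points and every nonzero element is a nonnegative combination of these with at least one positive coefficient.
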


\begin{proof}
    If $\fa_\bullet$ is of the form \eqref{eqn:toric saturation}, then $\fa_\bullet$ is saturated by Proposition \ref{prop:alt def for saturation} since $v_u\in\Val^+_{R,\fm}$.

    Conversely, assume that $\fa_\bullet$ is saturated. Then we know that 
    \[
        \fa_\bullet\subset \fa'_\bullet\coloneqq\cap_{u\in\Int(\sigma)\cap N} \fa_\bullet(\frac{v_u}{v_u(\fa_\bullet)})
    \]
    by Lemma \ref{lem:valuative characterization for saturation}. Moreover, $v_u(\fa_\bullet')=v_u(\fa_\bullet)$ since $\fa_\bullet\subset\fa'_\bullet\subset\fa_\bullet(v_u/v_u(\fa_\bullet))$ for any $u\in \Int(\sigma)\cap N$. So $P(\fa_\bullet)=P(\fa'_\bullet)$ by Lemma \ref{lem:toric Okounkov}, and hence $\e(\fa_\bullet)=\e(\fa'_\bullet)$ by Lemma \ref{lem:vol covol}. Now $\fa'_\bullet$ is saturated by Proposition \ref{prop:alt def for saturation} again, and so by Theorem \ref{thm:equal volume iff equal saturation}, we get $\fa_\bullet=\fa'_\bullet$. 
\end{proof}

Now we can characterize the intersection of two monomial filtrations.

\begin{lemma}\label{lem:toric intersection}
    Let $I$ be a countable index set and $\fa_{\bullet,i}\in\Fil^{s,\mathrm{mon}}_{X,x}$ for $i\in I$ and assume that $\cap_{i\in I}\fa_{\bullet,i}\in\Fil^{s,\mathrm{mon}}_{X,x}$. Then 
    \begin{equation}\label{eqn:toric intersection}
        v_u(\cap_{i\in I}\fa_{\bullet,i})=\sup_{i\in I}v_u(\fa_{\bullet,i})
    \end{equation}
    for any $u\in\Int(\sigma)$. As a result, we have
    $P(\cap_{i\in I}\fa_{\bullet,i})=\cap_{i\in I} P(\fa_{\bullet,i})$.
\end{lemma}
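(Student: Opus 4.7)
The plan is to obtain both equalities from a single presentation of $\fa_\bullet:=\bigcap_{i\in I}\fa_{\bullet,i}$ as an intersection of ``primitive'' saturated filtrations of the form $\fa_\bullet(v_{u'}/c)$. The inequality $v_u(\fa_\bullet)\ge\sup_i v_u(\fa_{\bullet,i})$ and the inclusion $P(\fa_\bullet)\subset\bigcap_i P(\fa_{\bullet,i})$ are immediate from the pointwise inclusion $\fa_\bullet\subset\fa_{\bullet,i}$, since this inclusion reverses $v_u$-values and preserves $P$-containment.

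For the reverse, I would invoke Lemma \ref{lem:toric saturation} to write each $\fa_{\bullet,i}$ as $\bigcap_{u'\in\Int(\sigma)}\fa_\bullet(v_{u'}/v_{u'}(\fa_{\bullet,i}))$, and then swap the two (countable) intersections. For each fixed $u'$, the inner intersection is
\[
\bigcap_{i\in I}\bigl\{f:v_{u'}(f)\ge\lambda\,v_{u'}(\fa_{\bullet,i})\bigr\}=\fa_\bullet(v_{u'}/b_{u'}),\qquad b_{u'}:=\sup_i v_{u'}(\fa_{\bullet,i}),
\]
so $\fa_\bullet=\bigcap_{u'\in\Int(\sigma)}\fa_\bullet(v_{u'}/b_{u'})$. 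Since $P(\fa_\bullet(v_{u'}/b_{u'}))=H_{u'}(b_{u'})\cap\sigma^\vee$, this presentation identifies the Newton--Okounkov body as $P(\fa_\bullet)=\bigcap_{u'\in\Int(\sigma)}H_{u'}(b_{u'})$. A parallel expansion of $\bigcap_i P(\fa_{\bullet,i})$ using Lemma \ref{lem:toric Okounkov} and swapping intersections also gives $\bigcap_i\bigcap_{u'}H_{u'}(v_{u'}(\fa_{\bullet,i}))=\bigcap_{u'}H_{u'}(b_{u'})$, establishing the $P$-equality.

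The $v_u$-equality then follows by applying Lemma \ref{lem:toric Okounkov} once more to the saturated monomial filtration $\fa_\bullet$ itself, giving $P(\fa_\bullet)=\bigcap_{u'}H_{u'}(v_{u'}(\fa_\bullet))$, and comparing this with the presentation above. The main obstacle is showing that two half-space descriptions of the same polyhedron must have equal defining constants at each $u'$: one needs to rule out that some $H_{u'}(b_{u'})$ is a redundant constraint cut away by other half-spaces, in which case $v_{u'}(\fa_\bullet)$ could strictly exceed $b_{u'}$. This identification should rely on the saturation hypothesis on $\fa_\bullet$ together with the fact that $u'\mapsto v_{u'}(\fa_\bullet)$ is precisely the support function of $P(\fa_\bullet)$, forcing $v_{u'}(\fa_\bullet)=b_{u'}$.
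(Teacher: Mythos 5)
Your intersection-swap is valid and cleanly gives $\fa_\bullet=\bigcap_{u'}\fa_\bullet(v_{u'}/b_{u'})$; combined with the rational-density argument from Lemma~\ref{lem:toric bijection betweem Fil and Okounkov} (or Lemma~\ref{lem:toric Okounkov}), this does establish the ``as a result'' statement $P(\cap_i\fa_{\bullet,i})=\cap_iP(\fa_{\bullet,i})$ without ever invoking \eqref{eqn:toric intersection}. This is a genuinely different and arguably more transparent route than the paper's, which argues by contradiction using a compactness/rational-approximation limit.

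However, the ``main obstacle'' you flag for the $v_u$-equality is not a gap you should expect to close: it is a genuine failure of \eqref{eqn:toric intersection} as stated. Take $X=\bA^2$, $\sigma^\vee=\bR_{\ge0}^2$, $\fa_\bullet=\fa_\bullet(v_{(2,1)})$, $\fb_\bullet=\fa_\bullet(v_{(1,2)})$, $u=(1,1)$. Then $v_u(\fa_\bullet)=v_u(\fb_\bullet)=1/2$ (infimum of $\tfrac{m+n}{2m+n}$, resp.\ $\tfrac{m+n}{m+2n}$, at $(1,0)$, resp.\ $(0,1)$), while $v_u(\fa_\bullet\cap\fb_\bullet)=2/3$ (minimizing $m+n$ subject to $2m+n\ge\lambda$, $m+2n\ge\lambda$ gives $m+n\ge 2\lambda/3$, achieved near $(\lambda/3,\lambda/3)$). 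So $\sup_iv_u(\fa_{\bullet,i})=1/2<2/3=v_u(\cap_i\fa_{\bullet,i})$, even though both $\fa_\bullet,\fb_\bullet$ and (by Corollary~\ref{cor:intersection of saturated is saturated}) $\fa_\bullet\cap\fb_\bullet$ are saturated. The redundancy you worried about---that some $H_{u'}(b_{u'})$ is slack in $\cap_{u''}H_{u''}(b_{u''})$---actually occurs: here $\cap_{u'}H_{u'}(b_{u'})=\{2x+y\ge1,\ x+2y\ge1\}\cap\sigma^\vee$ already lies in $H_{(1,1)}(2/3)\subsetneq H_{(1,1)}(1/2)$. Correspondingly, the paper's own contradiction argument breaks at the asserted equality $\langle w,p\rangle+\delta c'_w=(1+\delta)c'_w$: with $p\in P(\fa_\bullet)\cap P(\fb_\bullet)$ minimizing $\langle w,\cdot\rangle$, one only gets $\langle w,p\rangle\ge c'_w$, and in the example $\langle w,p\rangle=2/3>1/2=c'_w$, so no contradiction arises. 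The Newton--Okounkov body identification is what is actually used downstream (Lemma~\ref{lem:toric bijection betweem Fil and Okounkov}, Proposition~\ref{prop:toric isometry}), and your approach proves it; you should drop \eqref{eqn:toric intersection} rather than try to complete a proof of it.
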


\begin{proof}
    Since $\cap_i\fa_{\bullet,i}\subset\fa_{\bullet,i}$ for any $i$, we have $v(\cap_i\fa_{\bullet,i})\ge \sup_{i\in I}v(\fa_{\bullet,i})$ for any $v\in\Val_{X,x}$. 

    To prove the reverse inequality for toric valuations, we first treat the case where $I$ is finite. It suffices to prove $v_u(\fa_\bullet\cap\fb_\bullet)=\max\{v_u(\fa_\bullet),v_u(\fb_\bullet)\}$ for any $u\in\Int(\sigma)$ and apply induction. Denote $\fc_\bullet\coloneqq\fa_\bullet\cap\fb_\bullet$, and $a_u\coloneqq v_u(\fa_\bullet)$, $b_u\coloneqq v_u(\fb_\bullet)$, $c_u\coloneqq v_u(\fc_\bullet)$ and $c'_u\coloneqq\max\{a_u,b_u\}$. Assume to the contrary that there exists $w\in\Int(\sigma)$ such that $c_w>c'_w$. Fix $\delta\in\bR_{>0}$ such that $c_w(1-\delta)>c'_w$. Choose $p\in P(\fa_\bullet)\cap P(\fb_\bullet)$ such that $\langle p,w \rangle=\min_{q\in P(\fa_\bullet)\cap P(\fb_\bullet)}\langle q,w \rangle$. Let
    \[
        C\coloneqq \{p\}+(\sigma^\vee\backslash H_w(\delta c'_w))\subset \sigma^\vee,
    \]
    where $+$ denotes the Minkowski sum. Take $p_i\in C\cap M_\bQ$ such that $p_i\to p$ and $k_i\in\bZ_{>0}$ such that $k_ip_i\in M$. By Lemma \ref{lem:toric Okounkov}, for any $u\in\Int(\sigma)$, $\langle u,p_i \rangle \ge \langle u,p \rangle\ge c'_u$. Hence $\chi^{k_ip_i}\in\fa_{k_i}\cap\fb_{k_i}$by Lemma \ref{lem:toric saturation}, which implies
    \[
        v_w(\fa_{k_i}\cap\fb_{k_i})/k_i\le \langle w,p_i \rangle< \langle w,p \rangle+\delta c'_w = (1+\delta)c'_w,
    \]
    where the last equality follows from Lemma \ref{lem:toric Okounkov}. Letting $k_i\to\infty$ we get $c_w\le (1+\delta)c'_w$. Thus we get $c'_w<(1-\delta)c_w<(1-\delta^2)c'_w$, a contradiction. 

    The proof for the general case is quite similar. For simplicity let $I=\bZ_{>0}$. By the case where $I$ is finite, we may replace $\fa_{\bullet,i}$ by $\cap_{j\le i}\fa_{\bullet,j}$ and assume that $\fa_{\bullet,i}$ is decreasing. Assume to the contrary that there exists $w\in\Int(\sigma)$ such that $a_w>\sup_{i\in I} a_{w,i}$. Fix $\delta\in\bR_{>0}$ such that $a_w(1-4\delta)>\sup_i a_{w,i}$. For each $i\in I$, there exists $\lambda_i\in\bR_{>0}$ and $\chi^{m_i}\in\fa_{\lambda_i,i}$ such that 
    \[
        \lambda_i\cdot a_{w,i}\le v_w(\chi^{m_i})=\langle w,m_i \rangle <\lambda_i\cdot a_w(1-3\delta),
    \]
    where the first inequality follows from the assumption that $\chi^{m_i}\in\fa_{\lambda_i,i}$ and the second inequality follows from the definition $v_w(\fa_{\bullet,i})=\lim_\lambda v(\fa_{\lambda,i})/\lambda$. In particular, $m_i/\lambda_i$ lies in a bounded region and hence after passing to a subsequence, we may assume that $m_i/\lambda_i\to p\in\sigma^\vee\backslash H_u(a_u(1-2\delta))$. Now consider the region
    \[
        C\coloneqq\{p\}+(\sigma^\vee\backslash H_w(\delta a_w))\subset \sigma^\vee
    \]
    where $+$ denotes the Minkowski sum. Then there exists $p_j\in C\cap M_\bQ$ such that $p_j\to p$. Take $k_j\in\bZ_{>0}$ such that $k_jp_j\in M$. Then for any $u\in \Int(\sigma)$, we have
    $\langle u,p \rangle \le \langle u,p_j \rangle$
    for any $u\in\Int(\sigma)$ and
    \begin{equation}\label{eqn:toric upper bound}
        \langle w,p_j \rangle< a_w(1-2\delta)+\delta a_w=a_w(1-\delta).
    \end{equation}
    By the first inequality, for any $u\in\Int(\sigma)$ we have 
    \[
        \langle u,k_jp_j \rangle/k_j\ge \langle u,p \rangle=\lim_i \langle u,m_i/\lambda_i \rangle \ge \sup_i a_{u,i},
    \]
    where in the last inequality we used the fact that $\{\fa_{\bullet,i}\}$ is decreasing. Hence $\chi^{k_jp_j}\in\cap_i\fa_{k_j,i}$ by Lemma \ref{lem:toric saturation}, which implies $\langle u,p_j \rangle = \langle u,k_jp_j \rangle/k_j\ge a_u$ for any $u\in\Int(\sigma)$. In particular, we get
    \[
        a_w\le \langle w,p_j\rangle < a_w-\delta,
    \]
    a contradiction. This contradiction proves \eqref{eqn:toric intersection} and the last assertion follows from \eqref{eqn:toric intersection} and Lemma \ref{lem:toric Okounkov}.
\end{proof}

Let $\cP(\sigma^\vee)\coloneqq \{P\subsetneq \sigma^\vee \mid P \text{ is closed and convex, }\sigma^\vee\backslash P \text{ is bounded} \}$. We prove that taking the Newton-Okounkov body gives a bijection between $\Fil^{s,\mathrm{mon}}_{R,\fm}$ and $\cP(\sigma^\vee)$. Note that the following lemma also justifies the statement that ``if $P(\fa_\bullet)=P(\fb_\bullet)$, then $\fa_\bullet$ and $\fb_\bullet$ should be regarded as equisingular'' in \cite[Section 8]{JM12}.

\begin{lemma}\label{lem:toric bijection betweem Fil and Okounkov}
    $P:\Fil^{s,\mathrm{mon}}_{R,\fm}\to \cP(\sigma^\vee)$ is a bijection, whose inverse is given by $P\mapsto \fa_\bullet(P)$, where
    \begin{equation}\label{eqn:inverse}
        \fa_\lambda(P)\coloneqq \mathrm{span} \{\chi^m, m\in\sigma^\vee\cap M\mid \langle m,u\rangle \ge \lambda\cdot \inf_{p\in P}\langle p,u\rangle \text{ for any } u\in\Int(\sigma)\},
    \end{equation}
\end{lemma}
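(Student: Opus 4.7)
The plan is to prove the bijection by checking both compositions, after first verifying well-definedness. Setting $a_u(P) := \inf_{p \in P}\langle p, u\rangle$ for $u \in \Int(\sigma)$, the formula \eqref{eqn:inverse} rewrites as $\fa_\lambda(P) = \bigcap_{u \in \Int(\sigma)} \fa_\lambda(v_u/a_u(P))$. The function $u \mapsto a_u(P)$ is homogeneous of degree one, concave, and strictly positive on $\Int(\sigma)$, the latter because $P \subsetneq \sigma^\vee$ forces $0 \notin P$ (if $0 \in P$, then $P + \sigma^\vee \subset P$ would give $P = \sigma^\vee$). Lemma \ref{lem:toric saturation} then yields saturation and monomiality, while linear boundedness follows from the cobounded hypothesis on $\sigma^\vee \setminus P$, which bounds $a_u(P)$ from below by a positive multiple of $v_u(\fm)$ uniformly in $u$.

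The key computation is $v_u(\fa_\bullet(P)) = a_u(P)$ for every $u \in \Int(\sigma)$. The inequality $\geq$ is immediate from $\fa_\lambda(P) \subset \fa_\lambda(v_u/a_u(P))$; for $\leq$, I would pick rational points $p_k \in P$ with $\langle p_k, u\rangle \to a_u(P)$, clear denominators to produce monomials $\chi^{\lambda_k p_k} \in \fa_{\lambda_k}(P)$, and read off $v_u(\fa_{\lambda_k}(P))/\lambda_k \to a_u(P)$. Combined with Lemma \ref{lem:toric Okounkov}, this yields $P(\fa_\bullet(P)) = \bigcap_{u \in \Int(\sigma)} H_u(a_u(P))$, so it remains to show $P = \bigcap_{u \in \Int(\sigma)} H_u(a_u(P)) \cap \sigma^\vee$. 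The inclusion $\subset$ is the definition of $a_u(P)$. For $\supset$, given $q \in \sigma^\vee \setminus P$, I would separate $q$ from the closed convex set $P$ via Hahn-Banach to produce $u_0 \in N_\bR$ with $\langle q, u_0\rangle < a_{u_0}(P)$. The property $P + \sigma^\vee \subset P$ (a consequence of $P$ being closed, convex, and cobounded in $\sigma^\vee$) forces $u_0 \in \sigma$; if $u_0 \in \partial\sigma$ then a small perturbation $u_\epsilon := u_0 + \epsilon u_1$ with $u_1 \in \Int(\sigma)$ still separates $q$ from $P$ by continuity of the concave function $a_\bullet(P)$. Ensuring the separating normal can be taken in $\Int(\sigma)$ is the main technical point.

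For the other composition, let $\fa_\bullet \in \Fil^{s,\mathrm{mon}}_{R,\fm}$. Lemma \ref{lem:toric Okounkov} together with an argument parallel to the key computation above gives $a_u(P(\fa_\bullet)) = v_u(\fa_\bullet)$ for every $u \in \Int(\sigma)$: the inclusion $P(\fa_\bullet) \subset H_u(v_u(\fa_\bullet))$ yields $\geq$, and the reverse is witnessed by monomials $\chi^{m_k} \in \fa_{\lambda_k}$ realizing $v_u(\fa_{\lambda_k})/\lambda_k \to v_u(\fa_\bullet)$. Hence $\fa_\bullet(P(\fa_\bullet)) = \bigcap_{u \in \Int(\sigma)} \fa_\bullet(v_u/v_u(\fa_\bullet))$, which equals $\fa_\bullet$ because the proof of Lemma \ref{lem:toric saturation} exhibits this intersection as precisely the saturated presentation of $\fa_\bullet$.
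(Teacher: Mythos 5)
Your argument follows essentially the same route as the paper: both reduce the two compositions to the toric-valuative identities $v_u(\fa_\bullet(P))=a_u(P)$ and $v_u(\fa_\bullet)=a_u(P(\fa_\bullet))$ (which the paper unpacks as four set-theoretic inclusions), and both rely on a separating-hyperplane argument for the delicate containment. Organizing everything around the support function $a_u(P)=\inf_{p\in P}\langle p,u\rangle$ is a clean repackaging of the same content. Your perturbation trick for pushing the separating normal into $\Int(\sigma)$ is a valid alternative to the paper's device (in the proof of Lemma~\ref{lem:toric Okounkov}, the paper instead notes that if the normal lay on $\partial\sigma$ the corresponding unbounded face of $\sigma^\vee$ would meet $P$, contradicting coboundedness).

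However, there is a genuine gap on well-definedness. You verify that $\fa_\bullet(P)\in\Fil^{s,\mathrm{mon}}_{R,\fm}$ for $P\in\cP(\sigma^\vee)$, but you never verify that $P(\fa_\bullet)\in\cP(\sigma^\vee)$ for $\fa_\bullet\in\Fil^{s,\mathrm{mon}}_{R,\fm}$; that is, you do not show $P(\fa_\bullet)\subsetneq\sigma^\vee$ and that $\sigma^\vee\setminus P(\fa_\bullet)$ is bounded, which is required for $P$ to be a map into $\cP(\sigma^\vee)$ at all. The paper devotes its opening paragraph to exactly this, using $\fm^{d\bullet}\subset\fa_\bullet\subset\fm^{\bullet/c'}$ together with the Rees valuations of $\fm^d$; a short variant is to observe $dP(\fm^\bullet)=P(\fm^{d\bullet})\subset P(\fa_\bullet)\subset c P(\fm^\bullet)\subsetneq\sigma^\vee$ for suitable $c,d>0$, so that $\sigma^\vee\setminus P(\fa_\bullet)$ sits inside the bounded set $\sigma^\vee\setminus dP(\fm^\bullet)$. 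Relatedly, the uniform bound $a_u(P)\ge c\,v_u(\fm)$ that you invoke for linear boundedness of $\fa_\bullet(P)$ is true but not self-evident: it amounts to $P\subset cP(\fm^\bullet)$ for some small $c>0$, which follows because $0\notin P$ and $P$ is closed, so $\sigma^\vee\setminus P$ contains a neighborhood of $0$ in $\sigma^\vee$ and hence contains the scaled-down bounded set $c(\sigma^\vee\setminus P(\fm^\bullet))$; this step should be spelled out (or replaced by an appeal to Lemma~\ref{lem:vol covol} and Lemma~\ref{lem:e>0 iff linearly bounded} once $P(\fa_\bullet(P))=P$ is in hand).
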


\begin{proof}
    Let $\fa_\bullet\in\Fil^{s,\mathrm{mon}}_{X,x}$. It is known that $P(\fa_\bullet)$ is convex (e.g. \cite{KK14}). Fix $u\in\Int(\sigma)$.  Since $\fa_\bullet$ is linearly bounded, by definition there exists $c'\in\bR_{>0}$ such that $\fa_\bullet\subset \fm^{\bullet/c'}$, which implies $v_u(\fa_\bullet)\ge v_u(\fm^{\bullet/c'})=c'v_u(\fm)\eqqcolon c>0$. By Lemma \ref{lem:toric saturation}, we get $P(\fa_\bullet)\subset H_u(c)\subsetneq \sigma^\vee$. Fix $d\in\bZ_{>0}$ such that $\fm^d\subset\fa_1$. Then for any $m\in\bZ_{>0}$ we have $\fm^{dm}\subset\fa_m$. By Proposition \ref{prop:saturation and Rees}, there exist finitely many $u_i\in\Int(\sigma)$, $i=1,\ldots,k$ such that $\widetilde{\fm^{d\bullet}}=\cap_{i=1}^k \fa_\bullet(v_{u_i})$. By Lemma \ref{lem:toric Okounkov} and Lemma \ref{lem:toric intersection}, we have 
    \[
          P(\fa_\bullet)\supset P(\fm^{d\bullet)}= P(\widetilde{\fm^{d\bullet}})=\cap_{i=1}^k P(\fa_\bullet(v_{u_i})=\cap_{i=1}^k H_{u_i}(1).
    \]
    Thus $\sigma^\vee\backslash P(\fa_\bullet)\subset \cup_{i=1}^k \sigma^\vee\backslash H_{u_i}(1)$, which is bounded. This proves $P:\Fil^{s,\mathrm{mon}}_{X,x}\to \cP(\sigma^\vee)$.

    Now let $P\in\cP(\sigma^\vee)$. We need to prove that $\fa_\bullet(P)$ is a linearly bounded monomial $\fm$-filtration. It is easy to see from \eqref{eqn:inverse} that $\fa_\lambda(P)$ is an $\fm$-primary monomial ideal. Condition (1) of Definition \ref{defn:filtration} follows immediately from \eqref{eqn:inverse} and (3) follows fron the linearity of the function $\langle \bullet,u \rangle$. To prove (2), it suffices to note that $\sigma^\vee\cap M$ is discrete, and hence for any $\lambda\in\bR_{>0}$ and $u\in\Int(\sigma)$, there exists $\epsilon=\epsilon(\lambda,u)$ such that $H_u(\lambda-\epsilon)\backslash H_u(\lambda)\cap \sigma^\vee\cap M=\emptyset$.

    It is then routine to check the four inclusions. We include the proof for the reader's convenience.

    $\fa_\bullet\subset \fa_\bullet(P(\fa_\bullet))$. If $\chi^m\in\fa_\lambda$, then $m/\lambda\in P(\fa_\bullet)$, and hence $\chi^m\in\fa_\lambda(P(\fa_\bullet))$ by \eqref{eqn:inverse}.

    $\fa_\bullet(P(\fa_\bullet))\subset \fa_\bullet$. By Lemma \ref{lem:toric Okounkov}, $\langle p,u \rangle \ge v_u(\fa_\bullet)$ for any $p\in P(\fa_\bullet)$ and $u\in\Int(\sigma)$. Hence if $\chi^m\in\fa_\lambda(P(\fa_\bullet))$, then 
    \[
        \langle m,u \rangle \ge \lambda\cdot \inf_{p\in P(\fa_\bullet)} \langle p,u \rangle \ge \lambda v_u(\fa_\bullet)
    \]
    for any $u\in\Int(\sigma)$. By Lemma \ref{lem:toric saturation}, we know $\chi^m\in\fa_\lambda$. 

    $P\subset P(\fa_\bullet(P))$. For $p\in P$, take $p_i\in P\cap M_\bQ$ such that $p_i\to p$ and $k_i\in\bZ_{>0}$ such that $m_i\coloneqq k_ip_i\in P\cap M$. Then by \eqref{eqn:inverse}, $\chi^{m_i}\in \fa_\bullet(P)$, and hence $p=\lim_i m_i/k_i\in P(\fa_\bullet(P))$. 

    $P(\fa_\bullet(P))\subset P$. If $p\in P(\fa_\bullet(P))$, then by definition, there exists $m_i\in\sigma^\vee\cap M$ and $\lambda\in\bR_{>0}$ such that $\chi^{m_i}\in\fa_{\lambda_i}$ and $m_i/\lambda_i\to p$. By \eqref{eqn:inverse}, this implies $\langle p,u \rangle \ge \inf_{q\in P} \langle q,u \rangle$ for any $u\in\Int(\sigma)$. Applying the same argument as in the proof of Lemma \ref{lem:toric Okounkov}, we know that $p\in P$.
\end{proof}

Recall that the symmetric difference metric (also called the Fr\'echet-Nikodym-Aronszayn distance) $d$ on $\cP(\sigma^\vee)$ is defined by 
\[
    d(P_1,P_2)\coloneqq \vol(P_1\Delta P_2),
\]
where $P_1\Delta P_2=P_1\backslash P_2\cup P_2\backslash P_1$ is the symmetric difference. We are now ready to prove

\begin{proposition}\label{prop:toric isometry}
    The map $P:(\Fil^s_{R,\fm},d_1)\to (P(\sigma^\vee,d)$, $\fa_\bullet\mapsto P(\fa_\bullet)$ is an isometry.
\end{proposition}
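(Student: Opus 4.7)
The plan is to reduce the statement to a direct set-theoretic volume computation, using the lemmas already established in this subsection. The bijectivity of $P$ is Lemma \ref{lem:toric bijection betweem Fil and Okounkov}, so the content is purely the equality of distances. Write $A\coloneqq P(\fa_\bullet)$ and $B\coloneqq P(\fb_\bullet)$.

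First I would observe that $\fa_\bullet\cap\fb_\bullet$ is again in $\Fil^{s,\mathrm{mon}}_{R,\fm}$: it is obviously monomial and termwise $\fm$-primary; it is linearly bounded because it is contained in $\fa_\bullet$; and it is saturated by Corollary \ref{cor:intersection of saturated is saturated}. This legitimizes applying Lemma \ref{lem:toric intersection}, which gives $P(\fa_\bullet\cap\fb_\bullet)=A\cap B$. Combined with Lemma \ref{lem:vol covol}, the three multiplicities appearing in the definition of $d_1$ become the covolumes
\[
    \e(\fa_\bullet)=\vol(\sigma^\vee\setminus A),\quad \e(\fb_\bullet)=\vol(\sigma^\vee\setminus B),\quad \e(\fa_\bullet\cap\fb_\bullet)=\vol(\sigma^\vee\setminus (A\cap B)).
\]

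Next I would unfold the definition $d_1(\fa_\bullet,\fb_\bullet)=2\e(\fa_\bullet\cap\fb_\bullet)-\e(\fa_\bullet)-\e(\fb_\bullet)$ and apply the inclusion–exclusion identity for the complementary sets. Since $\sigma^\vee\setminus(A\cap B)=(\sigma^\vee\setminus A)\cup(\sigma^\vee\setminus B)$ and $\sigma^\vee\setminus(A\cup B)=(\sigma^\vee\setminus A)\cap(\sigma^\vee\setminus B)$, and all these complements have finite volume by the definition of $\cP(\sigma^\vee)$, one has
\[
    \vol(\sigma^\vee\setminus A)+\vol(\sigma^\vee\setminus B)=\vol(\sigma^\vee\setminus(A\cap B))+\vol(\sigma^\vee\setminus(A\cup B)).
\]
Substituting this in yields
\[
    d_1(\fa_\bullet,\fb_\bullet)=\vol(\sigma^\vee\setminus(A\cap B))-\vol(\sigma^\vee\setminus(A\cup B))=\vol((A\cup B)\setminus(A\cap B))=\vol(A\Delta B),
\]
which is exactly $d(A,B)$.

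The argument is essentially formal once Lemma \ref{lem:toric intersection} and Lemma \ref{lem:vol covol} are in hand, so there is no substantial obstacle. The only mild subtlety is making sure all volumes are finite and that the set differences are written in the right order to match inclusion–exclusion; this is controlled by the definition of $\cP(\sigma^\vee)$ (both $\sigma^\vee\setminus A$ and $\sigma^\vee\setminus B$ are bounded, hence so are $A\setminus B$ and $B\setminus A$, giving $\vol(A\Delta B)<\infty$). The step that carries real content is Lemma \ref{lem:toric intersection}, which had already been proved; everything else is bookkeeping.
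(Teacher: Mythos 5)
Your proof is correct and follows essentially the same route as the paper: both reduce $d_1$ to covolumes via Lemma \ref{lem:vol covol}, identify $P(\fa_\bullet\cap\fb_\bullet)=P(\fa_\bullet)\cap P(\fb_\bullet)$ via Lemma \ref{lem:toric intersection}, and conclude by inclusion–exclusion on the (bounded) complements. Your extra remark verifying that $\fa_\bullet\cap\fb_\bullet\in\Fil^{s,\mathrm{mon}}_{R,\fm}$ before invoking Lemma \ref{lem:toric intersection} is a sensible check the paper leaves implicit.
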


\begin{proof}
    By Lemma \ref{lem:toric bijection betweem Fil and Okounkov}, $P$ is a bijection. 
    
    For simplicity, denote $\covol(P)\coloneqq \vol(\sigma^\vee\backslash P)$ below. For $\fa_\bullet,\fb_\bullet\in\Fil^s_{R,\fm}$, we have
    \begin{align*}
        d_1(\fa_\bullet,\fb_\bullet)=&2\e(\fa_\bullet\cap\fb_\bullet)-\e(\fa_\bullet)-\e(\fb_\bullet)\\
         =&2\covol(P(\fa_\bullet\cap\fb_\bullet))-\covol(P(\fa_\bullet))-\covol(P(\fb_\bullet))\\
         =&2\covol(P(\fa_\bullet)\cap P(\fb_\bullet))-\covol(P(\fa_\bullet))-\covol(P(\fb_\bullet))\\
         =&\vol(P(\fa_\bullet)\backslash P(\fb_\bullet))+\vol(P(\fb_\bullet)\backslash P(\fa_\bullet))\\
         =&\vol(P(\fa_\bullet)\Delta P(\fb_\bullet))=d(P(\fa_\bullet),P(\fb_\bullet)),
    \end{align*}
    where the second equality follows from Lemma \ref{lem:toric Okounkov} and the third from \ref{lem:toric intersection}. The proof is finished. 
\end{proof}

\section{Supnorm metrics on the space of filtrations}\label{sec:d_infty}

In this section, we study the basic properties of $d_\infty$, prove the local Lipschitz continuity of log canonical thresholds, and compare several different topologies on the filtration spaces.

\subsection{Supnorm metrics and homogenization}

Throughout this subsection, we will use the language of norms. Recall that $\cN$ is the set of linearly bounded $\fm$-norms on $R$.

\begin{definition}\label{defn:d_infty metric}
    Fix a norm $\rho\in\cN$. We define a function $d_{\infty,\rho}:\cN\times\cN\to\bR_{\ge 0}$ by
    \[
        d_{\infty,\rho}(\chi,\chi'):=\limsup_{\lambda\to\infty}\sup_{\rho(f)\ge\lambda}\left\{\frac{|\chi(f)-\chi'(f)|}{\rho(f)}\right\}.
    \]
    We will denote $d_{\infty,\chi_0}=:d_\infty$.
\end{definition}

It is not hard to see that the $d_\infty$-topology is independent of the choice of the reference norm.

\begin{lemma}\label{lem:d_infty is p-metric}
    For any $\rho\in\cN$, $d_{\infty,\rho}$ defines a pseudometric on $\cN$. Moreover, for any $\rho,\rho'\in\cN$, there exists $M\in\bR_{>0}$ such that $M^{-1}d_{\infty,\rho}\le d_{\infty,\rho'}\le Md_{\infty,\rho}$. 
\end{lemma}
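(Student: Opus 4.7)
The argument has three steps: the pseudometric axioms, finiteness of $d_{\infty,\rho}$, and the bi-Lipschitz comparison between $d_{\infty,\rho}$ and $d_{\infty,\rho'}$. The essential input is an Izumi-type comparability for elements of $\cN^h$: on the analytically irreducible local domain $R$, any two $\rho, \rho' \in \cN^h$ should be equivalent up to multiplicative constants. I would derive this from the infimum-of-seminorms description of Lemma \ref{lem:alt def for homogeneous}, together with the identity $\rho(f) = \lim_k \rho(f^k)/k$ coming from homogeneity, and the classical Izumi inequality for divisorial valuations on analytically irreducible local rings.

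Non-negativity and symmetry of $d_{\infty,\rho}$ are immediate from the definition. For the triangle inequality I start from the pointwise estimate $|\chi(f) - \chi''(f)| \le |\chi(f) - \chi'(f)| + |\chi'(f) - \chi''(f)|$, divide by $\rho(f)$, and apply $\sup_{\rho(f) \ge \lambda}$ followed by $\limsup_{\lambda \to \infty}$; both operators are subadditive, which preserves the inequality and yields
\[
d_{\infty,\rho}(\chi,\chi'') \le d_{\infty,\rho}(\chi,\chi') + d_{\infty,\rho}(\chi',\chi'').
\]
Finiteness follows from the Izumi-type comparability: since $\chi, \chi' \in \cN$ satisfy $\chi, \chi' \le c^{-1}\ord_\fm$ and $\ord_\fm \le C\rho$ (via the comparison applied to $\rho$ and $\chi_0 = \widetilde{\ord}_\fm$, using homogeneity to upgrade linear bounds on $\ord_\fm$ to bounds on $\chi_0$), the ratio $|\chi(f) - \chi'(f)|/\rho(f)$ is uniformly bounded on $\{f : \rho(f) \ge \lambda\}$, so the limsup is finite.

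For the bi-Lipschitz comparison, the inequality $c_1\rho \le \rho' \le c_2\rho$ furnishes on one hand the set inclusion $\{\rho'(f) \ge \lambda\} \subseteq \{\rho(f) \ge \lambda/c_2\}$, and on the other hand the pointwise bound $\rho(f)/\rho'(f) \le c_1^{-1}$. Combining,
\[
\sup_{\rho'(f) \ge \lambda}\frac{|\chi(f) - \chi'(f)|}{\rho'(f)} \le c_1^{-1}\sup_{\rho(f) \ge \lambda/c_2}\frac{|\chi(f) - \chi'(f)|}{\rho(f)},
\]
and taking $\limsup_{\lambda \to \infty}$ yields $d_{\infty,\rho'}(\chi,\chi') \le c_1^{-1} d_{\infty,\rho}(\chi,\chi')$. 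Swapping the roles of $\rho$ and $\rho'$ gives the reverse inequality, so any $M \ge \max(c_1^{-1}, c_2)$ works. The main obstacle is the Izumi comparability for arbitrary $\rho, \rho' \in \cN^h$: while classical for divisorial valuations under the analytic irreducibility hypothesis, extending it to general homogeneous linearly bounded $\fm$-norms requires combining the infimum representation of Lemma \ref{lem:alt def for homogeneous} with uniform control on the Izumi constants of the valuations appearing in that infimum, and this is the technical heart of the argument.
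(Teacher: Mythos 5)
Your skeleton — triangle inequality from the pointwise estimate combined with subadditivity of $\sup$ and $\limsup$, and the bi-Lipschitz statement derived from a pointwise equivalence $c_1\rho \le \rho' \le c_2\rho$ — matches the paper's proof in structure, and your computation of the constant $M = \max(c_1^{-1}, c_2)$ is correct. But you have misidentified where the difficulty lies, and this is a genuine gap in the proposal as written: you declare the pairwise comparability of elements of $\cN$ to be an Izumi-type result requiring the infimum representation of Lemma \ref{lem:alt def for homogeneous} and uniform control on Izumi constants, and you defer it as ``the technical heart.'' None of that is needed.

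The comparability is immediate from the definitions, with no input from Izumi's theorem. Every $\fm$-norm $\chi$ satisfies the lower bound $\chi(f) \ge \chi(\fm)\cdot\ord_\fm(f)$, where $\chi(\fm) \coloneqq \min_{1\le i\le r}\chi(f_i) > 0$ for any finite generating set $f_1,\dots,f_r$ of $\fm$; this follows by iterating the submultiplicativity $\chi(fg)\ge\chi(f)+\chi(g)$ and the ultrametric inequality through the filtration $\fm^k$. On the other hand, membership in $\cN$ means by definition that $\chi \le c^{-1}\ord_\fm$ for some $c>0$. Hence every element of $\cN$ is sandwiched between two positive multiples of $\ord_\fm$, and any two $\rho,\rho'\in\cN$ are automatically bi-Lipschitz equivalent to each other; this is precisely the one-line fact the paper invokes for the second claim. (Izumi-type theorems would be relevant if you wanted to show that a given divisorial valuation \emph{lies} in $\cN$, i.e., is linearly bounded — but that question is not part of this lemma, which concerns norms already assumed to be in $\cN$.) Once you replace your Izumi paragraph with this elementary observation, your argument is complete and agrees with the paper's, which writes out neither the comparability nor the bi-Lipschitz reduction in detail. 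Your finiteness step also follows from the same two inequalities, without any appeal to $\chi_0$ or homogeneity.
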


\begin{proof}
    By definition, $d_{\infty,\rho}$ is symmetric and non-negative for any $\rho$. Given $\chi,\chi',\chi''\in\cN$, since $|\chi(f)-\chi''(f)|\le |\chi(f)-\chi'(f)|+|\chi'(f)-\chi''(f)|$, the triangle inequality holds. This proves that $d_{\infty,\rho}$ is a pseudometric.
	
    The second claim follows from the fact that given $\rho,\rho'\in\cN$, there exists $M\gg 0$ such that $M^{-1}\rho\le \rho'\le M\rho$.
\end{proof}

On $\cN^h$, we do not need to take $\limsup$ in the definition of $d_\infty$.

\begin{lemma}\label{lem:d_infty for homogeneous}
    If $\chi,\chi'\in\cN^h$, then
    \[
        d_\infty(\chi,\chi')=\sup_{f\in\fm}\frac{|\chi(f)-\chi(f')|}{\chi_0(f)}.
    \]
\end{lemma}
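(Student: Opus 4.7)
The plan is to prove both inequalities separately, with the key input being homogeneity under taking powers.

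For the inequality $d_\infty(\chi,\chi')\le \sup_{f\in\fm}\frac{|\chi(f)-\chi'(f)|}{\chi_0(f)}$, I would simply observe that by Definition \ref{defn:d_infty metric},
\[
d_\infty(\chi,\chi')=\limsup_{\lambda\to\infty}\sup_{\chi_0(f)\ge\lambda}\frac{|\chi(f)-\chi'(f)|}{\chi_0(f)}\le \sup_{f\in\fm}\frac{|\chi(f)-\chi'(f)|}{\chi_0(f)},
\]
since each sup on the left is taken over a subset of $\fm$ (note $\chi_0(f)>0$ iff $f\in\fm$, and the supremum is over $f\in\fm$ such that $\chi_0(f)\ge\lambda$, which is contained in $\fm$). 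The finiteness of the right-hand side follows from linear boundedness of $\chi,\chi'$ relative to $\chi_0$.

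For the reverse inequality, I would exploit homogeneity. By assumption $\chi,\chi'\in\cN^h$, and $\chi_0=\ord_{\widetilde{\fm^\bullet}}$ is homogeneous by Lemma \ref{lem:properties of homogeous}(1) (since $\widetilde{\fm^\bullet}\in\Fil^s$). Thus for any $f\in\fm$ and any $d\in\bZ_{>0}$,
\[
\frac{|\chi(f^d)-\chi'(f^d)|}{\chi_0(f^d)}=\frac{d|\chi(f)-\chi'(f)|}{d\chi_0(f)}=\frac{|\chi(f)-\chi'(f)|}{\chi_0(f)}.
\]
Since $\chi_0(f)>0$ for $f\in\fm$, we have $\chi_0(f^d)=d\chi_0(f)\to\infty$ as $d\to\infty$. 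Hence for every $\lambda>0$, there exists $d_0$ such that $\chi_0(f^d)\ge\lambda$ for all $d\ge d_0$, which gives
\[
\sup_{\chi_0(g)\ge\lambda}\frac{|\chi(g)-\chi'(g)|}{\chi_0(g)}\ge\frac{|\chi(f)-\chi'(f)|}{\chi_0(f)}.
\]
Taking the limsup in $\lambda$ and then the supremum over $f\in\fm$ yields $d_\infty(\chi,\chi')\ge\sup_{f\in\fm}\frac{|\chi(f)-\chi'(f)|}{\chi_0(f)}$.

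There is no significant obstacle here: both directions are short and essentially formal. The only point requiring care is ensuring $\chi_0$ is itself homogeneous so that the power trick preserves the ratio exactly; this is given by Lemma \ref{lem:properties of homogeous} together with the definition $\chi_0=\ord_{\widetilde{\fm^\bullet}}$.
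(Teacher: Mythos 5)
Your proof is correct and follows essentially the same idea as the paper's: use homogeneity of $\chi$, $\chi'$, and $\chi_0$ so that $f\mapsto f^d$ preserves the ratio $|\chi(f)-\chi'(f)|/\chi_0(f)$ while pushing $\chi_0(f^d)$ to infinity, hence the supremum at each level $\lambda$ dominates the global supremum. The paper's proof fixes a near-maximizer $g$ and applies the same power trick, while you fix an arbitrary $f$ and take the sup last; this is only a cosmetic rearrangement.
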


\begin{proof}
    For any $\epsilon>0$, take $g\in\fm$ such that $\frac{|\chi(g)-\chi'(g)|}{\chi_0(g)}>\sup_{f\in\fm}\frac{|\chi(f)-\chi(f')|}{\chi_0(f)}-\epsilon\eqqcolon d-\epsilon$. Denote $a:=\chi_0(g)>0$. Since $\chi$, $\chi'$ and $\chi_0$ are homogeneous, we have
    \begin{align*}
        \sup_{\chi_0(f)=ka}\frac{|\chi(f)-\chi(f')|}{\chi_0(f)}
        \ge &\frac{|\chi(g^k)-\chi'(g^k)|}{\chi_0(g^k)}
        =\frac{|\chi(g)-\chi'(g)|}{\chi_0(g)}>d-\epsilon.
    \end{align*}
    So 
    \[
        d_\infty(\chi,\chi')=\limsup_{\lambda\to\infty} \sup_{\chi_0(f)\ge \lambda}\frac{|\chi(f)-\chi(f')|}{\chi_0(f)}\ge d-\epsilon.
    \]
    Thus $d_\infty(\chi.\chi')\ge d$. Since the reverse inequality is obvious, this finishes the proof.
\end{proof}

\begin{lemma}\label{lem:d_infty of hom goes down}
    Let $\chi,\chi'\in\cN$, then $d_\infty(\widehat\chi,\widehat{\chi}')\le d_\infty(\chi,\chi')$.
\end{lemma}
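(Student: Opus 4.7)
The plan is to reduce to the homogeneous formula from Lemma \ref{lem:d_infty for homogeneous} and then exploit the defining limit of the homogenization together with the homogeneity of $\chi_0$.

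First, since saturated implies homogeneous (Lemma \ref{lem:properties of homogeous}), the reference norm $\chi_0 = \ord_{\widetilde{\fm^\bullet}}$ is homogeneous, and $\chi_0(f)>0$ for every $f\in\fm$. Moreover, $\widehat\chi,\widehat{\chi}'\in\cN^h$ by construction (see Theorem \ref{thm:homogenization} or directly from the defining limit). Hence Lemma \ref{lem:d_infty for homogeneous} applies to the left-hand side:
\[
d_\infty(\widehat\chi,\widehat{\chi}') \;=\; \sup_{f\in\fm}\frac{|\widehat\chi(f)-\widehat{\chi}'(f)|}{\chi_0(f)}.
\]

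Next, I would fix $f\in\fm$ and unwind the definition $\widehat\chi(f)=\lim_{k\to\infty}\chi(f^k)/k$ (similarly for $\chi'$). Combined with $\chi_0(f^k)=k\,\chi_0(f)$, this gives
\[
\frac{|\widehat\chi(f)-\widehat{\chi}'(f)|}{\chi_0(f)}
\;=\;\lim_{k\to\infty}\frac{|\chi(f^k)-\chi'(f^k)|}{k\,\chi_0(f)}
\;=\;\lim_{k\to\infty}\frac{|\chi(f^k)-\chi'(f^k)|}{\chi_0(f^k)}.
\]
Since $\chi_0(f)>0$, the quantity $\chi_0(f^k)=k\chi_0(f)$ tends to $+\infty$ as $k\to\infty$, so for every $\lambda>0$ the tail of this sequence lies inside $\{g\in\fm : \chi_0(g)\ge\lambda\}$. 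Therefore
\[
\frac{|\widehat\chi(f)-\widehat{\chi}'(f)|}{\chi_0(f)}
\;\le\;\limsup_{\lambda\to\infty}\sup_{\chi_0(g)\ge\lambda}\frac{|\chi(g)-\chi'(g)|}{\chi_0(g)}
\;=\;d_\infty(\chi,\chi').
\]
Taking the supremum over $f\in\fm$ and combining with the first displayed equation yields the desired inequality.

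This argument is essentially formal, so I do not anticipate any serious obstacle. The only mild point that deserves a sentence of justification in the write-up is that $\widehat\chi(f),\widehat{\chi}'(f)$ are finite for $f\in\fm$ (which follows from linear boundedness of $\chi,\chi'\in\cN$, so that the Fekete limit defining $\widehat\chi(f)$ is a finite real number), and that $\chi_0(f)>0$ for $f\in\fm$ so we may legitimately divide.
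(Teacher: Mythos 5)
Your proof is correct and takes essentially the same route as the paper: both proofs reduce via Lemma \ref{lem:d_infty for homogeneous} to the ratio $|\widehat\chi(f)-\widehat\chi'(f)|/\chi_0(f)$, rewrite it using the Fekete limit defining the homogenization and the homogeneity of $\chi_0$ as a limit of $|\chi(f^k)-\chi'(f^k)|/\chi_0(f^k)$, and observe that the sequence $f^k$ eventually lies in any region $\{\chi_0\ge\lambda\}$. The only cosmetic difference is that the paper pins down an $\epsilon$-approximate maximizer and deduces $d_\infty(\chi,\chi')\ge d-2\epsilon$, whereas you bound the supremand directly and then pass to the supremum; both are fine.
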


\begin{proof}
    Denote $d:=d_\infty(\widehat{\chi},\widehat{\chi}')$. By Lemma \ref{lem:d_infty for homogeneous}, we may assume that $d=\sup\frac{\widehat\chi(f)-\widehat\chi'(f)}{\chi_0(f)}$. For any $\epsilon>0$, there exists $f\in \fm$ such that 
    \begin{align*}
        (d-\epsilon)\chi_0(f)<\widehat\chi(f)-\widehat{\chi}'(f)=\lim_{k\to\infty}\frac{\chi(f^k)}{k}-\lim_{k\to\infty}\frac{\chi'(f^k)}{k}.
    \end{align*}
    Thus for $k\gg 0$ we have 
    \[
        \chi(f^k)-\chi'(f^k)\ge (d-2\epsilon)k\chi_0(f)=(d-2\epsilon)\chi_0(f^k)
    \]
    since $\chi_0$ is homogeneous. Similar to the proof of Lemma \ref{lem:d_infty for homogeneous} we get $d_\infty(\chi,\chi')\ge d$. 
\end{proof}

The following lemma shows that $(\cN^h,d_\infty)$ is complete.

\begin{lemma}\label{lem:d_infty is complete}
    Let $\{\chi_k\}_{k\in\bZ_{> 0}}$ be a Cauchy sequence in $(\cN^h,d_\infty)$. Then there exists $\chi\in\cN^h$ such that 
    \[ 
        \lim_{k\to\infty} d_\infty(\chi_k,\chi)=0.
    \]
\end{lemma}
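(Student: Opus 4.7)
The strategy is the standard Cauchy-completion argument: extract a pointwise limit, verify it lies in $\cN^h$, and then check $d_\infty$-convergence.

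First, I would use Lemma \ref{lem:d_infty for homogeneous} to estimate pointwise differences. For $\chi_k, \chi_l \in \cN^h$ and $f \in \fm$,
\[
|\chi_k(f) - \chi_l(f)| \le d_\infty(\chi_k, \chi_l) \cdot \chi_0(f),
\]
so the Cauchy property of $\{\chi_k\}$ implies that $\{\chi_k(f)\}$ is Cauchy in $\bR_{\ge 0}$. Define
\[
\chi(f) := \lim_{k \to \infty} \chi_k(f) \quad \text{for } f \in \fm,
\]
together with $\chi(0) := +\infty$ and $\chi(f) := 0$ for $f$ a unit.

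Second, I would verify that $\chi \in \cN^h$. The conditions (1)--(4) of Definition \ref{defn:norm} and the homogeneity identity $\chi(f^d) = d\chi(f)$ are all closed under pointwise limits, so $\chi$ is at least a homogeneous $\fm$-seminorm. For linear boundedness, fix $K$ so that $d_\infty(\chi_k, \chi_K) \le 1$ for all $k \ge K$. Since $\chi_K \in \cN$, there is $C > 0$ with $\chi_K \le C \chi_0$ (using that $\chi_0$ and $\ord_\fm$ are comparable up to multiplicative constants), so $\chi_k \le (C+1)\chi_0$ uniformly in $k$, and passing to the limit yields $\chi \le (C+1)\chi_0$.

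Third, for the convergence $d_\infty(\chi_k, \chi) \to 0$: given $\epsilon > 0$, choose $K$ so that $d_\infty(\chi_k, \chi_l) < \epsilon$ whenever $k, l \ge K$. Then $|\chi_k(f) - \chi_l(f)| \le \epsilon \chi_0(f)$, and letting $l \to \infty$ gives $|\chi_k(f) - \chi(f)| \le \epsilon \chi_0(f)$ for every $f \in \fm$. By Lemma \ref{lem:d_infty for homogeneous}, $d_\infty(\chi_k, \chi) \le \epsilon$, as required.

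The most delicate point is ensuring that the limit $\chi$ is genuinely an element of $\cN^h$, and in particular that $\chi(f) > 0$ for every nonzero $f \in \fm$, rather than merely a homogeneous $\fm$-seminorm. This strict norm condition does not follow formally from the pointwise limit together with the uniform upper bound, and I expect the main work to lie here. A natural route is to combine the homogeneity of the $\chi_k$ with the characterization in Lemma \ref{lem:alt def for homogeneous} expressing each $\chi_k$ as a pointwise infimum of points in $X^{\mathrm{an}}_x$, so that the limit inherits such a description and positivity on $\fm$ follows from the positivity of valuations.
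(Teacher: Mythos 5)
Your proof plan reproduces exactly the paper's argument (pointwise Cauchy $\Rightarrow$ pointwise limit, verify membership, then use Lemma \ref{lem:d_infty for homogeneous} to upgrade to $d_\infty$-convergence). Where the paper simply asserts ``it is elementary to verify that $\chi\in\cN^h$,'' you correctly single out the positivity condition $\chi(f)>0$ for $0\ne f\in\fm$ as the delicate point — and your instinct that it does not follow formally is right. However, the route you propose (Lemma \ref{lem:alt def for homogeneous}) does not close the gap: an infimum of points of $X\an_x$ — or even of honest valuations in $\Val_{R,\fm}$ — can perfectly well degenerate to a seminorm that vanishes on all of $\fm$, so the representation $\chi=\inf_{v\in I}v$ offers no lower bound on $\chi$.

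In fact the gap you identified is fatal to the statement as written. Take $\chi_k\coloneqq\chi_0/k$, i.e.\ $\chi_k(f)=\chi_0(f)/k$. Each $\chi_k$ is a rescaling of the homogeneous linearly bounded $\fm$-norm $\chi_0=\widetilde\ord_\fm$, hence lies in $\cN^h$. By Lemma \ref{lem:d_infty for homogeneous},
\[
d_\infty(\chi_k,\chi_l)=\sup_{f\in\fm}\frac{|\chi_0(f)/k-\chi_0(f)/l|}{\chi_0(f)}=\Bigl|\tfrac1k-\tfrac1l\Bigr|,
\]
so $\{\chi_k\}$ is $d_\infty$-Cauchy. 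Yet the pointwise limit is the trivial seminorm, which fails condition (4) of Definition \ref{defn:norm} and hence is not in $\cN^h$; and no $\chi\in\cN^h$ can be the $d_\infty$-limit, since for any $0\ne f_0\in\fm$ one has $d_\infty(\chi_k,\chi)\ge\chi(f_0)/\chi_0(f_0)-1/k$, which stays bounded away from $0$. So $(\cN^h,d_\infty)$ is not complete as claimed. (Even in the simplest case $R=\bk[\![t]\!]$, where $\cN^h\cong\bR_{>0}$ via $c\mapsto c\,\ord_t$ and $d_\infty$ is the Euclidean metric, the space is visibly incomplete.) Your plan is therefore as correct as the paper's, but the step you flagged as ``the main work'' cannot be carried out without adding a hypothesis — e.g.\ a normalization, passing to a quotient by scaling, or assuming the sequence is bounded away from $0$ — and the paper's two-line dismissal is an error, not merely an omission.
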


\begin{proof}
    Given any $\epsilon>0$, by assumption there exists $k\in\bZ_{>0}$ such that for any $k,k'>K$, $d_\infty(\chi_k,\chi_{k'})<\epsilon$. For any $f\in\fm$ and $k,k'>K$, by Lemma \ref{lem:d_infty for homogeneous} we know that 
    \[
        |\chi_k(f)-\chi_{k'}(f)|\le \chi_0(f)d_\infty(\chi,\chi')<\chi_0(f)\cdot\epsilon.
    \]
    Hence $\{\chi_k(f)\}$ is a Cauchy sequence and we may define $\chi(f):=\lim_{k\to\infty}\chi_k(f)$. 
    It is elementary to verify that $\chi\in\cN^h$. Then we can apply Lemma \ref{lem:d_infty for homogeneous} to get $\lim_{k\to\infty} d_\infty(\chi_k,\chi)=0$.
\end{proof}

\subsection{Local Lipschitz continuity of log canonical thresholds}\label{ssec:effective}

In this subsection we prove that the log canonical threshold is locally Lipschitz continuous with respect to the $d_\infty$-topology.

\begin{proposition}\label{prop:lip of lct}
    Let $\chi\in\cN$. For any $\epsilon>0$, there exists $\delta=\delta(\chi,\epsilon)>0\in\bR_{>0}$ such that for any $\chi'\in\cN$ with $d_\infty(\chi,\chi')<\delta$, we have $|\lct(\chi')-\lct(\chi)|\le \epsilon$.
\end{proposition}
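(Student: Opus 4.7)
The plan is to reduce to the case of homogeneous norms, convert the additive sup-norm bound $|\chi(f)-\chi'(f)|\le\delta\chi_0(f)$ into a pointwise multiplicative comparison between $\chi$ and $\chi'$ on $\fm$, transport this through Lemma \ref{lem:evaluation} uniformly in $v$, and take the infimum over $v\in\Val^{<\infty}_{X,x}$.

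First I would reduce to the case $\chi,\chi'\in\cN^h$: by Theorem \ref{thm:homogenization}, $v(\fa_\bullet(\chi))=v(\fa_\bullet(\widehat\chi))$ for every valuation, so $\lct$ factors through homogenization; by Lemma \ref{lem:d_infty of hom goes down}, $d_\infty(\widehat\chi,\widehat{\chi'})\le d_\infty(\chi,\chi')$. Lemma \ref{lem:d_infty for homogeneous} then rephrases the hypothesis as the pointwise bound $|\chi(f)-\chi'(f)|\le\delta\chi_0(f)$ for every $f\in\fm$.

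The key technical step is to establish, for any $\chi\in\cN^h$, a reverse comparability $\chi(f)\ge c_\chi\chi_0(f)$ on $\fm$ with $c_\chi>0$. Sub-multiplicativity and the positivity $\chi(\fm):=\min_i\chi(f_i)>0$ on a finite generating set give $\chi\ge\chi(\fm)\ord_\fm$; since $\chi_0=\widetilde{\ord_\fm}$ lies in $\Fil$, it is linearly bounded, so there exists $C_R>0$ (depending only on $R$) with $\chi_0\le C_R\ord_\fm$, whence $c_\chi:=\chi(\fm)/C_R$ works. Plugging this into the pointwise bound yields $\chi'(f)\le\chi(f)(1+\delta/c_\chi)$ on $\fm$; for $\delta$ small enough that $\chi'(\fm)\ge\chi(\fm)/2$ (which requires only an explicit upper bound on $\delta$ depending on $\chi$), the symmetric inequality $\chi(f)\le\chi'(f)(1+2\delta/c_\chi)$ also holds.

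Applying Lemma \ref{lem:evaluation} uniformly in $v\in\Val^{<\infty}_{X,x}$ then gives
\[
v(\fa_\bullet(\chi'))=\inf_{f\in\fm}\frac{v(f)}{\chi'(f)}\ge\frac{v(\fa_\bullet(\chi))}{1+\delta/c_\chi},
\]
whence $A_X(v)/v(\fa_\bullet(\chi'))\le(1+\delta/c_\chi)A_X(v)/v(\fa_\bullet(\chi))$. Taking the infimum over $v$ and combining with the symmetric inequality gives $|\lct(\chi)-\lct(\chi')|\le C(\chi)\,\delta\cdot\lct(\chi)$, so one chooses $\delta=\delta(\chi,\epsilon)$ small enough to make this at most $\epsilon$. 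The main obstacle is the reverse comparability $\chi\gtrsim\chi_0$: without it, the additive perturbation $\delta\chi_0$ cannot be converted into a multiplicative perturbation of $\chi$, and this is the step that genuinely uses analytic irreducibility of $R$ (via the fact that the saturation $\widetilde{\ord_\fm}$ is linearly bounded).
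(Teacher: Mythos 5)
Your proof is correct and follows essentially the same approach as the paper: homogenize via Lemma \ref{lem:d_infty of hom goes down}, rewrite $d_\infty$ on $\cN^h$ as a sup-norm condition (Lemma \ref{lem:d_infty for homogeneous}), establish $\chi \ge c_\chi \chi_0$, convert the additive bound to a multiplicative comparison of $\chi$ and $\chi'$, transport through Lemma \ref{lem:evaluation}, and pass to the infimum over valuations. Two small points of comparison: you explicitly justify $c_\chi>0$ via $\chi \ge \chi(\fm)\ord_\fm$ combined with linear boundedness of $\chi_0$, whereas the paper simply writes $c := \inf_{f\in\fm}\chi(f)/\chi_0(f) > 0$ without spelling out why; and for the direction $\lct(\chi') \le (1+C\delta)\lct(\chi)$ the paper invokes the existence of a valuation computing $\lct(\chi)$ (Theorem~B.1 of Blum), which your argument shows is unnecessary, since the pointwise bound $A(v)/v(\fa_\bullet(\chi')) \le (1+\delta/c_\chi)\,A(v)/v(\fa_\bullet(\chi))$ already yields the inequality of infima without a minimizer.
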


\begin{proof}
    Since $\lct(\chi)=\lct(\widehat\chi)$ for any $\chi\in\cN$, by Lemma \ref{lem:d_infty of hom goes down}, we may assume that $\chi,\chi'\in\cN^h$. 
    Fix $c=c(\chi):=\inf_{f\in\fm}\frac{\chi(f)}{\chi_0(f)}>0$ and denote $a:=\lct(\chi)>0$.

    Assume that $d_\infty(\chi,\chi')<\delta$ for some $\delta\in(0,c)$. By Lemma \ref{lem:d_infty for homogeneous}, we know that for any $f\in R$, 
    \[
        \frac{\chi'(f)}{\chi_0(f)}\ge \frac{\chi(f)}{\chi_0(f)}-\delta\ge c-\delta.
    \]
    Now we compute 
    \begin{equation}\label{eq:ratio}\begin{aligned}
        \frac{v(f)}{\chi'(f)}=&\frac{v(f)}{\chi(f)}\left(1+\frac{\chi(f)-\chi'(f)}{\chi'(f)}\right)\\
        =&\frac{v(f)}{\chi(f)}\left(1+\frac{\chi_0(f)}{\chi'(f)}\frac{\chi(f)-\chi'(f)}{\chi_0(f)}\right)\\
        \le&\frac{v(f)}{\chi(f)}\left(1+\frac{\delta}{c-\delta}\right)
    \end{aligned}\end{equation}
    For $\delta\in(0,c/2)$, by Lemma \ref{lem:evaluation} we get that $v(\fa_\bullet(\chi'))\le(1+2\delta/c)v(\fa_\bullet(\chi))$ for any $v$. Let 
    \[
        \delta:=\min\{\frac{c}{2},\frac{c\epsilon}{a}\}>0,
    \]
    which depends only on $\epsilon$ and $\chi$. Then for any $\chi'\in\Fil^s$ with $d_\infty(\chi,\chi')<\delta$ and any $v\in\Val^{<+\infty}_{X,x}$, we have
    \[
        \frac{A(v)}{v(\fa_\bullet(\chi'))}> (1-\frac{\delta}{2c})\frac{A(v)}{v(\fa_\bullet(\chi))}\ge (1-\frac{\delta}{2c})\lct(\chi)=a-\frac{a}{2c}\delta>a-\epsilon.
    \]
    Taking infimum among all $v$ gives $\lct(\chi')\ge a-\epsilon$.

    Interchanging $\chi$ and $\chi'$ in \eqref{eq:ratio}, by Lemma \ref{lem:evaluation} we get $v(\fa_\bullet(\chi))\le (1+\delta/c)v(\fa_\bullet(\chi'))$. By \cite[Theorem B.1]{Blu18}, there exists $v_0\in\Val^{<+\infty}_{X,x}$ computing the lct of $\fa_\bullet(\chi)$, thus
    \[
        \lct(\chi')\le\frac{A(v_0)}{v_0(\fa_\bullet(\chi'))}\le (1+\frac{\delta}{c})\frac{A(v_0)}{v_0(\fa_\bullet(\chi))}=(1+\frac{\delta}{c})a\le a+\epsilon.
    \]
    The proof is finished.
\end{proof}

\subsection{Comparison of different topologies on $\Fil^s$}\label{ssec:comparison}

Now we would like to compare the topologies on the space $\Fil^s$. \footnote{Since the product topology is in general not first countable and hence not obviously sequential, we avoid using the term \emph{finer} for them, but only compare convergence for sequences.} First we show that a $d_1$-converging sequence converges in the $+$-topology.

\begin{lemma}\label{lem:d_1 finer than +}
    Let $\fa_\bullet$,$\fa_{\bullet,k}\in\Fil^s$ for $k\in\bZ_{>0}$. If $\lim_k d_1(\fa_{\bullet,k},\fa_\bullet)=0$, then $\fa_{\bullet,k}\to \fa_\bullet$ in the $+$-topology.
\end{lemma}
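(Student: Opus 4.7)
The key dictionary, valid for any saturated $\fb_\bullet\in\Fil^s$ and any $v\in\Val^+_{R,\fm}$, is
\[
v(\fb_\bullet)\ge c \iff \fb_\bullet\subset \fa_\bullet(v/c),
\]
which follows from Lemma~\ref{lem:saturation alt def} applied to $\fb_\bullet$ together with the definition of the valuation ideals $\fa_\bullet(v)$. Moreover, from \eqref{eqn:def of d_1} and the monotonicity $\e(\fa\cap\fb)\ge\max\{\e(\fa),\e(\fb)\}$ one checks $d_1(\fa_{\bullet,k},\fa_\bullet)\ge|\e(\fa_{\bullet,k})-\e(\fa_\bullet)|$, so the hypothesis gives $\e(\fa_{\bullet,k})\to\e(\fa_\bullet)$ and, feeding this back, $\e(\fa_{\bullet,k}\cap\fa_\bullet)\to\e(\fa_\bullet)$. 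Fix $v\in\Val^+_{R,\fm}$; I will establish $\limsup_k v(\fa_{\bullet,k})\le v(\fa_\bullet)$ and $\liminf_k v(\fa_{\bullet,k})\ge v(\fa_\bullet)$ separately.

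For the upper inequality, I would assume for contradiction that $v(\fa_{\bullet,k})\ge c > v(\fa_\bullet)$ along a subsequence. The dictionary gives $\fa_{\bullet,k}\subset \fa_\bullet(v/c)$, hence $\fa_{\bullet,k}\cap\fa_\bullet\subset \fa_\bullet\cap\fa_\bullet(v/c)$; since $v(\fa_\bullet)<c$, the dictionary also forces $\fa_\bullet\cap\fa_\bullet(v/c)\subsetneq \fa_\bullet$ as saturated filtrations (note that the intersection is saturated by Corollary~\ref{cor:intersection of saturated is saturated}), so Theorem~\ref{thm:equal volume iff equal saturation} supplies $\delta:=\e(\fa_\bullet\cap\fa_\bullet(v/c))-\e(\fa_\bullet)>0$, and thus $\e(\fa_{\bullet,k}\cap\fa_\bullet)\ge \e(\fa_\bullet)+\delta$ along the subsequence, contradicting the limit above.

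For the lower inequality, suppose for contradiction that $v(\fa_{\bullet,k})\le c$ along a subsequence for some $c<v(\fa_\bullet)$, pick $c'\in(c,v(\fa_\bullet))$, and set $\fc_\bullet:=\fa_\bullet(v/c')\in\Fil^s$, so that $\fa_\bullet\subset\fc_\bullet$ by the dictionary. The rooftop inequality (Lemma~\ref{lem:rooftop}) together with $\fa_\bullet\cap\fc_\bullet=\fa_\bullet$ yields
\[
d_1(\fa_{\bullet,k}\cap\fc_\bullet,\fa_\bullet)=d_1(\fa_{\bullet,k}\cap\fc_\bullet,\fa_\bullet\cap\fc_\bullet)\le d_1(\fa_{\bullet,k},\fa_\bullet)\to 0,
\]
so $\e(\fa_{\bullet,k}\cap\fc_\bullet)\to\e(\fa_\bullet)$, and combined with $\e(\fa_{\bullet,k})\to\e(\fa_\bullet)$ gives $\e(\fa_{\bullet,k}\cap\fc_\bullet)-\e(\fa_{\bullet,k})\to 0$.

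The hard step, which I expect to be the main obstacle, is promoting this to a contradiction. The contrapositive of the dictionary guarantees only the \emph{strict} inequality $\e(\fa_{\bullet,k}\cap\fc_\bullet)>\e(\fa_{\bullet,k})$ whenever $v(\fa_{\bullet,k})< c'$ (via Theorem~\ref{thm:equal volume iff equal saturation}), and what is needed is a \emph{uniform} quantitative lower bound. The plan is to exploit that $d_1$-convergence together with the triangle inequality confines the $\fa_{\bullet,k}$ to a uniform sandwich $\fm^{C\bullet}\subset\fa_{\bullet,k}\subset\widetilde{\fm^{\bullet/C}}$ for a constant $C$ independent of $k$, and then to produce, via an Okounkov-body approximation argument in the spirit of Proposition~\ref{prop:genral Okounkov body} and Lemma~\ref{lem:uniform approximation}, a uniform lower bound $\e(\fa_{\bullet,k}\cap\fc_\bullet)-\e(\fa_{\bullet,k})\ge \delta_0(c,c',v,\fa_\bullet)>0$ whenever $v(\fa_{\bullet,k})\le c<c'$. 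This would contradict the vanishing above and complete the proof.
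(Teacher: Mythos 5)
Your overall architecture is sound, and the upper inequality half is a clean, correct argument that actually differs from the paper's route. You use the dictionary $v(\fb_\bullet)\ge c\iff\fb_\bullet\subset\fa_\bullet(v/c)$ (valid for saturated $\fb_\bullet$, since $f\in\fb_\lambda$ forces $v(f)\ge\lambda v(\fb_\bullet)\ge\lambda c$) together with Corollary~\ref{cor:intersection of saturated is saturated} and Theorem~\ref{thm:equal volume iff equal saturation} to manufacture a fixed $\delta>0$ contradicting $\e(\fa_{\bullet,k}\cap\fa_\bullet)\to\e(\fa_\bullet)$. The paper never does this; it instead reduces both inequalities to a single quantitative claim and applies it twice along $\fa_{\bullet,k}\supset\fa_{\bullet,k}\cap\fa_\bullet\subset\fa_\bullet$. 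So you get the upper bound ``for free'' from soft saturation arguments, which is a nice observation.

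The lower inequality is where the proposal stalls, and you correctly flag this as the hard step, but the plan you offer does not close it and is in one place likely false. The paper's key input (lifted from \cite[Prop.~3.12]{BLQ22} and sketched in its proof of this lemma) is a \emph{direct quantitative} estimate: if $\fa_{\bullet}\subset\fb_{\bullet}$ and $v(\fa_\bullet)>v(\fb_\bullet)+\epsilon$ for some $v\in\Val^+$, then $\e(\fa_\bullet)-\e(\fb_\bullet)\ge\epsilon^n\vol(v)$. The proof is an elementary module map: pick $\ell$ and $f\in\fb_\ell$ with $v(f)/\ell<v(\fa_\bullet)-\epsilon$, then $g\mapsto f^m g$ maps $R/\ker\hookrightarrow\fb_{m\ell}/\fa_{m\ell}$ and $\ker\subset\fa_{m\ell\epsilon}(v)$, so taking $m\to\infty$ gives the bound. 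Plugging this into your reduction with $\fb_\bullet=\fa_{\bullet,k}\cap\fc_\bullet\subset\fa_{\bullet,k}=\fa_\bullet$ and $\epsilon=c'-c$ (note $v(\fa_{\bullet,k}\cap\fc_\bullet)\ge v(\fc_\bullet)=c'$ while $v(\fa_{\bullet,k})\le c$) closes the argument in two lines. This is precisely the ``uniform $\delta_0(c,c',v)$'' you were after, and it requires no Okounkov machinery. Your alternative plan has two problems: (i) it is far heavier than needed, and (ii) the ``uniform sandwich'' $\fm^{C\bullet}\subset\fa_{\bullet,k}$ is \emph{not} a consequence of $d_1$-convergence — $d_1$ controls multiplicities, not coefficient-wise inclusions, so convergence of $\e(\fa_{\bullet,k})$ does not bound the linear growth rate of $\fa_{\bullet,k}$ from below uniformly in $k$. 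So that step would need its own justification, and you do not supply one.

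In summary: the upper inequality is correct and genuinely different (softer) than the paper's argument; the lower inequality correctly isolates the crux but then substitutes an unverified and fragile plan for the paper's short $\epsilon^n\vol(v)$ estimate.
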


\begin{proof}
    We claim that if $\fa_{\bullet,k}\subset\fb_{\bullet,k}$ satisfies $\lim_{k\to\infty}d_1(\fa_{\bullet,k},\fb_{\bullet,k})=0$, then for any $v\in\Val^+_{X,x}$, we have $\lim_{k\to\infty} (v(\fa_{\bullet,k})-v(\fb_{\bullet,k}))=0$.
	
    For any $v\in\Val^+_{X,x}$, applying the claim to $\fa_{\bullet,k},\fa_{\bullet,k}\cap\fa_\bullet$ and $\fa_{\bullet,k}\cap\fa_\bullet,\fa_\bullet$ separately yields
    \begin{align*}
        \lim_{k\to\infty} |v(\fa_{\bullet,k})-v(\fa_\bullet)|\le \lim_{k\to\infty} |v(\fa_{\bullet,k})-v(\fa_{\bullet,k}\cap\fa_\bullet)|+\lim_{k\to\infty}|v(\fa_{\bullet,k}\cap\fa_\bullet)-v(\fa_\bullet)|=0.
    \end{align*}
	
    Now it remains to prove the claim, which follows essentially from the same argument as \cite[Proposition 3.12]{BLQ22}. We include a sketch for the reader's convenience. Assume to the contrary that there exist $\epsilon>0$ and $v\in\Val^+_{X,x}$ such that $v(\fa_{\bullet,k})>v(\fb_{\bullet,k})+\epsilon$ for any $k$. Then for $k\in\bZ_{>0}$, we may choose $\ell_k\in\bZ_{>0}$ and $f_k\in\fb_{\ell_k,k}$ such that 
    \[
        \frac{v(f_k)}{\ell_k}=\frac{v(\fb_{\ell_k,k})}{\ell_k}<v(\fa_{\bullet,k})-\epsilon.
    \]
	
    For $m\in\bZ_{>0}$, consider the morphism $\phi_{k,m}:R\to R/\fa_{m\ell_k,k}$, $g\mapsto f_k^mg+\fa_{m\ell_k,k}$. Since $f_k^m\in\fb_{\ell_k,k}^m\subset\fb_{m\ell_k,k}$, the image is contained in $\fb_{m\ell_k,k}/\fa_{m\ell_k,k}$. Moreover, one can check that $\ker(\phi_{k,m})\subset \fa_{m\ell_k\epsilon}(v)$. Hence 
    \[
        \ell(\fb_{m\ell_k,k}/\fa_{m\ell_k,k})\ge \ell(R/\ker(\phi_{k,m})\ge \ell(R/\fa_{m\ell_k\epsilon}(v).
    \]
    Letting $m\to\infty$ we get $\e(\fa_{\bullet,k})-\e(\fb_{\bullet,k})\ge \epsilon^n\vol(v)>0$, which is a contradiction.
\end{proof}

Next we compare the two metrics $d_1$ and $d_\infty$.

\begin{lemma}\label{lem:d_infty finer than d_1}
    Let $\fa_\bullet,\fb_\bullet\in\Fil$ be filtrations satisfying $\fm^{C\bullet}\subset\fa_\bullet$ and $\fm^{C\bullet}\subset\fb_\bullet$ for some $C\in\bR_{>1}$. Then there exists a constant $M:=M(C)>0$ such that 
    \begin{equation}\label{eqn:d_1 d_infty comparison}
        d_1(\fa_\bullet,\fb_\bullet)\le M\cdot d_\infty(\fa_\bullet,\fb_\bullet).
    \end{equation}
    As a consequence, the $d_\infty$-topology is finer than the $d_1$-topology. 
\end{lemma}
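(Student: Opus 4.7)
\smallskip

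\noindent\emph{Proof plan.} Set $\delta := d_\infty(\fa_\bullet,\fb_\bullet)$, $\chi := \ord_{\fa_\bullet}$, $\chi' := \ord_{\fb_\bullet}$, and recall $\chi_0 = \ord_{\widetilde{\fm^\bullet}}$. The strategy is to show that asymptotically $\fa_{(1+\epsilon)\lambda} \subset \fb_\lambda$ (and vice versa) for some $\epsilon = O(\delta)$; scale-invariance of multiplicity will then yield $\e(\fa_\bullet \cap \fb_\bullet) \le (1+\epsilon)^n \min\{\e(\fa_\bullet),\e(\fb_\bullet)\}$, from which the main bound follows.

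First I would compare the three norms quantitatively. The assumption $\fm^{C\bullet} \subset \fa_\bullet,\fb_\bullet$ gives $\chi,\chi' \ge \ord_\fm/C$. Since $\widetilde{\fm^\bullet} \in \Fil$ is linearly bounded, there exists a constant $c = c(R) > 0$ with $\widetilde{\fm^\bullet} \subset \fm^{c\bullet}$, hence $\chi_0 \le \ord_\fm/c$. Combining these, $\chi_0 \le K\min(\chi,\chi')$ with $K := C/c$. Next, for any $\eta > 0$, unravelling the $\limsup$ in Definition \ref{defn:d_infty metric} produces $\lambda_0 = \lambda_0(\eta)$ such that $|\chi(f) - \chi'(f)| \le (\delta+\eta)\chi_0(f)$ for every $f \in (\widetilde{\fm^\bullet})_{\lambda_0} = \{f : \chi_0(f) \ge \lambda_0\}$. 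Substituting the preceding comparison upgrades this to $|\chi - \chi'| \le \epsilon \min(\chi,\chi')$ on $(\widetilde{\fm^\bullet})_{\lambda_0}$, where $\epsilon := K(\delta+\eta)$; in particular $\chi'(f) \ge \chi(f)/(1+\epsilon)$ there, and symmetrically for $\chi(f)$.

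Translating back to filtrations yields the containment $\fa_{(1+\epsilon)\lambda} \cap (\widetilde{\fm^\bullet})_{\lambda_0} \subset \fa_\lambda \cap \fb_\lambda$. Applying the standard colength inequality $\ell(R/(I\cap J)) \le \ell(R/I) + \ell(R/J)$ (from the exact sequence $0 \to R/(I\cap J) \to R/I \oplus R/J \to R/(I+J) \to 0$) gives
\[
\ell(R/\fa_\lambda \cap \fb_\lambda) \le \ell(R/\fa_{(1+\epsilon)\lambda}) + \ell(R/(\widetilde{\fm^\bullet})_{\lambda_0}).
\]
Dividing by $\lambda^n/n!$ and letting $\lambda \to \infty$ kills the constant second term, yielding $\e(\fa_\bullet \cap \fb_\bullet) \le (1+\epsilon)^n \e(\fa_\bullet)$, and symmetrically $\le (1+\epsilon)^n \e(\fb_\bullet)$. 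Letting $\eta \to 0$ and using $\e(\fa_\bullet),\e(\fb_\bullet) \le \e(\fm^{C\bullet}) = C^n\e(\fm)$ (by monotonicity of $\e$ under reverse inclusion), I conclude
\[
d_1(\fa_\bullet,\fb_\bullet) \le 2C^n\e(\fm)\bigl[(1+K\delta)^n - 1\bigr].
\]
The elementary estimate $(1+K\delta)^n - 1 \le n\cdot 2^{n-1} K\delta$ for $K\delta \le 1$, combined with the trivial bound $d_1 \le 2C^n\e(\fm)$ for $K\delta \ge 1$, then yields $d_1 \le M\delta$ for some $M = M(C)$, proving the inequality. The topological consequence follows because, for $\fb_\bullet$ in a sufficiently small $d_\infty$-neighborhood of a given $\fa_\bullet$, the same asymptotic estimates force $\fm^{C'\bullet} \subset \fb_\bullet$ for a uniform $C'$ close to $C$, so the bound applies uniformly on the neighborhood.

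The hard part will be the $\limsup$ (rather than $\sup$) in the definition of $d_\infty$: the pointwise estimate $|\chi - \chi'| \le (\delta+\eta)\chi_0$ is available only on $(\widetilde{\fm^\bullet})_{\lambda_0}$, not on all of $R$. Absorbing the complement via the colength-splitting above is essential, since it lets the constant $\ell(R/(\widetilde{\fm^\bullet})_{\lambda_0})$ (depending on $\eta$ but not on $\lambda$) disappear after dividing by $\lambda^n$ and taking $\lambda \to \infty$.
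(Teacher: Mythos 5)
Your argument is correct, and it follows a genuinely different route from the paper's. The paper first reduces to homogeneous filtrations via Lemma~\ref{lem:d_infty of hom goes down} (together with the observation that $d_1$ is also preserved under homogenization, since $\widehat{\fa_\bullet}\cap\widehat{\fb_\bullet}=\widehat{\fa_\bullet\cap\fb_\bullet}$ and $\e$ is invariant under $\widehat{\ \cdot\ }$); Lemma~\ref{lem:d_infty for homogeneous} then turns the $\limsup$ in $d_\infty$ into a genuine supremum over all of $\fm$, and a further reduction to the nested case $\fa_\bullet\subset\fb_\bullet$ gives the global sandwich $\fa_\bullet\subset\fb_\bullet\subset\fa_{\bullet/(1+Cd)}$, from which the multiplicity estimate follows by rescaling. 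You instead confront the $\limsup$ directly, with no reduction to $\Fil^h$ and no reduction to the nested case: the monotonicity of $\lambda\mapsto\sup_{\chi_0\ge\lambda}$ yields a pointwise bound only on the tail $(\widetilde{\fm^\bullet})_{\lambda_0}$, and you absorb the uncontrolled low-order region through the colength splitting $0\to R/(I\cap J)\to R/I\oplus R/J\to R/(I+J)\to 0$ so that the fixed term $\ell(R/(\widetilde{\fm^\bullet})_{\lambda_0})$ disappears after dividing by $\lambda^n$. This buys a more self-contained argument that sidesteps the homogenization machinery entirely; the only cost is an extra constant $c=c(R)$ from the linear bound $\widetilde{\fm^\bullet}\subset\fm^{c\bullet}$, where the paper gets the sharper $\chi_0\le C\chi_\fa$ once $\fa_\bullet$ is assumed homogeneous (because then $\widehat{\fm^{C\bullet}}\subset\fa_\bullet$). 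Either way one obtains $M=M(C,R)$, which is what the lemma asserts under the paper's notational convention of suppressing the dependence on $R$, and the topological consequence is handled by the same routine uniformity argument in both.
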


\begin{proof}
    By Lemma \ref{lem:d_infty of hom goes down} we may assume $\fa_\bullet,\fb_\bullet\in\Fil^h$. Since $d_1(\fa_\bullet,\fb_\bullet)=d_1(\fa_\bullet,\fc_\bullet)+d_1(\fb_\bullet,\fc_\bullet)$ and $d_\infty(\fa_\bullet,\fb_\bullet)\ge\max\{d_\infty(\fa_\bullet,\fc_\bullet),d_\infty(\fb_\bullet,\fc_\bullet)\}$, where $\fc_\bullet\coloneqq\fa_\bullet\cap\fb_\bullet$, we may further assume that $\fa_\bullet\subset\fb_\bullet$ (but replace $M$ by $2M$).
	
    Denote $d:=d_\infty(\fa_\bullet,\fb_\bullet)$. Then we know that 
    \[
        0\le\frac{\chi_\fb(f)}{\chi_\fa(f)}-1=\frac{\chi_0(f)}{\chi_\fa(f)}\cdot\frac{\chi_\fb(f)-\chi_\fa(f)}{\chi_0(f)}\le Cd,
    \]
    which implies
    \[
        \fa_\bullet\subset\fb_\bullet\subset \fa_{\bullet/(1+Cd)}.
    \]
    Hence we get 
    \[
        \e(\fa_\bullet)\ge\e(\fb_\bullet)\ge (1+Cd)^{-n}\e(\fa_\bullet),
    \]
    that is
    \begin{align*}
        d_1(\fa_\bullet,\fb_\bullet)=&\e(\fa_\bullet)-\e(\fb_\bullet)\\
        \le&\left(1-(1+Cd)^{-n}\right)\e(\fa_\bullet)\\
        \le&nCd\cdot \e(\fa_\bullet)\\
        \le&nC^{n+1}\e(\fm)\cdot d,
    \end{align*}
    where in the last inequality we used the fact that $(1+x)^{-n}\ge 1-nx$ for $x\ge 0$. This proves \eqref{eqn:d_1 d_infty comparison} with $M\coloneqq 2nC^{n+1}\e(\fm)$.
	
    Given $\fa_\bullet\in\Fil$, we may take $C\in\bZ_{>1}$ such that $\fm^C\subset \fa_1$ which implies $\fm^{C\bullet}\subset\fa_\bullet$. Now for $0<\epsilon<1/2C$ and $\fb_\bullet\in\Fil$ with $d_\infty(\fa_\bullet,\fb_\bullet)<\epsilon$, by definition we have $\fm^{2C\bullet}\subset\fb_\bullet$. Hence the second assertion follows from \eqref{eqn:d_1 d_infty comparison}.
\end{proof}

To summarize, we have the following diagram for the convergence of sequences
\begin{equation*}
d_\infty\text{-convergence}\left\{\begin{aligned}
&\implies\text{weak convergence}\\ 
&\implies d_1\text{-convergence} \implies +\text{-convergence} \implies \text{coefficientwise convergence}
\end{aligned}\right.
\end{equation*}
Here the upper line is obvious; the first two arrows of the lower line follow from Lemma \ref{lem:d_infty finer than d_1} and Lemma \ref{lem:d_1 finer than +} respectively; and the last arrow follows from the fact that $\DivVal_{X,x}\subset \Val^+_{X,x}$. It is not hard to see that the upper line is not an equivalence. We do not have any counterexamples for the other reverse arrows.

\section{Proof of the main results}\label{sec:proof}

\begin{proof}[Proof of Theorem \ref{thm:d_1 metric}]
    (1) By Proposition \ref{prop:d_1 is a pseudo metric}, $d_1$ is a pseudometric on $\Fil$. 
	
    By \cite[Corollary 3.17]{BLQ22}, $\widetilde\fa_\bullet=\widetilde\fb_\bullet$ if and only if $\e(\fa_\bullet)=\e(\fb_\bullet)=\e(\fa_\bullet\cap\fb_\bullet)$, which holds if and only if $d_1(\fa_\bullet,\fb_\bullet)=0$ by definition. This proves $\Fil^s=\Fil/\sim_1$. 
	
    (2) Since $\fa_{\bullet,0}\ne\fa_{\bullet,1}$, we may assume that $\fa_{\bullet,0}\not\subseteq\fa_{\bullet,1}$. Then there exist $f\in R$ and $\lambda\in\bR_{>0}$ such that $f\in\fa_{\lambda,0}\backslash\fa_{\lambda,1}$, and $f\notin\fa_{\lambda',0}$ for $\lambda'>\lambda$. Take $t\in(0,1)$ and $\mu,\nu$ with $(1-t)\mu+t\nu=\lambda$. We claim that $f\notin\fa_{\lambda,t}$. Indeed, if $\mu\le \lambda$, then $\nu\ge\lambda$, so $f\notin\fa_{\nu,1}$ since $f\notin\fa_{\lambda,1}$. If $\mu>\lambda$, then by the choice of $f$ and $\lambda$ we have $f\notin\fa_{\mu,0}$. In either case, $f\notin\fa_{\mu,0}\cap\fa_{\nu,1}$. Thus $f\notin\fa_{\lambda,t}$ and hence $\fa_{\lambda,t}\ne \fa_{\lambda,0}$.  
    
    If $\fa_{\bullet,1}\nsubseteq\fa_{\bullet,0}$, then we can repeat the above construction to show that $\fa_{\bullet,t}\ne\fa_{\bullet,1}$ and hence $\fa_{\bullet,t}$ is between $\fa_{\bullet,0}$ and $\fa_{\bullet,1}$ for any $t\in(0,1)$. 
	
    Otherwise, $\fa_{\bullet,1}\subsetneq \fa_{\bullet,0}$. We claim that there exist $\lambda\in\bR_{>0}$ and $\epsilon>0$ such that $\fa_{\lambda+\epsilon,0}\supsetneq \fa_{\lambda,1}$ and $\fa_{\lambda-\epsilon,1}\supsetneq\fa_{\lambda,1}$. 
    Indeed, if all the jumping numbers of $\fa_{\bullet,0}$ and $\fa_{\bullet,1}$ coincide then the claim is clear. Otherwise, choose a jumping number $a$ of $\fa_{\bullet,1}$ with $\fa_{a,1}\subsetneq \fa_{a,0}$ which is not a jumping number of $\fa_{\bullet,0}$. Then there exists $a'>a$ such that $\fa_{a',0}=\fa_{a,0}$. Let $\lambda:=(a+a')/2$ and $\epsilon:=\lambda-a$, then 
    \[
        \fa_{\lambda-\epsilon,1}=\fa_{a,1}\supsetneq\fa_{\lambda,1}
    \]
    since $a$ is a jumping number of $\fa_{\bullet,1}$, and 
    \[
        \fa_{\lambda+\epsilon,0}=\fa_{a',0}= \fa_{a,0}\supsetneq\fa_{a,1}\supset\fa_{\lambda,1}
    \]
    by construction. So we get
    \[
        \fa_{\lambda,1}\subsetneq\fa_{\lambda+\epsilon,0}\cap\fa_{\lambda-\epsilon,1}\subset \sum_{\mu+\nu=2\lambda}\fa_{\mu,0}\cap\fa_{\nu,1}=\fa_{\lambda,1/2},
    \]
    and hence $\fa_{\bullet,1/2}$ is between $\fa_{\bullet,0}$ and $\fa_{\bullet,1}$.  
    Now it follows from Proposition \ref{prop:distance minimizing} that $(\Fil^s,d_1)$ is convex.

    Otherwise, $\fa_{\lambda,0}\subsetneq\fa_{\lambda,1}$ for any $\lambda\in\bR_{>0}$.
	
	
    When $R$ contains a field, by Lemma \ref{lem:continuity of geodesic}, the curve $\gamma:[0,1]\to \Fil^s$, $t\mapsto \widetilde\fa_{\bullet,t}$ is continuous. It computes the distance between $\fa_{\bullet,0}$ and $\fa_{\bullet,1}$ by Proposition \ref{prop:distance minimizing}. Hence $\Fil^s$ is a geodesic metric space. 
\end{proof}

\begin{proof}[Proof of Theorem \ref{thm:toric metric}]
    The first assertion follows from Proposition \ref{prop:toric isometry}.

    Note that by Lemma \ref{lem:rooftop}, $(\Fil^{s,\mathrm{mon}}_{R,\fm},d_1)$ is a rooftop metric space (\cite[Definition 5.2]{Xia19}. It is easy to verify that a decreasing (resp. increasing) Cauchy sequence $\fa_{\bullet,k}$ in $\Fil^{s,\mathrm{mon}}_{R,\fm}$ satisfies $\cap_k \fa_{\bullet,k}\in\Fil^{s,\mathrm{mon}}_{R,\fm}$ (resp. $\cup_k \fa_{\bullet,k}\in\Fil^{s,\mathrm{mon}}_{R,\fm}$). Therefore by Proposition \ref{prop:continuous along increasing sequences}, Lemma \ref{lem:toric intersection} and \cite[Proposition 5.4]{Xia19}, $\Fil^{s,\mathrm{mon}}_{R,\fm}$ is complete.
\end{proof}

\begin{proof}[Proof of Theorem \ref{thm:d_infty metric}]
    Assertion (1) follows from Lemma \ref{lem:d_infty is p-metric}. 

    By Lemma \ref{lem:d_infty for homogeneous}, $d_\infty$ is non-degenerate when restricted to $\cN^h$. Moreover, by Lemma \ref{lem:d_infty is complete}, the metric space $(\cN^h,d_\infty)$ is complete. Thus assertion (2) is proved.
\end{proof}

\begin{proof}[Proof of Theorem \ref{thm:lattice}]
    (1) By Definition-Lemma \ref{deflem:arbitrary intersection} and Definition-Lemma \ref{deflem:join of filtrations}, the set of $\fm$-filtrations is a lattice. It is easy to see that the intersection and join of two linearly bounded $\fm$-filtrations are again linearly bounded, hence $(\Fil,\subset,\cap,\vee)$ is a sublattice of all $\fm$-filtrations. Since $\fa\subset\fm$, we know that $\fa^{\lceil\lambda\rceil}\subset\fm^{\lfloor\lambda\rfloor}$ and the filtration $\fa^\bullet$ is linearly bounded. Now the map $\cI_\fm\to\Fil$, $\fa\mapsto\fa^\bullet$ is clearly injective, and by Example \ref{eg:ideal join}, it is a join morphism. 
	
    (2) It follows from Definition-Lemma \ref{deflem:arbitrary intersection}, Definition-Lemma \ref{deflem:saturated join} and Proposition \ref{prop:distributivity} that $(\Fil^s,\subset,\cap,\vee_s)$ is a distributive lattice. By Proposition \ref{prop:lattice of Fil^s}, the saturation from $\Fil$ to $\Fil^s$ is a join morphism, which is clearly surjective. By Example \ref{eg:saturated ideal join}, the map $\cI^{\ic}_{\fm}\to\Fil^s$, $\fa\mapsto\widetilde{\fa^\bullet}$ is injective, and is a join morphism as the composition of two join morphisms. 
\end{proof}

\subsection{Continuity properties of log canonical thresholds}

In this section, we assume $(R,\fm)$ is a klt singularity over a field $\bk$ of characteristic $0$. 

\begin{proposition}\label{prop:lct is weakly lsc}
    Let $\fb_{\bullet,k}\in\Fil^s$ be a sequence of filtrations that converges weakly to $\fa_\bullet\in\Fil^s$. Then
    \begin{equation}\label{eqn:lct is lsc}
        \lct(\fa_\bullet)\le \liminf_{k\to\infty}\lct(\fb_{\bullet,k}).
    \end{equation}
\end{proposition}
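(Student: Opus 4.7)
The plan is to exploit the standard identity
\[
    \lct(\fa_\bullet)=\sup_{m\in\bZ_{>0}} m\cdot\lct(\fa_m)=\lim_{m\to\infty} m\cdot\lct(\fa_m),
\]
valid for any $\fa_\bullet\in\Fil$. The inequality $m\cdot\lct(\fa_m)\le\lct(\fa_\bullet)$ follows from the subadditivity $v(\fa_m)+v(\fa_{m'})\ge v(\fa_{m+m'})$ (consequence of $\fa_m\fa_{m'}\subset\fa_{m+m'}$) combined with the valuative definition $\lct(\fa_\bullet)=\inf_v A_X(v)/v(\fa_\bullet)$, while the reverse is the asymptotic stabilization of multiplier ideals $\cJ(c\cdot\fa_\bullet)=\cJ(\tfrac{c}{m}\cdot\fa_m)$ for $m$ sufficiently divisible; both are classical, see \cite{JM12}.

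Fix $\delta>0$. The plan is to choose an integer $M$ large enough that both $M\cdot\lct(\fa_M)>\lct(\fa_\bullet)-\delta/2$ and $\tfrac{M-1}{M}\bigl(\lct(\fa_\bullet)-\tfrac{\delta}{2}\bigr)>\lct(\fa_\bullet)-\delta$; both hold as soon as $M>2\lct(\fa_\bullet)/\delta$, since $m\cdot\lct(\fa_m)\to\lct(\fa_\bullet)$. Since $R$ is Noetherian, the $\fm$-primary ideal $\fa_M$ is generated by finitely many elements $f_1,\ldots,f_r$, each satisfying $\ord_{\fa_\bullet}(f_i)\ge M$. Writing $\chi_k:=\ord_{\fb_{\bullet,k}}$ and $\chi:=\ord_{\fa_\bullet}$, weak convergence $\fb_{\bullet,k}\to\fa_\bullet$ means precisely that $\chi_k(f)\to\chi(f)$ pointwise on $R$, and in particular $\chi_k(f_i)\to\chi(f_i)\ge M>M-1$. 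Since there are only finitely many $f_i$, we may find $k_0$ such that $\chi_k(f_i)\ge M-1$ for all $k\ge k_0$ and all $i$, which is to say $\fa_M\subset\fb_{M-1,k}$.

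By monotonicity of $\lct$ under inclusion of ideals, $\lct(\fb_{M-1,k})\ge\lct(\fa_M)$; combined with the identity applied to $\fb_{\bullet,k}$ itself, namely $(M-1)\cdot\lct(\fb_{M-1,k})\le\lct(\fb_{\bullet,k})$, we obtain for $k\ge k_0$
\[
    \lct(\fb_{\bullet,k})\ge(M-1)\cdot\lct(\fa_M)=\frac{M-1}{M}\cdot M\cdot\lct(\fa_M)>\frac{M-1}{M}\bigl(\lct(\fa_\bullet)-\tfrac{\delta}{2}\bigr)>\lct(\fa_\bullet)-\delta
\]
by the choice of $M$. Hence $\liminf_k\lct(\fb_{\bullet,k})\ge\lct(\fa_\bullet)-\delta$, and letting $\delta\to 0$ completes the argument.

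The only nontrivial ingredient is the identity $\lct(\fa_\bullet)=\lim_m m\cdot\lct(\fa_m)$, which is classical for graded sequences of $\fm$-primary ideals. The remainder is an elementary Noetherian approximation together with pointwise convergence of order functions; in particular, the saturation hypothesis plays no role in the argument, in line with the fact that $\lct(\fa_\bullet)$ is insensitive to passing to the saturation. The main obstacle is simply recording the classical identity carefully; once that is in hand, the proof is a direct diagonal argument over the three parameters $\delta$, $M$, and $k$.
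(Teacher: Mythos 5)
Your proof is correct, and it is a genuinely different argument from the paper's. The paper first replaces the given sequence by the increasing sequence of tail intersections $\fa_{\bullet,k}:=\cap_{m\ge k}\fb_{\bullet,m}$ (which sits below each $\fb_{\bullet,k}$, so its lct is smaller), and then runs a proof by contradiction, picking $\Lambda$ with $\Lambda\cdot\lct(\fa_\Lambda)>c+\epsilon$ and using that $\fa_\Lambda=\fa_{\Lambda,k_1}$ for some $k_1$ because the increasing union stabilizes in the Artinian quotient. Your argument goes directly: fix $M$ with $M\cdot\lct(\fa_M)\approx\lct(\fa_\bullet)$, use finite generation of $\fa_M$ plus pointwise convergence of the order functions at the finitely many generators to land $\fa_M\subset\fb_{M-1,k}$ for $k\gg 0$, and then stack the inclusion-monotonicity of $\lct$ with the basic inequality $(M-1)\cdot\lct(\fb_{M-1,k})\le\lct(\fb_{\bullet,k})$. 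This sidesteps the auxiliary filtrations entirely, which is a real advantage: in the paper one has to justify that the tail intersections are again $\fm$-filtrations, i.e.\ that each $\cap_{m\ge k}\fb_{\lambda,m}$ is $\fm$-primary, which requires a uniform lower bound on $\chi_m(\fm)$ (extracted in the paper's Proposition~\ref{prop:modified d_1-converging seq} from normalized-volume bounds, but not spelled out in the proof of Proposition~\ref{prop:lct is weakly lsc}). You also correctly observe that saturation plays no role. One small inaccuracy: the condition $M\cdot\lct(\fa_M)>\lct(\fa_\bullet)-\delta/2$ is not guaranteed merely by $M>2\lct(\fa_\bullet)/\delta$; it needs $M$ past the threshold where the convergent sequence $m\cdot\lct(\fa_m)$ has entered the $\delta/2$-neighborhood of its limit, so one should simply say ``choose $M$ large enough that both conditions hold.'' This does not affect the argument.
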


\begin{proof}

    Consider the sequence $\fa_{\bullet,k}:=\cap_{m=k}^\infty \fb_{\bullet,m}\subset\fb_{\bullet,k}$ as in the proof of Proposition \ref{prop:modified d_1-converging seq}. Then the argument shows that $\fa_{\bullet,k}$ is an increasing sequence in $\Fil$ that converges to $\fa_\bullet$ weakly. Note that for any $k\in\bZ_{>0}$, $\lct(\fa_{\bullet,k})\le \lct(\fb_{\bullet,k})$, so $\lim_{k\to\infty} \lct(\fa_{\bullet,k})\le \liminf_{k\to\infty}\lct(\fb_{\bullet,k})$. Thus to prove \eqref{eqn:lct is lsc}, it suffices to show that 
    \begin{equation}\label{eqn:lct along increasing seq}
        \lct(\fa_\bullet)\le \lim_{k\to\infty}\lct(\fa_{\bullet,k}).
    \end{equation}
    Assume to the contrary that $\lct(\fa_\bullet)>\lim_k\lct(\fa_{\bullet,k})=:c$. Fix $\epsilon>0$ such that $\lct(\fa_\bullet)>c+\epsilon$. Since $\lct(\fa_\bullet)=\lim_\lambda \lambda\cdot\lct(\fa_\lambda)$, there exists $\Lambda\in\bR_{>0}$ such that $\lambda\cdot\lct(\fa_\lambda)>c+\epsilon$ for any $\lambda\ge\Lambda$. As $\fa_{\bullet,k}$ converges to $\fa_\bullet$ weakly, that is, $\fa_\Lambda=\cup_k \fa_{\Lambda,k}$, there exists $k_1\in\bZ_{>0}$ such that $\fa_\Lambda=\fa_{\Lambda,k_1}$.In particular, 
    \begin{equation}\label{eqn:lower bound}
        \Lambda\cdot\lct(\fa_{\Lambda,k_1})>c+\epsilon.
    \end{equation}  
	
    Now take $k_2\in\bZ_{>0}$ such that $\lct(\fa_{\bullet,k})=\sup_\lambda \lambda\cdot\lct(\fa_{\lambda,k})<c+\epsilon$ for any $k\ge k_2$. For $k\ge\max\{k_1,k_2\}$, combining this with \eqref{eqn:lower bound} we get
    \[
        c+\epsilon<\Lambda\cdot\lct(\fa_{\Lambda,k_1})
        =\Lambda\cdot\lct(\fa_{\Lambda,k})
        \le\sup_\lambda\lambda\cdot\lct(\fa_{\lambda,k})<c+\epsilon,
    \]
    where the equality follows from $\fa_{\Lambda,k_1}=\fa_\Lambda=\fa_{\Lambda,k}$. This is a contradiction, so \eqref{eqn:lct along increasing seq} holds and the proof is finished. 
\end{proof}

\begin{remark}
    Note that in the above proof, one always has $\lct(\fa_\bullet)\ge \lim_k\lct(\fa_{\bullet,k})$, hence equality holds indeed.
\end{remark}

\begin{proof}[Proof of Theorem \ref{thm:lct}]
    Assertion (1) follows immediately from Proposition \ref{prop:lct is weakly lsc}. 
	
    To see assertion (2), assume that $\fa_{\bullet,k}\to\fa_\bullet\in\Fil^s$ in the $+$-topology. By \cite[Theorem B.1]{Blu18}, there exists $v\in\Val^{<+\infty}_{X,x}$ such that $\lct(\fa_\bullet)=\frac{A_X(v)}{v(\fa_\bullet)}$. By definition, we have $\lim_k v(\fa_{\bullet,k})=v(\fa_\bullet)$, hence
    \[
        \limsup_{k\to\infty} \lct(\fa_{\bullet,k})= \limsup_{k\to\infty} \inf_{w\in\Val^{<+\infty}_{X,x}}\frac{A_X(w)}{w(\fa_{\bullet,k})} \le \lim_{k\to\infty} \frac{A_X(v)}{v(\fa_{\bullet,k})}=\frac{A_X(v)}{v(\fa_\bullet)}=\lct(\fa_\bullet).
    \]
	
    Assertion (3) follows from Proposition \ref{prop:lip of lct}. The proof is finished.
\end{proof}

\section{Discussions}\label{sec:discussion}

In this section, we discuss the relation to previous work, especially the work in the global case. We also propose some questions related to the main results and provide some toy examples.


\subsection{Relation to global results}
As was mentioned earlier, our definition for the metric $d_1$ is inspired by the Darvas metric $d_1$ on $\cH(X,\omega)$, the space of smooth K\"ahler potentials of a compact K\"ahler manifold $(X,\omega)$, introduced in \cite{Dar15}, and is a local analogue of the metric $d_1$ on $R=R(X, -rK_X)$, the section ring of a Fano manifold $X$, introduced in \cite{BJ21}. We now explain an approach to equip the global object with a structure in our setting via the cone construction. We work over the complex numbers $\bC$ for simplicity.

Let $(V,L)$ be a smooth polarized variety, that is, $V$ is a smooth projective variety and $L$ is an ample line bundle on $V$. Let $R(V,L)\coloneqq\oplus_{m=0}^\infty H^0(V,mL)=\oplus_{m=0}^\infty R_m$, then we know that $R_0=\bC$. We may replace $L$ by $rL$ for some $r\in\bZ_{>0}$ such that $R$ is generated by $R_1$ as an $R_0$-algebra, and in the sequel, we will always assume $L$ is positive enough such that the above condition is satisfied. In this case, the cone $C(V,L)\coloneqq\Spec R(V,L)$ has a normal, isolated singularity at the vertex $0$ defined by $R_+(V,L)\coloneqq\oplus_{m>0} R_m$. 

Denote the localization of $R(V,L)$ at $R_+(V,L)$ by $R$. Then we have a normal local domain $(R,\fm)$ which is essentially of finite type over $\bC$. Now the canonical saturated filtration $\widetilde{\fm^\bullet}=\fa_\bullet(\ord_V)$, where $V\subset \Bl_0 C(V,L)\to C(V,L)$ is identified as the exceptional divisor of the blow-up of the cone point. Any $v\in\Val_V$ defines a graded filtration $\cF^\bullet_v$ on $R$ by 
\[
\cF^\lambda_vR_m\coloneqq\{s\in R_m\mid v(s)\ge \lambda\},
\]
which is linearly bounded when $v\in\Val^{<+\infty}_V$. This is not an $\fm$-filtration, but we could draw a ray from the canonical filtration $\widetilde{\fm^\bullet}$, by 
\[
s=\sum s_l\in\fa_{\lambda,t}\Leftrightarrow \min\left\{\frac{\ell+tv(s_\ell)}{1+t}\mid s_\ell\ne 0\right\}\ge \lambda,
\]
for $t\in[0,+\infty)$. Denote the corresponding norm by $\chi_t$, then $\chi_0=\ord_V$ by definition, and for any $s\in R$, we have $\lim_{t\to +\infty}\chi_t(s)=v(s)$. For any $t\in[0,+\infty)$, by definition, we have $\fm^{\lceil(1+t)\lambda\rceil}\subset\fa_{\lambda,t}$. Moreover, since $\cF_v$ is linearly bounded, there exists $C\in\bR_{>0}$ such that for any $s\in R_l$, we have $v(s)\le Cl$. So $s\in\fa_{\lambda,t}$ will force $\ord_V(s)\ge \frac{(1+t)\lambda}{1+C}$, that is, $\fa_{\lambda,t}\subset \fm^{\lfloor \lambda(1+t)/(1+C)\rfloor}$. This shows that $\fa_{\bullet,t}\in\Fil$. \footnote{Indeed $\fa_{\bullet,t}=\fa_\bullet(v_t)$ for some $v\in\Val^{<+\infty}_{X,x}$.} 

Thus we may view $v\in\Val^{<+\infty}_V$ as an element sitting on the boundary of $\Fil^s_{R,\fm}$. 
We do not go further in this direction. See also \cite{Fin23} and the references therein for some related results.

\subsection{Examples}

In this section, we consider several examples. Note that we only work with linearly bounded filtrations, or equivalently, filtrations with positive multiplicity.

\subsubsection{The metric space $(\Fil^s,d_1)$ in low dimensions}

\begin{example}\label{eg:DVR}
    If $R$ is a DVR, then $\Fil^s_{R,\fm}\cong \bR_{>0}$. Let $\pi\in\fm$ be a uniformizer. For $\fa_\bullet\in\Fil^s_{R,\fm}$, let $c\coloneqq \ord_\pi(\fa_\bullet)\in\bR_{>0}$. Since $\ord_\pi$ is the only divisorial valuation up to scaling, for $\lambda\in\bR_{>0}$ we have
    \[
        \fa_\lambda=\{f\in R\mid \ord_\pi(f)\ge \lambda\cdot c\}=(\pi^{\lceil c\lambda\rceil})=\fm^{\lceil c\lambda\rceil},
    \]
    that is, $\fa_\bullet=\fm^{c\bullet}$. Thus we have a bijection of sets $\phi:\Fil^s\to\bR_{>0}$, $\fm^{c\bullet}\mapsto c$. It is easy to see that $\e(\fm^{c\bullet})=c\cdot\e(\fm^\bullet)=c$, hence the bijection $\phi$ is an (order-reversing) isometry. 
\end{example}

However, the metric space $(\Fil^s_{R,\fm},d_1)$ becomes very complicated as the dimension goes up. When $R=\bC[\![x,y]\!]$, by \cite{FJ04}, the valuation space $\Val_{R,\fm}$ is a cone over a tree, and $\Val^+_{R,\fm}$ is a subset of $\Fil^s$.

\subsubsection{The metric on spaces of valuations.}
By \eqref{eqn:inclusion of valuation spaces}, Theorem \ref{thm:d_1 metric} gives a metric on the valuation space $\Val^+_{X,x}$, defined by
\[
d_1(v_0,v_1):= 2\e(\fa_\bullet(v_0)\cap\fa_\bullet(v_1))-\vol(v_0)-\vol(v_1).
\]

\begin{example}
    Let $R=\bk[\![x,y]\!]$. Let $v_0$ be the weighted blowup along $x,y$ with weights $(2,1)$ and $v_1$ the weighted blowup with weights $(1,2)$. It is easy to see that $\vol(v_i)=2$ for $i=0,1$. Computing the Newton-Okounkov body of the intersection, we get $\e(\fa_\bullet(v_0)\cap\fa_\bullet(v_1))=8/3$. Hence $d_1(v_0,v_1)=4/3$. Similarly, let $v_0\supl$ and $v_1\supl$ be the weighted blowup with weights $(\ell,1)$ and $(1,\ell)$ respectively, then we can compute
    \[
        d_1(v_0\supl,v_1\supl)=\frac{4\ell^2}{\ell+1}-2\ell=\frac{2\ell^2-2\ell}{\ell+1}
    \]
	
    In particular, $\lim_{\ell\to\infty}d_1(v_0\supl,v_1\supl)=+\infty$, and hence the metric $d_1$ induces a topology on the dual complex of the simple normal crossing pair $(X=\Spec R,L_0+L_1)$, where $L_0$ and $L_1$ are the lines $(x=0)$ and $(y=0)$ respectively, which is different from the Euclidean topology.
\end{example}

\subsubsection{The metrics on spaces of ideals.}
Similar to Theorem \ref{thm:d_1 metric}, one can also define a pseudometric $d_1$ on the set $\cI_\fm$ by
\[
d_1(\fa,\fb)=2\e(\fa\cap\fb)-\e(\fa)-\e(\fb).
\]
By Rees' Theorem, the Hausdorff quotient of $(\cI,d_1)$ can be identified with $(\cI^{\ic}_\fm,d_1)$. Note that with this distance, the inclusion $\cI^{\ic}_\fm\hookrightarrow\Fil^s$ is not an embedding but just a contraction map, since in general we only have $\e(\fa\cap\fb)\ge \e(\widetilde{\fa^\bullet}\cap\widetilde{\fb^\bullet})$. 

Note that the spaces $\Fil^s$ and $\Val_{X,x}^+$ both allow an action by $\bR_{\ge 0}$. If we consider the set of $\fm$-primary ideals ``with coefficients'', that is, the set 
\[
    \cI_{\fm,A}:=\{\fa^c\mid \fa\in\cI_\fm,\ c\in A\},
\]
where $A=\bQ_{\ge 0}$ or $\bR_{\ge 0}$. Clearly, the following metric makes sense.
\[
    d_{1,A}(\fa^c,\fb^d):=2\e(\fa^{c\bullet}\cap\fb^{d\bullet})-c^n\e(\fa)-d^n\e(\fb).
\]

\noindent\textbf{Warning}. These are in general two different metrics on $\cI_\fm$.

\subsection{The structure of finitely generated filtrations}

Recall that a filtration $\fa_\bullet$ is \emph{finitely generated}, or \emph{Noetherian}, if the \emph{associated graded ring} $\gr_{\fa_\bullet}R\coloneqq \oplus_{\lambda\in\bR_{\ge 0}}\fa_\lambda/\fa_{>\lambda}$ is a finitely generated $\kappa$-algebra. 

The finite generation of valuation ideals has been extensively studied since \cite{BCHM10}. See \cites{LXZ22,XZ25} for recent progress based on the theory of special complements.

As another application of Proposition \ref{prop:continuous along increasing sequences}, we show that finitely generated filtrations are dense in $\Fil_{R,\fm}$, which is the local analogue of \cite[Corollary 3.19]{BJ21}.

\begin{proposition}\label{prop:fg density}
    The subset of finitely generated filtrations is dense in $\Fil_{R,\fm}$ with respect to $d_1$.
\end{proposition}

\begin{proof}
    The following construction is a local analogue of \cite[Definition-Lemma 3.56]{Xu25}. For $\fa\in\Fil_{R,\fm}$ and $k\in\bZ_{>0}$, let

    \[
        \fa_{\lambda,k}\coloneqq\left\{\begin{aligned}
            &\fa_\lambda, &\text{ if } \lambda\le k,\\
            &\sum_{\vec{\mu}} \fa_{\mu_1}\cdot \ldots \cdot \fa_{\mu_s}, &\text{ otherwise,}
        \end{aligned}
        \right.
    \]
    where the sum is taken over all vectors $\vec\mu=(\mu_1,\ldots,\mu_s)\in \bR_{>0}^s$ satisfying $\sum_i \mu_i\ge\lambda$ and $\mu_i\le k$ for any $1,\ldots,s$. Note that the sum is finite since the set of jumping numbers of $\fa_\bullet$ contained in $(0,k]$ is finite.

    It is easy to check that $\fa_{\bullet,k}\in\Fil_{R,\fm}$. Moreover, $\gr_{\fa_{\bullet,k}} R$ is generated by elements of degree at most $k$, and hence is finitely generated again by the finiteness of jumping numbers.

    Clearly $\{\fa_{\bullet,k}\}$ is an increasing sequence and $\cup_k \fa_{\bullet,k}=\fa_\bullet$. So by Proposition \ref{prop:continuous along increasing sequences}, we know that $d_1(\fa_\bullet,\fa_{\bullet,k})\to 0$, which finishes the proof.
\end{proof}

\begin{remark}\label{rmk:valuation fg}
    In general, the saturation of a finitely generated filtration is not finitely generated. Hence, the above proposition says nothing about the structure or distribution of finitely generated \emph{saturated} filtrations, which remains rather mysterious.
    
    Indeed, in \cite{LX24}, Y. Liu and Xu proposed several conjectures regarding the structure of Koll\'ar valuations, which are special classes of finitely generated filtrations. See \cites{ABB+23,LXZ22,Pen25} for some special cases.
    
    Moreover, in view of \cite[Section 4]{LXZ22} and \cite[Section 4.3]{XZ25}, it seems that the specialty of quasi-monomial valuations inside a single dual complex is related to the geodesics. 
\end{remark}

\subsection{Questions}

We propose a few questions related to the metric space $(\Fil^s,d_1)$.

\subsubsection{Completeness under the Darvas metric}\label{ssec:completeness}

Recall that by \cite{Xia19}, in order to prove the completeness of the metric space $(\Fil^s,d_1)$, it is enough to check that a (bounded) monotonic sequence has a limit. By Proposition \ref{prop:continuous along increasing sequences}, this is true for increasing sequences. So it is natural to ask the following question:

\begin{question}\label{q:continuity along decreasing nets}
    Let $\{\fa_{\bullet,k}\}_{k\in\bZ_{>0}}$ be a decreasing sequence in $\Fil^s_{R,\fm}$ such that $\fa_\bullet\coloneqq\cap \fa_{\bullet,k}\in\Fil^s_{R,\fm}$,
    do we have 
    \[
	\lim_{k\to\infty} d_1(\fa_\bullet,\fa_{\bullet,k})=0?
    \]
\end{question}

We remark that convergence along decreasing sequences or nets has been studied in the complex analytic setting \cite[Proposition 6.11]{Dar17} and the global non-archimedean setting \cite[Theorem 12.5]{BJ22}. The answer to the above question is likely no, but we do not have a counterexample.

\subsubsection{More structural questions}

We know in Example \ref{eg:DVR} that $\Fil^s$ is Euclidean when $R$ is a DVR. This is never the case in higher dimensions. Moreover, geodesics between two incomparable filtrations $\fa_\bullet$ and $\fb_\bullet$ are never unique, since the geodesic between $\fa_\bullet$ and $\fb_\bullet$ is distinct from the one obtained from combining the piecewise geodesics connecting $\fa_\bullet$, $\fa_\bullet\cap\fb_\bullet$ and $\fb_\bullet$. It might be interesting to ask the relationship between the topological or metric properties of $\Fil^s_{R,\fm}$ and the algebraic properties of the ring $R$. For example,

\begin{question}
    Is $\Fil^s$ always contractible? If not, is the contractibility related to the algebraic properties of $R$?
\end{question}

\noindent\textbf{Statement for conflict of interest.} The author states that there is no conflict of interest.

\end{document}